\documentclass[11pt,a4paper]{article}

\usepackage{soul}
\usepackage{tikz}
\usepackage{amsthm}
\usepackage{amsmath}
\usepackage{amssymb}
\usepackage{mathrsfs}

\usepackage{geometry}
\usepackage{graphicx}
\usepackage{amsfonts}
\usepackage{epstopdf}
\usepackage{placeins}
\usepackage{enumerate}
\usepackage{enumitem}
\usepackage{subcaption}
\usepackage{microtype}
\usepackage[hidelinks]{hyperref} 
\usepackage[normalem]{ulem} 

\usetikzlibrary{calc,arrows.meta, positioning}

\numberwithin{equation}{section}

\allowdisplaybreaks

\newcommand{\esup}{\operatorname*{ess\,sup}}
\newcommand{\einf}{\operatorname*{ess\,inf}}
\newcommand{\eosc}{\operatorname*{ess\,osc}}

\newcommand{\Rmnum}[1]{\uppercase\expandafter{\romannumeral#1}} 

\def\Xint#1{\mathchoice
{\XXint\displaystyle\textstyle{#1}}%
{\XXint\textstyle\scriptstyle{#1}}%
{\XXint\scriptstyle\scriptscriptstyle{#1}}%
{\XXint\scriptscriptstyle\scriptscriptstyle{#1}}%
\!\int}
\def\XXint#1#2#3{{\setbox0=\hbox{$#1{#2#3}{\int}$ }
\vcenter{\hbox{$#2#3$ }}\kern-.6\wd0}}

\def\dashint{\Xint-}

\theoremstyle{plain}
\newtheorem{theorem}{Theorem}[section]
\newtheorem{proposition}[theorem]{Proposition}
\newtheorem{lemma}[theorem]{Lemma}
\newtheorem{corollary}[theorem]{Corollary}

\theoremstyle{definition}
\newtheorem{remark}[theorem]{Remark}
\newtheorem{question}[theorem]{Question}

\renewcommand{\thefootnote}{}

\AtEndDocument{
\bigskip{\footnotesize%
   \textsc{Department of Mathematics, Aarhus University, 8000 Aarhus C, Denmark} \par
  \textit{E-mail address}: \texttt{yangmengqh@gmail.com}\par
  \textit{E-mail address}: \texttt{yang@math.au.dk}
}}

\begin{document}

\title{{Local and non-local $p$-energies on metric measure spaces}}
\author{Meng Yang}
\date{}

\maketitle

\abstract{For $p>1$, we study subordination phenomena for local and non-local regular $p$-energies on metric measure spaces. Under suitable geometric assumptions, we show that if a local regular $p$-energy satisfies a Poincar\'e inequality and a cutoff Sobolev inequality with scaling function $\Psi$, then any non-local $p$-form induced by a jumping kernel with scaling function $\Upsilon$, where $\Upsilon$ lies \emph{strictly above $\Psi$ at small scales}, defines a regular $p$-energy satisfying a non-local Poincar\'e inequality and a non-local cutoff Sobolev inequality. The corresponding scaling function $\Xi$ is explicitly determined by $\Psi$ and $\Upsilon$. Our results also cover examples whose jumping kernels have light polynomial tails at infinity. These results provide a nonlinear extension of the classical subordination principle beyond the Dirichlet form framework.}

\footnote{\textsl{Date}: \today}
\footnote{\textsl{MSC2020}: 31E05, 28A80}
\footnote{\textsl{Keywords}: subordination principle, Poincar\'e inequalities, cutoff Sobolev inequalities.}
\footnote{The author is very grateful to Jiaxin Hu for raising the question of comparing cutoff functions for local and non-local Dirichlet forms, which initiated this work, to Eryan Hu for stimulating discussions on insightful techniques for non-local Dirichlet forms, and to Josh Kline for discussions on the BBM-type Poincar\'e inequalities.}

\renewcommand{\thefootnote}{\arabic{footnote}}
\setcounter{footnote}{0}

\section{Introduction}\label{sec_intro}

Let us recall several classical results. First, let $(X,d,m)$ be a $d_h$-Ahlfors regular metric measure space. Assume that there exists a Brownian motion, or equivalently, a strongly local regular Dirichlet form, whose associated heat kernel satisfies the following two-sided sub-Gaussian estimates:
\begin{equation*}\label{eq_HKL}\tag*{$\text{HK}^{(L)}(d_w)$}
\frac{C_1}{t^{d_h/d_w}}\exp\left(-C_2\left(\frac{d(x,y)}{t^{1/d_w}}\right)^{\frac{d_w}{d_w-1}}\right)\le p_t(x,y)\le\frac{C_3}{t^{d_h/d_w}}\exp\left(-C_4\left(\frac{d(x,y)}{t^{1/d_w}}\right)^{\frac{d_w}{d_w-1}}\right),
\end{equation*}
where $d_w$ is a new parameter called the walk dimension, which is typically strictly greater than 2 on fractals. By the classical subordination technique (see, for example, \cite{Kum03, Pie08}), for any $\beta\in(0,d_w)$, the stable-like quadratic form
\begin{align*}
&\mathcal{E}^{(\beta)}(u)=\int_X\int_X \frac{\lvert u(x)-u(y)\rvert^2}{d(x,y)^{d_h+\beta}}m(\mathrm{d}x) m(\mathrm{d}y),\\
&\mathcal{F}^{(\beta)}=\left\{u\in L^2(X;m):\int_X\int_X \frac{\lvert u(x)-u(y)\rvert^2}{d(x,y)^{d_h+\beta}}m(\mathrm{d}x) m(\mathrm{d}y)<+\infty\right\},
\end{align*}
defines a non-local regular Dirichlet form whose associated heat kernel satisfies the polynomial-type estimates
\begin{equation*}\label{eq_HKJ}\tag*{$\text{HK}^{(J)}(\beta)$}
\frac{C_1}{t^{d_h/\beta}}\left(1+\frac{d(x,y)}{t^{1/\beta}}\right)^{-(d_h+\beta)}\le p_t(x,y)\le\frac{C_2}{t^{d_h/\beta}}\left(1+\frac{d(x,y)}{t^{1/\beta}}\right)^{-(d_h+\beta)}.
\end{equation*}

In 2004, Barlow and Bass \cite{BB04} introduced the well-known cutoff Sobolev inequality $\text{CS}^{(L)}$ in the local setting. This condition provides energy inequalities for cutoff functions associated with Dirichlet forms. They proved that \ref{eq_HKL} is equivalent to the conjunction of the local Poincar\'e inequality $\text{PI}^{(L)}(d_w)$ and $\text{CS}^{(L)}(d_w)$, both with parameter $d_w$. In later work, Chen, Kumagai, Wang \cite{CKW20,CKW21}, and independently Grigor'yan, Hu, Hu \cite{GHH18}, introduced the non-local condition $\text{CS}^{(J)}(\beta)$ and proved that \ref{eq_HKJ} is equivalent to $\text{CS}^{(J)}(\beta)$.

The original formulation of $\text{CS}^{(L)}$ is technically involved, even after several simplifications in \cite{AB15,Mur24a}, and was long believed to be difficult to verify on concrete fractals, even in classical examples such as the Sierpi\'nski gasket. In the strongly recurrent setting, roughly speaking when $d_w>d_h$, the author \cite{Yan25c} provided a direct and simple proof of $\text{CS}^{(L)}(d_w)$; see also \cite{Yan25b} for the case of the Sierpi\'nski gasket, where the main idea is illustrated.

It had remained a long-standing conjecture to replace $\text{CS}$ by the substantially simpler capacity upper bound condition $\text{cap}_\le$, known as the resistance conjecture (see \cite{GHL14} and \cite{GHH18}), until it was very recently resolved by Eriksson-Bique \cite{Eri26} in the local setting and by Murugan \cite{Mur26} in the non-local setting.

At the level of functional inequalities alone, a natural question is as follows:

\begin{quote}
Can one obtain $\text{CS}^{(J)}(\beta)$ \emph{directly} from $\text{PI}^{(L)}(d_w)$ and $\text{CS}^{(L)}(d_w)$, without using heat kernel estimates?
\end{quote}

Second, let $\Upsilon:[0,+\infty)\to[0,+\infty)$ be a piecewise power function given by
$$\Upsilon(r)=r^{\beta_1}1_{(0,1)}(r)+r^{\beta_2}1_{[1,+\infty)}(r),$$
where $\beta_1\in(0,d_w)$ and $\beta_2\in(0,+\infty)$, then by the heat kernel estimate theory (see \cite{BKKL19,BKKL25,CKW22,LM23}), the quadratic form
\begin{equation}\label{eq_EUpsilon}
\begin{aligned}
&\mathcal{E}^{(\Upsilon)}(u)=\int_X\int_X \frac{\lvert u(x)-u(y)\rvert^2}{d(x,y)^{d_h}\Upsilon(d(x,y))}m(\mathrm{d}x) m(\mathrm{d}y),\\
&\mathcal{F}^{(\Upsilon)}=\left\{u\in L^2(X;m):\int_X\int_X \frac{\lvert u(x)-u(y)\rvert^2}{d(x,y)^{d_h}\Upsilon(d(x,y))}m(\mathrm{d}x) m(\mathrm{d}y)<+\infty\right\},
\end{aligned}
\end{equation}
defines a non-local regular Dirichlet form satisfying the non-local Poincar\'e inequality $\text{PI}^{(J)}(\Xi)$ and the non-local cutoff Sobolev inequality $\text{CS}^{(J)}(\Xi)$, where the scaling function $\Xi$ may be different from $\Upsilon$ and is given by $\Xi(r)=r^{\beta_1}$ for $r\in(0,1)$, while for $r\ge1$,
\begin{equation*}
\Xi(r)=
\begin{cases}
r^{\beta_2},&\text{if }\beta_2<d_w,\\
\frac{r^{d_w}}{\log(e-1+r)},&\text{if }\beta_2=d_w,\\
r^{d_w},&\text{if }\beta_2>d_w.
\end{cases}
\end{equation*}

For this example, where the resulting scaling function $\Xi$ may be different from $\Upsilon$, one may also ask the following question:

\begin{quote}
Can one derive $\text{PI}^{(J)}(\Xi)$ and $\text{CS}^{(J)}(\Xi)$ \emph{directly} from $\text{PI}^{(L)}(d_w)$ and $\text{CS}^{(L)}(d_w)$, without using heat kernel estimates?
\end{quote}

Third, for $p>1$, the classical Bourgain-Brezis-Mironescu (BBM) limiting theory provides a fundamental bridge between non-local and local $p$-energies. One important manifestation of this phenomenon is the so-called BBM-type Poincar\'e inequality: there exist $C>0$, $A>1$ such that, for any $\theta\in(0,1)$, any $u\in L^p(X;m)$, and any ball $B(x,r)$, we have
\begin{align*}
&\int_{B(x,r)}\lvert u-u_{B(x,r)}\rvert^p \mathrm{d}m\\
&\le C(1-\theta)r^{\theta p}\int_{B(x,Ar)}\int_{B(x,Ar)}\frac{\lvert u(y)-u(z)\rvert^p}{m(B(y,d(y,z)))d(y,z)^{\theta p}}m(\mathrm{d}y)m(\mathrm{d}z).
\end{align*}
Here the highly interesting extra factor $(1-\theta)$ appears in front of the fractional term and balances the limiting behavior of the right-hand side as $\theta\uparrow 1$, so that the renormalized quantity recovers the corresponding local energy in the BBM limit. In Euclidean spaces, this point of view goes back to \cite{BBM01,BBM02}; see also \cite{MS02,Pon04,HSMPPV23,MPW24}. More recently, BBM-type Poincar\'e inequalities have also been investigated on metric measure spaces with an upper gradient structure; see \cite{KLLZ25}.

One motivation for this work is the following:

\begin{quote}
To establish BBM-type Poincar\'e inequalities on general metric measure spaces, in particular, on fractals where no gradient or upper gradient structure is available.
\end{quote}

The first two results are formulated within the Dirichlet form framework: strongly local Dirichlet forms generalize the classical Dirichlet integral $\int_{\mathbb{R}^d}\lvert \nabla f(x)\rvert^2\mathrm{d}x$ on $\mathbb{R}^d$. By contrast, the local $p$-energies in the third result are formulated using upper gradients. For general $p>1$, extending the classical $p$-energy $\int_{\mathbb{R}^d}|\nabla f(x)|^p\mathrm{d} x$ in $\mathbb{R}^d$, as initiated by \cite{HPS04}, the study of local $p$-energies on fractals and general metric measure spaces has been recently advanced considerably, see \cite{CGQ22,Shi24,BC23,MS25,Kig23,CGYZ26,AB25,AES25a}. In this setting, a new parameter $d_{w,p}$, called the $p$-walk dimension, naturally arises; notably, $d_{w,2}$ coincides with $d_w$ in \ref{eq_HKL}. One may therefore expect that suitable variants of the cutoff Sobolev inequality $\text{CS}^{(L)}$, as formulated in \cite{Yan25a}, play a similarly fundamental role in the study of local $p$-energies. Some applications, such as the elliptic Harnack inequality, Wolff potential estimates, the singular behavior of energy measures, and the dichotomy of $p$-walk dimensions, were investigated in \cite{Yan25c,Yan25d,Yan25e}.

In (\ref{eq_EUpsilon}), replacing $2$ by $p$ and replacing the kernel $\frac{1}{d(x,y)^{d_h}\Upsilon(d(x,y))}$ by a jumping kernel $K^{(\Upsilon)}(x,y)$ satisfying
$$\frac{C_1}{m(B(x,d(x,y)))\Upsilon(d(x,y))}\le K^{(\Upsilon)}(x,y)\le\frac{C_2}{m(B(x,d(x,y)))\Upsilon(d(x,y))},$$
where $\Upsilon$ is a suitable general scaling function, one can define the following non-local $p$-form:
\begin{equation}\label{eq_EUpsilonp}
\begin{aligned}
&\mathcal{E}^{(\Upsilon,p)}(u)=\int_X\int_X\lvert u(x)-u(y)\rvert^p K^{(\Upsilon)}(x,y)m(\mathrm{d}x)m(\mathrm{d}y),\\
&\mathcal{F}^{(\Upsilon,p)}=\left\{u\in L^p(X;m):\int_X\int_X\lvert u(x)-u(y)\rvert^p K^{(\Upsilon)}(x,y)m(\mathrm{d}x)m(\mathrm{d}y)<+\infty\right\}.
\end{aligned}
\end{equation}
These forms can be formulated for any choice of $\Upsilon$. However, if the growth of $\Upsilon$ is not properly controlled, the space $\mathcal{F}^{(\Upsilon,p)}$ may be trivial, consisting only of constant functions. As in the Dirichlet form setting, we are therefore interested in the \emph{regular} regime, where $\mathcal{F}^{(\Upsilon,p)}$ contains sufficiently many continuous functions to support potential-theoretic and PDE methods. Moreover, one can formulate natural $p$-analogues of $\text{PI}^{(J)}$, $\text{CS}^{(J)}$, and $\text{cap}^{(J)}_{\le}$, which likewise play a central role in the study of non-local $p$-energies.

Motivated by the above results, we pose the following natural questions in the setting of $p$-energies.

\begin{question}\label{que_sub}
Let $(\mathcal{E}^{(L,p)},\mathcal{F}^{(L,p)})$ be a strongly local regular $p$-energy satisfying $\text{PI}^{(L)}(\Psi)$ and $\text{CS}^{(L)}(\Psi)$ for a suitable scaling function $\Psi$.
\begin{itemize}
\item For which scaling functions $\Upsilon$ is the $p$-form $(\mathcal{E}^{(\Upsilon,p)},\mathcal{F}^{(\Upsilon,p)})$, defined in (\ref{eq_EUpsilonp}), a regular $p$-energy?
\item In this case, does $(\mathcal{E}^{(\Upsilon,p)},\mathcal{F}^{(\Upsilon,p)})$ satisfy the non-local Poincar\'e inequality $\text{PI}^{(J)}$ and the non-local cutoff Sobolev inequality $\text{CS}^{(J)}$? If so, what are the corresponding scaling functions?
\end{itemize}
\end{question}

The main result of this paper makes progress towards answering this question; see Theorem \ref{thm_sub}. The key observation is that, under suitable geometric assumptions, if a regular $p$-energy, either local or non-local as in (\ref{eq_EUpsilonp}), satisfies a Poincar\'e inequality and a cutoff Sobolev inequality, then the cutoff functions appearing in the latter can be chosen to be H\"older continuous. These cutoff functions are closely related to the condition $\text{CE}$ recently introduced in \cite{Ant25a}, as well as to the original formulation of $\text{CS}$ in \cite{BB04}. In fact, we prove that several versions of $\text{CE}$ and $\text{CS}$ are equivalent: the strong versions $\text{CE}_{\text{strong}}$ and $\text{CS}_{\text{strong}}$, where the cutoff functions are H\"older continuous; the continuous versions $\text{CE}_{\text{cont}}$ and $\text{CS}_{\text{cont}}$, where the cutoff functions are continuous; and the weak versions $\text{CE}_{\text{weak}}$ and $\text{CS}_{\text{weak}}$, where no continuity condition on the cutoff functions is assumed. See Theorems \ref{thm_equiv_L} and \ref{thm_equiv_J}.

Assume that $(\mathcal{E}^{(L,p)},\mathcal{F}^{(L,p)})$ is a strongly local regular $p$-energy satisfying $\text{PI}^{(L)}(\Psi)$ and $\text{CE}^{(L)}_{\text{strong}}(\Psi)$. Let $\Upsilon$ be a scaling function satisfying the following strict upper growth condition at small scales:
\begin{equation}\label{eq_SUG0int}
\sum_{n=0}^{+\infty}\frac{\Psi(2^{-n})}{\Upsilon(2^{-n})}<+\infty.
\end{equation}
We prove that the $p$-form $(\mathcal{E}^{(\Upsilon,p)},\mathcal{F}^{(\Upsilon,p)})$ defined in (\ref{eq_EUpsilonp}) satisfies $\text{PI}^{(J)}(\Xi)$ and $\text{CE}^{(J)}_{\text{strong}}(\Xi)$, where $\Xi$ is the scaling function explicitly determined by $\Psi$ and $\Upsilon$ as follows:
$$
\Xi(r)=
\begin{cases}
0, & r=0,\\
\frac{\Psi(r)}{\int_0^r \frac{\mathrm{d}\Psi(t)}{\Upsilon(t)}}, & r>0.
\end{cases}
$$
These functional inequalities imply the \emph{regularity} of $(\mathcal{E}^{(\Upsilon,p)},\mathcal{F}^{(\Upsilon,p)})$; in this implication, the continuity of cutoff functions plays a crucial role, as shown in Proposition \ref{prop_regular}. This was also previously observed in \cite{CGHL25}. Moreover, we prove that (\ref{eq_SUG0int}) is necessary not only for the $p$-form $(\mathcal{E}^{(\Upsilon,p)},\mathcal{F}^{(\Upsilon,p)})$ to be regular, but even for $\mathcal{F}^{(\Upsilon,p)}$ to contain a non-constant function.

Finally, to prove $\text{CE}^{(J)}_{\text{strong}}(\Xi)$, we use the \emph{same} cutoff functions as those in $\text{CE}^{(L)}_{\text{strong}}(\Psi)$. To prove $\text{PI}^{(J)}(\Xi)$, we construct a partition of unity with controlled energy and apply a discrete convolution argument. To the best of our knowledge, this analytic argument to obtain non-local functional inequalities from local functional inequalities is new, even in the linear ($p=2$) Dirichlet form setting.

Throughout this paper, $p\in(1,+\infty)$ is fixed. The letters $C,C_1,C_2,C_A, C_B$ will always refer to some positive constants and may change at each occurrence. The sign $\asymp$ means that the ratio of the two sides is bounded from above and below by positive constants. The sign $\lesssim$ ($\gtrsim$) means that the LHS is bounded by positive constant times the RHS from above (below). For $x,y\in \mathbb{R}$, denote $x\vee y=\max\{x,y\}$, $x\wedge y=\min\{x,y\}$, $x_+=\max\{x,0\}$, and $x_-=\max\{-x,0\}$. We use $\# A$ to denote the cardinality of a set $A$.

\section{Statement of main results}\label{sec_result}

We say that a function $\Psi:[0,+\infty)\to[0,+\infty)$ is doubling if $\Psi$ is a homeomorphism, which implies that $\Psi$ is strictly increasing continuous and $\Psi(0)=0$, and there exists $C_\Psi>1$, called a doubling constant of $\Psi$, such that $\Psi(2r)\le C_\Psi\Psi(r)$ for any $r>0$. Moreover, we assume that there exist $\beta^{(1)}_\Psi,\beta^{(2)}_\Psi>0$ with $\beta^{(1)}_\Psi\le \beta^{(2)}_\Psi$ such that
\begin{equation}\label{eq_beta12}
\frac{1}{C_\Psi}\left(\frac{R}{r}\right)^{\beta_\Psi^{(1)}}\le\frac{\Psi(R)}{\Psi(r)}\le {C_\Psi}\left(\frac{R}{r}\right)^{\beta_\Psi^{(2)}}\text{ for any }r\le R.
\end{equation}
Indeed, we can take $\beta_\Psi^{(2)}=\log_2C_\Psi$. Throughout this paper, we always assume that $\Psi,\Upsilon$ are doubling functions with doubling constants $C_\Psi, C_\Upsilon$, and each of them satisfies (\ref{eq_beta12}) with constants $\beta^{(1)}_\Psi,\beta^{(2)}_\Psi,\beta^{(1)}_\Upsilon,\beta^{(2)}_\Upsilon$.

Let $(X,d,m)$ be a complete unbounded metric measure space, that is, $(X,d)$ is a complete unbounded locally compact separable metric space and $m$ is a positive Radon measure on $X$ with full support. Throughout this paper, we always assume that all metric balls are relatively compact. For any $x\in X$ and any $r>0$, denote $B(x,r)=\{y\in X:d(x,y)<r\}$ and $V(x,r)=m(B(x,r))$. If $B=B(x,r)$, then denote $\delta B=B(x,\delta r)$ for any $\delta>0$. Let $\mathcal{B}(X)$ be the family of all Borel measurable subsets of $X$. Let $C(X)$ be the family of all continuous functions on $X$. Let $C_c(X)$ be the family of all continuous functions on $X$ with compact support. Denote $\dashint_A=\frac{1}{m(A)}\int_A$ and $u_A=\dashint_Au\mathrm{d} m$ for any measurable set $A$ with $m(A)\in(0,+\infty)$ and any function $u$ such that the integral $\int_Au\mathrm{d} m$ is well-defined. We use the convention that $\int_A\int_B\ldots m(\mathrm{d}x)m(\mathrm{d}y)$ denotes integration over $x\in A$ and $y\in B$. For a measurable function $u$ on a measure set $A$, denote by $\esup_{A}u$ and $\einf_{A}u$ the essential supremum and essential infimum of $u$ on $A$, respectively, and define $\eosc_A=\esup_Au-\einf_Au$ whenever the difference is well-defined.

Let $\varepsilon>0$. We say that $V\subseteq X$ is an $\varepsilon$-net if for any distinct $x,y\in V$, we have $d(x,y)\ge \varepsilon$, and for any $z\in X$, there exists $x\in V$ such that $d(x,z)<\varepsilon$. Since $(X,d)$ is separable, all $\varepsilon$-nets are countable.

Let $U,V$ be two open subsets of $X$ satisfying $U\subseteq\overline{U}\subseteq V$. We say that $\phi$ is a cutoff function for $U\subseteq V$ if $0\le\phi\le1$ $m$-a.e. in $X$, $\phi=1$ $m$-a.e. in an open neighborhood of $\overline{U}$, and $\mathrm{supp}(\phi)\subseteq V$, where $\mathrm{supp}(f)$ refers to the support of the measure of $|f|\mathrm{d} m$ for any given function $f$. In our definition, we do \emph{not} require cutoff functions to be continuous.

We say that the chain condition \ref{eq_CC} holds if there exists $C_{cc}>0$ such that for any $x,y\in X$ and any integer $n\ge1$, there exists a sequence $\{x_k:0\le k\le n\}$ of points in $X$ with $x_0=x$ and $x_n=y$ such that
\begin{equation*}\label{eq_CC}\tag*{CC}
d(x_k,x_{k-1})\le C_{cc} \frac{d(x,y)}{n}\text{ for any }k=1,\ldots,n.
\end{equation*}
Throughout this paper, we always assume \ref{eq_CC}. Since we will consider the boundary regularity of solutions to PDEs, the chain condition will play an important role in establishing certain regularity results; see Lemma \ref{lem_corkscrew}.

We say that the volume doubling condition \ref{eq_VD} holds if there exists $C_{VD}>0$ such that
\begin{equation*}\label{eq_VD}\tag*{VD}
V(x,2r)\le C_{VD}V(x,r)\text{ for any }x\in X,r>0.
\end{equation*}

We say that $(\mathcal{E},\mathcal{F})$ is a $p$-form if $\mathcal{F}$ is a linear subspace of $L^p(X;m)$ and $\mathcal{E}:\mathcal{F}\to[0,+\infty)$ satisfies that $\mathcal{E}^{1/p}$ is a semi-norm on $\mathcal{F}$. In our definition, we allow a $p$-form $(\mathcal{E},\mathcal{F})$ to be trivial, in the sense that the domain $\mathcal{F}$ consists only of constant functions.

For any $\lambda>0$, let $\mathcal{E}_\lambda:\mathcal{F}\to[0,+\infty)$ be given by $\mathcal{E}_\lambda(\cdot)=\mathcal{E}(\cdot)+\lVert \cdot\rVert_{L^p(X;m)}^p$.

We say that a $p$-form $(\mathcal{E},\mathcal{F})$ is a $p$-energy if $\mathcal{F}$ is dense in $L^p(X;m)$ and satisfies the following conditions.
\begin{itemize}
\item (Closed property) $(\mathcal{F},\mathcal{E}_1^{1/p})$ is a Banach space.
\item (Markovian property) For any $\varphi\in C(\mathbb{R})$ with $\varphi(0)=0$ and $|\varphi(t)-\varphi(s)|\le|t-s|$ for any $t,s\in\mathbb{R}$, for any $f\in\mathcal{F}$, we have $\varphi(f)\in\mathcal{F}$ and $\mathcal{E}(\varphi(f))\le\mathcal{E}(f)$.
\item ($p$-Clarkson's inequality) For any $f,g\in\mathcal{F}$, we have
\begin{equation*}
\begin{cases}
\mathcal{E}(f+g)+\mathcal{E}(f-g)\ge2 \left(\mathcal{E}(f)^{\frac{1}{p-1}}+\mathcal{E}(g)^{\frac{1}{p-1}}\right)^{p-1}&\text{if }p\in(1,2],\\
\mathcal{E}(f+g)+\mathcal{E}(f-g)\le2 \left(\mathcal{E}(f)^{\frac{1}{p-1}}+\mathcal{E}(g)^{\frac{1}{p-1}}\right)^{p-1}&\text{if }p\in[2,+\infty).\\
\end{cases}
\end{equation*} 
\end{itemize}
By the $p$-Clarkson's inequality, for any $f,g\in \mathcal{F}$, the derivative
$$\mathcal{E}(f;g)=\frac{1}{p}\frac{\mathrm{d}}{\mathrm{d} t}\mathcal{E}(f+tg)|_{t=0}\in\mathbb{R}$$
exists, the map $\mathcal{E}(f;\cdot):\mathcal{F}\to\mathbb{R}$ is linear, $\mathcal{E}(f;f)=\mathcal{E}(f)$. Moreover, for any $f,g\in\mathcal{F}$, for any $a\in\mathbb{R}$, we have
$$\mathcal{E}(af;g)=\mathrm{sgn}(a)|a|^{p-1}\mathcal{E}(f;g),$$
$$|\mathcal{E}(f;g)|\le\mathcal{E}(f)^{(p-1)/p}\mathcal{E}(g)^{1/p}.$$
Moreover, for any $\lambda>0$, all of the above results remain valid with $\mathcal{E}$ replaced by $\mathcal{E}_\lambda$, and for any $f,g\in\mathcal{F}$, we have
$$\mathcal{E}_\lambda(f;g)=\mathcal{E}(f;g)+\lambda\int_X\mathrm{sgn}(f)|f|^{p-1}g\mathrm{d} m.$$
See \cite[Theorem 3.7, Corollary 3.25]{KS24a} for the proofs of these results.

We say that a $p$-form $(\mathcal{E},\mathcal{F})$ is
\begin{itemize}
\item strongly local if for any $f,g\in\mathcal{F}$ with compact support and $g$ constant in an open neighborhood of $\mathrm{supp}(f)$, we have $\mathcal{E}(f+g)=\mathcal{E}(f)+\mathcal{E}(g)$.
\item regular if $\mathcal{F}\cap C_c(X)$ is uniformly dense in $C_c(X)$ and $\mathcal{E}_1^{1/p}$-dense in $\mathcal{F}$.
\end{itemize}

Let $K^{(J)}:X\times X\backslash \mathrm{diag}\to[0,+\infty)$ satisfy $K^{(J)}(x,y)=K^{(J)}(y,x)$ for any distinct $x,y\in X$. We define the non-local $p$-form $(\mathcal{E}^{(J)},\mathcal{F}^{(J)})$ by
\begin{align*}
&\mathcal{E}^{(J)}(u)=\int_X\int_X \lvert u(x)-u(y)\rvert^p K^{(J)}(x,y)m(\mathrm{d}x)m(\mathrm{d}y),\\
&\mathcal{F}^{(J)}=\left\{u\in L^p(X;m):\int_X\int_X \lvert u(x)-u(y)\rvert^p K^{(J)}(x,y)m(\mathrm{d}x)m(\mathrm{d}y)<+\infty\right\}.
\end{align*}
Let
$$\widehat{\mathcal{F}}^{(J)}=\left\{u+a:u\in \mathcal{F}^{(J)},a\in \mathbb{R}\right\}.$$
For any $x_0\in X$ and any $R>0$, for any measurable function $u$, define
$$\mathrm{Tail}^{(J)}(u;x_0,R)=\left(\int_{X\backslash B(x_0,R)}\lvert u(y)\rvert^{p-1}K^{(J)}(x_0,y)m(\mathrm{d}y)\right)^{\frac{1}{p-1}}.$$

For any measurable subset $\Omega\subseteq X$, for any $u\in \widehat{\mathcal{F}}^{(J)}$, let $\Gamma^{(J)}_\Omega(u)$ be the measure on $X$ given by
$$\int_X f \mathrm{d}\Gamma^{(J)}_\Omega(u)=\int_X\left(\int_\Omega f(x)\lvert u(x)-u(y)\rvert^p K^{(J)}(x,y)m(\mathrm{d}y)\right)m(\mathrm{d}x)\text{ for any }f\in C_c(X).$$


We say that the upper polynomial radial regularity condition \ref{eq_UPR} holds if there exist $C_{U}>0$, $\theta>0$ such that for any $x_0,x,y\in X$ with $x_0\ne x$ and $x_0\ne y$, we have
\begin{equation*}\label{eq_UPR}\tag*{$\text{UPR}^{(J)}$}
\frac{K^{(J)}(x_0,x)}{K^{(J)}(x_0,y)}\le C_{U}\left(\frac{d(x_0,y)}{d(x_0,x)}\vee 1\right)^\theta.
\end{equation*}

Let $\Upsilon$ be a doubling function satisfying (\ref{eq_beta12}).

We say that the tail estimate \ref{eq_KJ_tail} holds if there exists $C_T>0$ such that for any $x\in X$ and any $r>0$, we have
\begin{equation*}\label{eq_KJ_tail}\tag*{$\text{T}^{{(J)}}(\Upsilon)$}
\int_{X\backslash B(x,r)}K^{(J)}(x,y)m(\mathrm{d}y)\le \frac{C_T}{\Upsilon(r)}.
\end{equation*}

We say that the non-local Poincar\'e inequality \ref{eq_PIJ} holds if there exist $C_{PI}>0$, $A_{PI}\ge1$ such that for any ball $B(x,r)$, for any $f\in \widehat{\mathcal{F}}^{(J)}$, we have
\begin{equation*}\label{eq_PIJ}\tag*{$\text{PI}^{{(J)}}(\Upsilon)$}
\int_{B(x,r)}\lvert f-f_{B(x,r)}\rvert^p \mathrm{d}m\le C_{PI}\Upsilon(r)\int_{B(x,A_{PI}r)}\mathrm{d}\Gamma^{(J)}_{B(x,A_{PI}r)}(f).
\end{equation*}

We say that \ref{eq_KJ} holds if there exist $C_{1},C_{2}>0$ such that for any distinct $x,y\in X$, we have
\begin{equation*}\label{eq_KJ}\tag*{$\text{K}^{{(J)}}(\Upsilon)$}
\frac{C_{1}}{V(x,d(x,y))\Upsilon(d(x,y))}\le K^{(J)}(x,y)\le \frac{C_{2}}{V(x,d(x,y))\Upsilon(d(x,y))}.
\end{equation*}
We say that \hypertarget{eq_uKJ}{$\text{K}^{(J)}_\le(\Upsilon)$} (resp. \hypertarget{eq_lKJ}{$\text{K}^{(J)}_\ge(\Upsilon)$}) holds if the upper bound (resp. the lower bound) in \ref{eq_KJ} holds. Assuming \ref{eq_VD}, \ref{eq_KJ}, we have \ref{eq_UPR}, \ref{eq_KJ_tail}, \ref{eq_PIJ}; see Lemma \ref{lem_KJ_ele} for the proofs.

It is clear that \ref{eq_PIJ} implies \hyperref[eq_PIJ]{$\text{PI}^{(J)}(\Xi)$} for any doubling function $\Xi$ satisfying $\Upsilon\lesssim \Xi$. However, as we will see in Theorem \ref{thm_sub}, it may happen that \ref{eq_KJ} holds for the doubling function $\Upsilon$, and hence implies \ref{eq_PIJ}, whereas \hyperref[eq_PIJ]{$\text{PI}^{(J)}(\Xi)$} also holds for another doubling function $\Xi$ satisfying $\Xi\lesssim \Upsilon$ but $\Xi\not\asymp \Upsilon$.

If $(\mathcal{E}^{(J)},\mathcal{F}^{(J)})$ is a non-local $p$-form given by a kernel $K^{(J)}$ satisfying \ref{eq_KJ}, then we also write $\mathcal{E}^{(J,\Upsilon)}$, $K^{(J,\Upsilon)}$, $\mathrm{Tail}^{(J,\Upsilon)}$, and so on, to emphasize the dependence on $\Upsilon$.

We say that the non-local cutoff Sobolev inequality \hypertarget{eq_CSJ_strong}{$\text{CS}^{(J)}_{\text{strong}}(\Upsilon)$} holds if there exist $\delta\in(0,1)$, $C,C_1,C_2>0$, $A_1,A_2>1$ with $A_1<A_2$ such that for any ball $B(x_0,r)$, there exists a cutoff function $\phi\in \mathcal{F}^{(J)}$ for $B(x_0,r)\subseteq B(x_0,A_1r)$ such that for any $x,y\in X$ and any $f\in \widehat{\mathcal{F}}^{(J)}$, we have
\begin{subequations}
\begin{equation*}\label{eq_CSJ_Holder}\tag*{$\text{CS}^{(J)}(\Upsilon)\text{-}1$}
\lvert \phi(x)-\phi(y)\rvert\le C \left(\frac{d(x,y)}{r}\wedge 1\right)^\delta,
\end{equation*}
\begin{equation*}\label{eq_CSJ_energy}\tag*{$\text{CS}^{(J)}(\Upsilon)\text{-}2$}
\int_{B(x_0,A_2r)}\lvert f\rvert^p \mathrm{d}\Gamma^{(J)}_{B(x_0,A_2r)}(\phi)\le C_1\int_{B(x_0,A_2r)}\mathrm{d}\Gamma^{(J)}_{B(x_0,A_2r)}(f)+\frac{C_2}{\Upsilon(r)}\int_{B(x_0,A_2r)}\lvert f\rvert^p \mathrm{d}m.
\end{equation*}
\end{subequations}
We say that \hypertarget{eq_CSJ_cont}{$\text{CS}^{(J)}_{\text{cont}}(\Upsilon)$} holds if \ref{eq_CSJ_energy} holds and the cutoff function $\phi$ is continuous. We say that \hypertarget{eq_CSJ_weak}{$\text{CS}^{(J)}_{\text{weak}}(\Upsilon)$} holds if only \ref{eq_CSJ_energy} holds.

We note that the condition \ref{eq_CSJ_energy} simplifies both the earlier AB condition introduced in \cite[Definition 2.1]{GHH18} and the CSJ condition introduced in \cite[DEFINITION 1.5]{CKW21}, which are formulated in the setting of Dirichlet forms ($p=2$), in the following aspects:
\begin{enumerate}[label=(\alph*)]
\item We only consider cutoff functions for $B(x,r)\subseteq B(x,A_1r)$ for any $r>0$, instead of cutoff functions for $B(x,R)\subseteq B(x,R+r)$ for any $R,r>0$, as required in the AB condition, or cutoff functions for $B(x,R)\subseteq B(x,R+r)$ for any $R\ge r>0$, as required in the CSJ condition.
\item For the term on the LHS and the second term on the RHS, we only consider integrals over $B(x,A_2r)$ for any $r>0$, instead of integrals over $B(x,R')$ for any $R'>R+r$, as required in the AB condition, or over $B(x,R+(1+C_0)r)$ for some $C_0\in(0,1]$, as required in the CSJ condition.
\item For the first term on the RHS, we only consider integrals over the ``largest" balls $B(x,A_2r)$, instead of integrals over balls $B(x,R+r)$, as required in the AB condition, or over annuli, as required in the CSJ condition.
\end{enumerate}
By a standard covering argument and a self-improvement argument, we will show that \ref{eq_CSJ_energy} implies a stronger condition, which corresponds to the $p$-energy version of the AB condition or the CSJ condition, see Proposition \ref{prop_CSJ_self}. See also \cite[Proposition 3.1]{Yan25c} for local $p$-energies and \cite[Lemma 6.2]{Mur24a} for local Dirichlet forms.

Assume \ref{eq_KJ_tail}. Then \ref{eq_CSJ_energy} is equivalent to
$$\int_{B(x_0,A_2r)}\lvert f\rvert^p \mathrm{d}\Gamma^{(J)}_{X}(\phi)\le C_1\int_{B(x_0,A_2r)}\mathrm{d}\Gamma^{(J)}_{B(x_0,A_2r)}(f)+\frac{C_2}{\Upsilon(r)}\int_{B(x_0,A_2r)}\lvert f\rvert^p \mathrm{d}m,$$
up to suitable modifications of the constants $C_1,C_2$; see Lemma \ref{lem_CSJ_energy} for the proof.

Let $A_1,A_2\in\mathcal{B}(X)$. We define the non-local capacity between $A_1,A_2$ as
\begin{align*}
&\mathrm{cap}^{(J)}(A_1,A_2)=\inf\left\{\mathcal{E}^{(J)}(\phi):\phi\in\mathcal{F}^{(J)},
\begin{array}{l}
\phi=1\text{ in an open neighborhood of }A_1,\\
\phi=0\text{ in an open neighborhood of }A_2
\end{array}
\right\},
\end{align*}
here we use the convention that $\inf\emptyset=+\infty$.

We say that the non-local capacity upper bound \ref{eq_ucapJ} holds if there exist $C_{cap}>0$, $A_{cap}>1$ such that for any ball $B(x,r)$, we have
\begin{align*}
\mathrm{cap}^{(J)}\left(B(x,r),X\backslash B(x,A_{cap}r)\right)&\le C_{cap} \frac{V(x,r)}{\Upsilon(r)}.\label{eq_ucapJ}\tag*{$\text{cap}^{{(J)}}(\Upsilon)_{\le}$}
\end{align*}
Assume \ref{eq_VD}, \ref{eq_KJ_tail}. Then \hyperlink{eq_CSJ_weak}{$\text{CS}^{(J)}_{\text{weak}}(\Upsilon)$} implies \ref{eq_ucapJ}; see Lemma \ref{lem_CSJ2ucapJ} for the proof.

Note that the above notions are always well-defined. Thus it is meaningful to ask whether the above conditions hold, even in the degenerate case where $\mathcal{F}^{(J)}$ or $\widehat{\mathcal{F}}^{(J)}$ is trivial and consists only of constant functions.

Let $(\mathcal{E}^{(L)},\mathcal{F}^{(L)})$ be a strongly local regular $p$-energy. By \cite[Theorem 2.4]{Sas26}, there exists a (canonical) $p$-energy measure $\Gamma^{(L)}:\mathcal{F}^{(L)}\times\mathcal{B}(X)\to[0,+\infty)$, $(f,A)\mapsto\Gamma^{(L)}(f)(A)$ satisfying the following conditions.
\begin{enumerate}[label=(\alph*),ref=(\alph*)]
\item\label{item_meas1} For any $f\in\mathcal{F}^{(L)}$, $\Gamma^{(L)}(f)(\cdot)$ is a positive Radon measure on $X$ with $\Gamma^{(L)}(f)(X)=\mathcal{E}^{(L)}(f)$.
\item\label{item_meas2} For any $A\in\mathcal{B}(X)$, $\Gamma^{(L)}(\cdot)(A)^{1/p}$ is a semi-norm on $\mathcal{F}^{(L)}$.
\item\label{item_meas3} For any $f,g\in\mathcal{F}^{(L)}\cap C_c(X)$, $A\in\mathcal{B}(X)$, if $f-g$ is constant on $A$, then $\Gamma^{(L)}(f)(A)=\Gamma^{(L)}(g)(A)$.
\item\label{item_meas4} ($p$-Clarkson's inequality) For any $f,g\in\mathcal{F}^{(L)}$, for any $A\in\mathcal{B}(X)$, we have
\begin{align*}
&\Gamma^{(L)}(f+g)(A)+\Gamma^{(L)}(f-g)(A)\\
&\begin{cases}
\ge2 \left(\Gamma^{(L)}(f)(A)^{\frac{1}{p-1}}+\Gamma^{(L)}(g)(A)^{\frac{1}{p-1}}\right)^{p-1}&\text{if }p\in(1,2],\\
\le2 \left(\Gamma^{(L)}(f)(A)^{\frac{1}{p-1}}+\Gamma^{(L)}(g)(A)^{\frac{1}{p-1}}\right)^{p-1}&\text{if }p\in[2,+\infty).\\
\end{cases}
\end{align*}
\item\label{item_meas5} (Chain rule) For any $f,g\in\mathcal{F}^{(L)}\cap C_c(X)$, for any piecewise $C^1$ functions $\varphi,\psi:\mathbb{R}\to\mathbb{R}$ with $\varphi(0)=\psi(0)=0$, we have
$$\mathrm{d}\Gamma^{(L)}\left(\varphi(f);\psi(g)\right)=\mathrm{sgn}(\varphi'(f))|\varphi'(f)|^{p-1}\psi'(g)\mathrm{d}\Gamma^{(L)}(f;g),$$
where $\Gamma^{(L)}(f;g)$ is a signed measure given by $\Gamma^{(L)}(f;g)=\frac{1}{p}\frac{\mathrm{d}}{\mathrm{d}t}\Gamma^{(L)}(f+tg)|_{t=0}$.
\item\label{item_meas6} (Leibniz rule) For any $f,g,h\in \mathcal{F}^{(L)}\cap C_c(X)$, we have $\mathrm{d}\Gamma^{(L)}(f;gh)=g \mathrm{d}\Gamma^{(L)}(f;h)+h \mathrm{d}\Gamma^{(L)}(f;g)$.
\end{enumerate}

We say that the local Poincar\'e inequality \ref{eq_PIL} holds if there exist $C_{PI}>0$, $A_{PI}\ge1$ such that for any ball $B(x,r)$, for any $f\in\mathcal{F}^{(L)}$, we have
\begin{equation*}\label{eq_PIL}\tag*{$\text{PI}^{({L})}(\Psi)$}
\int_{B(x,r)}\lvert f-f_{B(x,r)}\rvert^p\mathrm{d} m\le C_{PI}\Psi(r)\int_{{B(x,A_{PI}r)}}\mathrm{d}\Gamma^{(L)}(f).
\end{equation*}

We say that the local cutoff Sobolev inequality \hypertarget{eq_CSL_strong}{$\text{CS}^{(L)}_{\text{strong}}(\Psi)$} holds if there exist $\delta\in(0,1)$, $C,C_{1},C_{2}>0$, $A_{S}>1$ such that for any ball $B(x_0,r)$, there exists a cutoff function $\phi\in\mathcal{F}^{(L)}$ for $B(x_0,r)\subseteq B(x_0,A_Sr)$ such that for any $x,y\in X$ and any $f\in\mathcal{F}^{(L)}$, we have
\begin{subequations}
\begin{equation*}\label{eq_CSL_Holder}\tag*{$\text{CS}^{(L)}(\Psi)\text{-}1$}
\lvert \phi(x)-\phi(y)\rvert\le C \left(\frac{d(x,y)}{r}\wedge 1\right)^\delta,
\end{equation*}
\begin{equation*}\label{eq_CSL_energy}\tag*{$\text{CS}^{(L)}(\Psi)\text{-}2$}
\int_{B(x_0,A_{S}r)}|\widetilde{f}|^p\mathrm{d}\Gamma^{(L)}(\phi)\le C_{1}\int_{B(x_0,A_{S}r)}\mathrm{d}\Gamma^{(L)}(f)+\frac{C_{2}}{\Psi(r)}\int_{B(x_0,A_{S}r)}|f|^p\mathrm{d} m,
\end{equation*}
\end{subequations}
where $\widetilde{f}$ is a quasi-continuous modification of $f$, such that $\widetilde{f}$ is uniquely determined $\Gamma^{(L)}(\phi)$-a.e. in $X$, see \cite[Section 8]{Yan25a} for more details. We say that \hypertarget{eq_CSL_cont}{$\text{CS}^{(L)}_{\text{cont}}(\Psi)$} holds if \ref{eq_CSL_energy} holds and the cutoff function $\phi$ is continuous. We say that \hypertarget{eq_CSL_weak}{$\text{CS}^{(L)}_{\text{weak}}(\Psi)$} holds if only \ref{eq_CSL_energy} holds. 

Let $A_1,A_2\in\mathcal{B}(X)$. We define the local capacity between $A_1,A_2$ as
\begin{align*}
&\mathrm{cap}^{(L)}(A_1,A_2)=\inf\left\{\mathcal{E}^{(L)}(\phi):\phi\in\mathcal{F}^{(L)},
\begin{array}{l}
\phi=1\text{ in an open neighborhood of }A_1,\\
\phi=0\text{ in an open neighborhood of }A_2
\end{array}
\right\},
\end{align*}
here we use the convention that $\inf\emptyset=+\infty$.

We say that the local capacity upper bound \ref{eq_ucapL} holds if there exist $C_{cap}>0$, $A_{cap}>1$ such that for any ball $B(x,r)$, we have
\begin{equation*}\label{eq_ucapL}\tag*{$\text{cap}^{{(L)}}(\Psi)_{\le}$}
\mathrm{cap}^{(L)}\left(B(x,r),X\backslash B(x,A_{cap}r)\right)\le C_{cap} \frac{V(x,r)}{\Psi(r)}.
\end{equation*}
Under \ref{eq_VD}, by taking $f\equiv1$ in $B(x,A_Sr)$, it is easy to see that \hyperlink{eq_CSL_weak}{$\text{CS}^{(L)}_{\text{weak}}(\Psi)$} implies \ref{eq_ucapL}.

Assuming \ref{eq_VD}, \ref{eq_PIL}, \ref{eq_ucapL}, by \cite[Proposition 2.1, Remark 2.2]{Yan25a}, we have $\beta^{(1)}_\Psi\ge p$ in (\ref{eq_beta12}), hence
\begin{equation}\label{eq_Psi}
\frac{\Psi(R)}{\Psi(r)}\ge \frac{1}{C_\Psi}\left(\frac{R}{r}\right)^p\text{ for any }r\le R.
\end{equation}

For $\bullet\in\{L,J\}$, let $(\mathcal{E}^{(\bullet)},\mathcal{F}^{(\bullet)})$ be a strongly local regular $p$-energy for $\bullet=L$ or a non-local $p$-form for $\bullet=J$. We say that the cutoff energy inequality \hypertarget{eq_CE_strong}{$\text{CE}^{(\bullet)}_{\text{strong}}(\Psi)$} holds if there exist $\delta\in(0,1)$, $A_E>1$, $C>0$ such that for any ball $B(x_0,r)$, there exists a cutoff function $\phi\in \mathcal{F}^{(\bullet)}$ for $B(x_0,r)\subseteq B(x_0,A_Er)$ such that for any $x,y\in X$ and any $s>0$ we have
\begin{subequations}
\begin{equation*}\label{eq_CE_Holder}\tag*{$\text{CE}^{(\bullet)}(\Psi)\text{-}1$}
\lvert \phi(x)-\phi(y)\rvert\le C \left(\frac{d(x,y)}{r}\wedge 1\right)^\delta,
\end{equation*}
\begin{equation*}\label{eq_CE_energy}\tag*{$\text{CE}^{(\bullet)}(\Psi)\text{-}2$}
\int_{B(x,s)}\mathrm{d}\Gamma^{(\bullet)}_X(\phi)\le C \left(\frac{s}{r}\wedge1\right)^\delta \frac{V(x,s)}{\Psi(s\wedge r)},
\end{equation*}
\end{subequations}
where $\Gamma^{(\bullet)}_X=\Gamma^{(L)}$ for $\bullet=L$ and $\Gamma^{(\bullet)}_X=\Gamma^{(J)}_X$ for $\bullet=J$. We say that \hypertarget{eq_CE_cont}{$\text{CE}^{(\bullet)}_{\text{cont}}(\Psi)$} holds if \ref{eq_CE_energy} holds and the cutoff function $\phi$ is continuous. We say that \hypertarget{eq_CE_weak}{$\text{CE}^{(\bullet)}_{\text{weak}}(\Psi)$} holds if only \ref{eq_CE_energy} holds.

For $\square\in\{\text{strong},\text{cont},\text{weak}\}$, if \hyperlink{eq_CE_strong}{$\text{CE}^{(\bullet)}_{\square}(\Psi)$} holds with some $\delta_1\in(0,1)$, then it also holds for any $\delta_2\in(0,\delta_1]$. Hence, if necessary, we may take $\delta\in(0,1)$ sufficiently small. The precise value of $\delta$ is unimportant; what matters is the existence of such a $\delta$.

For $\beta>0$, we say that $\text{PI}^{(\bullet)}(\beta)$, $\text{cap}^{(\bullet)}(\beta)_\le$, $\text{CS}^{(\bullet)}_{\square}(\beta)$, etc. hold if $\text{PI}^{(\bullet)}(\Psi)$, $\text{cap}^{(\bullet)}(\Psi)_\le$, $\text{CS}^{(\bullet)}_{\square}(\Psi)$, etc. hold with $\Psi:r\mapsto r^{\beta}$.

The first main results of this paper establish equivalences between various CE and CS conditions in both the local and non-local settings.

\begin{theorem}\label{thm_equiv_L}
Assume \ref{eq_VD}. Let $(\mathcal{E}^{(L)},\mathcal{F}^{(L)})$ be a strongly local regular $p$-energy satisfying \ref{eq_PIL}. Then
\begin{center}
\hspace{12pt}\hyperlink{eq_CE_strong}{$\text{CE}^{(L)}_{\text{strong}}(\Psi)$} $\Leftrightarrow$ \hyperlink{eq_CE_cont}{$\text{CE}^{(L)}_{\text{cont}}(\Psi)$} $\Leftrightarrow$ \hyperlink{eq_CE_weak}{$\text{CE}^{(L)}_{\text{weak}}(\Psi)$}\\
\noindent$\Leftrightarrow$\hyperlink{eq_CSL_strong}{$\text{CS}^{(L)}_{\text{strong}}(\Psi)$} $\Leftrightarrow$ \hyperlink{eq_CSL_cont}{$\text{CS}^{(L)}_{\text{cont}}(\Psi)$} $\Leftrightarrow$ \hyperlink{eq_CSL_weak}{$\text{CS}^{(L)}_{\text{weak}}(\Psi)$}.
\end{center}
\end{theorem}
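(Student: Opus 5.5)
The trivial implications come first. For each family we have strong $\Rightarrow$ cont $\Rightarrow$ weak, so we only need to close the two cycles and link them. I would establish
\[
\text{CE}^{(L)}_{\text{weak}}(\Psi)\Rightarrow\text{CS}^{(L)}_{\text{weak}}(\Psi)\Rightarrow\text{CE}^{(L)}_{\text{strong}}(\Psi)\Rightarrow\text{CS}^{(L)}_{\text{strong}}(\Psi).
\]
Together with the trivial implications, this yields all six equivalences.

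\textbf{Step 1: $\text{CE}^{(L)}_{\text{weak}}(\Psi)\Rightarrow\text{CS}^{(L)}_{\text{weak}}(\Psi)$, using the same cutoff.} Let $\phi$ be the cutoff from $\text{CE}$ for $B(x_0,r)\subseteq B(x_0,A_Er)$, and let $f\in\mathcal{F}^{(L)}$. Cover $B(x_0,A_Er)$ by balls $B_i=B(z_i,\epsilon r)$ with bounded overlap, where $\{z_i\}$ is an $\epsilon r$-net; \ref{eq_VD} bounds the number of balls. On each $B_i$ split
\[
|\widetilde f|^p\le 2^{p-1}|\widetilde f-f_{B_i}|^p+2^{p-1}|f_{B_i}|^p .
\]
The constant part is controlled by \ref{eq_CE_energy} with $s=\epsilon r$, which gives $\Gamma^{(L)}(\phi)(B_i)\lesssim V(z_i,r)/\Psi(r)$, together with $|f_{B_i}|^p\le\dashint_{B_i}|f|^p\,\mathrm{d}m$. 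The oscillating part is the real issue: it requires a Morrey/trace-type inequality $\int_{B}|\widetilde f-f_B|^p\,\mathrm{d}\Gamma^{(L)}(\phi)\lesssim\int_{A B}\mathrm{d}\Gamma^{(L)}(f)+\ldots$. I would prove it by a telescoping argument over dyadic balls $B(x,2^{-k}\epsilon r)$ centred at Lebesgue points $x$. Using \ref{eq_PIL}, one bounds $|\widetilde f(x)-f_{B}|$ by $\sum_k\big(\Psi(2^{-k}\epsilon r)\dashint\mathrm{d}\Gamma^{(L)}(f)\big)^{1/p}$. The decay factor $(s/r)^\delta$ in \ref{eq_CE_energy} makes $\Gamma^{(L)}(\phi)(B(x,s))\lesssim (s/r)^\delta V(x,s)/\Psi(s)$. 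Integrating the telescoped bound against $\Gamma^{(L)}(\phi)$ and using a weighted Hölder argument that spends part of the $\delta$ decay, the series sums geometrically. One obtains $C\int\mathrm{d}\Gamma^{(L)}(f)$ over an enlarged ball, plus a lower-order $L^p$ term. I expect this maximal-function/potential estimate, and in particular handling $\widetilde f$ quasi-everywhere, to be the main technical obstacle. I would first do it for $f\in\mathcal{F}^{(L)}\cap C_c(X)$ and then pass to general $f$ by regularity and quasi-continuity, as in \cite[Section 8]{Yan25a}.

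\textbf{Step 2: $\text{CS}^{(L)}_{\text{weak}}(\Psi)\Rightarrow\text{CE}^{(L)}_{\text{strong}}(\Psi)$.} Here I would use the capacity upper bound \ref{eq_ucapL}, which follows from $\text{CS}^{(L)}_{\text{weak}}$, and replace an arbitrary cutoff by an equilibrium-type potential. Let $\phi$ be the minimizer of $\mathcal{E}^{(L)}$ among functions equal to $1$ on $B(x_0,r)$ and $0$ off $B(x_0,A r)$, that is, the $p$-harmonic function in the annulus. Then $\mathcal{E}^{(L)}(\phi)\lesssim V(x_0,r)/\Psi(r)$.

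\ref{eq_PIL} and $\text{CS}^{(L)}_{\text{weak}}$ give the De Giorgi machinery: a Caccioppoli inequality, since cutoff functions with controlled energy are exactly what CS provides, then local boundedness and Hölder regularity of $p$-harmonic functions. \ref{eq_CC} supplies corkscrew-type boundary regularity (cf. Lemma \ref{lem_corkscrew}), so $\phi$ is Hölder up to both boundary pieces, giving \ref{eq_CE_Holder}.

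For \ref{eq_CE_energy}, take a ball $B(x,s)$ with $s<r$. Caccioppoli applied to $\phi-c$ gives
\[
\Gamma^{(L)}(\phi)(B(x,s))\lesssim \frac{1}{\Psi(s)}\int_{B(x,2s)}|\phi-c|^p\,\mathrm{d}m\lesssim\frac{V(x,s)}{\Psi(s)}\Big(\frac{s}{r}\Big)^{\delta p},
\]
where we choose $c=\phi(x)$ (or $c=0$ or $1$ near the boundary) and use the Hölder bound. For $s\ge r$ one simply uses the total energy together with \ref{eq_VD}.

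\textbf{Step 3: $\text{CE}^{(L)}_{\text{strong}}(\Psi)\Rightarrow\text{CS}^{(L)}_{\text{strong}}(\Psi)$.} This is Step 1 applied to the Hölder cutoff, which keeps \ref{eq_CSL_Holder}.
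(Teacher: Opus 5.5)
Your logical scheme is sound. Steps 1 and 3 are essentially the paper's argument in Section~\ref{sec_CE2CS}: the same cutoff, subtraction of a ball average, telescoping over dyadic scales with a weight $2^{p\alpha n}$, $\alpha<\delta/p$, summation against the decay factor in \ref{eq_CE_energy}, and passage from $\mathcal{F}^{(L)}\cap C_c(X)$ to $\widetilde f$ by quasi-continuity and Fatou.

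The gap is in Step 2, at the boundary regularity of your annular capacitary potential. You claim that \ref{eq_CC} gives corkscrews, and hence H\"older continuity up to both pieces of $\partial\big(B(x_0,Ar)\setminus\overline{B(x_0,r)}\big)$. But Lemma~\ref{lem_corkscrew} says nothing about balls: it produces a \emph{different} open set $\Omega$ with $\delta_1B\subseteq\Omega\subseteq\delta_2B$ whose \emph{complement} has corkscrews. Balls can fail this already in the half-plane $\{(a,b)\in\mathbb{R}^2:a\le 1\}$, which satisfies \ref{eq_VD} and \ref{eq_CC}. Near the tangency point $(1,0)$, the complement of $B(0,1)$ is a sliver of width $\approx\rho^2$ at scale $\rho$, so it contains no ball of radius $c\rho$.

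Replacing the outer ball by the $\Omega$ of Lemma~\ref{lem_corkscrew} repairs the outer piece, but not the inner one. To control $1-\phi$ at points of $\partial B(x_0,r)$ you would need $\overline{B(x_0,r)}$ to contain a ball of radius $c\rho$ inside $B(y,\rho)$, i.e.\ an \emph{interior} corkscrew condition for $B(x_0,r)$. This is neither assumed nor supplied by Lemma~\ref{lem_corkscrew}, and an estimate of the type of Proposition~\ref{prop_bdy_osc_L} is only available under such corkscrew hypotheses. Without it you get neither \ref{eq_CE_Holder} nor the $\sup|\phi-c|$ control near $\partial B(x_0,r)$ that your Caccioppoli estimate for \ref{eq_CE_energy} uses.

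The missing idea is to avoid prescribing data on an inner boundary at all. The paper proceeds as follows.
\begin{itemize}
\item It takes $\Omega$ from Lemma~\ref{lem_corkscrew} with $4B\subseteq\Omega\subseteq5B$ and solves $-\Delta^{(L)}_p u_\Omega+\Psi(r)^{-1}|u_\Omega|^{p-2}u_\Omega=1$ in $\Omega$ with $u_\Omega\in\mathcal{F}^{(L)}(\Omega)$. Only $\partial\Omega$, which has exterior corkscrews, now matters.
\item Comparison gives $0\le u_\Omega\le\Psi(r)^{1/(p-1)}$, and comparison with the solution on $4B$ gives $u_\Omega\ge C_3\Psi(r)^{1/(p-1)}$ on $2B$.
\item The interior and boundary oscillation inequalities, obtained by Moser iteration from \ref{eq_PIL} and the self-improved $\text{CS}^{(L)}_{\text{weak}}(\Psi)$, give global H\"older continuity.
\item The cutoff is the truncation $\phi=\big(u_\Omega/(C_3\Psi(r)^{1/(p-1)})\big)\wedge1$. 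The bound \ref{eq_CE_energy} comes from testing the equation against $\psi^p\big(u_\Omega-\inf_{B(x,2s)}u_\Omega\big)$.
\end{itemize}

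Your capacitary route can be rescued in the same spirit. Take the potential of $\big(\overline{B(x_0,r)},X\setminus\Omega\big)$ with $\Omega\supseteq B(x_0,Kr)$ for $K$ large. Get $\phi\ge c_0$ on $B(x_0,3r)$ from the weak Harnack inequality for this superharmonic function. Then replace $\phi$ by $(\phi/c_0)\wedge1$, so that the possibly irregular inner boundary lies inside the region where the cutoff is identically $1$.
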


\begin{theorem}\label{thm_equiv_J}
Assume \ref{eq_VD}. Let $(\mathcal{E}^{(J)},\mathcal{F}^{(J)})$ be a non-local regular $p$-energy given by a kernel $K^{(J)}$ satisfying \ref{eq_UPR}, \ref{eq_KJ_tail}, \ref{eq_PIJ}. Then
\begin{center}
\hspace{12pt}\hyperlink{eq_CE_strong}{$\text{CE}^{(J)}_{\text{strong}}(\Upsilon)$} $\Leftrightarrow$ \hyperlink{eq_CE_cont}{$\text{CE}^{(J)}_{\text{cont}}(\Upsilon)$} $\Leftrightarrow$ \hyperlink{eq_CE_weak}{$\text{CE}^{(J)}_{\text{weak}}(\Upsilon)$}\\
\noindent$\Leftrightarrow$\hyperlink{eq_CSJ_strong}{$\text{CS}^{(J)}_{\text{strong}}(\Upsilon)$} $\Leftrightarrow$ \hyperlink{eq_CSJ_cont}{$\text{CS}^{(J)}_{\text{cont}}(\Upsilon)$} $\Leftrightarrow$ \hyperlink{eq_CSJ_weak}{$\text{CS}^{(J)}_{\text{weak}}(\Upsilon)$}.
\end{center}
\end{theorem}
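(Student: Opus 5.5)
The trivial implications come first: strong $\Rightarrow$ cont $\Rightarrow$ weak for both CE and CS, since each version only drops a continuity requirement on $\phi$. It then suffices to prove the chain
\[
\text{CS}^{(J)}_{\text{weak}}(\Upsilon)\Rightarrow \text{CE}^{(J)}_{\text{strong}}(\Upsilon)\Rightarrow \text{CS}^{(J)}_{\text{strong}}(\Upsilon),
\]
together with $\text{CE}^{(J)}_{\text{weak}}(\Upsilon)\Rightarrow \text{CS}^{(J)}_{\text{weak}}(\Upsilon)$.

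For $\text{CE}^{(J)}_{\text{weak}}\Rightarrow\text{CS}^{(J)}_{\text{weak}}$ (and the same argument gives strong CE $\Rightarrow$ strong CS with the same $\phi$), fix $f$ and cover $B(x_0,A_2r)$ by balls $B(z_i,s)$ with $s=\epsilon r$ and bounded overlap, using \ref{eq_VD}. On each ball I write $|f|^p\lesssim|f-f_{B(z_i,\lambda s)}|^p+|f_{B(z_i,\lambda s)}|^p$. The constant part is handled by \ref{eq_CE_energy}: it gives $\Gamma^{(J)}_X(\phi)(B(z_i,s))\lesssim V(z_i,s)/\Upsilon(s)$, so summing yields $\Upsilon(r)^{-1}\int|f|^p\,\mathrm{d}m$. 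The oscillating part needs more care, since $\Gamma^{(J)}(\phi)$ is not absolutely continuous with a bounded density a priori. I would instead expand $\int|f(x)|^p|\phi(x)-\phi(y)|^pK(x,y)$ directly and use $|\phi(x)-\phi(y)|\le1$. For $d(x,y)\ge s$ the tail \ref{eq_KJ_tail} gives $\lesssim\Upsilon(s)^{-1}\int|f|^p$. For $d(x,y)<s$ I use CE on dyadic annuli together with \ref{eq_PIJ}, or more simply the Lemma \ref{lem_CSJ_energy} reformulation plus the self-improvement Proposition \ref{prop_CSJ_self}.

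The main implication is $\text{CS}^{(J)}_{\text{weak}}(\Upsilon)\Rightarrow\text{CE}^{(J)}_{\text{strong}}(\Upsilon)$, which requires producing a H\"older cutoff. My plan is to take $\phi$ to be an equilibrium potential, i.e.\ a minimizer of $\mathcal{E}^{(J)}$ (or of a suitable nonlocal Dirichlet problem) among functions equal to $1$ on $B(x_0,r)$ and $0$ outside $B(x_0,A r)$. Existence follows from closedness and Clarkson uniform convexity, and regularity lets us approximate by elements of $\mathcal{F}^{(J)}\cap C_c$. The energy bound comes first. By Lemma \ref{lem_CSJ2ucapJ}, $\mathcal{E}^{(J)}(\phi)\lesssim V(x_0,r)/\Upsilon(r)$, and this gives \ref{eq_CE_energy} for $s\ge r$ via VD. For $s<r$ I need the decay factor $(s/r)^\delta$; I plan to get it from a Caccioppoli inequality, derived from CS weak and the harmonicity of $\phi$ away from the boundary sets, combined with oscillation decay.

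Oscillation decay is the core difficulty. PI$^{(J)}$ together with CS$^{(J)}_{\text{weak}}$ and the tail control (\ref{eq_UPR} and \ref{eq_KJ_tail} bound nonlocal tails) gives a De Giorgi iteration, hence interior H\"older continuity of harmonic functions with a Tail term. Near $\partial B(x_0,r)$ and $\partial B(x_0,Ar)$ I need boundary H\"older regularity. Here I use the capacity density of complements of balls, which follows from \ref{eq_CC} and VD via the corkscrew-type Lemma \ref{lem_corkscrew}, and run a boundary De Giorgi/Wiener-type oscillation estimate. This yields $|\phi(x)-\phi(y)|\le C(d(x,y)/r\wedge1)^\delta$, which is \ref{eq_CE_Holder}. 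For $s<r$, on $B(x,s)$ I combine Caccioppoli with this H\"older estimate: the local energy is $\lesssim \Upsilon(s)^{-1}V(x,s)\,(\mathrm{osc}_{B(x,2s)}\phi)^p\lesssim(s/r)^{\delta p}V(x,s)/\Upsilon(s)$. The nonlocal part of $\Gamma_X(\phi)$ from far jumps is controlled using \ref{eq_UPR}, the tail, and the H\"older bound summed over annuli, which is possible after shrinking $\delta$ below $\beta^{(1)}_\Upsilon$. I expect this boundary regularity step, including the tail bookkeeping needed to obtain a uniform $\delta$, to be the main obstacle.
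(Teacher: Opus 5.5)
Your reduction to $\text{CS}^{(J)}_{\text{weak}}\Rightarrow\text{CE}^{(J)}_{\text{strong}}\Rightarrow\text{CS}^{(J)}_{\text{strong}}$ together with $\text{CE}^{(J)}_{\text{weak}}\Rightarrow\text{CS}^{(J)}_{\text{weak}}$ is logically sufficient. The trouble is that the step CE $\Rightarrow$ CS is not actually carried out.

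You correctly isolate the obstacle: bounding $\int|f-f_B|^p\,\mathrm{d}\Gamma^{(J)}_X(\phi)$ when $\Gamma^{(J)}_X(\phi)$ has no bounded density. None of your remedies resolves it.
\begin{itemize}
\item The bound $|\phi(x)-\phi(y)|\le1$ only handles $d(x,y)\ge s$; for short jumps, $\int_{B(x,s)}K^{(J)}(x,y)\,m(\mathrm{d}y)$ is in general infinite.
\item ``CE on dyadic annuli together with \ref{eq_PIJ}'' is not developed into an argument.
\item The route through Lemma \ref{lem_CSJ_energy} and Proposition \ref{prop_CSJ_self} is circular, since Proposition \ref{prop_CSJ_self} assumes $\text{CS}^{(J)}_{\text{weak}}(\Upsilon)$.
\end{itemize}
What is missing is a multiscale chaining argument that exploits the decay factor $(s/r\wedge1)^\delta$ in $\text{CE}^{(J)}(\Upsilon)\text{-}2$. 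For $m$-a.e.\ $x\in B(x_0,2A_ER)$ (hence $\Gamma^{(J)}_X(\phi)$-a.e., since this measure is absolutely continuous with respect to $m$), pick $y_n$ in an $\varepsilon_n$-net with $d(x,y_n)<\varepsilon_n=2^{-n}\cdot2A_ER$. Telescope the averages $f_{B(y_n,\varepsilon_n)}$, apply \ref{eq_PIJ} on $B(y_n,2\varepsilon_n)$, and use H\"older's inequality with weights $2^{\pm\alpha n}$ to get
\[
|f(x)-f_{B(x_0,2A_ER)}|^p\lesssim\sum_{n\ge0}2^{p\alpha n}\frac{\Upsilon(\varepsilon_n)}{V(y_n,\varepsilon_n)}\int_{B(y_n,2\varepsilon_n)}\mathrm{d}\Gamma^{(J)}_{B(x_0,AR)}(f).
\]
Integrate this against $\Gamma^{(J)}_X(\phi)$ and sum over the net at each scale. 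Using $\Gamma^{(J)}_X(\phi)(B(y,\varepsilon_n))\lesssim2^{-\delta n}V(y,\varepsilon_n)/\Upsilon(\varepsilon_n)$ from $\text{CE}^{(J)}(\Upsilon)\text{-}2$ leaves the series $\sum_n2^{(p\alpha-\delta)n}$, which converges once $\alpha<\delta/p$. The far region is then handled by \ref{eq_KJ_tail}, as you say.

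In $\text{CS}^{(J)}_{\text{weak}}\Rightarrow\text{CE}^{(J)}_{\text{strong}}$, the equilibrium potential of $\bigl(B(x_0,r),X\backslash B(x_0,Ar)\bigr)$ needs boundary H\"older regularity on two boundaries:
\begin{itemize}
\item at $\partial B(x_0,Ar)$, which needs fatness of $X\backslash B(x_0,Ar)$;
\item at $\partial B(x_0,r)$, where the datum is $1$, which needs fatness of the inner set $B(x_0,r)$ itself.
\end{itemize}
Lemma \ref{lem_corkscrew} provides neither. Under \ref{eq_CC}, balls need not have corkscrews at every boundary point, which is exactly why the lemma constructs a modified open set $\Omega$ with $\delta_1B\subseteq\Omega\subseteq\delta_2B$. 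Moreover, it only gives corkscrews for the \emph{complement} of $\Omega$, with no analogue for an inner set.

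The paper never creates an inner boundary. It takes $u_\Omega\in\mathcal{F}^{(J)}(\Omega)$ solving $-\Delta_p^{(J)}u+\Upsilon(r)^{-1}|u|^{p-2}u=1$ in the whole corkscrew domain $\Omega\supseteq4B$, so only $\partial\Omega$ enters Proposition \ref{prop_bdy_osc_J}. It obtains $0\le u_\Omega\le\Upsilon(r)^{1/(p-1)}$ and $\inf_Bu_\Omega\ge C_3\Upsilon(r)^{1/(p-1)}$ by comparison (Proposition \ref{prop_comparisonJ}) with the resolvent solution on $4B$. That lower bound is Proposition \ref{prop_resolvent}, which rests on the lower bound in Proposition \ref{prop_EFJ}, obtained from the lemma of growth, \ref{eq_FKJ} and \ref{eq_ucapJ}. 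It then sets $\phi=\bigl(u_\Omega/(C_3\Upsilon(r)^{1/(p-1)})\bigr)\wedge1$. Inner boundary regularity is thus traded for a lower estimate of a torsion-type function, and this ingredient is absent from your plan. Once it is supplied, your Caccioppoli--H\"older--tail argument for $\text{CE}^{(J)}(\Upsilon)\text{-}2$ is essentially the paper's proof of (\ref{eq_u_energy_J}).
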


\begin{remark}\label{rmk_equiv}
By the very recent results of \cite{Eri26} and \cite{Mur26}, the condition \ref{eq_ucapL} can now be added to the list of equivalent conditions in Theorem \ref{thm_equiv_L}. Likewise, the condition \ref{eq_ucapJ} can now be added to the equivalences in Theorem \ref{thm_equiv_J}.
\end{remark}

\begin{remark}\label{rmk_strategy}
We will prove the diagrams of implications as in Figure \ref{fig_strategy}.

\begin{figure}[ht]
\centering
\begin{tikzpicture}[
  node distance=1cm,
  every node/.style={font=\normalsize},
  imp/.style={double, double distance=2pt, -{Implies[length=2.5mm]}}
]

\node (A) {$\text{CE}^{(\bullet)}_{\text{strong}}$};
\node (B) [right=of A] {$\text{CE}^{(\bullet)}_{\text{cont}}$};
\node (C) [right=of B] {$\text{CE}^{(\bullet)}_{\text{weak}}$};

\node (D) [below=of A]{$\text{CS}^{(\bullet)}_{\text{strong}}$};
\node (E) [below=of B] {$\text{CS}^{(\bullet)}_{\text{cont}}$};
\node (F) [below=of C] {$\text{CS}^{(\bullet)}_{\text{weak}}$};

\draw[imp] (A) -- (B);
\draw[imp] (B) -- (C);
\draw[imp] (D) -- (E);
\draw[imp] (E) -- (F);

\draw[imp] (A) -- (D);
\draw[imp] (B) -- (E);
\draw[imp] (C) -- (F);

\end{tikzpicture}

\medskip

\centering
\begin{tikzpicture}[
  node distance=0.5cm,
  every node/.style={font=\normalsize},
  imp/.style={double, double distance=2pt, -{Implies[length=2.5mm]}}
]

\node (A) {$\text{CS}^{(\bullet)}_{\text{weak}}$};
\node (B) [right=of A] {$\text{CE}^{(\bullet)}_{\text{strong}}$};

\draw[imp] (A) -- (B);
\end{tikzpicture}
\caption{Strategy of the proof of ``$\text{CE}^{(\bullet)}_{\square}\Leftrightarrow\text{CS}^{(\bullet)}_{\square}$"}\label{fig_strategy}
\end{figure}

The implications ``$\text{CE}^{(\bullet)}_{\text{strong}}\Rightarrow \text{CE}^{(\bullet)}_{\text{cont}}\Rightarrow \text{CE}^{(\bullet)}_{\text{weak}}$" and ``$\text{CS}^{(\bullet)}_{\text{strong}}\Rightarrow \text{CS}^{(\bullet)}_{\text{cont}}\Rightarrow \text{CS}^{(\bullet)}_{\text{weak}}$" follow trivially from the definitions. For $\square\in\{\text{strong},\text{cont},\text{weak}\}$, we will prove ``$\text{CE}^{(\bullet)}_\square\Rightarrow\text{CS}^{(\bullet)}_\square$" with the cutoff functions provided by $\text{CE}^{(\bullet)}_\square$ and following the idea of \cite[Proposition 4.10]{Ant25a}. Moreover, the implication ``$\text{CE}^{(J)}_\square\Rightarrow\text{CS}^{(J)}_\square$" will be proved under the sole assumption that $(\mathcal{E}^{{(J)}},\mathcal{F}^{(J)})$ is a non-local $p$-form. The key novelty of our result lies in the proof of ``$\text{CS}^{(\bullet)}_{\text{weak}}\Rightarrow\text{CE}^{(\bullet)}_{\text{strong}}$". We will reduce the proof to the interior and boundary regularity of solutions to certain PDEs. \ref{eq_UPR} is used only in the proof of this implication.
\end{remark}

\begin{remark}
For $p=2$, the conjunction of \ref{eq_PIL} and \hyperlink{eq_CSL_weak}{$\text{CS}^{(L)}_{\text{weak}}(\Psi)$} is equivalent to the following two-sided heat kernel estimates
\begin{align}
&\frac{C_1}{V\left(x,\Psi^{-1}(t)\right)}\exp\left(-\Lambda\left(C_2d(x,y),t\right)\right)\nonumber\\
&\le p_t(x,y)\le\frac{C_3}{V\left(x,\Psi^{-1}(t)\right)}\exp\left(-\Lambda\left(C_4d(x,y),t\right)\right),\label{eq_HK}\tag*{\text{HK}($\Psi$)}
\end{align}
where
$$\Lambda(R,t)=\sup_{s\in(0,+\infty)}\left(\frac{R}{s}-\frac{t}{\Psi(s)}\right),$$
see \cite[Theorem 1.2]{GHL15}, which yield H\"older continuity of $p_t(x,y)$; see, for example, \cite[COROLLARY 4.2]{BGK12}, where this is obtained via the parabolic Harnack inequality, which in turn intrinsically implies the existence of some $\delta>0$ in \hyperlink{eq_CE_strong}{$\text{CE}_{\text{strong}}^{{(L)}}(\Psi)$}. However, for general $p>1$, due to non-linearity and the absence of a heat kernel, we will need to establish H\"older regularity using a different approach.
\end{remark}

The second main result of this paper shows that a non-local $p$-form satisfying $\text{CS}^{(J)}_{\text{cont}}$ is indeed a \emph{regular} $p$-energy, as stated below; see \cite{CGHL25} for the corresponding result in the linear case $p=2$.

\begin{proposition}\label{prop_regular}
Assume \ref{eq_VD}. Let $(\mathcal{E}^{(J)}, \mathcal{F}^{(J)})$ be a non-local $p$-form given by a kernel $K^{(J)}$ satisfying \ref{eq_KJ_tail}, \ref{eq_PIJ}, \hyperlink{eq_CSJ_cont}{$\text{CS}^{(J)}_{\text{cont}}(\Upsilon)$}. Then $(\mathcal{E}^{(J)}, \mathcal{F}^{(J)})$ is a regular $p$-energy.
\end{proposition}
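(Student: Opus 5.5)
We must verify the following for $(\mathcal{E}^{(J)},\mathcal{F}^{(J)})$: it is a $p$-energy (dense in $L^p$, closed, Markovian, $p$-Clarkson), $\mathcal{F}^{(J)}\cap C_c(X)$ is uniformly dense in $C_c(X)$, and it is $\mathcal{E}_1^{1/p}$-dense in $\mathcal{F}^{(J)}$.

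\emph{Closedness, Markov property and Clarkson.} These are structural properties of the integral form and I would check them directly.
\begin{itemize}
\item Markov: $|\varphi(u(x))-\varphi(u(y))|\le|u(x)-u(y)|$ pointwise gives $\mathcal{E}^{(J)}(\varphi(u))\le\mathcal{E}^{(J)}(u)$.
\item Clarkson: apply the scalar Clarkson inequality to $a=u(x)-u(y)$, $b=v(x)-v(y)$ and integrate against $K^{(J)}m\otimes m$. For $p\ge2$ this is immediate. For $p\in(1,2]$, use the $L^p$ (weighted $\ell^p$) Clarkson inequality for the measure $K^{(J)}\,m\otimes m$ on $X\times X$.
\item Closedness: take an $\mathcal{E}_1$-Cauchy sequence $u_n$. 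It converges in $L^p$ to some $u$, and a subsequence converges $m$-a.e. Fatou's lemma on $X\times X$ then gives $u\in\mathcal{F}^{(J)}$ and $\mathcal{E}^{(J)}(u_n-u)\le\liminf_k\mathcal{E}^{(J)}(u_n-u_k)\to0$.
\end{itemize}
Nothing here uses the hypotheses beyond the symmetry and nonnegativity of $K^{(J)}$.

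\emph{Supply of continuous compactly supported functions.} By \hyperlink{eq_CSJ_cont}{$\text{CS}^{(J)}_{\text{cont}}(\Upsilon)$} with $f\equiv1$ on $B(x_0,A_2r)$, for every ball there is a continuous cutoff $\phi_{x_0,r}$ for $B(x_0,r)\subseteq B(x_0,A_1r)$ with $\int_{B(x_0,A_2r)}\mathrm{d}\Gamma^{(J)}_{B(x_0,A_2r)}(\phi)<\infty$.

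Lemma \ref{lem_CSJ_energy}, which uses \ref{eq_KJ_tail}, upgrades this to finiteness of $\Gamma^{(J)}_X(\phi)(B(x_0,A_2r))$. The remaining part of $\mathcal{E}^{(J)}(\phi)$ comes from $x\notin B(x_0,A_2r)$ and $y\in B(x_0,A_1r)$, where $\phi(x)=0$. By symmetry it is bounded by $\int_{B(x_0,A_1r)}\int_{X\setminus B(y,(A_2-A_1)r)}K^{(J)}\le V(x_0,A_1r)\,C_T/\Upsilon((A_2-A_1)r)$, again by \ref{eq_KJ_tail}. Hence $\phi\in\mathcal{F}^{(J)}\cap C_c(X)$.

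Next I would show that $\mathcal{F}^{(J)}\cap C_c(X)$ is an algebra, or at least closed under multiplication by such cutoffs. The identity $|fg(x)-fg(y)|\le\|f\|_\infty|g(x)-g(y)|+\|g\|_\infty|f(x)-f(y)|$ gives this. It is also a lattice by the Markov property.

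A partition of unity subordinate to an $\varepsilon$-net, built from these cutoffs, then lets me approximate any $u\in C_c(X)$ uniformly by $\sum_i u(x_i)\psi_i$. Here $\psi_i=\phi_i/\sum_j\phi_j$, with $\phi_i$ the cutoff at scale $\varepsilon$ around net point $x_i$. \ref{eq_VD} gives bounded overlap, so the sum is locally finite. To keep the argument short I would avoid the quotient and instead use the Stone–Weierstrass theorem for lattices on a compact set: the family separates points via the cutoffs and contains functions equal to $1$ on any ball. This gives uniform density.

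\emph{$\mathcal{E}_1$-density in $\mathcal{F}^{(J)}$.} This is the main obstacle. First I would truncate: for $u\in\mathcal{F}^{(J)}$, the functions $(u\wedge N)\vee(-N)$ converge to $u$ in $\mathcal{E}_1$ by dominated convergence on $X\times X$. Then I would localize with $\phi_{x_0,R}u$, using \hyperlink{eq_CSJ_cont}{$\text{CS}^{(J)}_{\text{cont}}$}-type energy control (\ref{eq_CSJ_energy}, and its self-improved form from Proposition \ref{prop_CSJ_self}) to show $\mathcal{E}^{(J)}(u-\phi_R u)\to0$ as $R\to\infty$. Since $\Upsilon(R)\to\infty$ and $u\in L^p$, the term $\Upsilon(R)^{-1}\int|u|^p$ vanishes, and the tail integrals vanish by dominated convergence.

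For bounded, compactly supported $u$, I would mollify by a discrete convolution: $u_\varepsilon=\sum_i u_{B(x_i,\varepsilon)}\psi_i$. This function is continuous with compact support and lies in $\mathcal{F}^{(J)}$. I would estimate $\mathcal{E}^{(J)}(u-u_\varepsilon)$ by splitting $X\times X$ into $\{d(x,y)<\varepsilon\}$ and its complement.
\begin{itemize}
\item Near the diagonal, \ref{eq_PIJ} controls $|u-u_{B_i}|^p$ on balls by $\Upsilon(\varepsilon)$ times the local energy. The energy of $\psi_i$ against $|u-u_{B_i}|^p$ is controlled by \ref{eq_CSJ_energy} applied with $f=u-u_{B_i}$. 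Together these bound the contribution by the energy of $u$ on $\varepsilon$-neighbourhoods of the diagonal, which tends to $0$ by absolute continuity.
\item Off the diagonal, $\|u-u_\varepsilon\|_{L^p}\to0$ together with \ref{eq_KJ_tail} handles the contribution.
\end{itemize}
The delicate point is making the near-diagonal localization of $\Gamma^{(J)}$ vanish as $\varepsilon\to0$. I would expect to prove it as a uniform bound $\mathcal{E}^{(J)}(u_\varepsilon)\lesssim\mathcal{E}^{(J)}(u)+\|u\|_p^p$, combined with $L^p$ convergence and a weak-compactness (Mazur) argument in the reflexive space $\mathcal{F}^{(J)}$, which is uniformly convex by Clarkson. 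Convex combinations of the $u_\varepsilon$ remain in $\mathcal{F}^{(J)}\cap C_c(X)$, which yields strong convergence.
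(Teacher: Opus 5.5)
Your architecture is the paper's: Stone--Weierstrass with the continuous cutoffs, truncation, localization by $\phi_R u$ through the self-improved inequality of Proposition \ref{prop_CSJ_self}, and the discrete convolution $u_\varepsilon=\sum_i u_{B(x_i,\varepsilon)}\psi_i$. This last step is controlled by \ref{eq_PIJ} and by a cutoff Sobolev inequality for the normalized $\psi_i$, which you still have to derive from that of the raw cutoffs, as in Lemma \ref{lem_partitionJ}.

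\textbf{The gap.} The off-diagonal step fails as written. With $w=u-u_\varepsilon$, \ref{eq_KJ_tail} only gives
\[
\iint_{d(x,y)\ge\varepsilon}\lvert w(x)-w(y)\rvert^pK^{(J)}(x,y)\,m(\mathrm{d}x)\,m(\mathrm{d}y)\le\frac{2^pC_T}{\Upsilon(\varepsilon)}\lVert w\rVert_{L^p(X;m)}^p .
\]
Since $\Upsilon(\varepsilon)^{-1}\to+\infty$ as $\varepsilon\downarrow0$, the bare convergence $\lVert u-u_\varepsilon\rVert_{L^p(X;m)}\to0$ proves nothing here. What is needed is the rate supplied by \ref{eq_PIJ} and \ref{eq_VD}, which is (\ref{eq_regular}) summed over the net: for suitable constants $C,C'$,
\[
\lVert u-u_\varepsilon\rVert_{L^p(X;m)}^p\lesssim\sum_i\int_{B(x_i,C\varepsilon)}\lvert u-u_{B(x_i,\varepsilon)}\rvert^p\,\mathrm{d}m\lesssim\Upsilon(\varepsilon)\sum_i\int_{B(x_i,C'\varepsilon)}\mathrm{d}\Gamma^{(J)}_{B(x_i,C'\varepsilon)}(u).
\]
This cancels the factor $\Upsilon(\varepsilon)^{-1}$ and turns the off-diagonal contribution into near-diagonal energy of $u$. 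You use exactly this cancellation in your near-diagonal term, but it is indispensable precisely where you did not invoke it.

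\textbf{After the fix.} With this rate, every contribution to $\mathcal{E}^{(J)}(u-u_\varepsilon)$ is bounded, by bounded overlap, by a constant times $\int_X\int_{B(x,C''\varepsilon)}\lvert u(x)-u(y)\rvert^pK^{(J)}(x,y)\,m(\mathrm{d}y)\,m(\mathrm{d}x)$. This tends to $0$ by dominated convergence, since the integrand vanishes on the diagonal. So the ``delicate point'' is not delicate, and you get $\mathcal{E}^{(J)}(u-u_\varepsilon)\to0$ directly, as the paper does.

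Your Mazur fallback is logically sound: reflexivity comes from Clarkson, the weak limit is identified via $L^p$-convergence, and $\mathcal{F}^{(J)}\cap C_c(X)$ is convex. But it does not bypass the problem. The far-from-diagonal part of the uniform bound $\mathcal{E}^{(J)}(u_\varepsilon)\lesssim\mathcal{E}^{(J)}(u)$ needs the same rate, since $\lVert u-u_\varepsilon\rVert_{L^p}\lesssim\lVert u\rVert_{L^p}$ only yields $\Upsilon(\varepsilon)^{-1}\lVert u\rVert_{L^p}^p$.

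\textbf{Smaller omissions.}
\begin{itemize}
\item You do not address the $L^p$-density of $\mathcal{F}^{(J)}$. It follows from your uniform approximation with supports in a fixed compact set.
\item For $p\ge2$, Clarkson is not ``immediate'' from a pointwise inequality. Apply the $L^p$ Clarkson inequality in $L^p(X\times X;K^{(J)}\,m\otimes m)$ to $u(x)-u(y)$ and $v(x)-v(y)$, as you do for $p\le2$.
\end{itemize}
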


\begin{remark}
In a forthcoming paper, we will relax \hyperlink{eq_CSJ_cont}{$\text{CS}^{(J)}_{\text{cont}}(\Upsilon)$} to a version of \ref{eq_ucapJ}, where the cutoff functions are also required to be \emph{continuous}.
\end{remark}

By Remark \ref{rmk_strategy}, we have the following direct consequence for non-local $p$-forms from Theorem \ref{thm_equiv_J} and Proposition \ref{prop_regular}.

\begin{corollary}\label{cor_pform}
Assume \ref{eq_VD}. Let $(\mathcal{E}^{(J)},\mathcal{F}^{(J)})$ be a non-local $p$-form given by a kernel $K^{(J)}$ satisfying \ref{eq_UPR}, \ref{eq_KJ_tail}, \ref{eq_PIJ}. Then
\begin{center}
\hyperlink{eq_CE_strong}{$\text{CE}^{(J)}_{\text{strong}}(\Upsilon)$} $\Leftrightarrow$ \hyperlink{eq_CE_cont}{$\text{CE}^{(J)}_{\text{cont}}(\Upsilon)$} $\Leftrightarrow$ \hyperlink{eq_CSJ_strong}{$\text{CS}^{(J)}_{\text{strong}}(\Upsilon)$} $\Leftrightarrow$ \hyperlink{eq_CSJ_cont}{$\text{CS}^{(J)}_{\text{cont}}(\Upsilon)$}.
\end{center}
Moreover, under any of these equivalent conditions, $(\mathcal{E}^{(J)}, \mathcal{F}^{(J)})$ is a regular $p$-energy.
\end{corollary}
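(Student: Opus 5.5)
The plan is to reduce everything to Theorem \ref{thm_equiv_J} and Proposition \ref{prop_regular}, using the trivial implications of Remark \ref{rmk_strategy}. One subtlety must be handled first: Theorem \ref{thm_equiv_J} is stated for a \emph{regular $p$-energy}, while here we only have a non-local $p$-form. The order of the argument is therefore to establish regularity first, from whichever condition is assumed, and only then invoke Theorem \ref{thm_equiv_J}.

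\textbf{The trivial implications.} By definition,
\begin{itemize}
\item[] \hyperlink{eq_CE_strong}{$\text{CE}^{(J)}_{\text{strong}}(\Upsilon)$} $\Rightarrow$ \hyperlink{eq_CE_cont}{$\text{CE}^{(J)}_{\text{cont}}(\Upsilon)$},
\item[] \hyperlink{eq_CSJ_strong}{$\text{CS}^{(J)}_{\text{strong}}(\Upsilon)$} $\Rightarrow$ \hyperlink{eq_CSJ_cont}{$\text{CS}^{(J)}_{\text{cont}}(\Upsilon)$}.
\end{itemize}
By Remark \ref{rmk_strategy}, the implications $\text{CE}^{(J)}_\square\Rightarrow\text{CS}^{(J)}_\square$ for $\square\in\{\text{strong},\text{cont}\}$ hold under the sole assumption that $(\mathcal{E}^{(J)},\mathcal{F}^{(J)})$ is a non-local $p$-form. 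The same cutoff functions are used, so continuity or H\"older continuity is preserved. This gives
\begin{itemize}
\item[] $\text{CE}^{(J)}_{\text{strong}}\Rightarrow\text{CE}^{(J)}_{\text{cont}}\Rightarrow\text{CS}^{(J)}_{\text{cont}}$,
\item[] $\text{CE}^{(J)}_{\text{strong}}\Rightarrow\text{CS}^{(J)}_{\text{strong}}\Rightarrow\text{CS}^{(J)}_{\text{cont}}$.
\end{itemize}
All four conditions therefore imply \hyperlink{eq_CSJ_cont}{$\text{CS}^{(J)}_{\text{cont}}(\Upsilon)$}.

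\textbf{Closing the cycle.} It remains to show that \hyperlink{eq_CSJ_cont}{$\text{CS}^{(J)}_{\text{cont}}(\Upsilon)$} implies \hyperlink{eq_CE_strong}{$\text{CE}^{(J)}_{\text{strong}}(\Upsilon)$}.
\begin{itemize}
\item[] Assume \hyperlink{eq_CSJ_cont}{$\text{CS}^{(J)}_{\text{cont}}(\Upsilon)$}, together with \ref{eq_VD}, \ref{eq_KJ_tail} and \ref{eq_PIJ}. Proposition \ref{prop_regular} then shows that $(\mathcal{E}^{(J)},\mathcal{F}^{(J)})$ is a regular $p$-energy.
\item[] The hypotheses of Theorem \ref{thm_equiv_J} are now satisfied: \ref{eq_VD}, a non-local regular $p$-energy, and \ref{eq_UPR}, \ref{eq_KJ_tail}, \ref{eq_PIJ}.
\item[] Since $\text{CS}^{(J)}_{\text{cont}}\Rightarrow\text{CS}^{(J)}_{\text{weak}}$ trivially, Theorem \ref{thm_equiv_J} (specifically its implication $\text{CS}^{(J)}_{\text{weak}}\Rightarrow\text{CE}^{(J)}_{\text{strong}}$) yields \hyperlink{eq_CE_strong}{$\text{CE}^{(J)}_{\text{strong}}(\Upsilon)$}.
\end{itemize}
This closes the cycle, so all four conditions are equivalent.

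\textbf{The ``moreover'' part.} Each of the four conditions implies \hyperlink{eq_CSJ_cont}{$\text{CS}^{(J)}_{\text{cont}}(\Upsilon)$}, so Proposition \ref{prop_regular} gives regularity in every case.

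The only point requiring care, and the reason the weak versions are omitted from the corollary, is this ordering. Regularity must be obtained before Theorem \ref{thm_equiv_J} is applied, and obtaining it requires continuity of the cutoff functions. For that reason we never start from $\text{CE}^{(J)}_{\text{weak}}$ or $\text{CS}^{(J)}_{\text{weak}}$ alone.
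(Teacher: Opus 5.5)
Your argument is correct and follows the paper's own route. The trivial implications, together with the step $\text{CE}^{(J)}_\square\Rightarrow\text{CS}^{(J)}_\square$ (valid for non-local $p$-forms by Remark \ref{rmk_strategy}), reduce everything to $\text{CS}^{(J)}_{\text{cont}}(\Upsilon)$. Proposition \ref{prop_regular} then gives regularity, and Theorem \ref{thm_equiv_J} closes the cycle via $\text{CS}^{(J)}_{\text{weak}}\Rightarrow\text{CE}^{(J)}_{\text{strong}}$; your point that regularity must come first, which is why the weak versions are excluded, matches the paper's reasoning.
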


The third and central main result of this paper is the following ``subordination" result.

\begin{theorem}\label{thm_sub}
Let $\Psi,\Upsilon$ be two doubling functions satisfying (\ref{eq_beta12}). Assume \ref{eq_VD}. Let $(\mathcal{E}^{(L)},\mathcal{F}^{(L)})$ be a strongly local regular $p$-energy satisfying \ref{eq_PIL}, \hyperlink{eq_CE_strong}{$\text{CE}^{(L)}_{\text{strong}}(\Psi)$}. Let $(\mathcal{E}^{(J,\Upsilon)}, \mathcal{F}^{(J,\Upsilon)})$ be a non-local $p$-form given by a kernel $K^{(J)}$ satisfying \ref{eq_KJ}. Then the followings are equivalent.
\begin{enumerate}[label=(\alph*),ref=(\alph*)]
\item\label{item_sub1} $(\mathcal{E}^{(J,\Upsilon)},\mathcal{F}^{(J,\Upsilon)})$ is a regular $p$-energy.
\item\label{item_sub2} $\mathcal{F}^{(J,\Upsilon)}$ contains a non-constant function.
\item\label{item_sub3} The pair $(\Upsilon,\Psi)$ satisfies the strictly upper growth condition at small scales \ref{eq_SUG0}, that is,
\begin{equation*}\label{eq_SUG0}\tag*{$\text{SUG}_0(\Upsilon,\Psi)$}
\sum_{n=0}^{+\infty}\frac{\Psi(\frac{1}{2^n})}{\Upsilon(\frac{1}{2^n})}<+\infty.
\end{equation*}
\end{enumerate}
Moreover, in this case, define $\Xi:[0,+\infty)\to[0,+\infty)$ by
\begin{equation}\label{eq_Xi}
\Xi(r)=
\begin{cases}
0, & r=0,\\
\frac{\Psi(r)}{\int_0^r \frac{\mathrm{d}\Psi(t)}{\Upsilon(t)}},&r>0.
\end{cases}
\end{equation}
Then $\Xi$ is well-defined, is a doubling function, and satisfies (\ref{eq_beta12}). Furthermore, $(\mathcal{E}^{(J,\Upsilon)}, \mathcal{F}^{(J,\Upsilon)})$ is a regular $p$-energy satisfying \hyperref[eq_PIJ]{$\text{PI}^{(J)}(\Xi)$}, \hyperlink{eq_CE_strong}{$\text{CE}^{(J)}_{\text{strong}}(\Xi)$}.
\end{theorem}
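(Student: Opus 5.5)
The proof runs around the cycle \ref{item_sub1}$\Rightarrow$\ref{item_sub2}$\Rightarrow$\ref{item_sub3}$\Rightarrow$ (regular $p$-energy with $\text{PI}^{(J)}(\Xi)$, $\text{CE}^{(J)}_{\text{strong}}(\Xi)$), with the last step also giving \ref{item_sub1}. The implication \ref{item_sub1}$\Rightarrow$\ref{item_sub2} is immediate, since regularity forces $\mathcal{F}^{(J,\Upsilon)}\cap C_c(X)$ to be uniformly dense in $C_c(X)$.

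For \ref{item_sub2}$\Rightarrow$\ref{item_sub3}, I argue by contradiction. Assume the series diverges and let $u\in\mathcal{F}^{(J,\Upsilon)}$. I will show $u$ is constant by proving that $\int_B|u-u_B|^p\,\mathrm{d}m=0$ for every ball $B$. Using \ref{eq_VD}, \ref{eq_CC} and the lower bound in \ref{eq_KJ}, I decompose the annuli $\{2^{-n-1}\le d(x,y)<2^{-n}\}$. A chaining/telescoping argument along dyadic scales gives the key estimate: for each $n$, the averaged difference at scale $2^{-n}$ is bounded by
$$\Upsilon(2^{-n})\iint_{d(x,y)\asymp 2^{-n}}|u(x)-u(y)|^pK^{(J)}\,\mathrm{d}m\,\mathrm{d}m.$$
I then bring in the local structure. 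By $\text{PI}^{(L)}(\Psi)$, $\Psi(2^{-n})$ is the natural cost scale at level $2^{-n}$, which suggests the following: if $\sum_n\Psi(2^{-n})/\Upsilon(2^{-n})=\infty$, a Hölder/Cauchy--Schwarz-type balancing across scales forces the scale-wise oscillation energies to vanish. Concretely, I would compare discrete $p$-energies on $2^{-n}$-nets: the discrete energy at level $n$ is at most $\Upsilon(2^{-n})\mathcal{E}_n$ with $\sum_n\mathcal{E}_n\le\mathcal{E}^{(J)}(u)<\infty$. Meanwhile, the oscillation of $u$ over a fixed ball at scale $1$ is at least what one gets from a sum over scales weighted by $\Psi(2^{-n})$; the growth $\Psi(R)/\Psi(r)\gtrsim(R/r)^p$ from (\ref{eq_Psi}) together with $\text{CE}^{(L)}$ (via the capacity upper bound) calibrates how discrete energies add across scales. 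I expect to extract $\mathrm{osc}^p\lesssim\big(\sum_n\Psi(2^{-n})/\Upsilon(2^{-n})\big)^{-1}\cdot\mathcal{E}^{(J)}(u)$-type bounds, which yield $\mathrm{osc}=0$. This is the step where I am least certain of the exact mechanism: the lower bound presumably needs a discrete resistance/capacity \emph{lower} estimate coming from \ref{eq_PIL}, i.e. effective resistances of $\Psi$-type on nets.

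For \ref{item_sub3}$\Rightarrow$ the rest, I first verify the properties of $\Xi$. By \ref{eq_SUG0} and doubling, $\int_0^r\mathrm{d}\Psi/\Upsilon\asymp\sum_{2^{-n}\le r}\Psi(2^{-n})/\Upsilon(2^{-n})<\infty$. Since $\int_0^r\mathrm{d}\Psi/\Upsilon\ge\Psi(r)/\Upsilon(r)$, this gives $\Xi\le\Upsilon$. The doubling property and (\ref{eq_beta12}) for $\Xi$ follow from those of $\Psi,\Upsilon$ via a routine estimate, and the upper exponent of $\Xi$ is at most that of $\Psi$.

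For $\text{CE}^{(J)}_{\text{strong}}(\Xi)$, I take the same Hölder cutoff $\phi$ from $\text{CE}^{(L)}_{\text{strong}}(\Psi)$ and split $\int_{B(x,s)}\int_X|\phi(y)-\phi(z)|^pK^{(J)}$ into dyadic shells $d(y,z)\asymp 2^{-k}$.
\begin{enumerate}[label=(\roman*)]
\item For shells with $2^{-k}\le r$, I bound $\int\!\!\int_{d\asymp\rho}|\phi(y)-\phi(z)|^p\,V^{-1}$ by $\Psi(\rho)\Gamma^{(L)}(\phi)(\text{enlarged ball})$, using \ref{eq_PIL} on a covering by $\rho$-balls, and then apply \ref{eq_CE_energy}. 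Summing over $\rho$ with weight $1/\Upsilon(\rho)$ produces $\int_0^r\mathrm{d}\Psi/\Upsilon$ times $V/\Psi(r)$, i.e. $V/\Xi(r)$.
\item The far shells are handled by $|\phi|\le1$ together with \ref{eq_KJ_tail}, giving $V/\Upsilon(r)\le V/\Xi(r)$.
\item The small-$s$ localization and the factor $(s/r\wedge1)^\delta$ come from the Hölder bound, which is where the strong version is needed.
\end{enumerate}

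For $\text{PI}^{(J)}(\Xi)$, I follow the strategy announced in the introduction. I take a partition of unity $\{\psi_i\}$ subordinate to a $\rho$-net, built from the CE cutoffs so that its local energy is controlled. I mollify $f$ discretely as $f_\rho=\sum_i f_{B_i}\psi_i\in\mathcal{F}^{(L)}$, and then
\begin{enumerate}[label=(\roman*)]
\item apply \ref{eq_PIL} to $f_\rho$ at scale $r$, bounding $\Gamma^{(L)}(f_\rho)$ by $\Psi(\rho)^{-1}$ times the jump energy at scale $\rho$;
\item bound $\|f-f_\rho\|_p^p$ by the jump energy at scales $\lesssim\rho$, and sum dyadically.
\end{enumerate}
Optimizing reproduces $\Xi(r)$.

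Regularity then follows from Proposition \ref{prop_regular}, since $\text{CE}_{\text{strong}}\Rightarrow\text{CS}_{\text{cont}}$ by Remark \ref{rmk_strategy}, and the tail estimate $\text{T}^{(J)}(\Xi)$ holds because $\Xi\le\Upsilon$. The main obstacles are the necessity direction \ref{item_sub2}$\Rightarrow$\ref{item_sub3} and the dyadic bookkeeping in the Poincaré inequality.
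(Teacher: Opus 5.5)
Your architecture matches the paper's: the same H\"older cutoff for $\text{CE}^{(J)}_{\text{strong}}(\Xi)$, a discrete convolution plus $\text{PI}^{(L)}(\Psi)$ for the Poincar\'e inequality, and regularity via $\text{CE}\Rightarrow\text{CS}_{\text{cont}}$ and Proposition~\ref{prop_regular}. However, the necessity direction (b)$\Rightarrow$(c) is not proved. The chaining estimate and the hoped-for discrete resistance bound are never turned into an inequality, and chaining is not what is required.

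What you need is an \emph{upper} bound for the oscillation at scale $R$ by $\Psi(R)/\Psi(r)$ times the averaged $r$-differences, at \emph{each single} scale $r\le R$. It must hold for every $u\in L^p(X;m)$; this is Proposition~\ref{prop_PIL_E}:
\[
\int_{B(x_0,R)}|u-u_{B(x_0,R)}|^p\,\mathrm{d}m\le C\,\Psi(R)\,E_{\Psi,B(x_0,AR)}(u,r),
\]
where $E_{\Phi,U}(u,\rho)=\frac{1}{\Phi(\rho)}\int_U\frac{1}{V(x,\rho)}\int_{B(x,\rho)}|u(x)-u(y)|^p\,m(\mathrm{d}y)\,m(\mathrm{d}x)$.

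This inequality comes from exactly the mollification you describe for the Poincar\'e step:
\begin{enumerate}
\item On an $\frac{r}{6}$-net, take a partition of unity $\{\psi_z\}$ with $\mathcal{E}^{(L)}(\psi_z)\lesssim V(z,\varepsilon)/\Psi(\varepsilon)$. Only $\text{cap}^{(L)}(\Psi)_\le$ is needed, and it follows from $\text{CE}^{(L)}_{\text{strong}}(\Psi)$.
\item Apply $\text{PI}^{(L)}(\Psi)$ to $u^{(\varepsilon)}=\sum_z u_{B(z,\varepsilon)}\psi_z\in\mathcal{F}^{(L)}_{\mathrm{loc}}$.
\item Bound both $\|u-u^{(\varepsilon)}\|_{L^p}^p$ and $\Gamma^{(L)}(u^{(\varepsilon)})$ by averaged $r$-differences.
\end{enumerate}
Nothing here uses $\text{SUG}_0(\Upsilon,\Psi)$ or $u\in\mathcal{F}^{(L)}$, which is why it can drive the contradiction argument.

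The second ingredient is (\ref{eq_nonlocal_E2}):
\[
\sum_{n\ge0}E_{\Upsilon,U}(u,2^{-n}R)\lesssim\iint_{U(R)\times U(R)}\frac{|u(x)-u(y)|^p}{V(x,d(x,y))\Upsilon(d(x,y))}\,m(\mathrm{d}x)\,m(\mathrm{d}y).
\]
This holds because, for $d(x,y)\in[2^{-k-1}R,2^{-k}R)$, the weights $\sum_{n\le k}\frac{1}{V(x,2^{-n}R)\Upsilon(2^{-n}R)}$ form a geometric series dominated by the $n=k$ term (using $\beta^{(1)}_\Upsilon>0$). By the lower bound in $\text{K}^{(J)}(\Upsilon)$, the right side is $\lesssim\mathcal{E}^{(J,\Upsilon)}(u)$.

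Then (b)$\Rightarrow$(c) is immediate. If $u\in\mathcal{F}^{(J,\Upsilon)}$ is non-constant, pick $B(x_0,R)$ with $\int_{B(x_0,R)}|u-u_{B(x_0,R)}|^p\,\mathrm{d}m>0$. Then $E_{\Psi,B(x_0,AR)}(u,2^{-n})\ge I>0$ whenever $2^{-n}\le R$, and
\[
+\infty>\mathcal{E}^{(J,\Upsilon)}(u)\gtrsim\sum_{n\ge N}\frac{\Psi(2^{-n})}{\Upsilon(2^{-n})}E_{\Psi,B(x_0,AR)}(u,2^{-n})\ge I\sum_{n\ge N}\frac{\Psi(2^{-n})}{\Upsilon(2^{-n})}.
\]

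The same pair of ingredients gives $\text{PI}^{(J)}(\Xi)$. Apply the single-scale inequality at $r=2^{-n}R$, multiply by $\Psi(2^{-n}R)/\Upsilon(2^{-n}R)$, and sum over $n$, using $\Xi(R)\asymp\Psi(R)/\sum_n\frac{\Psi(2^{-n}R)}{\Upsilon(2^{-n}R)}$. This last step must be a weighted sum over all scales, not an optimization over one $\rho$. A single scale yields only $\Psi(R)\inf_{\rho\le R}\Upsilon(\rho)/\Psi(\rho)$, i.e.\ the supremum in place of the sum. For instance, that loses the logarithm in the case $\beta_2=d_{w,p}$ of Corollary~\ref{cor_light} and the factor $d_{w,p}-\beta$ in Corollary~\ref{cor_BBM}.

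The rest of your plan matches the paper: the properties of $\Xi$, the dyadic-shell proof of $\text{CE}^{(J)}_{\text{strong}}(\Xi)$ (local covering bound plus the H\"older bound for far pairs), and regularity.
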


\begin{remark}
\begin{enumerate}[label=(\arabic*)]
\item The implication ``\ref{item_sub1}$\Rightarrow$\ref{item_sub2}" is trivial. We prove ``\ref{item_sub2}$\Rightarrow$\ref{item_sub3}" by contradiction. It remains to prove the conclusions under the assumption \ref{item_sub3}. More precisely, assuming that \ref{eq_SUG0} holds, we prove that $(\mathcal{E}^{(J,\Upsilon)}, \mathcal{F}^{(J,\Upsilon)})$ is a $p$-form satisfying \hyperref[eq_PIJ]{$\text{PI}^{(J)}(\Xi)$}, \hyperlink{eq_CE_strong}{$\text{CE}^{(J)}_{\text{strong}}(\Xi)$}, and also \ref{eq_UPR}, \hyperref[eq_KJ_tail]{$\text{T}^{(J)}(\Xi)$}, then \ref{item_sub1} follows from Corollary \ref{cor_pform}.
\item By Theorem \ref{thm_equiv_L}, the assumption \hyperlink{eq_CE_strong}{$\text{CE}^{(L)}_{\text{strong}}(\Psi)$} can be replaced by \hyperlink{eq_CSL_weak}{$\text{CS}^{(L)}_{\text{weak}}(\Psi)$}, or by any other equivalent condition stated there and in Remark \ref{rmk_equiv}.
\end{enumerate}
\end{remark}

As direct consequences, we obtain the following results, whose proofs are straightforward computations of $\Xi$ from $\Psi$ and $\Upsilon$ as in (\ref{eq_Xi}).

\begin{corollary}\label{cor_light}
Assume \ref{eq_VD}. Let $d_{w,p}>0$ and let $(\mathcal{E}^{(L)},\mathcal{F}^{(L)})$ be a strongly local regular $p$-energy satisfying \hyperref[eq_PIL]{$\text{PI}^{(L)}(d_{w,p})$}, \hyperlink{eq_CE_strong}{$\text{CE}^{(L)}_{\text{strong}}(d_{w,p})$}. For any $\beta_1\in(0,d_{w,p})$ and $\beta_2\in(0,+\infty)$, let
$$\Upsilon(r)=r^{\beta_1}1_{(0,1)}+r^{\beta_2}1_{[1,+\infty)}.$$
Let $(\mathcal{E}^{(J,\Upsilon)}, \mathcal{F}^{(J,\Upsilon)})$ be a non-local $p$-form given by a kernel $K^{(J)}$ satisfying \ref{eq_KJ}. Then $(\mathcal{E}^{(J,\Upsilon)}, \mathcal{F}^{(J,\Upsilon)})$ is a regular $p$-energy satisfying \hyperref[eq_PIJ]{$\text{PI}^{(J)}(\Xi)$}, \hyperlink{eq_CE_strong}{$\text{CE}^{(J)}_{\text{strong}}(\Xi)$}, where for $r\in(0,1)$, $\Xi(r)=r^{\beta_1}$, while for $r\ge1$,
$$
\Xi(r)=
\begin{cases}
r^{\beta_2},&\text{if }\beta_2<d_{w,p},\\
\frac{r^{d_{w,p}}}{\log(e-1+ r)},&\text{if }\beta_2=d_{w,p},\\
r^{d_{w,p}},&\text{if }\beta_2>d_{w,p}.
\end{cases}
$$
\end{corollary}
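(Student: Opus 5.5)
This is a direct application of Theorem \ref{thm_sub} with $\Psi(r)=r^{d_{w,p}}$ and the given piecewise $\Upsilon$. The work is therefore to check the hypotheses and to compute $\Xi$ from (\ref{eq_Xi}).

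First, $\Upsilon$ must be a doubling homeomorphism satisfying (\ref{eq_beta12}). It is continuous at $r=1$, since both branches equal $1$ there. It is strictly increasing, and its local exponents lie in $[\beta_1\wedge\beta_2,\beta_1\vee\beta_2]$, which gives (\ref{eq_beta12}) with $\beta^{(1)}_\Upsilon=\beta_1\wedge\beta_2$ and $\beta^{(2)}_\Upsilon=\beta_1\vee\beta_2$. The function $\Psi(r)=r^{d_{w,p}}$ is trivially admissible.

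Next, condition \ref{eq_SUG0} holds, because the relevant sum is
$$\sum_{n}\Psi(2^{-n})/\Upsilon(2^{-n})=\sum_n 2^{-n(d_{w,p}-\beta_1)}<+\infty,$$
which converges since $\beta_1<d_{w,p}$. Theorem \ref{thm_sub} then gives regularity, \hyperref[eq_PIJ]{$\text{PI}^{(J)}(\Xi)$} and \hyperlink{eq_CE_strong}{$\text{CE}^{(J)}_{\text{strong}}(\Xi)$}, with $\Xi$ as in (\ref{eq_Xi}).

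It remains to identify $\Xi$ up to constants. Both conditions are invariant under replacing $\Xi$ by a comparable doubling function: one only adjusts the constants in the inequalities. Write $d=d_{w,p}$, so that $\mathrm{d}\Psi(t)=d\,t^{d-1}\mathrm{d}t$.

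For $r<1$, the integral is
$$\int_0^r d\,t^{d-1-\beta_1}\,\mathrm{d}t=\frac{d}{d-\beta_1}\,r^{d-\beta_1},$$
so $\Xi(r)=\frac{d-\beta_1}{d}\,r^{\beta_1}\asymp r^{\beta_1}$.

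For $r\ge1$, the integral is
$$\frac{d}{d-\beta_1}+\int_1^r d\,t^{d-1-\beta_2}\,\mathrm{d}t,$$
and there are three cases.
\begin{itemize}
\item If $\beta_2<d$, the second term equals $\frac{d}{d-\beta_2}(r^{d-\beta_2}-1)$, so the whole integral is $\asymp r^{d-\beta_2}$ for $r\ge1$. Hence $\Xi\asymp r^{\beta_2}$.
\item If $\beta_2=d$, the integral equals $\frac{d}{d-\beta_1}+d\log r\asymp \log(e-1+r)$. Hence $\Xi\asymp r^{d}/\log(e-1+r)$.
\item If $\beta_2>d$, the integral is bounded between $\frac{d}{d-\beta_1}$ and $\frac{d}{d-\beta_1}+\frac{d}{\beta_2-d}$. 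Hence $\Xi\asymp r^d$.
\end{itemize}

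The only point that needs care is this replacement of the exact $\Xi$ from (\ref{eq_Xi}) by the stated piecewise formula, which is comparable to it but not equal. I would handle it by noting that if $\Xi_1\asymp\Xi_2$, then \hyperref[eq_PIJ]{$\text{PI}^{(J)}(\Xi_1)$} is equivalent to \hyperref[eq_PIJ]{$\text{PI}^{(J)}(\Xi_2)$}, since only the constant $C_{PI}$ changes. Likewise \ref{eq_CE_energy} for $\Xi_1$ and for $\Xi_2$ are equivalent, since only the constant $C$ changes. The stated piecewise $\Xi$ is itself continuous, with value $1$ at $r=1$ in all cases, and it is strictly increasing and doubling, hence admissible.
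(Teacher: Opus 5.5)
Your proposal is correct and follows the paper's route exactly. The paper also applies Theorem \ref{thm_sub} with $\Psi(r)=r^{d_{w,p}}$, checks \ref{eq_SUG0} using $\beta_1<d_{w,p}$, and reads off $\Xi$ from (\ref{eq_Xi}) up to comparability, which it leaves as a ``straightforward computation''. Your case analysis and your remark that $\text{PI}^{(J)}$ and $\text{CE}^{(J)}\text{-}2$ are stable under replacing $\Xi$ by a comparable function supply exactly the omitted details; strict monotonicity of the stated $\Xi$ when $\beta_2=d_{w,p}$ uses $d_{w,p}\ge p>1$ from (\ref{eq_Psi}), though the inequalities themselves do not need it.
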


\begin{corollary}[BBM-type Poincar\'e inequality]\label{cor_BBM}
Assume \ref{eq_VD}. Let $d_{w,p}>0$ and let $(\mathcal{E}^{(L)},\mathcal{F}^{(L)})$ be a strongly local regular $p$-energy satisfying \hyperref[eq_PIL]{$\text{PI}^{(L)}(d_{w,p})$}, \hyperlink{eq_CE_strong}{$\text{CE}^{(L)}_{\text{strong}}(d_{w,p})$}. For any $\beta\in(0,d_{w,p})$, let $(\mathcal{E}^{(J,\beta)}, \mathcal{F}^{(J,\beta)})$ be a non-local $p$-form given by a kernel $K^{(J)}$ satisfying \hyperref[eq_KJ]{$\text{K}^{(J)}(\beta)$}. Then $(\mathcal{E}^{(J,\beta)}, \mathcal{F}^{(J,\beta)})$ is a regular $p$-energy satisfying the following BBM-type Poincar\'e inequality: there exist $C>0$, $A>1$ \emph{independent} of $\beta\in(0,d_{w,p})$ such that for any ball $B(x,r)$, for any $f\in \widehat{\mathcal{F}}^{(J,\beta)}$,
\begin{align*}
&\int_{B(x,r)}\lvert f-f_{B(x,r)}\rvert^p \mathrm{d}m\\
&\le C\left(d_{w,p}-\beta\right)r^{\beta}\int_{B(x,Ar)}\int_{B(x,Ar)}\frac{\lvert f(y)-f(z)\rvert^p}{V(y,d(y,z))d(y,z)^\beta}m(\mathrm{d}y)m(\mathrm{d}z).
\end{align*}
\end{corollary}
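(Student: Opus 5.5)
The plan is to obtain the corollary as the special case $\Psi(r)=r^{d_{w,p}}$, $\Upsilon(r)=r^\beta$ of Theorem \ref{thm_sub}, tracking constants so that nothing depends on $\beta\in(0,d_{w,p})$. I will take $K^{(J)}(y,z)=\frac{1}{V(y,d(y,z))d(y,z)^\beta}$, symmetrized if necessary. Under \ref{eq_VD} this is comparable to $1/(V(z,d(y,z))d(y,z)^\beta)$ with constants depending only on $C_{VD}$. It therefore satisfies \hyperref[eq_KJ]{$\text{K}^{(J)}(\beta)$} with constants independent of $\beta$, and any other kernel satisfying \hyperref[eq_KJ]{$\text{K}^{(J)}(\beta)$} defines the same space $\mathcal{F}^{(J,\beta)}$.

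\textbf{Regularity.} Since $\Psi(2^{-n})/\Upsilon(2^{-n})=2^{-n(d_{w,p}-\beta)}$ is summable for $\beta<d_{w,p}$, condition \ref{eq_SUG0} holds, and Theorem \ref{thm_sub} gives that $(\mathcal{E}^{(J,\beta)},\mathcal{F}^{(J,\beta)})$ is a regular $p$-energy. Computing (\ref{eq_Xi}),
$$\int_0^r\frac{\mathrm{d}\Psi(t)}{\Upsilon(t)}=d_{w,p}\int_0^r t^{d_{w,p}-1-\beta}\,\mathrm{d}t=\frac{d_{w,p}}{d_{w,p}-\beta}r^{d_{w,p}-\beta},\qquad\text{so}\qquad \Xi(r)=\frac{d_{w,p}-\beta}{d_{w,p}}r^\beta.$$
Thus \hyperref[eq_PIJ]{$\text{PI}^{(J)}(\Xi)$} is exactly the claimed inequality with the factor $(d_{w,p}-\beta)$, up to the constant $C_{PI}$ and the factor $1/d_{w,p}$.

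\textbf{Uniformity of constants.} The remaining and main issue is that $C_{PI}$ and $A_{PI}$ in \hyperref[eq_PIJ]{$\text{PI}^{(J)}(\Xi)$} must not depend on $\beta$. Theorem \ref{thm_sub} a priori allows them to depend on $\Upsilon$ through $C_\Upsilon$ and $\beta^{(1)}_\Upsilon$, and $\beta^{(1)}_\Upsilon=\beta$ degenerates as $\beta\downarrow0$. So I will re-run the proof of \hyperref[eq_PIJ]{$\text{PI}^{(J)}(\Xi)$} (the partition of unity with controlled energy plus the discrete convolution argument) and record the dependencies. The partition of unity at scales $r_k=2^{-k}r$ is built from the cutoff functions of \hyperlink{eq_CE_strong}{$\text{CE}^{(L)}_{\text{strong}}(\Psi)$}, so it involves only $\Psi$, $C_{VD}$, $C_{PI}$ and the CE constants. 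The telescoping of the convolutions $f_k$ at scale $r_k$ should give
$$\int_{B}|f-f_B|^p\,\mathrm{d}m\lesssim \Psi(r)\sum_{k\ge0}\frac{1}{\Psi(r_k)}\int\!\!\int_{d(y,z)\lesssim r_k}\frac{|f(y)-f(z)|^p}{V(y,r_k)}\,m(\mathrm{d}y)\,m(\mathrm{d}z).$$
The $\beta$-independent constants here come from \ref{eq_VD} and \ref{eq_PIL}, and the scale $k=0$ term absorbs $f_0-f_B$. I expect the step requiring the most care to be organizing this (likely weighted, via Hölder with weights $\lambda_k$) so that the only place $\Upsilon$ enters is the interchange of summation.

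\textbf{Interchange of summation.} For a pair at distance $\rho\in[r_{j+1},r_j)$, and with weights chosen as $\lambda_k\propto\Psi(r_k)/\Upsilon(r_k)$, the total weight becomes
$$\frac{\Psi(r)}{V(y,\rho)}\sum_{k\le j}\frac{1}{\Psi(r_k)}\cdot\Big(\sum_{i}\frac{\Psi(r_i)}{\Upsilon(r_i)}\Big)\cdot\frac{\Upsilon(r_k)}{\Psi(r_k)}\cdots$$
The two sums are then handled separately. The first, $\sum_{i\ge0}\Psi(r_i)/\Upsilon(r_i)=\sum_i r^{d_{w,p}-\beta}2^{-i(d_{w,p}-\beta)}$, will be compared to $\int_0^{r}t^{d_{w,p}-1-\beta}\,\mathrm{d}t$. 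On each dyadic interval the integrand varies by a factor at most $2^{d_{w,p}}$, so the comparison constant depends only on $d_{w,p}$; together with $1-2^{-(d_{w,p}-\beta)}\asymp (d_{w,p}-\beta)$, this produces the factor $r^{d_{w,p}-\beta}/(d_{w,p}-\beta)$, i.e.\ $\Psi(r)/\Xi(r)$ with a uniform constant. The second sum, $\sum_{k\le j}\Upsilon(r_k)^{-1}=\sum_{k\le j}r_k^{-\beta}$, is a geometric series whose ratio $2^\beta$ may be close to $1$ as $\beta\downarrow0$; there I will bound it not by its last term but by $\lesssim \log(r/\rho)$-type quantities absorbed via \eqref{eq_Psi}, i.e.\ by using $\Psi(r_k)^{-1}$, which decays with exponent at least $p$ uniformly, so that this sum is dominated by the $k=j$ term with a $\beta$-free constant.

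Finally, domains are enlarged by a fixed factor $A$ coming from $A_{PI}$, $A_E$ and the convolution radii, all independent of $\beta$. This yields the BBM-type inequality with $C,A$ depending only on $C_{VD},C_{PI},A_{PI}$, the CE constants and $d_{w,p}$.
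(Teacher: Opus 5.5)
Your reduction matches the paper's route. You specialize Theorem~\ref{thm_sub} to $\Psi(r)=r^{d_{w,p}}$ and $\Upsilon(r)=r^\beta$, get regularity from \ref{eq_SUG0}, compute $\Xi(r)=\frac{d_{w,p}-\beta}{d_{w,p}}r^\beta$, and check that the constants in \hyperref[eq_PIJ]{$\text{PI}^{(J)}(\Xi)$} do not depend on $\beta$. The gap is in the mechanism you propose for that Poincar\'e inequality.

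Your displayed telescoping bound is true but empty. Its $k=0$ term, with coefficient $\Psi(r)/\Psi(r_0)=1$, already dominates $\int_B\lvert f-f_B\rvert^p\mathrm{d}m$ by Jensen's inequality (third part of Lemma~\ref{lem_KJ_ele}). So it carries no information from the local structure.

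Weighting a telescoped sum by H\"older then goes the wrong way. Write $\lambda_k=\Psi(r_k)/\Upsilon(r_k)$, $\Lambda=\sum_i\lambda_i$ and $Q_k=\frac{\Psi(r)}{\Psi(r_k)}D_k$, and start from $\lVert f-f_B\rVert_{L^p(B)}\le\sum_kQ_k^{1/p}$.
\begin{itemize}
\item H\"older with weights $\lambda_k$ gives scale $k$ the weight $\frac{\Psi(r)}{\Psi(r_k)}(\Lambda/\lambda_k)^{p-1}$. This is exactly the factor $\Lambda/\lambda_k$ in your ``total weight''.
\item The target, $\Xi(r)/\Upsilon(r_k)\asymp\Psi(r)/(\Lambda\Upsilon(r_k))$, requires instead the weight $\frac{\Psi(r)}{\Psi(r_k)}\cdot\frac{\lambda_k}{\Lambda}$.
\item The ratio $(\Lambda/\lambda_k)^p$ is already $\asymp(d_{w,p}-\beta)^{-p}$ at $k=0$ and grows geometrically in $k$. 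This is why $\Lambda\asymp r^{d_{w,p}-\beta}/(d_{w,p}-\beta)$ ends up in your numerator rather than the denominator.
\item No choice of H\"older weights $w_k$ can fix this: you would need $(\sum_iw_i/w_k)^{p-1}\lesssim\lambda_k/\Lambda$, and the right side tends to $0$.
\end{itemize}

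The missing idea is that the local structure gives a \emph{single-scale} bound at \emph{every} scale. Proposition~\ref{prop_PIL_E} takes one convolution $u^{(\varepsilon)}$ with $\varepsilon=\rho/6$, applies \ref{eq_PIL} to it, and controls its energy by \ref{item_COenergy}. The result is
$\int_{B(x_0,R)}\lvert u-u_{B(x_0,R)}\rvert^p\mathrm{d}m\le C\Psi(R)E_{\Psi,B(x_0,AR)}(u,\rho)$ for every $\rho\in(0,R]$, with $C,A$ independent of $\Upsilon$.

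The left side is therefore below every $Q_n$, hence below their convex combination with weights $\lambda_n/\Lambda$. In practice you multiply by $\Psi(2^{-n}R)/\Upsilon(2^{-n}R)$, sum over $n$, and apply (\ref{eq_nonlocal_E2}). The prefactor is then $\Psi(R)/\Lambda=R^\beta(1-2^{-(d_{w,p}-\beta)})\le(d_{w,p}-\beta)R^\beta$.

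In this scheme $\Psi(r_k)$ cancels exactly, so scale $k$ carries weight $\frac{\Psi(R)}{\Lambda}\cdot\frac{1}{\Upsilon(r_k)V(y,r_k)}$. The decay of $\Psi(r_k)^{-1}$ you planned to use for $\beta\downarrow0$ is therefore not available. Using only monotonicity of $V$, the sum $\sum_{k\le j}r_k^{-\beta}$ costs $(1-2^{-\beta})^{-1}$, which is the constant in (\ref{eq_nonlocal_E2}). This is harmless:
\begin{itemize}
\item For $\beta\ge d_{w,p}/2$ that constant is bounded.
\item For $\beta<d_{w,p}/2$, Jensen alone gives $\int_B\lvert f-f_B\rvert^p\mathrm{d}m\le 2^{d_{w,p}}C_{VD}^3\,r^\beta\iint_{B\times B}\frac{\lvert f(y)-f(z)\rvert^p}{V(y,d(y,z))d(y,z)^\beta}m(\mathrm{d}y)m(\mathrm{d}z)$, and $d_{w,p}-\beta>d_{w,p}/2$.
\end{itemize}
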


This paper is organized as follows. In Section \ref{sec_pre}, we collect some preliminary results for later use. In Section \ref{sec_sub}, we give the proof of Theorem \ref{thm_sub}. In Section \ref{sec_CSJ_self}, we prove the self-improvement property of {$\text{CS}^{(J)}_{\text{weak}}$} and {$\text{CS}^{(J)}_{\text{cont}}$}. In Section \ref{sec_regular}, we give the proof of Proposition \ref{prop_regular}. In Section \ref{sec_CE2CS}, we give the proof of ``$\text{CE}^{(\bullet)}_\square\Rightarrow\text{CS}^{(\bullet)}_\square$" in Theorems \ref{thm_equiv_L} and \ref{thm_equiv_J}. In Section \ref{sec_Moser_L}, we give the proof of ``\hyperlink{eq_CSL_weak}{$\text{CS}^{(L)}_{\text{weak}}$}$\Rightarrow$\hyperlink{eq_CE_strong}{$\text{CE}^{(L)}_{\text{strong}}$}" in Theorem \ref{thm_equiv_L}. In Section \ref{sec_Moser_J}, we give two oscillation inequalities. In Section \ref{sec_CS2CE_J}, we give the proof of ``\hyperlink{eq_CSJ_weak}{$\text{CS}^{(J)}_{\text{weak}}$}$\Rightarrow$\hyperlink{eq_CE_strong}{$\text{CE}^{(J)}_{\text{strong}}$}" in Theorem \ref{thm_equiv_J}. In Section \ref{sec_ele}, we give some elementary results used in the previous sections.

\section{Preliminary}\label{sec_pre}

Let $\Omega\subseteq X$ be an open subset and $x_0\in \partial\Omega$. Let $c,r_0>0$. We say that $X\backslash \Omega$ has a $(c,r_0)$-corkscrew at $x_0$ if for any $r\in(0,r_0)$, $B(x_0,r)\backslash \Omega$ contains a ball with radius $cr$. Obviously, if a $(c,r_0)$-corkscrew exists, then for any $A\ge1$, a $(\frac{1}{A}c,Ar_0)$-corkscrew and a $(\frac{1}{A}c,r_0)$-corkscrew also exist.

\begin{lemma}[{\cite[Lemma 14.2]{BB11}, \cite[Theorem 1.1]{Raj21}}]\label{lem_corkscrew}
Assume \ref{eq_CC}. Then for any $\delta_1,\delta_2>0$ with $\delta_1<\delta_2$, there exists $c>0$ depending only on $C_{cc},\delta_1,\delta_2$ such that for any ball $B=B(x_0,r)$, there exists an open set $\Omega$ with $\delta_1B\subseteq \Omega\subseteq \delta_2B$ such that, for any $x\in \partial \Omega$, we have $X\backslash \Omega$ has a $(c,r)$-corkscrew at $x$, and hence a $(\frac{1}{A}c ,Ar)$-corkscrew at $x$ for any $A\ge1$.
\end{lemma}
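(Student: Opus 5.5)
The plan is to reduce to the case where $d$ is a geodesic metric, then use a metric-neighbourhood construction. There is an alternative construction that avoids the reduction.

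\emph{Step 1: reduction to a geodesic metric.} Let $\ell$ be the length metric induced by $d$, defined as the infimum of lengths of $d$-rectifiable curves. I claim that \ref{eq_CC} gives $d\le \ell\le C\,d$ with $C$ depending only on $C_{cc}$. To see this, apply the chain condition at dyadic levels between $x$ and $y$. Apply it again between consecutive points, and iterate with $n=2$ at each stage. This produces, for each $k$, a chain of $2^k$ points with consecutive gaps at most $(C_{cc}/2)^k\,d(x,y)$.

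This naive iteration loses a factor $C_{cc}$ at each stage. So I would instead use the standard fact that \ref{eq_CC} on a complete space yields quasiconvexity: at each level choose chains with $n$ large so that the accumulated errors sum geometrically. Completeness, plus relative compactness of balls, then lets the chain points converge to a curve of length $\lesssim d(x,y)$; Arzel\`a--Ascoli gives the limiting curve. I expect this step to be the main obstacle, namely producing a genuine rectifiable curve with length comparable to $d(x,y)$. If this limiting argument resists, I would cite it from \cite{BB11} or \cite{Raj21} instead.

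Since $(X,\ell)$ is complete, locally compact and a length space, Hopf--Rinow shows it is geodesic. A corkscrew for $\ell$ with constant $c$ is a corkscrew for $d$ with constant $c/C^2$, after adjusting radii, and the inclusions $\delta_1B\subseteq\Omega\subseteq\delta_2B$ are adjusted in the same way. So it suffices to prove the lemma for a geodesic metric, with intermediate radii $\delta_1<\delta_1'<\delta_2'<\delta_2$ chosen so that these distortions are absorbed.

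\emph{Step 2: construction in the geodesic case.} Fix $\rho=(\delta_1+\delta_2)/2$ and $t=(\delta_2-\delta_1)r/4$. Let $E=X\backslash B(x_0,\rho r)$ and set
$$\Omega=\{w\in X:\ \mathrm{dist}(w,E)>t\}.$$
This set is open. Moreover $\Omega\subseteq B(x_0,\rho r)\subseteq\delta_2B$. For $w\in\delta_1B$, every point of $E$ lies at distance $\ge(\rho-\delta_1)r>t$ from $w$, so $\delta_1B\subseteq\Omega$.

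Now let $z\in\partial\Omega$, so $\mathrm{dist}(z,E)=t$, and let $s\in(0,r)$. By compactness there is $e\in E$ with $d(z,e)=t$. Take a geodesic $\gamma$ from $z$ to $e$. If $s/2<t$, let $w$ be the point on $\gamma$ with $d(z,w)=s/2$; then $d(w,E)\le d(w,e)=t-s/2$. Hence $B(w,s/4)\subseteq\{\mathrm{dist}(\cdot,E)<t\}\subseteq X\backslash\Omega$, and also $B(w,s/4)\subseteq B(z,s)$.

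If instead $s/2\ge t$, I would use a point of $E$ far from $B$ together with unboundedness and the chain condition. This gives a ball of radius $\gtrsim s$ inside $B(z,s)\backslash B(x_0,\rho r)$, for instance along a geodesic leaving $B(x_0,(\rho+\tfrac{\delta_2-\delta_1}{8})r)$. Since $s<r$ and $t\asymp r$, this regime only involves $s\asymp r$, so a single ball of radius comparable to $r$ suffices.

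In both cases the corkscrew constant $c$ depends only on $\delta_1,\delta_2$ and, via Step 1, on $C_{cc}$. The final assertion about $(\frac1Ac,Ar)$-corkscrews follows from the remark preceding the lemma.
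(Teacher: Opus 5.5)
Your reduction to a geodesic metric does not give the inclusions $\delta_1B\subseteq\Omega\subseteq\delta_2B$. Quasiconvexity only gives $B_\ell(x_0,a)\subseteq B_d(x_0,a)\subseteq B_\ell(x_0,Ca)$ with $C=C_{cc}$. So to deduce $\delta_1B\subseteq\Omega\subseteq\delta_2B$ from $B_\ell(x_0,\delta_1'r)\subseteq\Omega\subseteq B_\ell(x_0,\delta_2'r)$ you need, in general, $C\delta_1\le\delta_1'<\delta_2'\le\delta_2$. This is impossible once $C\ge\delta_2/\delta_1$: a multiplicative distortion cannot be absorbed into the gap between $\delta_1$ and $\delta_2$. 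This case does occur, since the paper uses the lemma with $\delta_1=4$, $\delta_2=5$ and arbitrary $C_{cc}$.

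The repair is to mix the two metrics. Define $E=X\backslash B_d(x_0,\rho r)$ using $d$, but set $\Omega=\{w:\mathrm{dist}_\ell(w,E)>t\}$ with $0<t<(\rho-\delta_1)r$. Then $\Omega\subseteq B_d(x_0,\rho r)$, and for $w\in\delta_1B$ we have $\mathrm{dist}_\ell(w,E)\ge\mathrm{dist}_d(w,E)>(\rho-\delta_1)r$. So both inclusions hold exactly. You then run Step 2 with $\ell$-geodesics. Since $B_\ell(z,s)\subseteq B_d(z,s)$ and $B_d(w,a/C)\subseteq B_\ell(w,a)$, only the corkscrew constant loses a factor $C$.

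Your large-scale case $s\ge 2t$ is also wrong as stated: a ball of radius $\gtrsim s$ inside $B(z,s)\backslash B(x_0,\rho r)$ need not exist. Here is a counterexample.
\begin{itemize}
\item Let $X$ be a line with a segment of length $\rho r+\eta$ attached at $x_0$, with the path metric, so the chain condition holds with $C_{cc}=1$.
\item Take $\delta_1=4$ and $\delta_2=5$, so $\rho=\frac{9}{2}$ and $t=\frac{r}{4}$.
\item Let $z$ be the point of the segment at distance $\rho r-t$ from $x_0$, and let $\frac{r}{2}\le s<r$.
\end{itemize}
Then $z\in\partial\Omega$, but $B(z,s)\backslash B(x_0,\rho r)$ is a segment of length $\eta$. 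It contains no ball of radius $cs$ once $\eta<cs$. Neither unboundedness nor geodesics leaving a larger ball can help, since near $z$ the space is just this spike.

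What you actually need is a ball avoiding $\Omega$, not $B(x_0,\rho r)$. With $e$ the nearest point of $E$ to $z$:
\begin{itemize}
\item $B(e,t/2)\subseteq\{\mathrm{dist}(\cdot,E)<t\}\subseteq X\backslash\Omega$;
\item $B(e,t/2)\subseteq B(z,\frac{3}{2}t)\subseteq B(z,s)$;
\item $t/2=(\delta_2-\delta_1)r/8>(\delta_2-\delta_1)s/8$.
\end{itemize}
Equivalently, take $w\in\gamma$ at distance $a=\min\{s/2,t\}$ from $z$ and use $B(w,a/2)$ in both regimes. Your first case is otherwise correct.

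With these two repairs your argument is a complete, self-contained proof of what the paper only cites from \cite[Lemma 14.2]{BB11} and \cite[Theorem 1.1]{Raj21}. It gives $c=\min\{\frac{1}{4},\frac{\delta_2-\delta_1}{8}\}/C_{cc}$, and unlike the uniform-domain construction of \cite{Raj21} it uses no doubling.

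Step 1 is also not the real obstacle, and needs no iteration. For each $n$, take one chain $x_0,\dots,x_n$ from the chain condition and view it as the map $k/n\mapsto x_k$. This map is $C_{cc}d(x,y)$-Lipschitz on $\{0,\frac{1}{n},\dots,1\}$, with values in the compact set $\overline{B(x,C_{cc}d(x,y))}$. A diagonal Arzel\`a--Ascoli argument then gives a curve of length at most $C_{cc}d(x,y)$, hence $d\le\ell\le C_{cc}d$.
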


We need the following results concerning BMO spaces. Let $U$ be an open set and $u$ a locally integrable function in $U$. We define the semi-norm $\lVert {u}\rVert_{\mathrm{BMO}(U)}$ as
$$\lVert {u}\rVert_{\mathrm{BMO}(U)}=\sup \left\{\dashint_B|u-u_B|\mathrm{d} m:B\subseteq U\text{ is a ball}\right\}.$$
Let $\mathrm{BMO}(U)$ be the family of all locally integrable functions $u$ in $U$ with $\lVert {u}\rVert_{\mathrm{BMO}(U)}<+\infty$.

\begin{lemma}[John-Nirenberg inequality; {\cite[THEOREM 5.2]{ABKY11}}]\label{lem_JohnNirenberg}
Assume \ref{eq_VD}. Then there exist $C_1,C_2>0$ such that for any open set $U$ and any $u\in \mathrm{BMO}(U)$, for any ball $B$ with $12B\subseteq U$ and any $\lambda>0$, we have
$$m \left(B\cap\{|u-u_B|>\lambda\}\right)\le C_1m(B)\exp \left(-C_2 \frac{\lambda}{\lVert {u}\rVert_{\mathrm{BMO}(U)}}\right).$$
\end{lemma}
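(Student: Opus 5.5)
We plan to run the classical Calder\'on--Zygmund iteration, with \ref{eq_VD} replacing the dyadic structure of $\mathbb{R}^d$. Both sides of the inequality are invariant under $u\mapsto cu$ and under adding constants, so we may assume $\lVert u\rVert_{\mathrm{BMO}(U)}=1$. Consider the admissible class $\mathcal{A}=\{B' \text{ ball}: 12B'\subseteq U\}$, and define
$$F(\lambda)=\sup_{B'\in\mathcal{A}}\frac{m\left(B'\cap\{|u-u_{B'}|>\lambda\}\right)}{m(B')}\in[0,1].$$
The goal is a recursive inequality $F(\lambda)\le \frac12 F(\lambda-C_0)$ for $\lambda\ge C_0$, with $C_0$ depending only on $C_{VD}$. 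Iterating it gives $F(\lambda)\le 2\cdot 2^{-\lambda/C_0}$, which is the claim with $C_1=2$ and $C_2=(\log 2)/C_0$.

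\textbf{Step 1: local Calder\'on--Zygmund decomposition.} Fix $B'=B(x',r')\in\mathcal{A}$ and set $g=|u-u_{B'}|1_{2B'}$. Note that $\dashint_{2B'}g\,\mathrm{d}m\le C$, since $2B'\subseteq U$ and $|u_{2B'}-u_{B'}|\lesssim 1$ by doubling.
\begin{enumerate}[label=(\roman*)]
\item For a large threshold $\lambda_0$ and each Lebesgue point $y\in B'$ with $g(y)>\lambda_0$, take a maximal (up to a factor $2$) radius $\rho_y\le r'/24$ such that $\dashint_{B(y,\rho_y)}g>\lambda_0$. Such a radius exists and is $<r'/24$ once $\lambda_0$ exceeds $C\dashint_{2B'}g$, using \ref{eq_VD} to compare $B(y,r'/24)$ with $2B'$.
\item Apply the $5r$-covering lemma to obtain disjoint balls $B_i=B(y_i,\rho_i)$ with $\{g>\lambda_0\}\cap B'\subseteq\bigcup 5B_i$ a.e.
\item These balls satisfy $\sum m(5B_i)\le \frac{C}{\lambda_0}\int_{2B'}g\,\mathrm{d}m\le \frac{C}{\lambda_0}m(B')$.
\item By maximality and doubling, $\dashint_{10B_i}g\le C\lambda_0$, hence $|u_{5B_i}-u_{B'}|\le C\lambda_0$.
\end{enumerate}

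\textbf{Step 2: admissibility of the sub-balls.} Since $y_i\in B'$ and $60\rho_i\le \frac{5}{2}r'$, we get $12(5B_i)\subseteq B(x',\frac{7}{2}r')\subseteq 12B'\subseteq U$, so $5B_i\in\mathcal{A}$. This self-similarity of $\mathcal{A}$ is the main reason for the margin $12B\subseteq U$ in the statement, and I expect this bookkeeping to be the only delicate point. It is a geometric check: the sub-balls must be small relative to $r'$ while still capturing the whole superlevel set, and the $\lambda_0$-dependence of the stopping radius must not break this. If it does, the fallback is Christ's dyadic cubes inside $B'$.

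\textbf{Step 3: recursion.} Outside $\bigcup 5B_i$ we have $|u-u_{B'}|\le\lambda_0$ a.e. in $B'$. Hence, for $\lambda>C\lambda_0$,
$$m\left(B'\cap\{|u-u_{B'}|>\lambda\}\right)\le\sum_i m\left(5B_i\cap\{|u-u_{5B_i}|>\lambda-C\lambda_0\}\right)\le F(\lambda-C\lambda_0)\sum_i m(5B_i).$$
Using the bound from Step 1(iii), this is at most $\frac{C}{\lambda_0}F(\lambda-C\lambda_0)\,m(B')$. Choosing $\lambda_0$ with $C/\lambda_0\le\frac12$ and taking the supremum over $B'\in\mathcal{A}$ gives the recursion with $C_0=C\lambda_0$. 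The a priori bound $F\le1$ then makes the iteration legitimate, and applying the result to $B'=B$ yields the lemma.
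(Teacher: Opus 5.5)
Your argument is correct. It is essentially the standard Calder\'on--Zygmund stopping-time proof behind the cited result \cite[THEOREM 5.2]{ABKY11}; the paper itself only quotes that result. The Vitali selection, the bounds in Step 1(iii)--(iv), the admissibility check $12(5B_i)\subseteq 12B'$, and the recursion for $F$ all go through with constants depending only on $C_{VD}$.

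Your worry in Step 2 is unfounded, and no dyadic cubes are needed. The cap $\rho_y\le r'/24$ does not depend on $\lambda_0$. In (iv), whenever $10\rho_y>r'/24$ the bound follows directly from doubling, since $B(y,10\rho_y)\subseteq 2B'$ and its measure is comparable to $m(2B')$. The degenerate case $\lVert u\rVert_{\mathrm{BMO}(U)}=0$ is trivial.
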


\begin{lemma}[{\cite[COROLLARY 5.6]{BM95AMPA}}]\label{lem_crossover}
Assume \ref{eq_VD}. Then for any ball $B(x_0,R)$ and any $u\in\mathrm{BMO}(B(x_0,R))$, for any ball $B$ with $12B\subseteq B(x_0,R)$, for any $b\ge \lVert {u}\rVert_{\mathrm{BMO}(B(x_0,R))}$, we have
$$\left\{\dashint_{B}\exp \left(\frac{C_2}{2b}u\right)\mathrm{d} m\right\}\left\{\dashint_{B}\exp \left(-\frac{C_2}{2b}u\right)\mathrm{d} m\right\}\le (C_1+1)^2,$$
where $C_1,C_2$ are the positive constants appearing in Lemma \ref{lem_JohnNirenberg}.
\end{lemma}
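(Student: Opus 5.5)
The plan is to derive this directly from Lemma \ref{lem_JohnNirenberg}, applied with $U=B(x_0,R)$, using the layer-cake representation of the exponential integrals. Set $a=\frac{C_2}{2b}$. The key observation is the pointwise splitting
$$\exp(\pm a u)\le \exp(\pm a u_B)\exp\left(a|u-u_B|\right),$$
which holds because $\pm u=\pm u_B\pm(u-u_B)$. Here $u_B$ is finite since $u$ is locally integrable in $B(x_0,R)$ and $B$ is relatively compact in it. Multiplying the two averages, the factors $e^{a u_B}$ and $e^{-a u_B}$ cancel. Hence it suffices to show
$$\dashint_B \exp\left(a|u-u_B|\right)\mathrm{d} m\le C_1+1.$$

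To bound this average, I will write it via Fubini as
$$1+\frac{1}{m(B)}\int_0^{+\infty} a e^{a\lambda}\, m\left(B\cap\{|u-u_B|>\lambda\}\right)\mathrm{d}\lambda.$$
Since $12B\subseteq B(x_0,R)$, Lemma \ref{lem_JohnNirenberg} bounds the distribution function by $C_1 m(B)\exp(-C_2\lambda/\lVert u\rVert_{\mathrm{BMO}(B(x_0,R))})$. The assumption $b\ge\lVert u\rVert_{\mathrm{BMO}(B(x_0,R))}$ then gives $C_2/\lVert u\rVert_{\mathrm{BMO}}\ge C_2/b=2a$. So the integrand is at most $C_1 a e^{a\lambda}e^{-2a\lambda}=C_1 a e^{-a\lambda}$, whose integral over $(0,+\infty)$ equals exactly $C_1$. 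This yields the bound $1+C_1$ for each of the two averages, and hence their product is at most $(C_1+1)^2$.

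The only point needing care is the degenerate case $\lVert u\rVert_{\mathrm{BMO}(B(x_0,R))}=0$, where the John--Nirenberg bound is read with the exponential equal to $0$. In that case $u$ is a.e. constant on $B$, so both averages equal $e^{\pm a u_B}$ and their product is $1\le(C_1+1)^2$. Alternatively, the same computation goes through with the convention $e^{-\infty}=0$. There is no real obstacle here: the choice of the factor $\frac{1}{2}$ in $a$ is exactly what makes the exponential integral converge with the clean constant $C_1$.
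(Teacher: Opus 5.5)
Your proof is correct and is the standard derivation of this crossover estimate from the John--Nirenberg inequality: the paper gives no proof beyond citing \cite[COROLLARY 5.6]{BM95AMPA}, and your layer-cake computation with $a=\frac{C_2}{2b}$ yields exactly the stated constant $(C_1+1)^2$. The only implicit assumption is $b>0$, which is needed anyway for the exponent $\frac{C_2}{2b}$ to make sense.
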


For $\bullet\in\{L,J\}$, let $(\mathcal{E}^{(\bullet)}, \mathcal{F}^{(\bullet)})$ be a \emph{regular} $p$-energy. Following \cite[Section 8]{Yan25a} and \cite[Section 6]{Yan25c}, we can define the capacity $\mathrm{cap}^{(\bullet)}_1$, and the notion of $\mathcal{E}^{(\bullet)}_1$-quasi-continuity, and obtain the corresponding results. Although the arguments in \cite{Yan25a} were presented in the local setting, the same line of reasoning also applies to the non-local setting. For simplicity, when no ambiguity arises, we write quasi-continuous and q.e. to mean $\mathcal{E}^{(\bullet)}_1$-quasi-continuous and $\mathcal{E}^{(\bullet)}_1$-quasi-everywhere (abbreviated $\mathcal{E}^{(\bullet)}_1$-q.e.), respectively.

\begin{lemma}[{\cite[Proposition 8.5]{Yan25a}}]
Each $u\in \mathcal{F}^{(\bullet)}$ admits a quasi-continuous modification $\widetilde{u}$, that is, $\widetilde{u}$ is quasi-continuous and $u=\widetilde{u}$ $m$-a.e. in $X$.
\end{lemma}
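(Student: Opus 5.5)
The plan is the classical Fukushima-type argument, adapted to the nonlinear setting. Since $\mathcal{E}^{(\bullet)}$ is not quadratic, I would avoid any use of lattice properties of $\mathcal{E}^{(\bullet)}$ and rely only on three ingredients: the triangle inequality for $(\mathcal{E}^{(\bullet)}_1)^{1/p}$, the Markovian property, and regularity.

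First I fix the capacity. For open $U\subseteq X$ set
$$\mathrm{cap}^{(\bullet)}_1(U)=\inf\left\{\mathcal{E}^{(\bullet)}_1(f):f\in\mathcal{F}^{(\bullet)},\ f\ge1\ m\text{-a.e. on }U\right\},$$
and for arbitrary $A$ take the infimum over open $U\supseteq A$. Two elementary facts follow.
\begin{enumerate}[label=(\roman*)]
\item $m(U)\le\mathrm{cap}^{(\bullet)}_1(U)$, because $\mathcal{E}^{(\bullet)}_1(f)\ge\lVert f\rVert_{L^p}^p$. Hence sets of zero capacity are $m$-null.
\item Countable $\frac1p$-subadditivity: $\mathrm{cap}^{(\bullet)}_1(\bigcup_iU_i)^{1/p}\le\sum_i\mathrm{cap}^{(\bullet)}_1(U_i)^{1/p}$. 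To see this, take near-optimal $f_i$ for $U_i$. By the Markovian property applied to $t\mapsto|t|$, we have $|f_i|\in\mathcal{F}^{(\bullet)}$ and $\mathcal{E}^{(\bullet)}_1(|f_i|)\le\mathcal{E}^{(\bullet)}_1(f_i)$. Then $g=\sum_i|f_i|$ satisfies $g\ge1$ a.e. on the union. When the series $\sum_i\mathcal{E}^{(\bullet)}_1(f_i)^{1/p}$ converges, the partial sums are Cauchy in the Banach space $(\mathcal{F}^{(\bullet)},(\mathcal{E}^{(\bullet)}_1)^{1/p})$. So $g\in\mathcal{F}^{(\bullet)}$, and the seminorm triangle inequality gives the bound.
\end{enumerate}

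The key estimate is a Chebyshev-type capacity inequality. For $f\in\mathcal{F}^{(\bullet)}\cap C(X)$ and $\lambda>0$, the set $\{|f|>\lambda\}$ is open and $|f|/\lambda\ge1$ on it. Hence, again by the Markovian property,
$$\mathrm{cap}^{(\bullet)}_1(\{|f|>\lambda\})\le\lambda^{-p}\mathcal{E}^{(\bullet)}_1(f).$$

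Now let $u\in\mathcal{F}^{(\bullet)}$. By regularity, choose $f_n\in\mathcal{F}^{(\bullet)}\cap C_c(X)$ with $\mathcal{E}^{(\bullet)}_1(f_n-u)^{1/p}\le2^{-3n}$. Then $\mathcal{E}^{(\bullet)}_1(f_{n+1}-f_n)^{1/p}\le 2\cdot2^{-3n}$. Put $U_n=\{|f_{n+1}-f_n|>2^{-n}\}$, which is open. The capacity inequality gives $\mathrm{cap}^{(\bullet)}_1(U_n)^{1/p}\le 2^{n}\cdot 2\cdot 2^{-3n}=2^{1-2n}$. Let $V_k=\bigcup_{n\ge k}U_n$. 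By (ii), $\mathrm{cap}^{(\bullet)}_1(V_k)^{1/p}\le\sum_{n\ge k}2^{1-2n}\to0$. On $X\backslash V_k$ we have $|f_{n+1}-f_n|\le2^{-n}$ for all $n\ge k$, so $f_n$ converges uniformly there, and the limit restricted to $X\backslash V_k$ is continuous.

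Define $\widetilde u=\lim_n f_n$ wherever the limit exists, and $\widetilde u=0$ elsewhere. The limit exists off $N=\bigcap_kV_k$, and $\mathrm{cap}^{(\bullet)}_1(N)=0$. Thus $\widetilde u$ is quasi-continuous: for every $\varepsilon>0$ there is an open $V_k$ of capacity $<\varepsilon$ with $\widetilde u|_{X\backslash V_k}$ continuous. Finally, $f_n\to u$ in $L^p$, so a subsequence converges $m$-a.e. to $u$. Since $N$ is $m$-null by (i), it follows that $\widetilde u=u$ $m$-a.e.

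The only step needing real care is the subadditivity (ii). In the bilinear case one uses $\mathcal{E}(f\vee g)\le\mathcal{E}(f)+\mathcal{E}(g)$, which is not available a priori for general $p$-energies. I circumvent this by using the sum $\sum_i|f_i|$ together with the $p$-th root triangle inequality, at the cost of requiring geometric decay of the capacities. The choice $2^{-3n}$ above ensures exactly that decay.
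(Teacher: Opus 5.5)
Your argument is correct. It is the standard Fukushima-type construction: a capacitary Chebyshev bound for continuous functions, summable capacities of the sets $\{|f_{n+1}-f_n|>2^{-n}\}$, and uniform convergence off small open sets. The paper does not reproduce this construction; it invokes it by citing \cite[Proposition 8.5]{Yan25a}. Your derivation of countable $\frac1p$-subadditivity via $\sum_i|f_i|$ and the triangle inequality for $(\mathcal{E}^{(\bullet)}_1)^{1/p}$ uses only completeness and the Markovian property. It therefore applies verbatim to both $\bullet=L$ and $\bullet=J$, which is exactly what the paper's remark that the local argument carries over to the non-local setting requires.
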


Let $\Omega\subseteq X$ be an open subset. Define
$$\mathcal{F}^{(\bullet)}(\Omega)=\text{the }\mathcal{E}^{(\bullet)}_1\text{-closure of }\mathcal{F}^{(\bullet)}\cap C_c(\Omega).$$
We say that $u\in \mathcal{F}^{(\bullet)}$ is harmonic in $\Omega$ if $\mathcal{E}^{(\bullet)}(u;\varphi)=0$ for any $\varphi\in \mathcal{F}^{(\bullet)}\cap C_c(\Omega)$, denoted by $-\Delta_p^{(\bullet)}u=0$ in $\Omega$. We say that $u\in \mathcal{F}^{(\bullet)}$ is superharmonic in $\Omega$ (resp. subharmonic in $\Omega$) if $\mathcal{E}^{(\bullet)}(u;\varphi)\ge0$ (resp. $\mathcal{E}^{(\bullet)}(u;\varphi)\le0$) for any non-negative $\varphi\in \mathcal{F}^{(\bullet)}\cap C_c(\Omega)$, denoted by $-\Delta_p^{(\bullet)}u\ge0$ in $\Omega$ (resp. $-\Delta_p^{(\bullet)}u\le0$ in $\Omega$). More generally, let $f\in L^\infty(\Omega;m)$, we write $-\Delta_p^{(\bullet)}u=f$ (resp. $-\Delta_p^{(\bullet)}u\ge f$) if $\mathcal{E}^{(\bullet)}(u;\varphi)=\int_X f\varphi \mathrm{d}m$ (resp. $\mathcal{E}^{(\bullet)}(u;v)\ge\int_X f\varphi \mathrm{d}m$) for any non-negative $\varphi\in \mathcal{F}^{(\bullet)}\cap C_c(\Omega)$. If $\Omega$ is additionally assumed to be bounded, then by the following result, the above (in)equalities also hold for any non-negative $v\in \mathcal{F}^{(\bullet)}(\Omega)$.

\begin{lemma}[{\cite[Lemma 6.2]{Yan25c}}]\label{lem_FOmega}
For any bounded open subset $\Omega\subseteq X$, we have
\begin{align*}
\mathcal{F}^{(\bullet)}(\Omega)=\left\{u\in \mathcal{F}^{(\bullet)}:\widetilde{u}=0\text{ q.e. on }X\backslash \Omega\right\}.
\end{align*}
\end{lemma}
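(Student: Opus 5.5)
The plan is to argue as in the bilinear case (cf.\ the proof of the analogous statement for regular Dirichlet forms in Fukushima--Oshima--Takeda). Since $\mathcal{E}^{(\bullet)}$ is not bilinear, products and Leibniz-type manipulations are replaced by lattice operations together with a \emph{weak compactness plus Mazur} argument. By the $p$-Clarkson inequality, $(\mathcal{F}^{(\bullet)},\mathcal{E}^{(\bullet)\,1/p}_1)$ is uniformly convex, hence reflexive. Therefore any sequence that is bounded in $\mathcal{E}^{(\bullet)}_1$ and converges in $L^p(X;m)$ converges weakly in $\mathcal{F}^{(\bullet)}$ to its $L^p$-limit. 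Moreover $\mathcal{F}^{(\bullet)}(\Omega)$ is a closed linear subspace, so it is weakly closed. The recurring tool is therefore:
\begin{quote}
if $v_n\in\mathcal{F}^{(\bullet)}(\Omega)$, $\sup_n\mathcal{E}^{(\bullet)}_1(v_n)<+\infty$ and $v_n\to v$ in $L^p(X;m)$, then $v\in\mathcal{F}^{(\bullet)}(\Omega)$.
\end{quote}

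\emph{Inclusion ``$\subseteq$''.} Take $u\in\mathcal{F}^{(\bullet)}(\Omega)$ and $u_n\in\mathcal{F}^{(\bullet)}\cap C_c(\Omega)$ with $u_n\to u$ in $\mathcal{E}^{(\bullet)}_1$. By the standard capacitary estimate from \cite[Section 8]{Yan25a}, a subsequence converges to $\widetilde{u}$ q.e.\ (indeed quasi-uniformly). Each $u_n$ vanishes identically on $X\backslash\Omega$, so $\widetilde{u}=0$ q.e.\ on $X\backslash\Omega$.

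\emph{Inclusion ``$\supseteq$''.} Take $u\in\mathcal{F}^{(\bullet)}$ with $\widetilde{u}=0$ q.e.\ on $X\backslash\Omega$.

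First, reduce to the case where $u$ is nonnegative and bounded. Write $u=u_+-u_-$ and truncate $u_\pm\wedge M$. By the Markovian property these have energy at most $\mathcal{E}^{(\bullet)}_1(u)$, and they converge to $u_\pm$ in $L^p$. Their quasi-continuous versions still vanish q.e.\ off $\Omega$, so the tool above applies.

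Next, with $0\le u\le M$, use quasi-continuity: choose open sets $G_k$ with $\mathrm{cap}^{(\bullet)}_1(G_k)<1/k$ such that $\widetilde{u}|_{X\backslash G_k}$ is continuous and $\widetilde{u}=0$ on $(X\backslash\Omega)\backslash G_k$. Let $e_k\in\mathcal{F}^{(\bullet)}$ satisfy $0\le e_k$, $\widetilde{e_k}\ge1$ q.e.\ on $G_k$, and $\mathcal{E}^{(\bullet)}_1(e_k)\le 2/k$. For $\varepsilon>0$ set
$$f_{k,\varepsilon}=\bigl((u-Me_k)_+-\varepsilon\bigr)_+.$$
Then $f_{k,\varepsilon}$ vanishes q.e.\ on $G_k$ and on $\{\widetilde{u}<\varepsilon\}\backslash G_k$. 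Its support is therefore contained in $\{\widetilde{u}\ge\varepsilon\}\backslash G_k$. This set is closed, because $\widetilde{u}$ is continuous on the closed set $X\backslash G_k$. It is also contained in $\Omega$, which is bounded, and balls are relatively compact, so it is a compact subset of $\Omega$. The energies of $f_{k,\varepsilon}$ are bounded uniformly in $k,\varepsilon$ by the triangle inequality for $\mathcal{E}^{1/p}_1$ and the Markovian property. Also $f_{k,\varepsilon}\to u$ in $L^p$ as $k\to\infty$ and then $\varepsilon\downarrow0$, since $Me_k\to0$ in $L^p$. By the tool, it suffices to show that every bounded $f\in\mathcal{F}^{(\bullet)}$ with compact support $K\subseteq\Omega$ lies in $\mathcal{F}^{(\bullet)}(\Omega)$.

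Finally, for such $f$, use regularity in two ways. First, uniform density of $\mathcal{F}^{(\bullet)}\cap C_c(X)$ in $C_c(X)$, followed by composing with a normal contraction, produces $\phi\in\mathcal{F}^{(\bullet)}\cap C_c(\Omega)$ with $0\le\phi\le1$ and $\phi=1$ on a neighbourhood of $K$. Second, take $g_n\in\mathcal{F}^{(\bullet)}\cap C_c(X)$ with $g_n\to f$ in $\mathcal{E}^{(\bullet)}_1$. With $M\ge\lVert f\rVert_\infty$, set
$$h_n=\bigl(g_n\wedge M\phi\bigr)\vee(-M\phi).$$
Then $h_n\in C_c(\Omega)$ and, since $\mathcal{F}^{(\bullet)}$ is a lattice by the Markovian property ($a\wedge b=a-(a-b)_+$), also $h_n\in\mathcal{F}^{(\bullet)}$. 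The $h_n$ have uniformly bounded energy, and $h_n\to(f\wedge M\phi)\vee(-M\phi)=f$ in $L^p$, so the tool gives $f\in\mathcal{F}^{(\bullet)}(\Omega)$.

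I expect the main obstacle to be the nonlinearity: every ``multiply by a cutoff'' or ``subtract the equilibrium potential'' step of the bilinear proof must be replaced by lattice operations with uniform energy bounds. The key points to verify carefully are that Markovian contractions and $\min$/$\max$ keep energies controlled by $\mathcal{E}^{(\bullet)}_1$-norms (through $\mathcal{E}^{1/p}$ being a seminorm), and that the existence of capacitary potentials $e_k$ for open sets holds in the $p$-setting as in \cite[Section 8]{Yan25a}. The bound for $\min$/$\max$ follows from $a\wedge b=a-(a-b)_+$ and the seminorm property; for the $e_k$, I would take the minimizer of $\mathcal{E}^{(\bullet)}_1$ over the closed convex set $\{e:\widetilde e\ge1\text{ q.e.\ on }G_k\}$.
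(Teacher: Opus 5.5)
Your proof is correct. The paper gives no proof of its own here and only cites \cite[Lemma 6.2]{Yan25c}, which is the $p$-energy version of the standard Fukushima--Oshima--Takeda argument (cf.\ \cite{FOT11}). Your proposal carries out that adaptation: reflexivity from the $p$-Clarkson inequality replaces Banach--Saks, and lattice operations replace multiplication by cutoffs. Since it uses only the Markovian property, regularity and the capacity facts from \cite{Yan25a}, it applies verbatim to both $\bullet=L$ and $\bullet=J$.
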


We establish existence and uniqueness results for boundary value problems associated with equations involving $\Delta_p^{(\bullet)}$, as follows. The proof follows the same direct method in the calculus of variations as in \cite[Theorem 8]{LL17}, which is stated in the setting of $\mathbb{R}^n$. Owing to the $p$-Clarkson's inequality for $p>1$, the same argument applies in our setting, and the proof is therefore omitted.

\begin{proposition}\label{prop_exist}
Let $\Omega\subseteq X$ be a bounded open subset and let $\lambda\ge0$. Assume \ref{eq_VD}, $\text{PI}^{(\bullet)}(\Psi)$ for some $\Psi$ in the case $\lambda=0$. Then there exists a unique $u\in \mathcal{F}^{(\bullet)}(\Omega)$ such that $-\Delta_p^{(\bullet)}u+\lambda \lvert u\rvert^{p-2}u=1$ in $\Omega$, that is,
$$\mathcal{E}^{(\bullet)}(u;\varphi)+\lambda \int_X \lvert u\rvert^{p-2}u\varphi \mathrm{d}m=\int_X\varphi \mathrm{d}m,$$
for any $\varphi\in \mathcal{F}^{(\bullet)}(\Omega)$.
\end{proposition}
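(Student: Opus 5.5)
We will prove Proposition \ref{prop_exist} by the direct method in the calculus of variations, applied to the functional
$$I(v)=\frac{1}{p}\mathcal{E}^{(\bullet)}(v)+\frac{\lambda}{p}\int_X|v|^p\mathrm{d}m-\int_\Omega v\,\mathrm{d}m,\qquad v\in\mathcal{F}^{(\bullet)}(\Omega).$$
Since $\Omega$ is bounded, $1_\Omega\in L^{p'}$, so the linear term is continuous on $L^p$. A minimizer $u$ satisfies $\frac{\mathrm{d}}{\mathrm{d}t}I(u+t\varphi)|_{t=0}=0$ for every $\varphi\in\mathcal{F}^{(\bullet)}(\Omega)$. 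By the differentiability of $\mathcal{E}^{(\bullet)}$ supplied by $p$-Clarkson (the derivative $\mathcal{E}(f;g)$ recalled in Section \ref{sec_result}) and the elementary differentiability of $t\mapsto\int|u+t\varphi|^p$, this is exactly the weak equation in the statement.

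\textbf{Coercivity.} If $\lambda>0$, $I(v)\ge \frac{\min(1,\lambda)}{p}\mathcal{E}_1(v)-m(\Omega)^{1/p'}\|v\|_{L^p}$, which is coercive. If $\lambda=0$, we need a Friedrichs-type inequality $\|v\|_{L^p}^p\le C_\Omega\,\mathcal{E}^{(\bullet)}(v)$ for $v\in\mathcal{F}^{(\bullet)}(\Omega)$, and we derive it from $\text{PI}^{(\bullet)}(\Psi)$ and \ref{eq_VD}.

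\begin{enumerate}[label=(\roman*)]
\item Choose a ball $B=B(x_0,R)\supseteq\Omega$. Since $X$ is unbounded and connected (by \ref{eq_CC}), there is $R'\ge 2R$ with $m(B(x_0,R')\setminus B)\ge c\,m(B(x_0,R'))$. This uses reverse volume doubling, which follows from \ref{eq_VD} and \ref{eq_CC}.
\item Let $B'=B(x_0,R')$. Since $v=0$ on $B'\setminus B$, we get $|v_{B'}|^p\,m(B'\setminus B)\le\int_{B'}|v-v_{B'}|^p\mathrm{d}m$.
\item Hence $\int_{B'}|v|^p\mathrm{d}m\lesssim\int_{B'}|v-v_{B'}|^p\mathrm{d}m\lesssim\Psi(R')\,\mathcal{E}^{(\bullet)}(v)$, using that the energy-measure integral over $A_{PI}B'$ is bounded by the total energy.
\end{enumerate}

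This is first justified for $v\in\mathcal{F}^{(\bullet)}\cap C_c(\Omega)$ and then extended by closure. With this inequality, $\mathcal{E}^{(\bullet)}$ and $\mathcal{E}^{(\bullet)}_1$ are equivalent on $\mathcal{F}^{(\bullet)}(\Omega)$, and $I$ is coercive. In both cases $I$ is bounded below.

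\textbf{Existence.} Take a minimizing sequence $u_n$. It is bounded in the Banach space $(\mathcal{F}^{(\bullet)}(\Omega),\mathcal{E}^{(\bullet)\,1/p}_1)$. The $p$-Clarkson inequality makes this space uniformly convex, hence reflexive. Rather than appeal to weak lower semicontinuity, we would run the Clarkson midpoint argument directly. Applying $p$-Clarkson to $\mathcal{E}^{(\bullet)}$, $\|\cdot\|_{L^p}^p$ and the linear term at $(u_n+u_m)/2$, the inequality $I(\frac{u_n+u_m}{2})\ge\inf I$ forces $\mathcal{E}^{(\bullet)}_1(\frac{u_n-u_m}{2})\to0$. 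In the case $\lambda=0$ we combine this with the Friedrichs inequality. So $(u_n)$ is Cauchy, and its limit $u$ lies in the closed subspace $\mathcal{F}^{(\bullet)}(\Omega)$. Since $I$ is continuous in the $\mathcal{E}_1$-norm, $u$ is a minimizer.

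\textbf{Uniqueness.} Suppose $u_1,u_2$ both solve the equation. Test both with $\varphi=u_1-u_2$ and subtract:
$$\mathcal{E}^{(\bullet)}(u_1;u_1-u_2)-\mathcal{E}^{(\bullet)}(u_2;u_1-u_2)+\lambda\int(|u_1|^{p-2}u_1-|u_2|^{p-2}u_2)(u_1-u_2)\mathrm{d}m=0.$$
Each term is nonnegative by monotonicity of the derivative of a convex functional. For strict conclusions, strict convexity of $\mathcal{E}^{(\bullet)}$ along nonconstant directions follows from Clarkson. Alternatively, both $u_i$ minimize the strictly convex $I$; strict convexity holds modulo functions of zero energy, and in $\mathcal{F}^{(\bullet)}(\Omega)$ these vanish, by the Friedrichs inequality when $\lambda=0$ and trivially when $\lambda>0$. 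Hence $u_1=u_2$.

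The main obstacle is the Friedrichs inequality when $\lambda=0$, specifically securing a positive-measure region of $B'$ outside $\Omega$. This is where unboundedness and \ref{eq_CC} are needed. The convexity and existence steps are routine given Clarkson.
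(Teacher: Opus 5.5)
Your proposal is correct and follows essentially the paper's route: the direct method in the calculus of variations, with $p$-Clarkson supplying the convexity and $\mathrm{PI}^{(\bullet)}(\Psi)$ supplying the Friedrichs inequality that makes $(\mathcal{F}^{(\bullet)}(\Omega),(\mathcal{E}^{(\bullet)})^{1/p})$ a Banach space when $\lambda=0$. Using the Clarkson midpoint estimate to get a strongly Cauchy minimizing sequence is a harmless variant of the weak-compactness argument. Also, CC is not needed in your Friedrichs step: with an $\Omega$-dependent constant, unboundedness of $X$ and full support of $m$ already give $m(B'\setminus B)>0$.
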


\begin{remark}
In the case $\lambda=0$, \ref{eq_VD}, $\text{PI}^{(\bullet)}(\Psi)$ ensure that $(\mathcal{F}^{(\bullet)}(\Omega),(\mathcal{E}^{(\bullet)})^{1/p})$ is a Banach space. In contrast, when $\lambda>0$, $(\mathcal{F}^{(\bullet)}(\Omega),(\mathcal{E}^{(\bullet)}_\lambda)^{1/p})$ is automatically a Banach space by definition.
\end{remark}

We have the following local and non-local comparison principles.

\begin{proposition}[{Comparison principle, local version; \cite[Proposition 3.5]{Yan25d}}]\label{prop_comparisonL}
Assume \ref{eq_VD}. Let $(\mathcal{E}^{(L)},\mathcal{F}^{(L)})$ be a strongly local regular $p$-energy satisfying \ref{eq_PIL}. Let $\lambda\ge0$. Let $\Omega\subseteq X$ be a bounded open subset and let $u,v\in\mathcal{F}^{(L)}$ satisfy
$$-\Delta^{(L)}_pu+\lambda|u|^{p-2}u\ge-\Delta^{(L)}_pv+\lambda|v|^{p-2}v\text{ in }\Omega,$$
that is,
$$\mathcal{E}^{(L)}(u;\varphi)+\lambda\int_X|u|^{p-2}u\varphi\mathrm{d} m\ge\mathcal{E}^{(L)}(v;\varphi)+\lambda\int_X|v|^{p-2}v\varphi\mathrm{d} m$$
for any non-negative $\varphi\in\mathcal{F}^{(L)}(\Omega)$, and $(v-u)_+\in \mathcal{F}^{(L)}(\Omega)$, or
$$\widetilde{u}\ge \widetilde{v}\text{ q.e. on }W\backslash \Omega\text{ for some open set }W\supseteq \overline{\Omega}\supseteq \Omega.$$
Then $u\ge v$ in $\Omega$.
\end{proposition}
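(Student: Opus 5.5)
The plan is to test the inequality with the positive part of $v-u$ and use the strict monotonicity of the $p$-Laplacian at the level of energy measures. I first treat the case $(v-u)_+\in\mathcal{F}^{(L)}(\Omega)$ and set $\varphi=(v-u)_+$. Testing the assumed inequality with $\varphi$ gives
$$\mathcal{E}^{(L)}(v;\varphi)-\mathcal{E}^{(L)}(u;\varphi)+\lambda\int_X\left(|v|^{p-2}v-|u|^{p-2}u\right)\varphi\,\mathrm{d}m\le0.$$
The $\lambda$-term is non-negative, because $t\mapsto|t|^{p-2}t$ is increasing and $\varphi$ is supported where $v>u$. So it suffices to show that
$$\mathcal{E}^{(L)}(v;\varphi)-\mathcal{E}^{(L)}(u;\varphi)\ge c\,\mathcal{E}^{(L)}(\varphi)$$
in a weak sense, namely that the left-hand side vanishes only if $\mathcal{E}^{(L)}(\varphi)=0$.

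To compare the two energies I localize with the energy measure $\Gamma^{(L)}$ of \cite[Theorem 2.4]{Sas26}. I will write $\mathcal{E}^{(L)}(f;g)=\int_X\mathrm{d}\Gamma^{(L)}(f;g)$. The chain rule \ref{item_meas5}, applied with the Lipschitz function $t\mapsto t_+$ and approximated by piecewise $C^1$ functions, together with strong locality and density of $\mathcal{F}^{(L)}\cap C_c(X)$, should give
$$\mathrm{d}\Gamma^{(L)}(f;\varphi)=1_{\{v>u\}}\,\mathrm{d}\Gamma^{(L)}(f;v-u)\quad\text{for }f\in\{u,v\}.$$
Hence
$$\mathcal{E}^{(L)}(v;\varphi)-\mathcal{E}^{(L)}(u;\varphi)=\int_{\{v>u\}}\left(\mathrm{d}\Gamma^{(L)}(v;v-u)-\mathrm{d}\Gamma^{(L)}(u;v-u)\right).$$
The integrand is a non-negative measure, and it is strictly positive wherever $\Gamma^{(L)}(v-u)$ charges. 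This follows from the $p$-Clarkson inequality \ref{item_meas4} for $\Gamma^{(L)}(\cdot)(A)$ on every Borel set $A$: it makes $\Gamma^{(L)}(\cdot)(A)$ strictly convex modulo its null space, and the derivative of a convex functional is monotone. Quantitatively, one expects the standard estimates, which are $\gtrsim\Gamma(v-u)(A)$ when $p\ge2$ and an analogous Hölder-type bound when $p<2$. I expect this step to be the main obstacle. It requires justifying the chain rule for the non-smooth positive part and for functions that are not compactly supported or continuous, and extracting a clean strict monotonicity on each set $A$ from Clarkson's inequality. I would first try to prove monotonicity only on $A=X$ after rewriting $\mathcal{E}^{(L)}(v;\varphi)-\mathcal{E}^{(L)}(u;\varphi)=\mathcal{E}^{(L)}(u+\varphi;\varphi)-\mathcal{E}^{(L)}(u;\varphi)$ using strong locality. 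This identity holds because $v=u+\varphi$ on $\{v>u\}$ and $\varphi$ is constant ($=0$) elsewhere. It would reduce everything to monotonicity of the Gâteaux derivative of the convex functional $\mathcal{E}^{(L)}$ along the segment $u+t\varphi$, which is available from \cite{KS24a}.

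With this in hand I obtain $\mathcal{E}^{(L)}(\varphi)=0$ for some $\varphi\in\mathcal{F}^{(L)}(\Omega)$ with $\Omega$ bounded. Applying \ref{eq_PIL} on a large ball containing $\Omega$, together with \ref{eq_VD}, shows that $\varphi$ is a.e. constant on that ball. Since $\varphi=0$ q.e. outside $\Omega$ by Lemma \ref{lem_FOmega}, and the complement of $\Omega$ in the ball has positive measure for a large enough ball, $\varphi=0$. Hence $u\ge v$ in $\Omega$.

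In the second case only $\widetilde u\ge\widetilde v$ q.e. on $W\backslash\Omega$ is known. I will choose a cutoff $\eta\in\mathcal{F}^{(L)}\cap C_c(W)$ with $\eta=1$ on a neighbourhood of $\overline\Omega$, which exists by regularity. Fix $k>0$ and set $\varphi_k=(v-u)_+\wedge(k\eta)\in\mathcal{F}^{(L)}$ by the Markovian property. This function is quasi-continuous, vanishes q.e. on $W\backslash\Omega$ because $(\widetilde v-\widetilde u)_+=0$ there, and vanishes off $\mathrm{supp}\,\eta$. So $\varphi_k\in\mathcal{F}^{(L)}(\Omega)$ by Lemma \ref{lem_FOmega}, and on $\Omega$ it equals $(v-u)_+\wedge k$. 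Repeating the previous argument with the truncated test function $\varphi_k$, and applying strong locality on the set $\{0<v-u<k\}$, gives $\mathcal{E}^{(L)}(\varphi_k)=0$. Therefore $\varphi_k=0$, and so $v\le u$ a.e. in $\Omega$.
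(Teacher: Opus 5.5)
The paper does not prove this statement; it quotes it from \cite[Proposition 3.5]{Yan25d}. Your outline is the standard argument, and the surrounding steps are sound:
\begin{itemize}
\item the $\lambda$-term has the right sign;
\item strictness does follow from the $p$-Clarkson inequality: if $\mathcal{E}^{(L)}(u+\varphi;\varphi)=\mathcal{E}^{(L)}(u;\varphi)$, the convex function $t\mapsto\mathcal{E}^{(L)}(u+t\varphi)$ has the same derivative at $t=0$ and $t=1$, hence is affine on $[0,1]$, and Clarkson's inequality for the pair $u,u+\varphi$ at the midpoint then forces $\mathcal{E}^{(L)}(\varphi)=0$;
\item \ref{eq_PIL} together with Lemma \ref{lem_FOmega} shows that a zero-energy element of $\mathcal{F}^{(L)}(\Omega)$ vanishes;
\item $(v-u)_+\wedge(k\eta)$ is an admissible non-negative test function in $\mathcal{F}^{(L)}(\Omega)$ for the second case.
\end{itemize}

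What is missing is the one step that carries all the content. This is the identity $\mathcal{E}^{(L)}(v;\varphi)=\mathcal{E}^{(L)}(u+\varphi;\varphi)$, equivalently $\mathrm{d}\Gamma^{(L)}(f;(v-u)_+)=1_{\{\widetilde{v}>\widetilde{u}\}}\,\mathrm{d}\Gamma^{(L)}(f;v-u)$. You also need its analogue for $\varphi_k$, which additionally requires that $\Gamma^{(L)}(\varphi_k)$ does not charge $\{\widetilde{\varphi}_k=k\}$.

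This identity does not follow from strong locality as defined in the paper. That definition requires compact supports and one function to be constant on an \emph{open} neighbourhood of the support of the other. Here the supports of $(v-u)_+$ and $(v-u)_-$ typically meet along $\{\widetilde{v}=\widetilde{u}\}$, and $\{\widetilde{v}>\widetilde{u}\}$ is only quasi-open. Properties \ref{item_meas3} and \ref{item_meas5} are likewise stated only on $\mathcal{F}^{(L)}\cap C_c(X)$.

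On that class everything you need is immediate from \ref{item_meas3}. For instance, $\Gamma^{(L)}(g)(\{g=c\})=\Gamma^{(L)}(0)(\{g=c\})=0$. Also, differentiating $t\mapsto\Gamma^{(L)}(f+th)(A)$ gives $\Gamma^{(L)}(f;h)(A)=\Gamma^{(L)}(f';h)(A)$ whenever $f-f'$ is constant on $A$. So the real work is extending these two facts to quasi-continuous versions of arbitrary $u,v\in\mathcal{F}^{(L)}$.

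That extension is not automatic. Approximating sequences from $\mathcal{F}^{(L)}\cap C_c(X)$ converge only quasi-uniformly, and the indicators $1_{\{v_n>u_n\}}$ need not converge on $\{\widetilde{v}=\widetilde{u}\}$. To pass to the limit you need two facts:
\begin{itemize}
\item $\Gamma^{(L)}(w)$ charges no set of zero capacity (the fact that makes $\widetilde{f}$ well defined $\Gamma^{(L)}(\phi)$-a.e.\ in $\text{CS}^{(L)}$);
\item the energy image density $\Gamma^{(L)}(w)(\{\widetilde{w}=c\})=0$ holds for every $w\in\mathcal{F}^{(L)}$.
\end{itemize}
You must prove these or cite them from the $p$-energy measure theory. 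As written, your main route and your fallback route stop at the same point, since the fallback also rests on this identity.
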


\begin{proposition}[{Comparison principle, non-local version}]\label{prop_comparisonJ}
Let $(\mathcal{E}^{(J)}, \mathcal{F}^{(J)})$ be a non-local regular $p$-energy associated with $\Upsilon$. Let $\lambda\ge0$. Let $\Omega\subseteq X$ be a bounded open subset and let $u,v\in \mathcal{F}^{(J)}$ satisfy
$$-\Delta_p^{(J)}u+\lambda \lvert u\rvert^{p-2}u\ge -\Delta_p^{(J)}v+\lambda \lvert v\rvert^{p-2}v\text{ in }\Omega,$$
that is,
$$\mathcal{E}^{(J)}(u;\varphi)+\lambda\int_X \lvert u\rvert^{p-2} u\varphi \mathrm{d}m\ge\mathcal{E}^{(J)}(v;\varphi)+\lambda\int_X \lvert v\rvert^{p-2} v\varphi \mathrm{d}m$$
for any non-negative $\varphi\in \mathcal{F}^{(J)}(\Omega)$, and
$$\widetilde{u}\ge \widetilde{v}\text{ q.e. on }X\backslash \Omega.$$
Then $u\ge v$ in $\Omega$, that is, $u\ge v$ in $X$.
\end{proposition}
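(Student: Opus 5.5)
We will follow the standard test-function argument: test the difference of the two inequalities with $\varphi=(v-u)_+$ and use strict monotonicity of the non-local $p$-Laplacian together with the monotonicity of $t\mapsto|t|^{p-2}t$.

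\emph{Admissibility of the test function.} Set $w=(v-u)_+$. Since $u,v\in\mathcal{F}^{(J)}$ and the Markovian property holds (apply it with $\varphi(t)=t_+$), we have $w\in\mathcal{F}^{(J)}$. Quasi-continuous modifications commute with Lipschitz maps, so $\widetilde{w}=(\widetilde{v}-\widetilde{u})_+$ q.e. By hypothesis $\widetilde{u}\ge\widetilde{v}$ q.e. on $X\backslash\Omega$, hence $\widetilde{w}=0$ q.e. on $X\backslash\Omega$. Because $\Omega$ is bounded, Lemma \ref{lem_FOmega} gives $w\in\mathcal{F}^{(J)}(\Omega)$, and $w\ge0$. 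So $w$ is an admissible test function, and
$$\mathcal{E}^{(J)}(v;w)-\mathcal{E}^{(J)}(u;w)+\lambda\int_X\left(|v|^{p-2}v-|u|^{p-2}u\right)w\,\mathrm{d}m\le0.$$

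\emph{Pointwise sign analysis.} For the non-local form, differentiating gives the explicit formula
$$\mathcal{E}^{(J)}(f;g)=\int\!\!\int|f(x)-f(y)|^{p-2}(f(x)-f(y))(g(x)-g(y))K^{(J)}\,\mathrm{d}m\,\mathrm{d}m.$$
Write $a=v(x)-v(y)$, $b=u(x)-u(y)$, $\eta=v-u$, so $w=\eta_+$ and $a-b=\eta(x)-\eta(y)$. Set $J_p(t)=|t|^{p-2}t$. The kernel integrand is then $(J_p(a)-J_p(b))(\eta_+(x)-\eta_+(y))$.

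We show this is nonnegative. Since $J_p$ is increasing, $J_p(a)-J_p(b)$ has the sign of $a-b=\eta(x)-\eta(y)$. Since $t\mapsto t_+$ is nondecreasing, $\eta_+(x)-\eta_+(y)$ has the same sign as $\eta(x)-\eta(y)$, or is zero. The zeroth-order term is also nonnegative: on $\{w>0\}$ we have $v>u$, so $J_p(v)-J_p(u)>0$.

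Consequently both terms in the displayed inequality are nonnegative, and each must vanish.

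\emph{Conclusion.} This final step is the main point to watch, though it is not hard.

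If $\lambda>0$, vanishing of the zeroth-order term forces $m(\{w>0\})=0$, and we are done.

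If $\lambda=0$, we use strictness. On the set where $\eta_+(x)\ne\eta_+(y)$, we have $\eta(x)\ne\eta(y)$, so $a\ne b$, and by strict monotonicity of $J_p$ the integrand is strictly positive there. The kernel is positive, since it is comparable to $1/(V\Upsilon)$ for a non-local energy associated with $\Upsilon$. Hence $\eta_+(x)=\eta_+(y)$ for $m\otimes m$-a.e. $(x,y)$, i.e. $w$ equals a constant $c$ a.e.

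Now $w\in\mathcal{F}^{(J)}(\Omega)\subseteq L^p(X;m)$, and $X$ is unbounded with doubling measure, so $m(X)=+\infty$ and the only constant in $L^p$ is $c=0$. Alternatively, $c=0$ because $w$ vanishes a.e. outside the bounded set $\Omega$, and $m(X\backslash\Omega)>0$.

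In either case $w=0$ a.e., that is, $u\ge v$ a.e. in $X$, and in particular in $\Omega$.
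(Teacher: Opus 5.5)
Your proof is correct and is essentially the argument the paper intends. The paper omits the proof and refers to \cite[Lemma 9]{LL14}, which likewise tests with $(v-u)_+$ and uses the strict monotonicity of $t\mapsto\lvert t\rvert^{p-2}t$ to force $(v-u)_+$ to be constant, hence zero; your handling of $\lambda>0$ through the zeroth-order term is the natural extension. The one point worth tightening is the case $\lambda=0$: you justify positivity of $K^{(J)}$ by the lower bound in $\text{K}^{(J)}(\Upsilon)$, but where the paper applies this result (Section \ref{sec_CS2CE_J}) no pointwise lower bound is assumed, only $\text{UPR}^{(J)}$, $\text{T}^{(J)}(\Upsilon)$, $\text{PI}^{(J)}(\Upsilon)$ and $\text{CS}^{(J)}_{\text{weak}}(\Upsilon)$. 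It is therefore more robust to note that your pointwise analysis already gives $\mathcal{E}^{(J)}(w)=0$, and then let $\text{PI}^{(J)}(\Upsilon)$ force $w$ to be constant. Some such nondegeneracy is genuinely needed, since $K^{(J)}\equiv0$ yields a regular $p$-energy for which the $\lambda=0$ statement fails.
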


\begin{remark}
Proposition \ref{prop_comparisonJ} was proved in a different setting in \cite[Lemma 9]{LL14}. The same argument applies in our setting, and therefore the proof is omitted.
\end{remark}

We have the following local and non-local Sobolev inequalities.

\begin{proposition}[{Sobolev inequality, local version; \cite[Proposition 4.1]{Yan25d}}]\label{prop_SobolevL}
Assume \ref{eq_VD}. Let $(\mathcal{E}^{(L)},\mathcal{F}^{(L)})$ be a strongly local regular $p$-energy satisfying \ref{eq_PIL}. Then there exist $\kappa>1$, $C>0$ such that for any ball $B(x_0,R)$, for any $f\in\mathcal{F}^{(L)}(B(x_0,R))$, we have
$$\left(\int_{B(x_0,R)}|f|^{p\kappa}\mathrm{d} m\right)^{\frac{1}{\kappa}}\le C\frac{\Psi(R)}{V(x_0,R)^{\frac{\kappa-1}{\kappa}}}\mathcal{E}^{(L)}(f).$$
Indeed, we can take $\kappa=\frac{\nu}{\nu-\beta_\Psi^{(1)}}$, where $\nu=\max\{\beta_\Psi^{(1)}+1,\log_2C_{VD}\}$.
\end{proposition}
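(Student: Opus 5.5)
The plan is to follow the Hajłasz--Koskela scheme: a pointwise telescoping estimate from \ref{eq_PIL}, then a weak-type bound by a covering argument, then an upgrade to the strong-type inequality by Maz'ya truncation. I will only use \ref{eq_VD} (which gives the lower volume bound
$$\frac{V(x,r)}{V(x_0,R)}\ge c\left(\frac{r}{R}\right)^{\nu}$$
for $x\in B(x_0,R)$, $r\le R$, with $\nu\ge\log_2C_{VD}$), the lower scaling (\ref{eq_beta12}) of $\Psi$, and \ref{eq_PIL}. Since $\Gamma^{(L)}(f)$ may be singular with respect to $m$ (as on fractals), every estimate will be written in terms of $\Gamma^{(L)}(f)(B(x,Ar))$ and never through a density. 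By the regularity of the energy and the definition of $\mathcal{F}^{(L)}(B(x_0,R))$, it suffices to treat $f\in\mathcal{F}^{(L)}\cap C_c(B(x_0,R))$, and then pass to the limit by Fatou's lemma.

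\emph{Step 1 (pointwise estimate).} Fix $x\in B(x_0,R)$ and set $B_k=B(x,2^{-k}\cdot 2R)$ for $k\ge0$, so $B_0\supseteq B(x_0,R)$. Telescoping, using doubling and \ref{eq_PIL} on each $B_k$, gives
$$|f(x)|\le|f_{B_0}|+C\sum_{k\ge0}\left(\frac{\Psi(2^{-k}R)}{V(x,2^{-k}R)}\,\Gamma^{(L)}(f)(B(x,A2^{-k}R))\right)^{1/p}.$$
To control $|f_{B_0}|$, I apply \ref{eq_PIL} on $B(x_0,CR)$ with $f=0$ on $B(x_0,CR)\setminus B(x_0,R)$. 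This set has measure $\gtrsim V(x_0,R)$, because \ref{eq_CC} gives points at distance $\asymp R$ from $x_0$ and hence balls of comparable volume. The result is
$$|f_{B_0}|\lesssim\left(\frac{\Psi(R)}{V(x_0,R)}\mathcal{E}^{(L)}(f)\right)^{1/p},$$
where strong locality is used to see that $\Gamma^{(L)}(f)$ lives in $\mathrm{supp} f$.

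\emph{Step 2 (weak type).} Write $\nu=\max\{\beta^{(1)}_\Psi+1,\log_2C_{VD}\}$ and $\beta=\beta^{(1)}_\Psi$, so that
$$\frac{\Psi(r)}{V(x,r)}\lesssim\frac{\Psi(R)}{V(x_0,R)}\left(\frac{r}{R}\right)^{\beta-\nu}.$$
Fix $t$ much larger than the $k=0$ term. Choose a small $\varepsilon>0$ so that $\sum_k 2^{-k\varepsilon}$ converges. If $|f(x)|>t$, some $k$ satisfies
$$\frac{\Psi(r_k)}{V(x,r_k)}\Gamma^{(L)}(f)(B(x,Ar_k))\ge c\,t^p 2^{-k\varepsilon p},\qquad r_k=2^{-k}R.$$
Combining this with the lower volume bound yields
$$V(x,r_k)\lesssim V(x_0,R)\left(\frac{\Psi(R)\,\Gamma^{(L)}(f)(B(x,Ar_k))}{t^pV(x_0,R)}\right)^{\kappa'},$$
for an exponent $\kappa'\ge1$ close to $\kappa=\nu/(\nu-\beta)$. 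A Vitali covering of $\{|f|>t\}$ by these balls, together with superadditivity of $s\mapsto s^{\kappa'}$ and bounded overlap, gives
$$m(\{|f|>t\})\le C\,V(x_0,R)\left(\frac{\Psi(R)\,\mathcal{E}^{(L)}(f)}{t^pV(x_0,R)}\right)^{\kappa}.$$
Here the exponent bookkeeping must be arranged so that the loss $\varepsilon$ is absorbed. I expect this to be the main obstacle. What I would try first is to choose $\varepsilon$ so that the covering yields exactly exponent $\kappa$; if that fails, I would prove the weak estimate with a slightly smaller exponent and recover $\kappa$ after truncation, if necessary by redefining $\nu$ slightly larger as the statement permits.

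\emph{Step 3 (truncation).} For $f\ge0$ (split $f=f_+-f_-$ using the Markovian property), set $f_j=(f-2^j)_+\wedge2^j$. By the chain rule and strong locality (items \ref{item_meas3} and \ref{item_meas5}), the measures $\Gamma^{(L)}(f_j)$ are carried by the disjoint sets $\{2^j<f\le2^{j+1}\}$, up to the level sets where $\Gamma^{(L)}(f)$ vanishes. Hence $\sum_j\mathcal{E}^{(L)}(f_j)\le\mathcal{E}^{(L)}(f)$. Applying Step 2 to $f_j$ at level $t=2^j$ and summing,
$$\int|f|^{p\kappa}\,\mathrm{d}m\lesssim\sum_j2^{jp\kappa}\,m(\{f_j\ge2^{j}\})\lesssim V(x_0,R)^{1-\kappa}\Psi(R)^\kappa\sum_j\mathcal{E}^{(L)}(f_j)^\kappa.$$
Since $\kappa>1$, the last sum is at most $V(x_0,R)^{1-\kappa}\Psi(R)^\kappa\,\mathcal{E}^{(L)}(f)^\kappa$, which is the claim after taking $\kappa$-th roots.
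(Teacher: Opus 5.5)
\textbf{Gap in Step 2 (the explicit exponent).} Your Haj\l asz--Koskela route (pointwise telescoping, weak type via a Vitali covering, then truncation) is a legitimate alternative to the route behind the cited result. The paper does not prove this statement; it quotes it from \cite{Yan25d}. The non-local analogue, Proposition \ref{prop_SobolevJ}, goes through $\text{PI}\Rightarrow$ Faber--Krahn $\Rightarrow$ Sobolev via \cite{CY25}. There, with $\beta=\beta^{(1)}_\Psi$, the Faber--Krahn exponent $\beta/\nu$ comes from an exact optimisation over the averaging radius, and $\kappa=1/(1-\beta/\nu)$ follows without loss.

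Your Steps 1 and 3 are sound. This includes working with $\Gamma^{(L)}(f)(B(x,Ar))$ rather than a density, and using the chain condition \ref{eq_CC} to control $|f_{B_0}|$.

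Step 2, however, does not give the stated $\kappa$, and none of your remedies fixes it:
\begin{itemize}
\item With weights $2^{-k\varepsilon}$ anchored at the top scale, the covering yields only $\kappa'=\nu/(\nu-\beta+\varepsilon p)<\kappa$ for every $\varepsilon>0$. Taking $\varepsilon=0$ destroys summability, so no choice of $\varepsilon$ works.
\item Truncation turns a weak-type bound into a strong-type bound with the \emph{same} exponent. It cannot recover $\kappa$ from $\kappa'$.
\item Enlarging $\nu$ changes the value that the statement fixes.
\end{itemize}
As written, you prove only that \emph{some} $\kappa>1$ works. That is all Lemma \ref{lem_Moser_L} needs, but it is not the assertion $\kappa=\nu/(\nu-\beta)$.

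\textbf{The fix: anchor the weights at a $t$-dependent scale.} Let $r_k=2^{1-k}R$ and
\[
T=\Bigl(\frac{\Psi(R)\,\mathcal{E}^{(L)}(f)}{V(x_0,R)}\Bigr)^{1/p},\qquad T_k(x)=\Bigl(\frac{\Psi(r_k)}{V(x,r_k)}\,\Gamma^{(L)}(f)(B(x,Ar_k))\Bigr)^{1/p}\le CT\Bigl(\frac{r_k}{R}\Bigr)^{-\frac{\nu-\beta}{p}}.
\]
Since $\nu>\beta$, the partial sums $\sum_{k\le k_0}T_k$ are dominated by their last term. Hence for $t\ge C_0T$ you can pick $k_0=k_0(t)$ with $\sum_{k\le k_0}T_k\le t/4$ and $r_{k_0}/R\le C(T/t)^{p/(\nu-\beta)}$.

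If $|f(x)|>t$, choose $k>k_0$ with $T_k(x)\ge ct\,2^{-(k-k_0)\varepsilon}$. Your computation then gives
\[
V(x,r_k)\le CV(x_0,R)\Bigl(\Bigl(\frac{r_{k_0}}{R}\Bigr)^{\varepsilon p}\frac{\Psi(R)\,\Gamma^{(L)}(f)(B(x,Ar_k))}{t^pV(x_0,R)}\Bigr)^{\kappa'}.
\]
After the Vitali summation, the extra factor satisfies $(r_{k_0}/R)^{\varepsilon p\kappa'}\le C(T/t)^{\varepsilon p^2\kappa'/(\nu-\beta)}$. It upgrades $(T/t)^{p\kappa'}$ to exactly $(T/t)^{p\kappa}$, because $p\kappa'\bigl(1+\frac{\varepsilon p}{\nu-\beta}\bigr)=\frac{p\nu}{\nu-\beta}=p\kappa$.

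This yields $m(\{|f|>t\})\le CV(x_0,R)(T/t)^{p\kappa}$. Your Step 3 then gives the inequality with the stated $\kappa$.
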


\begin{proposition}[{Sobolev inequality, non-local version; \cite[Proposition 3.5, Definition 3.6, Theorem 3.8]{CY25}}]\label{prop_SobolevJ}
Assume \ref{eq_VD}. Let $(\mathcal{E}^{(J)}, \mathcal{F}^{(J)})$ be a non-local regular $p$-energy given by a kernel $K^{(J)}$ satisfying \ref{eq_PIJ}. Then there exist $\kappa>1$, $C>0$ such that for any ball $B(x_0,R)$, for any $f\in\mathcal{F}^{(J)}(B(x_0,R))$, we have
$$\left(\int_{B(x_0,R)}|f|^{p\kappa}\mathrm{d} m\right)^{\frac{1}{\kappa}}\le C\frac{\Upsilon(R)}{V(x_0,R)^{\frac{\kappa-1}{\kappa}}}\mathcal{E}^{(J)}(f).$$
Indeed, we can take $\kappa=\frac{\nu}{\nu-\beta_\Upsilon^{(1)}}$, where $\nu=\max\{\beta_\Upsilon^{(1)}+1,\log_2C_{VD}\}$.
\end{proposition}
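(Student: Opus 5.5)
The plan is the classical route from a Poincar\'e inequality to a Sobolev inequality: a pointwise telescoping estimate via \hyperref[eq_PIJ]{$\text{PI}^{(J)}(\Upsilon)$}, then the weak-type bound for the Hardy--Littlewood maximal operator, then Maz'ya's truncation trick to pass from weak to strong type.

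\textbf{Step 1: $L^p$ control and the energy measure.} Fix $B=B(x_0,R)$ and $f\in\mathcal{F}^{(J)}(B)$. By Lemma \ref{lem_FOmega}, $f=0$ a.e. outside $B$. Let $\mu=\Gamma^{(J)}_X(f)$, so that $\mu(X)\le\mathcal{E}^{(J)}(f)$. I first show
$$\lVert f\rVert_{L^p}^p\lesssim\Upsilon(R)\,\mathcal{E}^{(J)}(f).$$
To do this, apply \hyperref[eq_PIJ]{$\text{PI}^{(J)}(\Upsilon)$} on $B(x_0,2R)$. The function $f$ vanishes on $B(x_0,2R)\backslash B(x_0,R)$. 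By \ref{eq_CC} and unboundedness, this annulus contains a ball of radius $\asymp R$, hence has measure $\gtrsim V(x_0,R)$ by \ref{eq_VD}. A standard argument then gives $|f_{B(x_0,2R)}|^pV(x_0,2R)\lesssim\int_{B(x_0,2R)}|f-f_{B(x_0,2R)}|^p\,\mathrm{d}m$, and the claim follows by doubling of $\Upsilon$.

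\textbf{Step 2: pointwise estimate.} Let $x\in B$ be a Lebesgue point of $f$, and put $r_k=2^{-k}r$ for a scale $r\in(0,R]$. Telescoping over the balls $B(x,r_k)$, and using H\"older together with \hyperref[eq_PIJ]{$\text{PI}^{(J)}(\Upsilon)$} on each ball, gives
$$|f(x)-f_{B(x,r)}|\lesssim\sum_{k\ge0}\left(\frac{\Upsilon(r_k)}{V(x,r_k)}\mu(B(x,A_{PI}r_k))\right)^{1/p}\lesssim\Upsilon(r)^{1/p}\,(M\mu)(x)^{1/p}.$$
Here $M\mu(x)=\sup_s\mu(B(x,s))/V(x,s)$, and the series converges geometrically because of the lower exponent $\beta^{(1)}_\Upsilon>0$ in (\ref{eq_beta12}). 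In addition, H\"older gives $|f_{B(x,r)}|\le V(x,r)^{-1/p}\lVert f\rVert_{L^p}$. By \ref{eq_VD}, $V(x,r)\gtrsim(r/R)^{\nu}V(x_0,R)$ for $x\in B$ and $r\le R$, where $\nu=\log_2C_{VD}$ (enlarged if necessary to $\nu\ge\beta^{(1)}_\Upsilon+1$). Also $\Upsilon(r)\lesssim(r/R)^{\beta^{(1)}_\Upsilon}\Upsilon(R)$. Substituting and optimizing in $r$ yields
$$|f(x)|\lesssim\lVert f\rVert_{L^p}^{1-\beta/\nu}\,V(x_0,R)^{-\beta/(p\nu)}\,\Upsilon(R)^{1/p}\,(M\mu)(x)^{\beta/(p\nu)},\qquad \beta=\beta^{(1)}_\Upsilon.$$
When the optimal $r$ exceeds $R$, I take $r=R$ instead, and Step 1 bounds the resulting term by the same right-hand side.

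\textbf{Step 3: weak type, then truncation.} The weak $(1,1)$ bound for $M$ under \ref{eq_VD} gives $m(\{M\mu>t\})\lesssim\mu(X)/t$. Combining this with Step 1 gives the weak-type estimate
$$\sup_{\lambda>0}\lambda^{p\kappa}\,m(\{|f|>\lambda\})\lesssim\left(\frac{\Upsilon(R)}{V(x_0,R)^{(\kappa-1)/\kappa}}\mathcal{E}^{(J)}(f)\right)^{\kappa},\qquad\kappa=\frac{\nu}{\nu-\beta}.$$
To upgrade to the strong inequality, I apply the weak bound to the truncations $f_k=((|f|-2^k)_+)\wedge 2^k$, $k\in\mathbb{Z}$. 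These still lie in $\mathcal{F}^{(J)}(B)$, by the Markov property and closedness. The key non-local fact is $\sum_k|f_k(x)-f_k(y)|^p\le|f(x)-f(y)|^p$, which follows from $\sum_k|a_k-b_k|\le|a-b|$ with each $|a_k-b_k|\le|a-b|$, and which gives $\sum_k\mathcal{E}^{(J)}(f_k)\le\mathcal{E}^{(J)}(f)$. The standard summation then finishes the proof: bound $\int|f|^{p\kappa}$ by $\sum_k2^{kp\kappa}m(\{|f|>2^{k+1}\})$ and use $\ell^1\subseteq\ell^\kappa$.

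The main obstacle is Step 1, and more generally the large-scale regime $r\ge R$ in Step 2. This requires a lower bound on the measure of the zero set of $f$ near $B$, which is where \ref{eq_CC} (through reverse volume doubling) must enter. The truncation identity in Step 3 is routine for non-local forms.
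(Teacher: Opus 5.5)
\textbf{There is a gap: the exponent bookkeeping in Steps~2--3 is wrong, and as written you do not reach the asserted $\kappa=\nu/(\nu-\beta)$} (with $\beta=\beta^{(1)}_\Upsilon$). Your route (telescoping with $\text{PI}^{(J)}(\Upsilon)$, the maximal function of $\mu=\Gamma^{(J)}_X(f)$, then truncation) is a legitimate alternative to the paper's. The paper only cites \cite{CY25}: Poincar\'e, doubling and the reverse doubling coming from CC give a Faber--Krahn inequality with exponent $\beta/\nu$, which is then turned into the Sobolev inequality by truncation.

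Write $a=(\Upsilon(R)M\mu(x))^{1/p}$, $b=V(x_0,R)^{-1/p}\lVert f\rVert_{L^p}$ and $t=r/R$. Your two terms are $at^{\beta/p}$ and $bt^{-\nu/p}$, and optimizing in $t$ gives
\[
|f(x)|\lesssim a^{\frac{\nu}{\nu+\beta}}\,b^{\frac{\beta}{\nu+\beta}}.
\]
This is not your displayed bound. Yours is not invariant under $K^{(J)}\mapsto K^{(J)}/c$, $\Upsilon\mapsto c\Upsilon$, and would in any case give weak exponent $p\nu/\beta$. Now replace $\lVert f\rVert_{L^p}$ by $(\Upsilon(R)\mathcal{E}^{(J)}(f))^{1/p}$ via Step~1 and use the weak $(1,1)$ bound. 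You get weak type $L^{p\kappa'}$ with $\kappa'=1+\beta/\nu$, and Maz'ya's summation preserves this exponent. Since $1+\beta/\nu<\nu/(\nu-\beta)$, your argument proves the inequality only with $\kappa'$. That settles the qualitative claim (some $\kappa>1$), which is all the later Moser iterations use, but not the explicit value.

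To reach $\kappa=\nu/(\nu-\beta)$, keep the $L^p$ factor through the truncation instead of discarding it via Step~1. The corrected Step~2 gives, for every $g\in\mathcal{F}^{(J)}(B(x_0,R))$ and $\lambda>0$,
\[
\lambda^{p\frac{\nu+\beta}{\nu}}\,m(\{|g|>\lambda\})\lesssim \Upsilon(R)\,\mathcal{E}^{(J)}(g)\,V(x_0,R)^{-\frac{\beta}{\nu}}\lVert g\rVert_{L^p}^{\frac{p\beta}{\nu}}.
\]
The regime $b>a$ (with $g$ in place of $f$) is handled by $m(\{|g|>\lambda\})\le V(x_0,R)$ together with Step~1. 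Your claim that Step~1 bounds the $r=R$ term ``by the same right-hand side'' is false pointwise, since there the optimized expression is smaller than $b$; it is harmless only at this weak-type level.

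Apply the inequality to $g=f_k$, using $\lVert f_k\rVert_{L^p}^p\le 2^{kp}m(\{|f|\ge 2^k\})$, and set $a_k=2^{kp\kappa}m(\{|f|\ge 2^k\})$. The powers of $2^k$ cancel exactly when $\kappa=\nu/(\nu-\beta)$, leaving
\[
a_{k+1}\lesssim \Upsilon(R)V(x_0,R)^{-\frac{\beta}{\nu}}\mathcal{E}^{(J)}(f_k)\,a_k^{\frac{\beta}{\nu}}.
\]
Sum over $k$, using $\sum_k\mathcal{E}^{(J)}(f_k)\le\mathcal{E}^{(J)}(f)$ and $a_k\le S:=\sum_j a_j$. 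This gives $S^{1-\beta/\nu}\lesssim \Upsilon(R)V(x_0,R)^{-\beta/\nu}\mathcal{E}^{(J)}(f)$. That is exactly the claim, since $\int|f|^{p\kappa}\,\mathrm{d}m\le 2^{p\kappa}S$ and $(\kappa-1)/\kappa=\beta/\nu$. Here $S<\infty$ for bounded $f$, and you remove boundedness by truncating at level $N$. This Bakry--Coulhon--Ledoux--Saloff-Coste step is where $\nu>\beta$, hence the choice $\nu\ge\beta+1$, is needed.
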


\section{Proof of Theorem \ref{thm_sub}}\label{sec_sub}

In this section, we give the proof of Theorem \ref{thm_sub}. Recall that $(X,d,m)$ is a metric measure space. For any doubling function $\Psi$ satisfying (\ref{eq_beta12}), any open subset $U\subseteq X$, any $u\in L^p(X;m)$, and any $r>0$, we define
$$E_{\Psi,U}(u,r)=\frac{1}{\Psi(r)}\int_U \left(\frac{1}{V(x,r)}\int_{B(x,r)}\lvert u(x)-u(y)\rvert^p m(\mathrm{d}y)\right)m(\mathrm{d}x),$$
and
$$U(r)=\bigcup_{x\in U}B(x,r).$$

The following characterization of ``$\iint\ldots$" in terms of ``$\sum E\ldots$" plays an important role in our proof.

\begin{lemma}\label{lem_nonlocal_E}
Assume \ref{eq_VD}. Let $\Upsilon$ be a doubling function satisfying (\ref{eq_beta12}) with constants $C_\Upsilon,\beta^{(1)}_\Upsilon,\beta^{(2)}_\Upsilon$. Then there exists $C>0$ such that for any $u\in L^p(X;m)$, for any non-empty bounded open subset $U\subseteq X$, for any $D\ge \mathrm{diam}(U)$ and any $R>0$, we have
\begin{equation}\label{eq_nonlocal_E1}
\int_U\int_U \frac{\lvert u(x)-u(y)\rvert^p}{V(x,d(x,y))\Upsilon(d(x,y))}m(\mathrm{d}x)m(\mathrm{d}y)\le C\sum_{n=0}^{+\infty}E_{\Upsilon,U}(u,\frac{1}{2^n}D ),
\end{equation}
\begin{equation}\label{eq_nonlocal_E2}
\sum_{n=0}^{+\infty}E_{\Upsilon,U}(u,\frac{1}{2^n}R)\le C\int_{U(R)}\int_{U(R)} \frac{\lvert u(x)-u(y)\rvert^p}{V(x,d(x,y))\Upsilon(d(x,y))}m(\mathrm{d}x)m(\mathrm{d}y),
\end{equation}
and
\begin{equation}\label{eq_nonlocal_E3}
\frac{1}{C}\sum_{n\in \mathbb{Z}}E_{\Upsilon,X}(u,\frac{1}{2^n})\le\int_X\int_X \frac{\lvert u(x)-u(y)\rvert^p}{V(x,d(x,y))\Upsilon(d(x,y))}m(\mathrm{d}x)m(\mathrm{d}y)\le {C}\sum_{n\in \mathbb{Z}}E_{\Upsilon,X}(u,\frac{1}{2^n}).
\end{equation}
\end{lemma}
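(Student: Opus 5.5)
The approach is a dyadic decomposition of the kernel by distance scales, using \ref{eq_VD} and the doubling of $\Upsilon$ to compare $V(x,d(x,y))\Upsilon(d(x,y))$ with $V(x,2^{-n}D)\Upsilon(2^{-n}D)$ on each annulus. For fixed $x$ and $y$ with $2^{-n-1}\rho\le d(x,y)<2^{-n}\rho$, where $\rho=D$ or $R$, we have
\[
V(x,d(x,y))\asymp V(x,2^{-n}\rho),\qquad \Upsilon(d(x,y))\asymp\Upsilon(2^{-n}\rho),
\]
with constants depending only on $C_{VD}$ and $C_\Upsilon$. All three estimates reduce to this comparison.

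For (\ref{eq_nonlocal_E1}), note that $x,y\in U$ implies $d(x,y)\le \mathrm{diam}(U)\le D$. The diagonal is null because $m(\{y\})=0$; this follows from \ref{eq_VD} together with unboundedness and \ref{eq_CC}, or we can simply ignore it since the integrand is defined off the diagonal. Hence $U\times U\setminus\mathrm{diag}$ is covered by the annuli $A_n(x)=\{y:2^{-n-1}D\le d(x,y)<2^{-n}D\}$ for $n\ge0$, together with the set $d(x,y)=D$. To absorb that set, we use $B(x,2D)$ as the top scale and compare it with the $n=0$ term via doubling, or we use the closed condition $d\le D$ in the first annulus with a factor from $V(x,D)\asymp V(x,2D)$. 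On $A_n(x)$ the integrand is bounded by $C\,|u(x)-u(y)|^p/(V(x,2^{-n}D)\Upsilon(2^{-n}D))$. Enlarging the $y$-domain from $A_n(x)\cap U$ to $B(x,2^{-n}D)$ and summing over $n$ gives exactly $C\sum_n E_{\Upsilon,U}(u,2^{-n}D)$.

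For (\ref{eq_nonlocal_E2}), fix $x\in U$. We write $B(x,2^{-n}R)=\bigcup_{k\ge n}A_k(x)$, now with $\rho=R$. This gives
\[
E_{\Upsilon,U}(u,2^{-n}R)=\int_U\sum_{k\ge n}\frac{1}{V(x,2^{-n}R)\Upsilon(2^{-n}R)}\int_{A_k(x)}|u(x)-u(y)|^p\,m(\mathrm{d}y)\,m(\mathrm{d}x).
\]
The key point is that $V(x,2^{-k}R)\Upsilon(2^{-k}R)\le V(x,2^{-n}R)\Upsilon(2^{-n}R)\cdot C_\Upsilon(2^{-(k-n)})^{\beta^{(1)}_\Upsilon}$, by (\ref{eq_beta12}) together with the monotonicity of $V$. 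Exchanging the sums, each $k$ receives the weight
\[
\sum_{n\le k}\frac{1}{V(x,2^{-n}R)\Upsilon(2^{-n}R)}\le \frac{C}{V(x,2^{-k}R)\Upsilon(2^{-k}R)}\sum_{j\ge0}2^{-j\beta^{(1)}_\Upsilon},
\]
which is a convergent geometric series. Since $d(x,y)\asymp 2^{-k}R$ on $A_k(x)$, this is comparable to $C/(V(x,d(x,y))\Upsilon(d(x,y)))$. Finally, $y\in B(x,R)\subseteq U(R)$ and $x\in U\subseteq U(R)$, which yields (\ref{eq_nonlocal_E2}).

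For (\ref{eq_nonlocal_E3}), we repeat both arguments with scales $2^{-n}$ for $n\in\mathbb{Z}$ and $U=X$. The upper bound follows by decomposing $X\times X$ into the annuli $\{2^{-n-1}\le d<2^{-n}\}$ for $n\in\mathbb{Z}$. The lower bound uses the same geometric-series exchange, where the sum over $n\le k$ now runs over all integers $n\le k$ and is still dominated by the $n=k$ term. The only real obstacle is this exchange-of-summation step. It needs the \emph{lower} growth exponent $\beta^{(1)}_\Upsilon>0$ of $\Upsilon$ so that the tail over larger balls decays geometrically; volume doubling alone gives only monotonicity, but that suffices. Everything else is bookkeeping with doubling constants.
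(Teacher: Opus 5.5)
Your proposal is correct and is essentially the paper's own proof. It uses the same dyadic annulus decomposition with \ref{eq_VD} and the doubling of $\Upsilon$ for (\ref{eq_nonlocal_E1}), the same exchange of summation controlled by monotonicity of $V$ and the geometric series $\sum_{j\ge0}2^{-j\beta^{(1)}_\Upsilon}$ for (\ref{eq_nonlocal_E2}), and the same two arguments over $n\in\mathbb{Z}$ for (\ref{eq_nonlocal_E3}).

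The only place you go beyond the paper is the set $\{y\in U: d(x,y)=D\}$, and there your suggested fix does not work. The term $E_{\Upsilon,U}(u,D)$ integrates only over the open ball $B(x,D)$, and doubling of $m$ cannot bound $\int_{\{d(x,y)=D\}}\lvert u(x)-u(y)\rvert^p\,m(\mathrm{d}y)$ by it. The paper silently ignores this set as well. This is harmless whenever $d(x,y)<D$ for all $x,y\in U$, for instance if $D>\mathrm{diam}(U)$, or if $U=B(z,2s)$ and $D=4s$ as in the paper's application of (\ref{eq_nonlocal_E1}).
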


\begin{proof}
The proof follows a standard annulus decomposition argument; see also \cite[Theorem 5.2]{GKS10}. Firstly, we have
\begin{align*}
&\int_U\int_U \frac{\lvert u(x)-u(y)\rvert^p}{V(x,d(x,y))\Upsilon(d(x,y))}m(\mathrm{d}x)m(\mathrm{d}y)\\
&\le\int_U \left(\sum_{n=0}^{+\infty}\int_{B(x,\frac{1}{2^n}D)\backslash B(x,\frac{1}{2^{n+1}}D)}\frac{\lvert u(x)-u(y)\rvert^p}{V(x,d(x,y))\Upsilon(d(x,y))}m(\mathrm{d}y)\right)m(\mathrm{d}x)\\
&\le\int_U \left(\sum_{n=0}^{+\infty}\frac{1}{V(x,\frac{1}{2^{n+1}}D)\Upsilon(\frac{1}{2^{n+1}}D)}\int_{B(x,\frac{1}{2^n}D)}\lvert u(x)-u(y)\rvert^p m(\mathrm{d}y)\right)m(\mathrm{d}x)\\
&\le C_1\sum_{n=0}^{+\infty}E_{\Upsilon,U}(u,\frac{1}{2^n}D),
\end{align*}
where $C_1=C_{\Upsilon}C_{VD}$, which gives (\ref{eq_nonlocal_E1}). Secondly, noting that
\begin{align*}
&\sum_{n=0}^{+\infty}E_{\Upsilon,U}(u,\frac{1}{2^n}R)\\
&=\sum_{n=0}^{+\infty}\frac{1}{\Upsilon(\frac{1}{2^n}R)}\int_U \left(\frac{1}{V(x,\frac{1}{2^n}R)}\sum_{k=n}^{+\infty}\int_{B(x,\frac{1}{2^k}R)\backslash B(x,\frac{1}{2^{k+1}}R)}\lvert u(x)-u(y)\rvert^p m(\mathrm{d}y)\right)m(\mathrm{d}x)\\
&=\sum_{k=0}^{+\infty}\int_U \left(\sum_{n=0}^{k}\frac{1}{V(x,\frac{1}{2^n}R)\Upsilon(\frac{1}{2^n}R)}\right)\left(\int_{B(x,\frac{1}{2^k}R)\backslash B(x,\frac{1}{2^{k+1}}R)}\lvert u(x)-u(y)\rvert^p m(\mathrm{d}y)\right)m(\mathrm{d}x),
\end{align*}
where by (\ref{eq_beta12}), we have
\begin{align*}
&\sum_{n=0}^{k}\frac{1}{V(x,\frac{1}{2^n}R)\Upsilon(\frac{1}{2^n}R)}\le \frac{1}{V(x,\frac{1}{2^k}R)}\sum_{n=0}^{k}\frac{1}{\Upsilon(\frac{1}{2^n}R)}\\
&\le C_\Upsilon\frac{1}{V(x,\frac{1}{2^k}R)\Upsilon(\frac{1}{2^k}R)}\sum_{n=0}^{k}2^{\beta^{(1)}_\Upsilon(n-k)}\le \frac{2^{\beta^{(1)}_\Upsilon}C_\Upsilon}{2^{\beta^{(1)}_\Upsilon}-1}\frac{1}{V(x,\frac{1}{2^k}R)\Upsilon(\frac{1}{2^k}R)}.
\end{align*}
Let $C_2=\frac{2^{\beta^{(1)}_\Upsilon}C_\Upsilon}{2^{\beta^{(1)}_\Upsilon}-1}$, then
\begin{align*}
&\sum_{n=0}^{+\infty}E_{\Upsilon,U}(u,\frac{1}{2^n}R)\\
&\le C_2\sum_{k=0}^{+\infty}\int_U\left(\frac{1}{V(x,\frac{1}{2^k}R)\Upsilon(\frac{1}{2^k}R)}\int_{B(x,\frac{1}{2^k}R)\backslash B(x,\frac{1}{2^{k+1}}R)}\lvert u(x)-u(y)\rvert^p m(\mathrm{d}y)\right)m(\mathrm{d}x)\\
&\le C_2\int_U\left(\sum_{k=0}^{+\infty}\int_{B(x,\frac{1}{2^k}R)\backslash B(x,\frac{1}{2^{k+1}}R)}\frac{\lvert u(x)-u(y)\rvert^p}{V(x,d(x,y))\Upsilon(d(x,y))} m(\mathrm{d}y)\right)m(\mathrm{d}x)\\
&=C_2\int_U\left(\int_{B(x,R)}\frac{\lvert u(x)-u(y)\rvert^p}{V(x,d(x,y))\Upsilon(d(x,y))} m(\mathrm{d}y)\right)m(\mathrm{d}x)\\
&\le C_2\int_{U(R)}\int_{U(R)}\frac{\lvert u(x)-u(y)\rvert^p}{V(x,d(x,y))\Upsilon(d(x,y))} m(\mathrm{d}x)m(\mathrm{d}y),
\end{align*}
which gives (\ref{eq_nonlocal_E2}). Finally, by replacing $U$ with $X$, $\sum_{n=0}^{+\infty}$ with $\sum_{n\in \mathbb{Z}}$, $\sum_{k=0}^{+\infty}$ with $\sum_{k\in \mathbb{Z}}$, $\sum_{n=0}^k$ with $\sum_{\mathbb{Z}\ni n\le k}$, and by letting $D=1$, $R=1$ in the above estimates, we have (\ref{eq_nonlocal_E3}).
\end{proof}

For a strongly local regular $p$-energy $(\mathcal{E}^{(L)},\mathcal{F}^{(L)})$, we have the relation between $\Gamma^{(L)}$ and $E$ as follows.

\begin{lemma}[{\cite[Proposition 3.5, Equation (1.18)]{Shi24a}}]\label{lem_local_E}
Assume \ref{eq_VD}. Let $(\mathcal{E}^{(L)},\mathcal{F}^{(L)})$ be a strongly local regular $p$-energy satisfying \ref{eq_PIL}. Then there exists $C>0$ such that for any open subset $U\subseteq X$, for any $u\in \mathcal{F}^{(L)}$, for any $r>0$, we have
$$E_{\Psi,U}(u,r)\le C\Gamma^{(L)}(u)\left(U(2A_{PI}r)\right),$$
where $A_{PI}\ge1$ is the constant in \ref{eq_PIL}.
\end{lemma}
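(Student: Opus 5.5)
The plan is a covering argument: reduce the averaged difference quotient to local mean oscillations on balls, apply \ref{eq_PIL} on each ball, and sum using bounded overlap from \ref{eq_VD}.

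\emph{Covering.} Fix $U$, $u\in\mathcal{F}^{(L)}$ and $r>0$. Choose a maximal $r$-separated subset $\{z_i\}\subseteq U$, which is countable by separability. Maximality gives $U\subseteq\bigcup_i B(z_i,r)$. Since $z_i\in U$, we have $B(z_i,2A_{PI}r)\subseteq U(2A_{PI}r)$. By \ref{eq_VD} and the $r$-separation, the family $\{B(z_i,2A_{PI}r)\}_i$ has overlap multiplicity bounded by a constant $N$ depending only on $C_{VD}$ and $A_{PI}$. This is the standard volume-counting argument: the balls $B(z_i,r/2)$ are disjoint, and each has volume comparable to that of $B(z_i,4A_{PI}r)$.

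\emph{Local estimate.} Write $B_i=B(z_i,2r)$. For $x\in B(z_i,r)$ we have $B(x,r)\subseteq B_i$ and $B_i\subseteq B(x,3r)$. Hence $V(x,r)\ge C_{VD}^{-2}V(z_i,2r)$ by \ref{eq_VD}. Therefore
\begin{align*}
\int_{B(z_i,r)}\frac{1}{V(x,r)}\int_{B(x,r)}\lvert u(x)-u(y)\rvert^p m(\mathrm{d}y)m(\mathrm{d}x)
&\le \frac{C}{m(B_i)}\int_{B_i}\int_{B_i}\lvert u(x)-u(y)\rvert^p m(\mathrm{d}x)m(\mathrm{d}y)\\
&\le 2^{p}C\int_{B_i}\lvert u-u_{B_i}\rvert^p\mathrm{d}m,
\end{align*}
where the last step uses $\lvert u(x)-u(y)\rvert^p\le 2^{p-1}(\lvert u(x)-u_{B_i}\rvert^p+\lvert u(y)-u_{B_i}\rvert^p)$. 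Applying \ref{eq_PIL} on $B_i$, this is at most
\[
C\,\Psi(2r)\,\Gamma^{(L)}(u)\bigl(B(z_i,2A_{PI}r)\bigr)\le C'\,\Psi(r)\,\Gamma^{(L)}(u)\bigl(B(z_i,2A_{PI}r)\bigr),
\]
using the doubling of $\Psi$.

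\emph{Summation.} Since the balls $B(z_i,r)$ cover $U$,
\[
\Psi(r)\,E_{\Psi,U}(u,r)\le\sum_i\int_{B(z_i,r)}\frac{1}{V(x,r)}\int_{B(x,r)}\lvert u(x)-u(y)\rvert^p m(\mathrm{d}y)m(\mathrm{d}x).
\]
Combining this with the local estimate gives
\[
E_{\Psi,U}(u,r)\le C'\sum_i\Gamma^{(L)}(u)\bigl(B(z_i,2A_{PI}r)\bigr)\le C'N\,\Gamma^{(L)}(u)\bigl(U(2A_{PI}r)\bigr),
\]
where the last inequality uses that $\Gamma^{(L)}(u)$ is a positive measure and the overlap bound $N$.

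The only step needing care is the bounded-overlap count and the volume comparison $V(x,r)\gtrsim V(z_i,2r)$. Both are routine consequences of \ref{eq_VD}, so I do not expect a genuine obstacle. Strong locality enters only through the existence of the energy measure $\Gamma^{(L)}$ used in \ref{eq_PIL}.
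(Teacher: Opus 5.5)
Your proof is correct. The steps all go through as written: the maximal $r$-separated subset of $U$, the volume comparison $V(x,r)\ge C_{VD}^{-2}V(z_i,2r)$, \ref{eq_PIL} applied on $B(z_i,2r)$ (which yields exactly the ball $B(z_i,2A_{PI}r)\subseteq U(2A_{PI}r)$), and the bounded-overlap summation. The paper gives no proof of its own but imports the estimate from \cite{Shi24a}, and your net-covering, ball-wise Poincar\'e, bounded-overlap argument is the standard proof of this kind of bound, so there is nothing missing.
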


We have the relation between the domains of $p$-forms as follows.

\begin{lemma}\label{lem_domain}
Assume \ref{eq_VD}. Let $(\mathcal{E}^{(L)},\mathcal{F}^{(L)})$ be a strongly local regular $p$-energy satisfying \ref{eq_PIL}. Let $\Upsilon$ be a doubling function satisfying (\ref{eq_beta12}) and \ref{eq_SUG0}. Let $(\mathcal{E}^{(J,\Upsilon)},\mathcal{F}^{(J,\Upsilon)})$ be a non-local $p$-form given by a kernel $K^{(J)}$ satisfying \ref{eq_KJ}. Then $\mathcal{F}^{(L)}\subseteq \mathcal{F}^{(J,\Upsilon)}$.
\end{lemma}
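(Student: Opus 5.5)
We need to show: for $u\in\mathcal{F}^{(L)}$, $\mathcal{E}^{(J,\Upsilon)}(u)<+\infty$. By \ref{eq_KJ} it suffices to bound the double integral with kernel $\frac{1}{V(x,d(x,y))\Upsilon(d(x,y))}$ over $X\times X$. By (\ref{eq_nonlocal_E3}) of Lemma \ref{lem_nonlocal_E}, this is bounded by $C\sum_{n\in\mathbb{Z}}E_{\Upsilon,X}(u,2^{-n})$, so the plan is to split this sum into small scales $n\ge0$ and large scales $n<0$ and bound each part.

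\textbf{Small scales.} For $n\ge0$ we use $E_{\Upsilon,X}(u,r)=\frac{\Psi(r)}{\Upsilon(r)}E_{\Psi,X}(u,r)$. Lemma \ref{lem_local_E} with $U=X$ gives $E_{\Psi,X}(u,r)\le C\Gamma^{(L)}(u)(X)=C\mathcal{E}^{(L)}(u)$. Hence
$$\sum_{n\ge0}E_{\Upsilon,X}(u,2^{-n})\le C\mathcal{E}^{(L)}(u)\sum_{n\ge0}\frac{\Psi(2^{-n})}{\Upsilon(2^{-n})},$$
and this series is finite by \ref{eq_SUG0}.

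\textbf{Large scales.} For $n<0$, set $r=2^{-n}\ge 2$. We bound $E_{\Upsilon,X}(u,r)$ crudely using $|u(x)-u(y)|^p\le 2^{p-1}(|u(x)|^p+|u(y)|^p)$:
$$\int_X\frac{1}{V(x,r)}\int_{B(x,r)}|u(x)|^p\,m(\mathrm{d}y)\,m(\mathrm{d}x)=\lVert u\rVert_p^p .$$
For the $|u(y)|^p$ term, Fubini and \ref{eq_VD} give $V(y,r)\le V(x,2r)\le C_{VD}V(x,r)$ whenever $d(x,y)<r$, so
$$\int_X\frac{1}{V(x,r)}\int_{B(x,r)}|u(y)|^p\,m(\mathrm{d}y)\,m(\mathrm{d}x)=\int_X|u(y)|^p\int_{B(y,r)}\frac{m(\mathrm{d}x)}{V(x,r)}\,m(\mathrm{d}y)\le C_{VD}\lVert u\rVert_p^p .$$
Thus $E_{\Upsilon,X}(u,r)\le C\lVert u\rVert_p^p/\Upsilon(r)$. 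By (\ref{eq_beta12}), $\Upsilon(2^k)\ge C_\Upsilon^{-1}2^{k\beta^{(1)}_\Upsilon}\Upsilon(1)$, so $\sum_{k\ge1}\Upsilon(2^k)^{-1}$ is a convergent geometric series. This part is therefore bounded by $C\lVert u\rVert_p^p$.

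Combining the two parts gives $\mathcal{E}^{(J,\Upsilon)}(u)\le C(\mathcal{E}^{(L)}(u)+\lVert u\rVert_p^p)<+\infty$. Since $u\in L^p(X;m)$ by assumption, $u\in\mathcal{F}^{(J,\Upsilon)}$.

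There is no serious obstacle. The only points to check are that Lemma \ref{lem_local_E} applies with $U=X$ (here $U(2A_{PI}r)=X$ and $\Gamma^{(L)}(u)(X)=\mathcal{E}^{(L)}(u)$), and that the Fubini step is justified, which holds because all integrands are nonnegative.
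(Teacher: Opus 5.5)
Your proposal is correct and follows the paper's proof essentially verbatim: you use \eqref{eq_nonlocal_E3}, split the sum into $n\ge0$ (handled via Lemma \ref{lem_local_E} and \ref{eq_SUG0}) and $n<0$ (handled via the crude bound $E_{\Upsilon,X}(u,r)\lesssim \lVert u\rVert_{L^p(X;m)}^p/\Upsilon(r)$ and a geometric series). The only difference is that you derive the crude bound directly via Fubini and \ref{eq_VD} rather than citing \cite[Lemma 3.1]{Bau24}; your derivation gives the same constant $2^{p-1}(C_{VD}+1)$ as the paper.
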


\begin{proof}
For any $u\in L^p(X;m)$, by (\ref{eq_nonlocal_E3}), we have
\begin{align*}
\mathcal{E}^{(J,\Upsilon)}(u)\lesssim\sum_{n\in \mathbb{Z}}E_{\Upsilon,X}(u,\frac{1}{2^n})=\left(\sum_{n\ge0}+\sum_{n<0}\right)E_{\Upsilon,X}(u,\frac{1}{2^n})=I_1+I_2.
\end{align*}
For any $r>0$, by \ref{eq_VD}, we have
$$E_{\Upsilon,X}(u,r)\le \frac{2^{p-1}(C_{VD}+1)}{\Upsilon(r)}\lVert u\rVert_{L^p(X;m)}^p,$$
see also \cite[Lemma 3.1]{Bau24}, hence
\begin{align*}
I_2\lesssim \lVert u\rVert_{L^p(X;m)}^p\sum_{n<0}\frac{1}{\Upsilon(\frac{1}{2^n})}\lesssim \frac{1}{\Upsilon(1)}\lVert u\rVert_{L^p(X;m)}^p\sum_{n<0}2^{\beta^{(1)}_\Upsilon n}<+\infty.
\end{align*}
For any $u\in \mathcal{F}^{(L)}$, for any $n\ge0$, by Lemma \ref{lem_local_E}, we have
$$E_{\Psi,X}(u,\frac{1}{2^n})\lesssim \Gamma^{(L)}(u)(X)=\mathcal{E}^{(L)}(u),$$
hence by \ref{eq_SUG0}, we have
$$I_1=\sum_{n\ge0}\frac{\Psi(\frac{1}{2^n})}{\Upsilon(\frac{1}{2^n})}E_{\Psi,X}(u,\frac{1}{2^n})\lesssim \mathcal{E}^{(L)}(u)\sum_{n\ge0}\frac{\Psi(\frac{1}{2^n})}{\Upsilon(\frac{1}{2^n})}<+\infty,$$
which gives $u\in \mathcal{F}^{(J,\Upsilon)}$.
\end{proof}

We give the proof of \hyperlink{eq_CE_strong}{$\text{CE}^{(J)}_{\text{strong}}(\Xi)$} under \ref{eq_SUG0} in Theorem \ref{thm_sub} as follows.

\begin{proof}[Proof of \hyperlink{eq_CE_strong}{$\text{CE}^{(J)}_{\text{strong}}(\Xi)$} under \ref{eq_SUG0} in Theorem \ref{thm_sub}]
Let $\delta\in(0,1)$, $A=A_E>1$ be the constants in {\hyperlink{eq_CE_strong}{$\text{CE}^{(L)}_{\text{strong}}(\Psi)$}}. By replacing $\delta$ with $\min\{\delta,\frac{\beta^{(1)}_\Upsilon}{2p}\}$, we may assume that $\delta\le\frac{\beta^{(1)}_\Upsilon}{2p}<\frac{\beta^{(1)}_\Upsilon}{p}$. For any ball $B(x_0,r)$, let $\phi\in \mathcal{F}^{(L)}$ be a cutoff function for $B(x_0,r)\subseteq B(x_0,Ar)$ given by {\hyperlink{eq_CE_strong}{$\text{CE}^{(L)}_{\text{strong}}(\Psi)$}}. By Lemma \ref{lem_domain}, we have $\phi\in \mathcal{F}^{(J,\Upsilon)}$. To prove that $\phi$ satisfies {\hyperlink{eq_CE_strong}{$\text{CE}^{(J)}_{\text{strong}}(\Xi)$}}, since \hyperref[eq_CE_Holder]{$\text{CE}^{(J)}(\Xi)\text{-}1$} follows trivially from \hyperref[eq_CE_Holder]{$\text{CE}^{(L)}(\Psi)\text{-}1$}, it remains to prove \hyperref[eq_CE_energy]{$\text{CE}^{(J)}(\Xi)\text{-}2$}.

For any $x\in X$, $s>0$, we have
\begin{align}
&\int_{B(x,s)}\mathrm{d}\Gamma^{(J,\Upsilon)}_{X}(\phi)\nonumber\\
&\lesssim\left(\int_{B(x,2s)}\int_{B(x,2s)}+\int_{B(x,s)}\int_{X\backslash B(x,2s)}\right)\frac{\lvert \phi(y)-\phi(z)\rvert^p}{V(y,d(y,z))\Upsilon(d(y,z))} m(\mathrm{d}y)m(\mathrm{d}z)\nonumber\\
&=I_1+I_2.\label{eq_CE_sub1}
\end{align}
By \hyperref[eq_CE_Holder]{$\text{CE}^{(L)}(\Psi)\text{-}1$}, we have
\begin{align*}
&I_2\lesssim\int_{B(x,s)}\int_{X\backslash B(y,s)}\left(\frac{d(y,z)}{r}\wedge 1\right)^{p\delta}\frac{1}{V(y,d(y,z))\Upsilon(d(y,z))}m(\mathrm{d}y)m(\mathrm{d}z)\\
&=\int_{B(x,s)}\left(\sum_{n=0}^{+\infty}\int_{B(y,2^{n+1}s)\backslash B(y,2^ns)}\left(\frac{d(y,z)}{r}\wedge 1\right)^{p\delta}\frac{1}{V(y,d(y,z))\Upsilon(d(y,z))}m(\mathrm{d}z)\right)m(\mathrm{d}y).
\end{align*}
Since
\begin{align*}
&\sum_{n=0}^{+\infty}\int_{B(y,2^{n+1}s)\backslash B(y,2^ns)}\left(\frac{d(y,z)}{r}\wedge 1\right)^{p\delta}\frac{1}{V(y,d(y,z))\Upsilon(d(y,z))}m(\mathrm{d}z)\\
&\le\sum_{n=0}^{+\infty} \left(\frac{2^{n+1}s}{r}\wedge 1\right)^{p\delta} \frac{V(y,2^{n+1}s)}{V(y,2^ns)\Upsilon(2^ns)}\lesssim \frac{1}{\Upsilon(s)}\sum_{n=0}^{+\infty}\left(\frac{2^{n+1}s}{r}\wedge 1\right)^{p\delta}\frac{1}{2^{\beta^{(1)}_\Upsilon n}}\\
&\overset{(\diamond)}{\scalebox{2}[1]{$\asymp$}}\frac{1}{\Upsilon(s)}\left(\frac{s}{r}\wedge1\right)^{p\delta}\le \frac{1}{\Upsilon(s)} \left(\frac{s}{r}\wedge1\right)^\delta,
\end{align*}
where $(\diamond)$ follows from the fact that $p\delta<\beta^{(1)}_\Upsilon$, we have
\begin{equation}\label{eq_CE_sub2}
I_2\lesssim \left(\frac{s}{r}\wedge1\right)^\delta \frac{V(x,s)}{\Upsilon(s)}.
\end{equation}
By (\ref{eq_nonlocal_E1}), we have
\begin{align*}
&I_1\lesssim\sum_{n=0}^{+\infty}E_{\Upsilon,B(x,2s)}(\phi,\frac{1}{2^n}(4s))=\sum_{n=0}^{+\infty}\frac{\Psi(\frac{1}{2^n}(4s))}{\Upsilon(\frac{1}{2^n}(4s))}E_{\Psi,B(x,2s)}(\phi,\frac{1}{2^n}(4s)).
\end{align*}

If $s\le Ar$, then for any $n\ge0$, by Lemma \ref{lem_local_E}, we have
\begin{align*}
&E_{\Psi,B(x,2s)}(\phi,\frac{1}{2^n}(4s))\lesssim \Gamma^{(L)}(\phi)\left(B(x,2s+2A_{PI}\frac{1}{2^n}(4s))\right)\le \Gamma^{(L)}(\phi)(B(x,16A_{PI}s))\\
&\overset{(\star)}{\scalebox{2}[1]{$\lesssim$}} \left(\frac{16A_{PI}s}{r}\wedge 1\right)^\delta \frac{V(x,16A_{PI}s)}{\Psi((16A_{PI}s)\wedge r)}\lesssim \left(\frac{s}{r}\right)^\delta \frac{V(x,s)}{\Psi(s)},
\end{align*}
where $(\star)$ follows from \hyperref[eq_CE_energy]{$\text{CE}^{(L)}(\Psi)\text{-}2$}. By \ref{eq_SUG0} and Lemma \ref{lem_Xi_basic}, we have
\begin{equation}\label{eq_CE_sub3}
I_1\lesssim \left(\frac{s}{r}\right)^\delta \frac{V(x,s)}{\Psi(s)}\sum_{n=0}^{+\infty}\frac{\Psi(\frac{1}{2^n}(4s))}{\Upsilon(\frac{1}{2^n}(4s))}\asymp \left(\frac{s}{r}\right)^\delta \frac{V(x,s)}{\Xi(s)}.
\end{equation}

If $s>Ar$, then
\begin{align*}
&I_1\le\int_{X}\int_{X}\frac{\lvert \phi(y)-\phi(z)\rvert^p}{V(y,d(y,z))\Upsilon(d(y,z))} m(\mathrm{d}y)m(\mathrm{d}z)\\
&= \left(\int_{B(x_0,2Ar)}\int_{B(x_0,2Ar)}+\int_{B(x_0,2Ar)}\int_{X\backslash B(x_0,2Ar)}+\int_{X\backslash B(x_0,2Ar)}\int_{B(x_0,2Ar)}\right)\ldots \mathrm{d}m \mathrm{d}m\\
&=I_{11}+I_{12}+I_{13}.
\end{align*}
Taking $x=x_0$, $s=Ar$ in (\ref{eq_CE_sub3}), we have
$$I_{11}\lesssim \frac{V(x_0,r)}{\Xi(r)}.$$
Since
\begin{align*}
&I_{12}=\int_{B(x_0,Ar)}\int_{X\backslash B(x_0,2Ar)}\frac{\phi(y)^p}{V(y,d(y,z))\Upsilon(d(y,z))}m(\mathrm{d}y)m(\mathrm{d}z)\\
&\le\int_{B(x_0,Ar)}\left(\int_{X\backslash B(y,Ar)}\frac{1}{V(y,d(y,z))\Upsilon(d(y,z))}m(\mathrm{d}z)\right)m(\mathrm{d}y)\lesssim \frac{V(x_0,r)}{\Upsilon(r)},
\end{align*}
and similarly
$$I_{13}\lesssim\frac{V(x_0,r)}{\Upsilon(r)},$$
using $\Xi\lesssim \Upsilon$ as in Lemma \ref{lem_Xi_basic}, we have
$$I_1\le I_{11}+I_{12}+I_{13}\lesssim\frac{V(x_0,r)}{\Xi(r)}.$$
If $d(x_0,x)<3s$, then
$$I_1\lesssim  \frac{V(x_0,r)}{\Xi(r)}\le \frac{V(x,d(x_0,x)+r)}{\Xi(r)}\le \frac{V(x,3s+\frac{1}{A}s)}{\Xi(r)}\lesssim \frac{V(x,s)}{\Xi(r)}.$$
If $d(x_0,x)\ge 3s$, then $B(x_0,Ar)\cap B(x,2s)=\emptyset$, hence
$$I_1=\int_{B(x,2s)}\int_{B(x,2s)}\frac{\lvert \phi(y)-\phi(z)\rvert^p}{V(y,d(y,z))\Upsilon(d(y,z))} m(\mathrm{d}y)m(\mathrm{d}z)=0\le  \frac{V(x,s)}{\Xi(r)}.$$
Therefore, if $s>Ar$, then
$$I_1\lesssim \frac{V(x,s)}{\Xi(r)}.$$
Combining this with (\ref{eq_CE_sub3}), we have
\begin{align*}
I_1\lesssim
\begin{cases}
\left(\frac{s}{r}\right)^\delta \frac{V(x,s)}{\Xi(s)}&\text{if }s\le Ar,\\
\frac{V(x,s)}{\Xi(r)}&\text{if }s>Ar,
\end{cases}
\lesssim\left(\frac{s}{r}\wedge 1\right)^\delta \frac{V(x,s)}{\Xi(s\wedge r)}.
\end{align*}
Combining this with (\ref{eq_CE_sub1}) and (\ref{eq_CE_sub2}), and using $\Xi \lesssim \Upsilon$ as in Lemma \ref{lem_Xi_basic}, we have
$$\int_{B(x,s)}\mathrm{d}\Gamma_X^{(J,\Upsilon)}(\phi)\lesssim I_1+I_2\lesssim \left(\frac{s}{r}\wedge 1\right)^\delta \frac{V(x,s)}{\Xi(s\wedge r)},$$
which is \hyperref[eq_CE_energy]{$\text{CE}^{(J)}(\Xi)\text{-}2$}. In summary, we have \hyperlink{eq_CE_strong}{$\text{CE}^{(J)}_{\text{strong}}(\Xi)$}.
\end{proof}

Let
\begin{align*}
&\mathcal{F}^{(L)}_{\mathrm{loc}}=\left\{u:
\begin{array}{l}
\text{for any relatively compact open set }U,\\
\text{there exists }u^\#\in\mathcal{F}^{(L)}\text{ such that }u=u^\#\text{ }m\text{-a.e. in }U
\end{array}
\right\}.
\end{align*}
For any $u\in\mathcal{F}_{\mathrm{loc}}^{(L)}$, let $\Gamma^{(L)}(u)|_U=\Gamma^{(L)}(u^\#)|_U$, where $u^\#$, $U$ are given as above, then $\Gamma^{(L)}(u)$ is a well-defined positive Radon measure on $X$, as follows from \ref{item_meas3} and \ref{item_meas6} together with an argument similar to that in \cite[Corollary 3.2.1]{FOT11}. Moreover, if \ref{eq_PIL} holds for any $u \in \mathcal{F}^{(L)}$, then it also holds for any $u \in \mathcal{F}^{(L)}_{\mathrm{loc}}$.

We have the following partition of unity with controlled energy.

\begin{lemma}[{\cite[Lemma 3.2]{Yan25a}}]\label{lem_partitionL}
Assume \ref{eq_VD}. Let $(\mathcal{E}^{(L)},\mathcal{F}^{(L)})$ be a strongly local regular $p$-energy satisfying \ref{eq_ucapL}. Then we have the following controlled cutoff condition. There exists $C_{cut}>0$ depending only on $p, C_{\Psi}, C_{VD}, C_{cap}, A_{cap}$ such that for any $\varepsilon>0$, for any $\varepsilon$-net $V$, there exists a family of functions $\{\psi_z\in\mathcal{F}^{(L)}:z\in V\}$ satisfying the following conditions.
\begin{enumerate}[label=(CO\arabic*),ref=(CO\arabic*)]
\item\label{item_COspt} For any $z\in V$, $0\le\psi_z\le 1$ in $X$, $\psi_z=1$ in $B(z,\frac{1}{4}\varepsilon)$, and $\psi_z=0$ on $X\backslash B(z,\frac{5}{4}\varepsilon)$.
\item\label{item_COunit} $\sum_{z\in V}\psi_z=1$.
\item\label{item_COenergy} For any $z\in V$, $\mathcal{E}^{(L)}(\psi_z)\le C_{cut}\frac{V(z,\varepsilon)}{\Psi(\varepsilon)}$.
\end{enumerate}
\end{lemma}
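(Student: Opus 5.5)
We would follow the standard partition-of-unity construction from \cite[Lemma 3.2]{Yan25a}, using only \ref{eq_VD}, \ref{eq_ucapL}, and strong locality. Fix $\varepsilon>0$ and an $\varepsilon$-net $V$. By \ref{eq_VD}, the balls $\{B(z,\frac{5}{4}\varepsilon):z\in V\}$ have bounded overlap, with a constant $N$ depending only on $C_{VD}$. This holds because any point lies in at most $N$ such balls, as the balls $B(z,\varepsilon/2)$ are disjoint.

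\textbf{Step 1: basic cutoff functions.} For each $z\in V$, apply \ref{eq_ucapL} to a ball of radius comparable to $\varepsilon/A_{cap}$. Together with \ref{eq_VD} and the doubling of $\Psi$, this gives a function $\phi_z\in\mathcal{F}^{(L)}$ that equals $1$ on a neighborhood of $B(z,\frac12\varepsilon)$ (or of $B(z,\varepsilon)$), vanishes on a neighborhood of $X\backslash B(z,\frac54\varepsilon)$, and satisfies $\mathcal{E}^{(L)}(\phi_z)\lesssim V(z,\varepsilon)/\Psi(\varepsilon)$. Truncating with the Markovian property, we get $0\le\phi_z\le1$.

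To make sure that $\sum_z\phi_z\ge1$ everywhere, we want $\phi_z=1$ on $B(z,\varepsilon)$. The problem is that \ref{eq_ucapL} only gives $\phi_z=1$ on $B(x,r)$ with support inside $B(x,A_{cap}r)$, so a direct choice of $r$ cannot cover $B(z,\varepsilon)$ while staying inside $B(z,\frac54\varepsilon)$. We handle this by covering $B(z,\varepsilon)$ with boundedly many small balls $B(w,\varepsilon/(8A_{cap}))$. For each such ball we take a capacitary function with support in $B(w,\varepsilon/8)$, and we set $\phi_z$ to be the maximum of these functions. The maximum stays in $\mathcal{F}^{(L)}$ by the lattice property, its energy is bounded by the sum of the energies, and the number of small balls is bounded by \ref{eq_VD}.

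\textbf{Step 2: normalize.} Set $\Phi=\sum_{z\in V}\phi_z$. Then $1\le\Phi\le N$, and the sum is locally finite. Define
\[
\psi_z=\frac{\phi_z}{\Phi}.
\]
This immediately gives \ref{item_COunit} and the support condition in \ref{item_COspt}. For $\psi_z=1$ on $B(z,\frac14\varepsilon)$, we need the other $\phi_{z'}$ to vanish there. The net $V$ is only $\varepsilon$-separated, so a point of $B(z,\frac14\varepsilon)$ lies at distance $\ge\frac34\varepsilon$ from every other $z'$. We therefore arrange the support of $\phi_{z'}$ away from $B(z,\frac14\varepsilon)$ by a more careful choice of radii, for instance making $\phi_{z'}=1$ near $B(z',\frac{1}{2}\varepsilon)$ for coverage and adjusting the net scale by a constant. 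Alternatively, we use the Gram–Schmidt-type construction: $\psi_{z_1}=\phi_{z_1}$, $\psi_{z_k}=\phi_{z_k}\prod_{j<k}(1-\phi_{z_j})$ after enumerating $V$. Only the boundedly many nearby factors are nontrivial, and this telescopes to give $\sum\psi_z=1$.

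\textbf{Step 3: energy.} On $B(z,\frac54\varepsilon)$, only $N$ functions $\phi_{z'}$ are non-zero. By the Leibniz and chain rules \ref{item_meas5}, \ref{item_meas6} applied to these functions, which are bounded by $1$, and with $\Phi\ge1$, we get
\[
\Gamma^{(L)}(\psi_z)\lesssim\sum_{z'}\Gamma^{(L)}(\phi_{z'}),
\]
summing over the $z'$ with $d(z,z')<\frac52\varepsilon$. Each of these satisfies $V(z',\varepsilon)\asymp V(z,\varepsilon)$ by \ref{eq_VD}, which yields \ref{item_COenergy}.

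The main obstacle is the bookkeeping in Steps 1–2: the constants $\frac14,\frac54$ must be attained while keeping the inner region covered and only boundedly many overlaps. The Leibniz and chain rules are stated only for $\mathcal{F}^{(L)}\cap C_c$. If the capacitary functions are not continuous, we would first approximate them via regularity, or use strong locality directly, to justify the energy estimate.
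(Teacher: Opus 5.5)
Your Steps 1 and 3 are sound: covering by balls of radius $\varepsilon/(8A_{cap})$ and taking maxima gives the cutoffs, and the quotient estimate works with $1\le\Phi\le N$. Step 2, however, has a genuine gap. Neither device you propose gives $\psi_z=1$ on $B(z,\frac14\varepsilon)$ together with $\sum_z\psi_z=1$.

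\emph{Why a single family fails.} Both normalization and the product $\psi_{z_k}=\phi_{z_k}\prod_{j<k}(1-\phi_{z_j})$ require the sets $\{\phi_z=1\}$ to cover $X$. An $\varepsilon$-net only guarantees that the balls $B(z,\varepsilon)$ cover $X$, so you essentially need $\phi_z=1$ on $B(z,\varepsilon)$.
\begin{itemize}
\item Your ``more careful radii'' would put the support of $\phi_z$ in $B(z,\frac34\varepsilon)$, and that kills coverage. For $V=\frac{19}{10}\varepsilon\mathbb{Z}\subseteq\mathbb{R}$, the midpoints lie at distance $\frac{19}{20}\varepsilon$ from $V$. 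The lemma is claimed for \emph{every} $\varepsilon$-net, so you cannot rescale $V$.
\item If instead $\phi_z=1$ on $B(z,\varepsilon)$, take neighbours with $\varepsilon\le d(z_j,z_k)<\frac54\varepsilon$, e.g.\ adjacent points of $V=\varepsilon\mathbb{Z}$. Then $\phi_{z_j}=1$ on the nonempty set $B(z_j,\varepsilon)\cap B(z_k,\frac14\varepsilon)$. There $\psi_{z_k}\le\frac12$ in the normalized version, and $\psi_{z_k}=0$ in the Gram--Schmidt version when $j<k$. So (CO1) fails.
\end{itemize}

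\emph{The missing idea: use two families of cutoffs at two scales.} Using your Step 1 argument, take both with energy $\lesssim V(z,\varepsilon)/\Psi(\varepsilon)$:
\begin{itemize}
\item $\theta_z$, a cutoff for $B(z,\frac14\varepsilon)\subseteq B(z,\frac12\varepsilon)$. These have pairwise disjoint supports, because the balls $B(z,\frac12\varepsilon)$ are disjoint.
\item $\eta_z$, a cutoff for $B(z,\varepsilon)\subseteq B(z,\frac54\varepsilon)$.
\end{itemize}
Set $\phi_z=\eta_z\bigl(1-\sum_{w\ne z}\theta_w\bigr)$. This is a finite expression on $B(z,\frac54\varepsilon)$, and it has the following properties:
\begin{itemize}
\item $\phi_z=1$ on $B(z,\frac14\varepsilon)$.
\item For $w\ne z$, $\phi_w=0$ on $B(z,\frac14\varepsilon)$, because $\theta_z=1$ there.
\item $1\le\sum_w\phi_w\le N$. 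On $B(w,\frac12\varepsilon)$ one has $\phi_w=1$. Off $\bigcup_wB(w,\frac12\varepsilon)$ one has $\phi_w=\eta_w$, and the balls $B(w,\varepsilon)$ cover $X$.
\end{itemize}
Then $\psi_z=\phi_z/\sum_w\phi_w$ satisfies (CO1)--(CO2). Your Step 3 energy estimate goes through unchanged, with extra terms $\Gamma^{(L)}(\theta_w)$ for the boundedly many $w$ with $d(w,z)<\frac74\varepsilon$. Equivalently, $\psi_z=\theta_z+\bigl(1-\sum_w\theta_w\bigr)\eta_z/\sum_w\eta_w$ also works.
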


We have the following ``variant" of the Poincar\'e inequality.

\begin{proposition}\label{prop_PIL_E}
Assume \ref{eq_VD}. Let $(\mathcal{E}^{(L)},\mathcal{F}^{(L)})$ be a strongly local regular $p$-energy satisfying \ref{eq_PIL}, \ref{eq_ucapL}. Then there exist $C>0$, $A>1$ such that for any $x_0\in X$ and any $R,r>0$ with $r\le R$, for any $u\in L^p(X;m)$, we have
$$\int_{B(x_0,R)}\lvert u-u_{B(x_0,R)}\rvert^p \mathrm{d}m \le C\Psi(R)E_{\Psi,B(x_0,AR)}(u,r).$$
\end{proposition}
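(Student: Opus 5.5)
We prove Proposition \ref{prop_PIL_E} by a discrete convolution argument. The idea is to approximate $u$ at scale $r$ by a function in $\mathcal{F}^{(L)}_{\mathrm{loc}}$ built from the partition of unity of Lemma \ref{lem_partitionL}, apply \ref{eq_PIL} to that approximation, and control the error in $L^p$ by $E_{\Psi}(u,r)$.

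\textbf{Construction.} Fix $\varepsilon=r/4$ (any fixed fraction of $r$ works) and an $\varepsilon$-net $V$. Let $\{\psi_z\}_{z\in V}$ be the family given by Lemma \ref{lem_partitionL}, which is available since \ref{eq_ucapL} is assumed. Define
$$u_\varepsilon=\sum_{z\in V}u_{B(z,2\varepsilon)}\psi_z .$$
By \ref{eq_VD} the net has bounded overlap: each point lies in at most $N$ of the balls $B(z,\frac54\varepsilon)$, with $N$ depending only on $C_{VD}$. Hence the sum is locally finite and $u_\varepsilon\in\mathcal{F}^{(L)}_{\mathrm{loc}}$.

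\textbf{Step 1 (approximation error).} For $x\in B(z_0,\varepsilon)$, property \ref{item_COunit} gives
$$u(x)-u_\varepsilon(x)=\sum_{z}\psi_z(x)\bigl(u(x)-u_{B(z,2\varepsilon)}\bigr).$$
Only the boundedly many $z$ with $d(x,z)<\frac54\varepsilon$ contribute, and for those $B(z,2\varepsilon)\subseteq B(x,4\varepsilon)=B(x,r)$. By Jensen and \ref{eq_VD},
$$|u(x)-u_\varepsilon(x)|^p\lesssim \frac{1}{V(x,r)}\int_{B(x,r)}|u(x)-u(y)|^p\,m(\mathrm{d}y).$$
Integrating over $B(x_0,R)$ gives $\int_{B(x_0,R)}|u-u_\varepsilon|^p\,\mathrm{d}m\lesssim \Psi(r)E_{\Psi,B(x_0,R)}(u,r)\le \Psi(R)E_{\Psi,B(x_0,R)}(u,r)$.

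\textbf{Step 2 (energy of $u_\varepsilon$).} On $B(z_0,\varepsilon)$ we have $\sum_z\psi_z=1$, so we may write $u_\varepsilon-u_{B(z_0,2\varepsilon)}=\sum_z (u_{B(z,2\varepsilon)}-u_{B(z_0,2\varepsilon)})\psi_z$. The sum runs over the boundedly many $z$ with $d(z,z_0)\le 3\varepsilon$. By the strong locality property \ref{item_meas3}, the $p$-energy measure of $u_\varepsilon$ on $B(z_0,\varepsilon)$ is therefore at most a constant times
$$\sum_{z}|u_{B(z,2\varepsilon)}-u_{B(z_0,2\varepsilon)}|^p\,\mathcal{E}^{(L)}(\psi_z).$$
Property \ref{item_COenergy} gives $\mathcal{E}^{(L)}(\psi_z)\lesssim V(z,\varepsilon)/\Psi(\varepsilon)$. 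The difference of averages satisfies
$$|u_{B(z,2\varepsilon)}-u_{B(z_0,2\varepsilon)}|^p\lesssim \frac{1}{V(z_0,\varepsilon)^2}\int_{B(z_0,6\varepsilon)}\int_{B(z_0,6\varepsilon)}|u(x)-u(y)|^p\,m(\mathrm{d}x)\,m(\mathrm{d}y).$$
Points $x,y\in B(z_0,6\varepsilon)$ satisfy $d(x,y)<12\varepsilon=3r$, which is bounded by a fixed multiple of $r$. Using \ref{eq_VD} and the doubling of $\Psi$, this yields
$$\Gamma^{(L)}(u_\varepsilon)(B(z_0,\varepsilon))\lesssim \frac{1}{\Psi(r)}\int_{B(z_0,6\varepsilon)}\frac{1}{V(x,3r)}\int_{B(x,3r)}|u(x)-u(y)|^p\,m(\mathrm{d}y)\,m(\mathrm{d}x).$$
Summing over $z_0$ with bounded overlap gives
$$\Gamma^{(L)}(u_\varepsilon)(B(x_0,A_{PI}R))\lesssim E_{\Psi,B(x_0,A'R)}(u,3r).$$

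\textbf{Step 3 (combine).} Use the triangle inequality and the fact that $\inf_c\int|f-c|^p$ is comparable to $\int|f-f_B|^p$:
$$\int_{B}|u-u_B|^p\lesssim \int_B|u-u_\varepsilon|^p+\int_B|u_\varepsilon-(u_\varepsilon)_B|^p .$$
Apply \ref{eq_PIL} for $\mathcal{F}^{(L)}_{\mathrm{loc}}$ to the second term and Step 2 to bound it by $\Psi(R)E_{\Psi,B(x_0,A'R)}(u,3r)$. The first term is handled by Step 1.

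\textbf{Main obstacle.} The remaining issue is replacing $E(u,3r)$ by $E(u,r)$. I would do this with a chaining estimate valid under \ref{eq_CC}. For $d(x,y)<3r$, pick intermediate points and average over small balls around them to get
$$\frac{1}{V(x,3r)}\int_{B(x,3r)}|u(x)-u(y)|^p\,m(\mathrm{d}y)\lesssim \text{averages of }\frac{1}{V(w,r)}\int_{B(w,r)}|u(w)-u(y)|^p\,m(\mathrm{d}y)\text{ over nearby }w.$$
Integrating in $x$ then gives $E_{\Psi,U}(u,3r)\lesssim E_{\Psi,U(4r)}(u,r)$, with $U(4r)\subseteq B(x_0,(A'+4)R)$ since $r\le R$. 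If the chaining becomes messy, a simpler fallback is to choose $\varepsilon=r/12$ from the start, so that all the balls in Steps 1 and 2 already have radius at most $r$.
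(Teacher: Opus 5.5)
Your proposal is correct and follows the paper's proof. It uses the same discrete convolution $\sum_z u_{B(z,\cdot)}\psi_z$ built from the controlled-energy partition of unity of Lemma \ref{lem_partitionL}, the same three-term split, and \ref{eq_PIL} applied to the convolution in $\mathcal{F}^{(L)}_{\mathrm{loc}}$, with its energy controlled by \ref{item_COenergy} together with differences of ball averages.

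The scale issue you flag as the main obstacle is not really one. Your fallback of shrinking $\varepsilon$ already gives $E(u,r)$ directly, so the chaining argument via \ref{eq_CC}, while valid, is unnecessary. The paper does essentially this: it takes $\varepsilon=r/6$ and compares $u_{B(z,\varepsilon)}$ with $u_{B(w,\varepsilon)}$ using $B(w,\varepsilon)\subseteq B(x,6\varepsilon)$ for $x\in B(z,\varepsilon)$.
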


\begin{proof}
Let $\varepsilon=\frac{r}{6}$, and let $V\subseteq X$ be an $\varepsilon$-net. By Lemma \ref{lem_partitionL}, there exist $C_{cut}>0$ and $\{\psi_z\in\mathcal{F}^{(L)}:z\in V\}$ satisfying the conditions therein. For any $z\in V$, let
$$N_z=\left\{w\in V:d(z,w)<4\varepsilon\right\},$$
then by \ref{eq_VD}, there exists an integer $M\ge1$ depending only on $C_{VD}$ such that $\# N_z\le M$ for any $z\in V$. For $u\in L^p(X;m)$, let
$$u^{(\varepsilon)}=\sum_{z\in V}u_{B(z,\varepsilon)}\psi_z,$$
then $u^{(\varepsilon)}\in \mathcal{F}^{(L)}_{\mathrm{loc}}$. By \ref{eq_VD} and H\"older's inequality, we have
\begin{align*}
&\int_X \lvert u^{(\varepsilon)}\rvert^p \mathrm{d}m\le\sum_{z\in V}\int_{B(z,\varepsilon)}\lvert \sum_{w\in N_z}u_{B(w,\varepsilon)}\psi_w\rvert^p \mathrm{d}m\\
&\lesssim \sum_{z\in V}\int_{B(z,\varepsilon)}\sum_{w\in N_z}\lvert u_{B(w,\varepsilon)}\rvert^p \mathrm{d}m\lesssim\sum_{z\in V}\int_{B(z,5\varepsilon)}\lvert u\rvert^p \mathrm{d}m\lesssim \int_X \lvert u\rvert^p \mathrm{d}m,
\end{align*}
hence $u^{(\varepsilon)}\in L^p(X;m)$. By H\"older's inequality, we have
\begin{align}
&\int_{B(x_0,R)}\lvert u-u_{B(x_0,R)}\rvert^p \mathrm{d}m\nonumber\\
&\le 3^{p-1}\left(\int_{B(x_0,R)}\lvert u-u^{(\varepsilon)}\rvert^p \mathrm{d}m+\int_{B(x_0,R)}\lvert u^{(\varepsilon)}-(u^{(\varepsilon)})_{B(x_0,R)}\rvert^p \mathrm{d}m\right.\nonumber\\
&\hspace{40pt}\left.+\int_{B(x_0,R)}\lvert (u^{(\varepsilon)})_{B(x_0,R)}-u_{B(x_0,R)}\rvert^p \mathrm{d}m\right),\label{eq_PIL_E1}
\end{align}
where
\begin{equation}\label{eq_PIL_E2}
\int_{B(x_0,R)}\lvert (u^{(\varepsilon)})_{B(x_0,R)}-u_{B(x_0,R)}\rvert^p \mathrm{d}m\le \int_{B(x_0,R)}\lvert u-u^{(\varepsilon)}\rvert^p \mathrm{d}m.
\end{equation}

Firstly, by \ref{eq_VD} and H\"older's inequality, we have
\begin{align}
&\int_{B(x_0,R)}\lvert u-u^{(\varepsilon)}\rvert^p \mathrm{d}m\le\sum_{z\in V\cap B(x_0,2R)}\int_{B(z,\varepsilon)}\lvert u-u^{(\varepsilon)}\rvert^p \mathrm{d}m\nonumber\\
&=\sum_{z\in V\cap B(x_0,2R)}\int_{B(z,\varepsilon)}\lvert \sum_{w\in N_z}(u-u_{B(w,\varepsilon)})\psi_w \rvert^p \mathrm{d}m\nonumber\\
&\lesssim \sum_{z\in V\cap B(x_0,2R)}\int_{B(z,\varepsilon)} \sum_{w\in N_z}\lvert u-u_{B(w,\varepsilon)}\rvert^p \mathrm{d}m\nonumber\\
&\lesssim \sum_{z\in V\cap B(x_0,2R)}\int_{B(z,\varepsilon)} \left(\frac{1}{V(x,6\varepsilon)}\int_{B(x,6\varepsilon)}\lvert u(x)-u(y)\rvert^p m(\mathrm{d}y)\right)m(\mathrm{d}x)\nonumber\\
&\lesssim \int_{B(x_0,4R)}\left(\frac{1}{V(x,6\varepsilon)}\int_{B(x,6\varepsilon)}\lvert u(x)-u(y)\rvert^p m(\mathrm{d}y)\right)m(\mathrm{d}x).\label{eq_PIL_E3}
\end{align}

Secondly, by \ref{eq_PIL}, we have
\begin{equation}\label{eq_PIL_E4}
\int_{B(x_0,R)}\lvert u^{(\varepsilon)}-(u^{(\varepsilon)})_{B(x_0,R)}\rvert^p \mathrm{d}m\le C_{PI}\Psi(R)\int_{B(x_0,A_{PI}R)}\mathrm{d}\Gamma^{(L)}(u^{(\varepsilon)}),
\end{equation}
where by \ref{eq_VD}, \ref{item_COenergy}, and H\"older's inequality, we have
\begin{align}
&\int_{B(x_0,A_{PI}R)}\mathrm{d}\Gamma^{(L)}(u^{(\varepsilon)})\le\sum_{z\in V\cap B(x_0,2A_{PI}R)}\Gamma^{(L)}(u^{(\varepsilon)})\left(B(z,\varepsilon)\right)\nonumber\\
&=\sum_{z\in V\cap B(x_0,2A_{PI}R)}\Gamma^{(L)} \left(\sum_{w\in N_z}(u_{B(z,\varepsilon)}-u_{B(w,\varepsilon)})\psi_w\right)\left(B(z,\varepsilon)\right)\nonumber\\
&\lesssim\sum_{z\in V\cap B(x_0,2A_{PI}R)}\sum_{w\in N_z}\lvert u_{B(z,\varepsilon)}-u_{B(w,\varepsilon)} \rvert^p\Gamma^{(L)} \left(\psi_w\right)\left(B(z,\varepsilon)\right)\nonumber\\
&\lesssim \sum_{z\in V\cap B(x_0,2A_{PI}R)} \frac{1}{V(z,\varepsilon)}\int_{B(z,\varepsilon)} \left(\frac{1}{V(x,6\varepsilon)}\int_{B(x,6\varepsilon)}\lvert u(x)-u(y)\rvert^p m(\mathrm{d}y)\right)m(\mathrm{d}x) \cdot\frac{V(z,\varepsilon)}{\Psi(\varepsilon)}\nonumber\\
&\lesssim \frac{1}{\Psi(\varepsilon)}\int_{B(x_0,4A_{PI}R)} \left(\frac{1}{V(x,6\varepsilon)}\int_{B(x,6\varepsilon)}\lvert u(x)-u(y)\rvert^p m(\mathrm{d}y)\right)m(\mathrm{d}x).\label{eq_PIL_E5}
\end{align}

Combining (\ref{eq_PIL_E1})--(\ref{eq_PIL_E5}) and using $6\varepsilon=r\le R$, we have
\begin{align*}
&\int_{B(x_0,R)}\lvert u-u_{B(x_0,R)}\rvert^p \mathrm{d}m\\
&\lesssim\left(\int_{B(x_0,4R)}+\frac{\Psi(R)}{\Psi(\varepsilon)}\int_{B(x_0,4A_{PI}R)}\right)\left(\frac{1}{V(x,6\varepsilon)}\int_{B(x,6\varepsilon)}\lvert u(x)-u(y)\rvert^p m(\mathrm{d}y)\right)m(\mathrm{d}x)\\
&\lesssim \Psi(R)E_{\Psi,B(x_0,4A_{PI}R)}(u,r).
\end{align*}
\end{proof}

We give the proof of ``\ref{item_sub2}$\Rightarrow$\ref{item_sub3}" in Theorem \ref{thm_sub} as follows.

\begin{proof}[Proof of ``\ref{item_sub2}$\Rightarrow$\ref{item_sub3}" in Theorem \ref{thm_sub}]
Let $u\in \mathcal{F}^{(J,\Upsilon)}$ be a non-constant function, then there exists a ball $B(x_0,R)$ such that
$$\int_{B(x_0,R)}\lvert u-u_{B(x_0,R)}\rvert^p \mathrm{d}m\in(0,+\infty).$$
By Proposition \ref{prop_PIL_E}, there exist $C>0$, $A>1$ such that for any $r\in(0,R]$, we have
$$E_{\Psi,B(x_0,AR)}(u,r)\ge \frac{1}{C\Psi(R)}\int_{B(x_0,R)}\lvert u-u_{B(x_0,R)}\rvert^p \mathrm{d}m=:I\in(0,+\infty).$$

Suppose that \ref{eq_SUG0} does not hold. Take $N\in \mathbb{Z}$ such that $\frac{1}{2^N}\le R$, then
$$\sum_{n=N}^{+\infty}\frac{\Psi(\frac{1}{2^n})}{\Upsilon(\frac{1}{2^n})}=+\infty.$$
By above, for any $n\ge N$, we have $E_{\Psi,B(x_0,AR)}(u,\frac{1}{2^n})\ge I$. By Lemma \ref{lem_nonlocal_E}, we have
\begin{align*}
&+\infty>\mathcal{E}^{(J,\Upsilon)}(u)\gtrsim\sum_{n=N}^{+\infty}E_{\Upsilon,B(x_0,AR)}(u,\frac{1}{2^n})\\
&=\sum_{n=N}^{+\infty}\frac{\Psi(\frac{1}{2^n})}{\Upsilon(\frac{1}{2^n})}E_{\Psi,B(x_0,AR)}(u,\frac{1}{2^n})\ge I\sum_{n=N}^{+\infty}\frac{\Psi(\frac{1}{2^n})}{\Upsilon(\frac{1}{2^n})}=+\infty,
\end{align*}
contradiction. Hence \ref{eq_SUG0} holds.
\end{proof}

We give the proof of \hyperref[eq_PIJ]{$\text{PI}^{(J)}(\Xi)$} under \ref{eq_SUG0} in Theorem \ref{thm_sub} as follows.

\begin{proof}[{Proof of {\protect\hyperref[eq_PIJ]{$\text{PI}^{(J)}(\Xi)$}} under \ref{eq_SUG0} in Theorem \ref{thm_sub}}]
For any ball $B(x_0,R)$, for any $u\in L^p(X;m)$, for any $n\ge0$, by Proposition \ref{prop_PIL_E}, we have
\begin{align*}
&\int_{B(x_0,R)}\lvert u-u_{B(x_0,R)}\rvert^p \mathrm{d}m\\
&\lesssim \Psi(R)E_{\Psi,B(x_0,AR)}(u,\frac{1}{2^n}R)=\Psi(R)\frac{\Upsilon(\frac{1}{2^n}R)}{\Psi(\frac{1}{2^n}R)}E_{\Upsilon,B(x_0,AR)}(u,\frac{1}{2^n}R),
\end{align*}
where $A$ is a constant appearing therein, that is,
$$\frac{\Psi(\frac{1}{2^n}R)}{\Upsilon(\frac{1}{2^n}R)}\int_{B(x_0,R)}\lvert u-u_{B(x_0,R)}\rvert^p \mathrm{d}m\lesssim \Psi(R)E_{\Upsilon,B(x_0,AR)}(u,\frac{1}{2^n}R).$$
Taking the summation over $n \ge 0$ and using Lemma \ref{lem_nonlocal_E}, we have
\begin{align*}
&\sum_{n=0}^{+\infty}\frac{\Psi(\frac{1}{2^n}R)}{\Upsilon(\frac{1}{2^n}R)}\int_{B(x_0,R)}\lvert u-u_{B(x_0,R)}\rvert^p \mathrm{d}m\lesssim \Psi(R)\sum_{n=0}^{+\infty}E_{\Upsilon,B(x_0,AR)}(u,\frac{1}{2^n}R)\\
&\lesssim \Psi(R)\int_{B(x_0,2AR)}\int_{B(x_0,2AR)}\frac{\lvert u(x)-u(y)\rvert^p}{V(x,d(x,y))\Upsilon(d(x,y))}m(\mathrm{d}x)m(\mathrm{d}y)\\
&\lesssim \Psi(R)\int_{B(x_0,2AR)}\mathrm{d}\Gamma^{(J,\Upsilon)}_{B(x_0,2AR)}(u),
\end{align*}
where the last inequality follows from \ref{eq_KJ}. By Lemma \ref{lem_Xi_basic}, we have
\begin{align*}
&\int_{B(x_0,R)}\lvert u-u_{B(x_0,R)}\rvert^p \mathrm{d}m\\
&\lesssim \frac{\Psi(R)}{\sum_{n=0}^{+\infty}\frac{\Psi(\frac{1}{2^n}R)}{\Upsilon(\frac{1}{2^n}R)}}\int_{B(x_0,2AR)}\mathrm{d}\Gamma^{(J,\Upsilon)}_{B(x_0,2AR)}(u)\asymp \Xi(R)\int_{B(x_0,2AR)}\mathrm{d}\Gamma^{(J,\Upsilon)}_{B(x_0,2AR)}(u).
\end{align*}
Hence, we have \hyperref[eq_PIJ]{$\text{PI}^{(J)}(\Xi)$}.
\end{proof}

Finally, by \ref{eq_VD} and \ref{eq_KJ}, we have \ref{eq_UPR} and \ref{eq_KJ_tail} by Lemma \ref{lem_KJ_ele}. Assuming \ref{eq_SUG0}, since $\Xi\lesssim \Upsilon$ by Lemma \ref{lem_Xi_basic}, it follows that \hyperref[eq_KJ_tail]{$\text{T}^{(J)}(\Xi)$} also holds. Combining \hyperref[eq_PIJ]{$\text{PI}^{(J)}(\Xi)$} and \hyperlink{eq_CE_strong}{$\text{CE}^{(J)}_{\text{strong}}(\Xi)$}, and applying Corollary \ref{cor_pform}, we conclude that $(\mathcal{E}^{(J,\Upsilon)},\mathcal{F}^{(J,\Upsilon)})$ is a regular $p$-energy. This completes the proof of Theorem \ref{thm_sub}.

\section{\texorpdfstring{Self-improvement of {$\text{CS}^{(J)}_{\text{weak}}$} and {$\text{CS}^{(J)}_{\text{cont}}$}}{Self-improvement of CSJweak and CSJcont}}\label{sec_CSJ_self}

In this section, we prove the self-improvement property of \hyperlink{eq_CSJ_weak}{$\text{CS}^{(J)}_{\text{weak}}$} and \hyperlink{eq_CSJ_cont}{$\text{CS}^{(J)}_{\text{cont}}$}; see also \cite[Lemma 6.2]{Mur24a} and \cite[Proposition 3.1]{Yan25c} for analogous results in the local setting, and \cite[Lemma 2.9]{GHH18} and \cite[PROPOSITION 2.4]{CKW21} in the non-local setting. Throughout this section, we always assume that $(\mathcal{E}^{(J)},\mathcal{F}^{(J)})$ is a non-local $p$-form given by a kernel $K^{(J)}$.

\begin{proposition}\label{prop_CSJ_self}
Assume \ref{eq_VD}, \ref{eq_KJ_tail}, \hyperlink{eq_CSJ_weak}{$\text{CS}^{(J)}_{\text{weak}}(\Upsilon)$}. Then for any $\varepsilon>0$, there exists $C_\varepsilon>0$ depending on $\varepsilon$, such that for any $x_0\in X$ , for any $R,r,R'>0$ with $R+r<R'$, there exists a cutoff function $\phi\in \mathcal{F}^{(J)}$ for $B(x_0,R)\subseteq B(x_0,R+r)$ such that for any $f\in \widehat{\mathcal{F}}^{(J)}\cap L^\infty(X;m)$, we have
\begin{align}
&\int_{B(x_0,R')}\lvert f\rvert^p \mathrm{d}\Gamma^{(J)}_{B(x_0,R')}(\phi)\nonumber\\
&\le \varepsilon \int_{B(x_0,R+r)\backslash \overline{B(x_0,R)}}\lvert \phi\rvert^p \mathrm{d}\Gamma^{(J)}_{B(x_0,R+r)\backslash \overline{B(x_0,R)}}(f)+\frac{C_\varepsilon}{\Upsilon(r)}\int_{B(x_0,R')}\lvert f\rvert^p \mathrm{d}m.\label{eq_CSJ_self}
\end{align}
Moreover, if \hyperlink{eq_CSJ_cont}{$\text{CS}^{(J)}_{\text{cont}}(\Upsilon)$} holds, then we can take $\phi\in \mathcal{F}^{(J)}\cap C_c(X)$.
\end{proposition}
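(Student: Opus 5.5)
The plan follows the standard self-improvement scheme used for the AB/CSJ conditions: first a covering argument, then a layering (telescoping) construction. There are four steps.

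\emph{Step 1: a covering version.} Fix $x_0$, $R$, $r$, $R'$. Let $\rho=r/(CA_2)$ with $C$ large, and let $V$ be a $\rho$-net. For each $z\in V$ with $B(z,\rho)$ meeting the annulus, take the cutoff $\phi_z$ given by $\text{CS}^{(J)}_{\text{weak}}(\Upsilon)$ for $B(z,\rho)\subseteq B(z,A_1\rho)$. Combining these via $\max_z\phi_z$, together with $1$ inside $B(x_0,R)$, should give a cutoff $\psi$ for $B(x_0,R)\subseteq B(x_0,R+r)$ with
$$\int_{B(x_0,R')}|f|^p\,\mathrm{d}\Gamma^{(J)}_{B(x_0,R')}(\psi)\le C_1\int_{B(x_0,R+r)\backslash\overline{B(x_0,R)}}\mathrm{d}\Gamma^{(J)}(f)+\frac{C_2}{\Upsilon(r)}\int_{B(x_0,R')}|f|^p\,\mathrm{d}m.$$
The ingredients are as follows. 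The pointwise inequality $|\max_z a_z-\max_z b_z|^p\le\sum_z|a_z-b_z|^p$ splits the energy of the maximum into the energies of the $\phi_z$. \ref{eq_VD} bounds the overlap of the balls $B(z,A_2\rho)$. Lemma \ref{lem_CSJ_energy} passes between $\Gamma_{B}$ and $\Gamma_X$; it uses \ref{eq_KJ_tail} to control the jumps leaving $B(z,A_2\rho)$ by $\Upsilon(\rho)^{-1}|f|^p$, and $\Upsilon(\rho)\asymp\Upsilon(r)$ by doubling. Since $f$ is bounded, all integrals are finite. Continuity is preserved if each $\phi_z$ is continuous and the union is locally finite.

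\emph{Step 2: the layering construction.} Let $n\ge1$ be large and split the annulus into $n$ concentric sub-annuli of width $r/n$, with radii $r_k=R+kr/n$. Step 1 gives cutoffs $\psi_k$ for $B(x_0,r_k)\subseteq B(x_0,r_{k+1})$. Set $\phi=\frac1n\sum_{k=0}^{n-1}\psi_k$. Then $\phi$ is a cutoff for $B(x_0,R)\subseteq B(x_0,R+r)$, and on the $k$-th sub-annulus $U_k$ we have $\phi\ge k/n$.

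To estimate $\Gamma(\phi)$, write $|\phi(x)-\phi(y)|^p\le n^{-p}\bigl(\sum_k|\psi_k(x)-\psi_k(y)|\bigr)^p$. The local part, where $x,y$ lie in neighbouring layers, involves only boundedly many $k$, which gives a factor $n^{-p}$. Summing the Step 1 bounds then gives
$$\varepsilon'\sum_k\int_{U_k}\mathrm{d}\Gamma(f)+\frac{C\,n^{\beta^{(2)}_\Upsilon-p+1}}{\Upsilon(r)}\int|f|^p\,\mathrm{d}m.$$
The long jumps, with $d(x,y)\gtrsim r/n$, are bounded crudely by $|\phi(x)-\phi(y)|\le1$ together with the tail estimate $\Upsilon(r/n)^{-1}\lesssim n^{\beta^{(2)}}\Upsilon(r)^{-1}$.

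\emph{Step 3: inserting the weight $|\phi|^p$.} On $U_k$ we have $\phi\ge k/n$, so the energy $\int_{U_k}\mathrm{d}\Gamma(f)$ can be bounded by $(n/k)^p\int_{U_k}\phi^p\,\mathrm{d}\Gamma(f)$ once $k\ge1$. For this to work, the layering must be arranged so that the energy of $\psi_k$ only sees $f$'s energy in the $k$-th layer, which is what Step 1 delivers. The outermost layers, where $\phi$ is small, are the delicate part. The standard remedy from \cite{AB15,GHH18} is to use the geometric weights $\phi=\sum_k a_k\psi_k$ with $a_k\propto\lambda^{k}$, so that in layer $k$ the coefficient of $\Gamma(\psi_k)$, namely $a_k^p$, is dominated by $\varepsilon\,\phi^p$. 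Here $\phi\gtrsim\sum_{j\ge k}a_j\ge a_k$ on that layer, and the reversed ordering makes $\phi$ larger inside.

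\emph{Step 4: choice of parameters.} Choose $\lambda$ (or $n$) depending on $\varepsilon$. This yields (\ref{eq_CSJ_self}) with $C_\varepsilon$ polynomial in $n$. In the continuous case all the pieces are continuous, so $\phi\in C_c(X)$.

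The main obstacle is Step 3: getting the factor $|\phi|^p$ in front of $\Gamma(f)$ with a small constant $\varepsilon$. In the non-local setting this must be done while also handling the cross-layer jumps, whose weight $|\phi|^p$ at the point $x$ need not match the weight at $y$. I would handle these through \ref{eq_KJ_tail}, absorbing them into the $C_\varepsilon\Upsilon(r)^{-1}\int|f|^p$ term.
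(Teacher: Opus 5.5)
Your Step 1 is the paper's covering step: a maximum of the $\text{CS}^{(J)}_{\text{weak}}(\Upsilon)$ cutoffs over a fine net, with \ref{eq_VD} for overlaps and \ref{eq_KJ_tail} for long jumps. Weights decreasing outward are also the right idea. The gap is in the layering of Steps 2--4.

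On the $k$-th layer $U_k$, the cutoff $\psi_k$ itself drops from $1$ to $0$. So you only have $\phi\ge\sum_{j>k}a_j$, not $\sum_{j\ge k}a_j$: the lower bound comes solely from layers lying \emph{outside} $U_k$. With finitely many layers, whatever the weights, $\phi=a_{n-1}\psi_{n-1}$ on the outermost layer, and this vanishes at $\partial B(x_0,R+r)$. Meanwhile Step 1 applied to $\psi_{n-1}$ produces $Ca_{n-1}^p\int_{U_{n-1}}\mathrm{d}\Gamma(f)$ with no small weight. Taking $f$ whose energy sits near the outer sphere shows this can never be bounded by $\varepsilon\int\phi^p\,\mathrm{d}\Gamma(f)$. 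Hence every layer needs infinitely many layers beyond it.

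The paper takes layers accumulating at the outer sphere: $B_k=B(x_0,R+r_k)$ with widths $s_k=\frac{1-a}{a}a^kr$, and $\phi=\sum_{k\ge0}(b^k-b^{k+1})\phi_k$. Then $b^{k+1}\le\phi\le b^k$ on $B_{k+1}\backslash\overline{B_k}$, and $b_k^p\le\bigl(\frac{1-b}{b}\bigr)^p\phi^p$ on $B_{k+1}$. The bound is proved for the truncations $\varphi_n$ uniformly in $n$, and then Fatou's lemma is applied.

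Shrinking widths then break the rest of your estimate. Step 1 for the $k$-th cutoff costs $b_k^p\Upsilon(s_{k+1})^{-1}\int|f|^p\,\mathrm{d}m$. Jumps between non-adjacent layers $k,l$ only have length $\ge s_{\min\{k,l\}+2}$, so your crude bound $|\phi(x)-\phi(y)|\le1$ with \ref{eq_KJ_tail} gives $\sum_k\Upsilon(s_k)^{-1}=+\infty$. You need the fact that only cutoffs with index between $\min\{k,l\}$ and $\max\{k,l\}$ change, so $|\phi(x)-\phi(y)|\le b^{\min\{k,l\}}$. This is what the paper's terms $I_{53}$ and $I_6$ encode through its connected/disconnected multinomial bookkeeping; your split by the layers containing $x$ and $y$ would do the same more simply.

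All error terms are then bounded by $\sum_kb^{pk}/\Upsilon(a^kr)\le C_\Upsilon\Upsilon(r)^{-1}\sum_k\bigl(b^{p}a^{-\beta^{(2)}_\Upsilon}\bigr)^k$ (the paper has $b^{p_0}$). This is finite only if two parameters are tuned in the right order:
first choose $b$ near $1$ so that $\bigl(\frac{1-b}{b}\bigr)^p$ is below $\varepsilon$ up to constants;
then choose $a$ near $1$ so that $b^{p}<a^{\beta^{(2)}_\Upsilon}$.
This balance between the decay of the weights and the growth of $\Upsilon(s_k)^{-1}$ is missing from your Step 4. A finite number $n$ of layers with $C_\varepsilon$ polynomial in $n$ cannot replace it.

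Two minor points. With your indexing, $\phi\ge(n-1-k)/n$ on $U_k$, not $k/n$. In the continuous case, continuity of the infinite sum follows from uniform convergence of $\sum_kb_k\phi_k$.
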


\begin{proof}
Firstly, we prove that there exists $C>0$ such that for any $x_0\in X$, for any $R,r,R'>0$ with $R+r<R'$, there exists a cutoff function $\phi\in \mathcal{F}^{(J)}$ for $B(x_0,R)\subseteq B(x_0,R+r)$ such that for any $f\in \mathcal{F}^{(J)}$, we have
\begin{align}
&\int_{B(x_0,R')}\lvert f\rvert^p \mathrm{d}\Gamma^{(J)}_{B(x_0,R')}(\phi)\nonumber\\
&\le C\int_{B(x_0,R+r)\backslash \overline{B(x_0,R)}} \mathrm{d}\Gamma^{(J)}_{B(x_0,R+r)\backslash \overline{B(x_0,R)}}(f)+\frac{C}{\Upsilon(r)}\int_{B(x_0,R')}\lvert f\rvert^p \mathrm{d}m.\label{eq_CSJ_self1}
\end{align}

Let $A_1,A_2,C_1,C_2$ be the constants in \ref{eq_CSJ_energy}, $L=2(A_1+8A_2+5)+1$, and $V$ an $(\frac{1}{L}r)$-net. For any $v\in V$, by \hyperlink{eq_CSJ_weak}{$\text{CS}^{(J)}_{\text{weak}}(\Upsilon)$}, there exists a cutoff function $\phi_v\in \mathcal{F}^{(J)}$ for $B(v,\frac{1}{L}r)\subseteq B(v,\frac{A_1}{L}r)$ such that
$$\int_{B(v,\frac{A_2}{L}r)}\lvert f\rvert^p \mathrm{d}\Gamma^{(J)}_{B(v,\frac{A_2}{L}r)}(\phi_v)\le C_1\int_{B(v,\frac{A_2}{L}r)}\mathrm{d}\Gamma^{(J)}_{B(v,\frac{A_2}{L}r)}(f)+\frac{C_2}{\Upsilon(r)}\int_{B(v,\frac{A_2}{L}r)}\lvert f\rvert^p \mathrm{d}m.$$

We claim that there exists $C_3>0$ such that for any open set $\Omega\supseteq B(v,\frac{A_2}{L}r)$, we have
\begin{equation}\label{eq_phiv}
\int_\Omega \lvert f\rvert^p \mathrm{d}\Gamma^{(J)}_\Omega(\phi_v)\le C_3\int_{B(v,\frac{A_2}{L}r)}\mathrm{d}\Gamma^{(J)}_{B(v,\frac{A_2}{L}r)}(f)+\frac{C_3}{\Upsilon(r)}\int_{\Omega}\lvert f\rvert^p \mathrm{d}m.
\end{equation}
Indeed, we have
\begin{align*}
\int_\Omega \lvert f\rvert^p \mathrm{d}\Gamma^{(J)}_\Omega(\phi_v)=\left(\int_{B(v,\frac{A_2}{L}r)}+\int_{\Omega\backslash B(v,\frac{A_2}{L}r)} \right)\lvert f\rvert^p \mathrm{d}\Gamma^{(J)}_\Omega(\phi_v)=I_1+I_2,
\end{align*}
and $I_1=I_{11}+I_{12}$, where
\begin{align*}
&I_{11}=\int_{B(v,\frac{A_2}{L}r)}\int_{B(v,\frac{A_2}{L}r)}\lvert f(x)\rvert^p \lvert \phi_v(x)-\phi_v(y)\rvert^p K^{(J)}(x,y) m(\mathrm{d}x) m(\mathrm{d}y)\\
&=\int_{B(v,\frac{A_2}{L}r)}\lvert f\rvert^p \mathrm{d}\Gamma^{(J)}_{B(v,\frac{A_2}{L}r)}(\phi_v)\lesssim\int_{B(v,\frac{A_2}{L}r)}\mathrm{d}\Gamma^{(J)}_{B(v,\frac{A_2}{L}r)}(f)+\frac{1}{\Upsilon(r)}\int_{B(v,\frac{A_2}{L}r)}\lvert f\rvert^p \mathrm{d}m,
\end{align*}
and by \ref{eq_KJ_tail}, we have
\begin{align*}
&I_{12}=\int_{B(v,\frac{A_2}{L}r)}\int_{\Omega\backslash B(v,\frac{A_2}{L}r)}\lvert f(x)\rvert^p \lvert \phi_v(x)-\phi_v(y)\rvert^p K^{(J)}(x,y) m(\mathrm{d}x) m(\mathrm{d}y)\\
&=\int_{B(v,\frac{A_1}{L}r)}\int_{\Omega\backslash B(v,\frac{A_2}{L}r)}\lvert f(x)\rvert^p \phi_v(x)^p K^{(J)}(x,y) m(\mathrm{d}x) m(\mathrm{d}y)\\
&\le\int_{B(v,\frac{A_1}{L}r)} \lvert f(x)\rvert^p \left(\int_{X\backslash B(x,\frac{A_2-A_1}{L}r)}K^{(J)}(x,y)m(\mathrm{d}y)\right)m(\mathrm{d}x)\\
&\lesssim \frac{1}{\Upsilon(r)}\int_{B(v,\frac{A_1}{L}r)}\lvert f\rvert^p \mathrm{d}m,
\end{align*}
and
\begin{align*}
&I_2=\int_{\Omega\backslash B(v,\frac{A_2}{L}r)}\int_{\Omega}\lvert f(x)\rvert^p \lvert \phi_v(x)-\phi_v(y)\rvert^p K^{(J)}(x,y) m(\mathrm{d}x) m(\mathrm{d}y)\\
&=\int_{\Omega\backslash B(v,\frac{A_2}{L}r)}\int_{B(v,\frac{A_1}{L}r)}\lvert f(x)\rvert^p \phi_v(y)^p K^{(J)}(x,y) m(\mathrm{d}x) m(\mathrm{d}y)\\
&\le\int_{\Omega\backslash B(v,\frac{A_2}{L}r)} \lvert f(x)\rvert^p \left(\int_{X\backslash B(x,\frac{A_2-A_1}{L}r)}K^{(J)}(x,y)m(\mathrm{d}y)\right)m(\mathrm{d}x)\\
&\lesssim \frac{1}{\Upsilon(r)}\int_{\Omega\backslash B(v,\frac{A_2}{L}r)}\lvert f\rvert^p \mathrm{d}m,
\end{align*}
hence we have (\ref{eq_phiv}).

Let $V_1=V\cap B(x_0,R+\frac{1}{2}r)$, then by \ref{eq_VD}, we have $\# V_1<+\infty$. Let
\begin{equation}\label{eq_phi_self1}
\phi=\max_{v\in V_1}\phi_v,
\end{equation}
then $\phi\in \mathcal{F}^{(J)}$ satisfies that $0\le \phi\le 1$ in $X$, $\phi=1$ in $B(x_0,R+\frac{1}{2}r-\frac{1}{L}r)\supseteq \overline{B(x_0,R)}$, and $\mathrm{supp}(\phi)\subseteq B(x_0,R+\frac{1}{2}r+\frac{A_1}{L}r)\subseteq B(x_0,R+r)$, hence $\phi\in \mathcal{F}^{(J)}$ is a cutoff function for $B(x_0,R)\subseteq B(x_0,R+r)$. Moreover
\begin{align*}
&\int_{B(x_0,R')}\lvert f\rvert^p \mathrm{d}\Gamma^{(J)}_{B(x_0,R')}(\phi)=\iint\limits_{B(x_0,R')\times B(x_0,R')}\lvert f(x)\rvert^p \lvert \phi(x)-\phi(y)\rvert^p K^{(J)}(x,y)m(\mathrm{d}x) m(\mathrm{d}y)\\
&=\left(\iint\limits_{\substack{B(x_0,R')\times B(x_0,R')\\d(x,y)\ge \frac{1}{L}r}}+\iint\limits_{\substack{B(x_0,R')\times B(x_0,R')\\d(x,y)<\frac{1}{L}r}}\right)\ldots \mathrm{d}m \mathrm{d}m=I_3+I_4.
\end{align*}
By \ref{eq_KJ_tail}, we have
\begin{align*}
&I_3\le\iint\limits_{\substack{B(x_0,R')\times B(x_0,R')\\d(x,y)\ge \frac{1}{L}r}}\lvert f(x)\rvert^p  K^{(J)}(x,y)m(\mathrm{d}x) m(\mathrm{d}y)\\
&\le\int_{B(x_0,R')}\lvert f(x)\rvert^p \left(\int_{X\backslash B(x,\frac{1}{L}r)}K^{(J)}(x,y)m(\mathrm{d}y)\right)m(\mathrm{d}x)\\
&\lesssim \frac{1}{\Upsilon(r)}\int_{B(x_0,R')}\lvert f\rvert^p \mathrm{d}m.
\end{align*}
For any $x,y\in X$ with $d(x,y)<\frac{1}{L}r$, we have $\lvert \phi(x)-\phi(y)\rvert\ne0$ only if $x,y\in U$, where
$$U=B(x_0,R+\frac{1}{2}r+\frac{A_1+1}{L}r)\backslash \overline{B(x_0,R+\frac{1}{2}r-\frac{2}{L}r)}.$$
Let
\begin{align*}
V_2=V\cap \left(B(x_0,R+\frac{1}{2}r+\frac{A_1+2}{L}r)\backslash B(x_0,R+\frac{1}{2}r-\frac{3}{L}r)\right),
\end{align*}
then $ U\subseteq\cup_{v\in V_2}B(v,\frac{1}{L}r)$, hence
\begin{align*}
&I_4=\iint_{\substack{U\times U\\d(x,y)<\frac{1}{L}r}}\lvert f(x)\rvert^p \lvert \phi(x)-\phi(y)\rvert^p K^{(J)}(x,y)m(\mathrm{d}x) m(\mathrm{d}y)\\
&\le\sum_{v\in V_2}\iint_{\substack{B(v,\frac{1}{L}r)\times U\\d(x,y)<\frac{1}{L}r}}\ldots \mathrm{d}m \mathrm{d}m\le\sum_{v\in V_2}\iint_{B(v,\frac{1}{L}r)\times B(v,\frac{2}{L}r)}\ldots\mathrm{d}m \mathrm{d}m\\
&\le\sum_{v\in V_2}\iint_{B(v,\frac{2}{L}r)\times B(v,\frac{2}{L}r)}\lvert f(x)\rvert^p \lvert \phi(x)-\phi(y)\rvert^p K^{(J)}(x,y)m(\mathrm{d}x) m(\mathrm{d}y).
\end{align*}
By \ref{eq_VD}, there exists some positive integer $M$ depending only on $C_{VD}, A_1,L$ such that
$$\# \left\{w\in V:d(v,w)<\frac{A_1+2}{L}r\right\}\le M\text{ for any }v\in V.$$
For any $v\in V_2$, in $B(v,\frac{2}{L}r)$, we have
$$\phi=\max_{w\in V_1}\phi_w=\max \left\{\phi_w:w\in V_1,d(v,w)<\frac{A_1+2}{L}r\right\}.$$
By the elementary fact that $\lvert a_1\vee a_2-a_3\vee a_4\rvert\le \lvert a_1-a_3\rvert+\lvert a_2-a_4\rvert$ for any $a_1,\ldots,a_4\in \mathbb{R}$, for any $x,y\in B(v,\frac{2}{L}r)$, we have
\begin{align*}
\lvert \phi(x)-\phi(y)\rvert^p\le \left(\sum_{\substack{w\in V_1\\d(v,w)<\frac{A_1+2}{L}r}}\lvert \phi_w(x)-\phi_w(y)\rvert\right)^p\le M^{p-1}\sum_{\substack{w\in V_1\\d(v,w)<\frac{A_1+2}{L}r}}\lvert \phi_w(x)-\phi_w(y)\rvert^p.
\end{align*}
Hence
\begin{align*}
&I_4\le M^{p-1}\sum_{v\in V_2}\sum_{\substack{w\in V_1\\d(v,w)<\frac{A_1+2}{L}r}}\iint_{B(v,\frac{2}{L}r)\times B(v,\frac{2}{L}r)}\lvert f(x)\rvert^p \lvert \phi_w(x)-\phi_w(y)\rvert^p\\
&\hspace{200pt}\cdot K^{(J)}(x,y)m(\mathrm{d}x) m(\mathrm{d}y)\\
&\le M^{p-1}\sum_{\substack{v\in V_2,w\in V_1\\d(v,w)<\frac{A_1+2}{L}r}}\iint_{B(w,\frac{A_1+4}{L}r)\times B(w,\frac{A_1+4}{L}r)} \ldots\mathrm{d}m \mathrm{d}m\\
&\le M^{p-1}\sum_{\substack{v\in V_2,w\in V_1\\d(v,w)<\frac{A_1+2}{L}r}}\iint_{B(w,\frac{8A_2}{L}r)\times B(w,\frac{8A_2}{L}r)} \ldots\mathrm{d}m \mathrm{d}m\\
&= M^{p-1}\sum_{\substack{v\in V_2,w\in V_1\\d(v,w)<\frac{A_1+2}{L}r}}\int_{B(w,\frac{8A_2}{L}r)}\lvert f\rvert^p \mathrm{d}\Gamma^{(J)}_{B(w,\frac{8A_2}{L}r)}(\phi_w).
\end{align*}
Let
\begin{align*}
V_3=V_1\backslash B(x_0,R+\frac{1}{2}r-\frac{A_1+5}{L}r),
\end{align*}
then
\begin{align*}
&I_4\le M^{p-1}\sum_{\substack{v\in V_2,w\in V_3\\d(v,w)<\frac{A_1+2}{L}r}}\int_{B(w,\frac{8A_2}{L}r)}\lvert f\rvert^p \mathrm{d}\Gamma^{(J)}_{B(w,\frac{8A_2}{L}r)}(\phi_w)\\
&\le M^p\sum_{w\in V_3}\int_{B(w,\frac{8A_2}{L}r)}\lvert f\rvert^p \mathrm{d}\Gamma^{(J)}_{B(w,\frac{8A_2}{L}r)}(\phi_w).
\end{align*}
Taking $v=w$ and $\Omega=B(w,\frac{8A_2}{L}r)$ in (\ref{eq_phiv}), we have
\begin{align*}
&\int_{B(w,\frac{8A_2}{L}r)}\lvert f\rvert^p \mathrm{d}\Gamma^{(J)}_{B(w,\frac{8A_2}{L}r)}(\phi_w)\\
&\lesssim\int_{B(w,\frac{A_2}{L}r)}\mathrm{d}\Gamma^{(J)}_{B(w,\frac{A_2}{L}r)}(f)+\frac{1}{\Upsilon(r)}\int_{B(w,\frac{8A_2}{L}r)}\lvert f\rvert^p \mathrm{d}m\\
&\le\int_{B(w,\frac{A_2}{L}r)}\mathrm{d}\Gamma^{(J)}_{B(x_0,R+r)\backslash \overline{B(x_0,R)}}(f)+\frac{1}{\Upsilon(r)}\int_{B(w,\frac{8A_2}{L}r)}\lvert f\rvert^p \mathrm{d}m,
\end{align*}
hence
\begin{align*}
&I_4\lesssim\sum_{w\in V_3}\left(\int_{B(w,\frac{A_2}{L}r)}\mathrm{d}\Gamma^{(J)}_{B(x_0,R+r)\backslash \overline{B(x_0,R)}}(f)+\frac{1}{\Upsilon(r)}\int_{B(w,\frac{8A_2}{L}r)}\lvert f\rvert^p \mathrm{d}m\right)\\
&=\int_X\left(\sum_{w\in V_3}1_{B(w,\frac{A_2}{L}r)}\right)\mathrm{d}\Gamma^{(J)}_{B(x_0,R+r)\backslash \overline{B(x_0,R)}}(f)+\frac{1}{\Upsilon(r)}\int_X\left(\sum_{w\in V_3}1_{B(w,\frac{8A_2}{L}r)}\right)\lvert f\rvert^p \mathrm{d}m.
\end{align*}
By \ref{eq_VD}, there exists some positive integer $N$ depending only on $C_{VD}, A_2,L$ such that
$$\sum_{w\in V_3}1_{B(w,\frac{8A_2}{L}r)}\le N1_{\cup_{w\in V_3}B(w,\frac{8A_2}{L}r)}\le N1_{B(x_0,R+r)\backslash \overline{B(x_0,R)}},$$
hence
$$I_4\lesssim\int_{B(x_0,R+r)\backslash \overline{B(x_0,R)}}\mathrm{d}\Gamma^{(J)}_{B(x_0,R+r)\backslash \overline{B(x_0,R)}}(f)+\frac{1}{\Upsilon(r)}\int_{B(x_0,R+r)\backslash \overline{B(x_0,R)}}\lvert f\rvert^p \mathrm{d}m.$$
Therefore, we have
\begin{align*}
&\int_{B(x_0,R')}\lvert f\rvert^p \mathrm{d}\Gamma^{(J)}_{B(x_0,R')}(\phi)=I_3+I_4\\
&\lesssim\int_{B(x_0,R+r)\backslash \overline{B(x_0,R)}}\mathrm{d}\Gamma^{(J)}_{B(x_0,R+r)\backslash \overline{B(x_0,R)}}(f)+\frac{1}{\Upsilon(r)}\int_{B(x_0,R')}\lvert f\rvert^p \mathrm{d}m.
\end{align*}

Secondly, we prove (\ref{eq_CSJ_self}). Let $a\in(0,1)$ be chosen later. For any $n\ge1$, let $s_n=\frac{1-a}{a}a^nr$, then $\sum_{n=1}^{+\infty}s_n=r$. Let $r_0=0$ and $r_n=\sum_{k=1}^ns_k$ for any $n\ge1$, then $r_n\uparrow r$. For any $n\ge0$, let $B_n=B(x_0,R+r_n)$, then $B(x_0,R)=B_0\subseteq B_n\uparrow B(x_0,R+r)$. For any $n\ge0$, by (\ref{eq_CSJ_self1}), there exists a cutoff function $\phi_n\in \mathcal{F}^{(J)}$ for $B_n\subseteq B_{n+1}$ such that
\begin{align}
&\int_{B(x_0,R')}\lvert f\rvert^p \mathrm{d}\Gamma^{(J)}_{B(x_0,R')}(\phi_n)\nonumber\\
&\le C\int_{B_{n+1}\backslash \overline{B_n}}\mathrm{d}\Gamma^{(J)}_{B_{n+1}\backslash \overline{B_n}}(f)+\frac{C}{\Upsilon(s_{n+1})}\int_{B(x_0,R')}\lvert f\rvert^p \mathrm{d}m.\label{eq_CSJ_self2}
\end{align}
Let $b\in(0,1)$ be chosen later, $b_n=b^n-b^{n+1}$ for any $n\ge0$, and
\begin{equation}\label{eq_phi_self2}
\phi=\sum_{k=0}^{+\infty}b_k\phi_k=\sum_{k=0}^{+\infty}(b^k-b^{k+1})\phi_k,
\end{equation}
then $0\le\phi\le1$ in $X$, $\phi=1$ in $B(x_0,R)$, and $\phi=0$ on $X\backslash B(x_0,R+r)$. For any $n\ge0$, in $B_{n+1}$, we have $\phi=\sum_{k=0}^n(b^k-b^{k+1})\phi_k+b^{n+1}\ge b^{n+1}$; in $B_{n+1}\backslash \overline{B_n}$, we have $\phi=(b^n-b^{n+1})\phi_n+b^{n+1}$, hence $b^{n+1}\le \phi\le b^n$ in $B_{n+1}\backslash \overline{B_n}$.

For any $n\ge0$, let $\varphi_n=\sum_{k=0}^nb_k\phi_k$. By Fatou's lemma, we only need to show that for suitable choices of $a$ and $b$ depending on $\varepsilon$, we have
\begin{align*}
&\int_{B(x_0,R')}\lvert f\rvert^p \mathrm{d}\Gamma^{(J)}_{B(x_0,R')}(\varphi_n)\\
&\le \varepsilon \int_{B(x_0,R+r)\backslash \overline{B(x_0,R)}}\lvert \phi\rvert^p \mathrm{d}\Gamma^{(J)}_{B(x_0,R+r)\backslash \overline{B(x_0,R)}}(f)+\frac{C_\varepsilon}{\Upsilon(r)}\int_{B(x_0,R')}\lvert f\rvert^p \mathrm{d}m,
\end{align*}
for any $n>p$, where $C_\varepsilon>0$ is independent of $n$.

Let $p=p_0+\delta_0$, where $1\le p_0\in \mathbb{Z}$ and $\delta_0\in[0,1)$. We have the following three cases:
\begin{enumerate}[label=(\alph*),ref=(\alph*)]
\item\label{item_pd1} $p_0\ge2$ and $\delta_0\in(0,1)$,
\item\label{item_pd2} $p_0\ge2$ and $\delta_0=0$,
\item\label{item_pd3} $p_0=1$ and $\delta_0\in(0,1)$.
\end{enumerate}

In case \ref{item_pd1}, we have
\begin{align*}
\lvert \varphi_n(x)-\varphi_n(y)\rvert^p\le\left(\sum_{k=0}^n b_k \lvert \phi_k(x)-\phi_k(y)\rvert\right)^{p_0}\left(\sum_{k=0}^n b_k \lvert \phi_k(x)-\phi_k(y)\rvert\right)^{\delta_0},
\end{align*}
where
\begin{align*}
\left(\sum_{k=0}^n b_k \lvert \phi_k(x)-\phi_k(y)\rvert\right)^{\delta_0}\le\sum_{k=0}^n \left(b_k \lvert \phi_k(x)-\phi_k(y)\rvert\right)^{\delta_0},
\end{align*}
and
\begin{align*}
&\left(\sum_{k=0}^n b_k \lvert \phi_k(x)-\phi_k(y)\rvert\right)^{p_0}=\sum_{\substack{t_0,\ldots,t_n\in \{0,\ldots,p_0\}\\t_0+\ldots+t_n=p_0}}\prod_{k=0}^n\left(b_k\lvert \phi_k(x)-\phi_k(y)\rvert\right)^{{t_k}}.
\end{align*}
For any sequence $\{t_0,\ldots,t_n\}\in \{0,\ldots,p_0\}^{n+1}$ with $t_0+\ldots+t_n=p_0$, let
$$I(t_0,\ldots,t_n)=\left\{k:t_k\ge1\right\}\subseteq\left\{0,\ldots,n\right\},$$
and
$$i(t_0,\ldots,t_n)=\max\left\{k\in I(t_0,\ldots,t_n)\middle|
\begin{array}{l}
I(t_0,\ldots,t_n)\supseteq\{i\in \mathbb{Z}:l\le i\le k\}\\
\text{for any $l\in I(t_0,\ldots,t_n)$ with $l\le k$}
\end{array}
\right\},$$
then $1\le \# I(t_0,\ldots,t_n)\le p_0$. We say that $\{t_0,\ldots,t_n\}$ is connected if $I(t_0,\ldots,t_n)$ consists of consecutive integers, or equivalently, $i(t_0,\ldots,t_n)=\max\{k:k\in I(t_0,\ldots,t_n)\}$; otherwise we say that it is disconnected, see Table \ref{tab_Ii} for two examples with $n=4$ and $p_0=3$. Obviously, if $\{t_0,\ldots,t_n\}$ is disconnected, then $k=i(t_0,\ldots,t_n)\le n-2$ and $I(t_0,\ldots,t_n)\cap\{k+2,k+3,\ldots\}\ne\emptyset$.

\begin{table}[ht]
\centering
\renewcommand{\arraystretch}{1.5} 
\begin{tabular}{|c|c|c|}
\hline
$\{t_0,\ldots,t_n\}$& $\{0,1,1,1,0\}$ & $\{0,1,1,0,1\}$\\
\hline
$I(t_0,\ldots,t_n)$&$\{1,2,3\}$&$\{1,2,4\}$\\
\hline
$i(t_0,\ldots,t_n)$&3&2\\
\hline
connectedness& connected&disconnected\\
\hline
\end{tabular}
\caption{Two examples of $\{t_0,\ldots,t_n\}$ with $n=4$ and $p_0=3$}
\label{tab_Ii}
\end{table}

Then we have
\begin{align*}
&\int_{B(x_0,R')}\lvert f\rvert^p \mathrm{d}\Gamma^{(J)}_{B(x_0,R')}(\varphi_n)\\
&=\left(\sum_{{\{t_0,\ldots,t_n\}:\text{connected}}}+\sum_{{\{t_0,\ldots,t_n\}:\text{disconnected}}}\right)\\
&\hspace{20pt}\int_{B(x_0,R')}\int_{B(x_0,R')}\lvert f(x)\rvert^p \prod_{k=0}^n\left(b_k\lvert \phi_k(x)-\phi_k(y)\rvert\right)^{{t_k}} \\
&\hspace{100pt}\cdot\left(\sum_{k=0}^n b_k \lvert \phi_k(x)-\phi_k(y)\rvert\right)^{\delta_0}K^{(J)}(x,y) m(\mathrm{d}x) m(\mathrm{d}y)\\
&\le I_5+I_6,
\end{align*}
where
\begin{align*}
&I_5=\int_{B(x_0,R')}\int_{B(x_0,R')}\lvert f(x)\rvert^p\left(\sum_{{\{t_0,\ldots,t_n\}:\text{connected}}} \prod_{k=0}^n\left(b_k\lvert \phi_k(x)-\phi_k(y)\rvert\right)^{{t_k}}\right)\\
&\hspace{100pt}\cdot\left(\sum_{k=0}^n \left(b_k \lvert \phi_k(x)-\phi_k(y)\rvert\right)^{\delta_0}\right)K^{(J)}(x,y) m(\mathrm{d}x) m(\mathrm{d}y),
\end{align*}
and
\begin{align*}
&I_6=\sum_{{\{t_0,\ldots,t_n\}:\text{disconnected}}}\int_{B(x_0,R')}\int_{B(x_0,R')}\lvert f(x)\rvert^p \prod_{k=0}^n\left(b_k\lvert \phi_k(x)-\phi_k(y)\rvert\right)^{{t_k}}\\
&\hspace{100pt}\cdot K^{(J)}(x,y) m(\mathrm{d}x) m(\mathrm{d}y).
\end{align*}
By Young's inequality, we have
\begin{align*}
&\sum_{\{t_0,\ldots,t_n\}:\text{connected}}\prod_{k=0}^n\left(b_k\lvert \phi_k(x)-\phi_k(y)\rvert\right)^{{t_k}}\\
&\le\sum_{\{t_0,\ldots,t_n\}:\text{connected}}\sum_{k=0}^n \frac{t_k}{p_0}\left(b_k\lvert \phi_k(x)-\phi_k(y)\rvert\right)^{p_0}\\
&\le \sum_{\{t_0,\ldots,t_n\}:\text{connected}}\sum_{k\in I(t_0,\ldots,t_n)} \left(b_k\lvert \phi_k(x)-\phi_k(y)\rvert\right)^{p_0}.
\end{align*}
For any fixed $k=0,\ldots,n$, there exist at most $2^{p_0-1}$ connected sequences $\{t_0,\ldots,t_n\}$ such that $k\in I(t_0,\ldots,t_n)$, hence
\begin{align*}
\sum_{\{t_0,\ldots,t_n\}:\text{connected}}\sum_{k\in I(t_0,\ldots,t_n)} \left(b_k\lvert \phi_k(x)-\phi_k(y)\rvert\right)^{p_0}\le2^{p_0-1}\sum_{k=0}^n \left(b_k\lvert \phi_k(x)-\phi_k(y)\rvert\right)^{p_0}.
\end{align*}
Thus we have
\begin{align*}
&I_5\le2^{p_0-1}\int_{B(x_0,R')}\int_{B(x_0,R')}\lvert f(x)\rvert^p\left(\sum_{k=0}^n\left(b_k\lvert \phi_k(x)-\phi_k(y)\rvert\right)^{{p_0}}\right)\\
&\hspace{100pt}\cdot\left(\sum_{k=0}^n \left(b_k \lvert \phi_k(x)-\phi_k(y)\rvert\right)^{\delta_0}\right)K^{(J)}(x,y) m(\mathrm{d}x) m(\mathrm{d}y)\\
&=2^{p_0-1}(I_{51}+I_{52}+I_{53}),
\end{align*}
where
\begin{align*}
I_{51}&=\int_{B(x_0,R')}\int_{B(x_0,R')}\lvert f(x)\rvert^p\left(\sum_{k=0}^n\left(b_k\lvert \phi_k(x)-\phi_k(y)\rvert\right)^{{p}}\right)K^{(J)}(x,y) m(\mathrm{d}x) m(\mathrm{d}y),\\
I_{52}&=\int_{B(x_0,R')}\int_{B(x_0,R')}\lvert f(x)\rvert^p\\
&\hspace{15pt}\cdot\left(\sum_{\substack{k,l=0\\|k-l|=1}}^n\left(b_k\lvert \phi_k(x)-\phi_k(y)\rvert\right)^{{p_0}}\left(b_l \lvert \phi_l(x)-\phi_l(y)\rvert\right)^{\delta_0}\right)K^{(J)}(x,y) m(\mathrm{d}x) m(\mathrm{d}y),\\
I_{53}&=\int_{B(x_0,R')}\int_{B(x_0,R')}\lvert f(x)\rvert^p\\
&\hspace{15pt}\cdot\left(\sum_{\substack{k,l=0\\|k-l|\ge2}}^n\left(b_k\lvert \phi_k(x)-\phi_k(y)\rvert\right)^{{p_0}}\left(b_l \lvert \phi_l(x)-\phi_l(y)\rvert\right)^{\delta_0}\right)K^{(J)}(x,y) m(\mathrm{d}x) m(\mathrm{d}y).
\end{align*}

For $I_{51}$, by (\ref{eq_CSJ_self2}), we have
\begin{align*}
&I_{51}=\sum_{k=0}^n b_k^p\int_{B(x_0,R')}\lvert f\rvert^p \mathrm{d}\Gamma^{(J)}_{B(x_0,R')}(\phi_k)\\
&\le \sum_{k=0}^n b_k^p \left(C\int_{B_{k+1}\backslash \overline{B_k}}\mathrm{d}\Gamma^{(J)}_{B_{k+1}\backslash \overline{B_k}}(f)+\frac{C}{\Upsilon(s_{k+1})}\int_{B(x_0,R')}\lvert f\rvert^p \mathrm{d}m\right)\\
&\overset{(*)}{\scalebox{2}[1]{$\le$}} C \left(\frac{1-b}{b}\right)^p\sum_{k=0}^n\int_{B_{k+1}\backslash \overline{B_k}}\lvert \phi\rvert^p \mathrm{d}\Gamma^{(J)}_{B(x_0,R+r)\backslash \overline{B(x_0,R)}}(f)\\
&\hspace{10pt}+C\sum_{k=0}^n \frac{b_k^p}{\Upsilon(s_{k+1})}\int_{B(x_0,R')}\lvert f\rvert^p \mathrm{d}m\\
&\le C \left(\frac{1-b}{b}\right)^p\int_{B(x_0,R+r)\backslash \overline{B(x_0,R)}}\lvert \phi\rvert^p \mathrm{d}\Gamma^{(J)}_{B(x_0,R+r)\backslash \overline{B(x_0,R)}}(f)\\
&\hspace{10pt}+C_4\sum_{k=0}^{+\infty} \frac{b^{pk}}{\Upsilon(a^kr)}\int_{B(x_0,R')}\lvert f\rvert^p \mathrm{d}m,
\end{align*}
where $(*)$ follows from the fact that $\phi\ge b^{k+1}$ in $B_{k+1}$, and $C_4>0$ is some constant.

For $I_{52}$, by Young's inequality, we have
\begin{align*}
&I_{52}\le\int_{B(x_0,R')}\int_{B(x_0,R')}\lvert f(x)\rvert^p\\
&\hspace{15pt}\cdot\left(\sum_{\substack{k,l=0\\|k-l|=1}}^n\left(\frac{p_0}{p}\left(b_k\lvert \phi_k(x)-\phi_k(y)\rvert\right)^{{p}}+\frac{\delta_0}{p}\left(b_l \lvert \phi_l(x)-\phi_l(y)\rvert\right)^{p}\right)\right)K^{(J)}(x,y) m(\mathrm{d}x) m(\mathrm{d}y)\\
&\le 4\int_{B(x_0,R')}\int_{B(x_0,R')}\lvert f(x)\rvert^p\left(\sum_{k=0}^n\left(b_k\lvert \phi_k(x)-\phi_k(y)\rvert\right)^{{p}}\right)K^{(J)}(x,y) m(\mathrm{d}x) m(\mathrm{d}y)=4I_{51}.
\end{align*}

For $I_{53}$, for any $l\ge k+2$, by \ref{eq_KJ_tail}, we have
\begin{align*}
&\int_{B(x_0,R')}\int_{B(x_0,R')}\lvert f(x)\rvert^p\left(b_k\lvert \phi_k(x)-\phi_k(y)\rvert\right)^{{p_0}}\left(b_l \lvert \phi_l(x)-\phi_l(y)\rvert\right)^{\delta_0}K^{(J)}(x,y) m(\mathrm{d}x) m(\mathrm{d}y)\\
&=b_k^{p_0}b_l^{\delta_0}\int_{B_{k+1}}\int_{B(x_0,R')\backslash B_{k+2}}\lvert f(x)\rvert^p \phi_k(x)^p(1-\phi_l(y))^{\delta_0}K^{(J)}(x,y)m(\mathrm{d}x) m(\mathrm{d}y)\\
&\hspace{10pt}+b_k^{p_0}b_l^{\delta_0}\int_{B(x_0,R')\backslash B_{k+2}}\int_{B_{k+1}}\lvert f(x)\rvert^p \phi_k(y)^p(1-\phi_l(x))^{\delta_0}K^{(J)}(x,y)m(\mathrm{d}x) m(\mathrm{d}y)\\
&\le b_k^{p_0}b_l^{\delta_0}\int_{B(x_0,R')}\lvert f(x)\rvert^p \left(\int_{X\backslash B(x,s_{k+2})}K^{(J)}(x,y)m(\mathrm{d}y)\right) m(\mathrm{d}x)\\
&\lesssim\frac{ b_k^{p_0}b_l^{\delta_0}}{\Upsilon(s_{k+2})}\int_{B(x_0,R')}\lvert f\rvert^p \mathrm{d}m.
\end{align*}
Taking the sum over $k,l$ with $l\ge k+2$, we have
\begin{align*}
\sum_{\substack{k,l=0\\l\ge k+2}}^n\int_{B(x_0,R')}\int_{B(x_0,R')}\ldots \mathrm{d}m \mathrm{d}m\lesssim \sum_{k=0}^n \frac{ b_k^{p}}{\Upsilon(s_{k+2})}\int_{B(x_0,R')}\lvert f\rvert^p \mathrm{d}m.
\end{align*}
Similarly, we also have
\begin{align*}
\sum_{\substack{k,l=0\\k\ge l+2}}^n\int_{B(x_0,R')}\int_{B(x_0,R')}\ldots \mathrm{d}m \mathrm{d}m\lesssim \sum_{l=0}^n \frac{ b_l^{p}}{\Upsilon(s_{l+2})}\int_{B(x_0,R')}\lvert f\rvert^p \mathrm{d}m,
\end{align*}
which gives
\begin{align*}
I_{53}\lesssim \sum_{k=0}^n \frac{ b_k^{p}}{\Upsilon(s_{k+2})}\int_{B(x_0,R')}\lvert f\rvert^p \mathrm{d}m\lesssim \sum_{k=0}^{+\infty} \frac{ b^{pk}}{\Upsilon(a^kr)}\int_{B(x_0,R')}\lvert f\rvert^p \mathrm{d}m.
\end{align*}
Hence
\begin{align}
&I_5\le 2^{p_0-1}(I_{51}+I_{52}+I_{53})\le 2^{p_0-1}(I_{51}+4I_{51}+I_{53})\nonumber\\
&\le 5\cdot2^{p_0-1}C \left(\frac{1-b}{b}\right)^p\int_{B(x_0,R+r)\backslash \overline{B(x_0,R)}}\lvert \phi\rvert^p \mathrm{d}\Gamma^{(J)}_{B(x_0,R+r)\backslash \overline{B(x_0,R)}}(f)\nonumber\\
&\hspace{15pt}+C_5 \sum_{k=0}^{+\infty} \frac{ b^{pk}}{\Upsilon(a^kr)}\int_{B(x_0,R')}\lvert f\rvert^p \mathrm{d}m,\label{eq_CSJ_self3}
\end{align}
where $C_5>0$ is some constant.

We now consider $I_6$. For any fixed $k=0,\ldots,n-2$ and any fixed sequence of non-negative integers $\{s_0,\ldots,s_k\}$ satisfying that
\begin{equation}\label{eq_cond_sk}
\sum_{i=0}^k s_i\le p_0-1 \text{ and there exists }l\le k\text{ such that }s_i=0\text{ if and only if }i<l,
\end{equation}
we have
\begin{align*}
&\sum_{\substack{{\{t_0,\ldots,t_n\}:\text{disconnected}}\\i(t_0,\ldots,t_n)=k\\ \{t_0,\ldots,t_k\}=\{s_0,\ldots,s_k\}}}\prod_{i=0}^n\left(b_i\lvert \phi_i(x)-\phi_i(y)\rvert\right)^{{t_i}}\\
&=\prod_{i=0}^k\left(b_i\lvert \phi_i(x)-\phi_i(y)\rvert\right)^{{s_i}}\sum_{\substack{{\{t_0,\ldots,t_n\}:\text{disconnected}}\\i(t_0,\ldots,t_n)=k\\ \{t_0,\ldots,t_k\}=\{s_0,\ldots,s_k\}}}\prod_{i=k+2}^n\left(b_i\lvert \phi_i(x)-\phi_i(y)\rvert\right)^{{t_i}}\\
&=\prod_{i=0}^k\left(b_i\lvert \phi_i(x)-\phi_i(y)\rvert\right)^{{s_i}} \left(\sum_{i=k+2}^n \left(b_i \lvert \phi_i(x)-\phi_i(y)\rvert\right)\right)^{p_0-\sum_{i=0}^ks_i}.
\end{align*}
By \ref{eq_KJ_tail}, we have
\begin{align*}
&\sum_{\substack{{\{t_0,\ldots,t_n\}:\text{disconnected}}\\i(t_0,\ldots,t_n)=k\\ \{t_0,\ldots,t_k\}=\{s_0,\ldots,s_k\}}}\int_{B(x_0,R')}\int_{B(x_0,R')}\lvert f(x)\rvert^p \prod_{k=0}^n\left(b_k\lvert \phi_k(x)-\phi_k(y)\rvert\right)^{{t_k}}\\
&\hspace{100pt}\cdot K^{(J)}(x,y) m(\mathrm{d}x) m(\mathrm{d}y)\\
&=\int_{B(x_0,R')}\int_{B(x_0,R')}\lvert f(x)\rvert^p\prod_{i=0}^k\left(b_i\lvert \phi_i(x)-\phi_i(y)\rvert\right)^{{s_i}} \left(\sum_{i=k+2}^n \left(b_i \lvert \phi_i(x)-\phi_i(y)\rvert\right)\right)^{p_0-\sum_{i=0}^ks_i}\\
&\hspace{100pt}\cdot K^{(J)}(x,y) m(\mathrm{d}x) m(\mathrm{d}y)\\
&=\int_{B_{k+1}}\int_{B(x_0,R')\backslash B_{k+2}}\lvert f(x)\rvert^p\prod_{i=0}^k\left(b_i\phi_i(x)\right)^{{s_i}} \left(\sum_{i=k+2}^n \left(b_i (1-\phi_i(y))\right)\right)^{p_0-\sum_{i=0}^ks_i}\\
&\hspace{100pt}\cdot K^{(J)}(x,y) m(\mathrm{d}x) m(\mathrm{d}y)\\
&\hspace{15pt}+\int_{B(x_0,R')\backslash B_{k+2}}\int_{B_{k+1}}\lvert f(x)\rvert^p\prod_{i=0}^k\left(b_i\phi_i(y)\right)^{{s_i}} \left(\sum_{i=k+2}^n \left(b_i (1-\phi_i(x))\right)\right)^{p_0-\sum_{i=0}^ks_i}\\
&\hspace{100pt}\cdot K^{(J)}(x,y) m(\mathrm{d}x) m(\mathrm{d}y)\\
&\le\prod_{i=0}^kb_i^{{s_i}} \left(\sum_{i=k+2}^n b_i \right)^{p_0-\sum_{i=0}^ks_i}\int_{B(x_0,R')}\lvert f(x)\rvert^p \left(\int_{X\backslash B(x,s_{k+2})} K^{(J)}(x,y) m(\mathrm{d}y)\right) m(\mathrm{d}x)\\
&\lesssim b^{(k-\sum_{i=0}^ks_i)\sum_{i=0}^ks_i} b^{k(p_0-\sum_{i=0}^ks_i)}\frac{1}{\Upsilon(s_{k+2})}\int_{B(x_0,R')}\lvert f\rvert^p \mathrm{d}m\\
&=b^{p_0k-\left(\sum_{i=0}^ks_i\right)^2}\frac{1}{\Upsilon(s_{k+2})}\int_{B(x_0,R')}\lvert f\rvert^p \mathrm{d}m\lesssim \frac{b^{p_0k}}{\Upsilon(a^kr)}\int_{B(x_0,R')}\lvert f\rvert^p \mathrm{d}m.
\end{align*}
For fixed $k$, there exist at most $2^{p_0-2}$ sequences $\{s_0,\ldots,s_k\}$ satisfying (\ref{eq_cond_sk}). Hence, taking the sum over $k$ and $\{s_0,\ldots,s_k\}$, we have
\begin{align*}
&I_6\le C_6\sum_{k=0}^{n-2}\frac{b^{p_0k}}{\Upsilon(a^kr)}\int_{B(x_0,R')}\lvert f\rvert^p \mathrm{d}m\le C_6\sum_{k=0}^{+\infty}\frac{b^{p_0k}}{\Upsilon(a^kr)}\int_{B(x_0,R')}\lvert f\rvert^p \mathrm{d}m,
\end{align*}
where $C_6>0$ is some constant. Combining this with (\ref{eq_CSJ_self3}), we have
\begin{align*}
&\int_{B(x_0,R')}\lvert f\rvert^p \mathrm{d}\Gamma^{(J)}_{B(x_0,R')}(\varphi_n)\le I_5+I_6\\
&\le 5\cdot2^{p_0-1}C \left(\frac{1-b}{b}\right)^p\int_{B(x_0,R+r)\backslash \overline{B(x_0,R)}}\lvert \phi\rvert^p \mathrm{d}\Gamma^{(J)}_{B(x_0,R+r)\backslash \overline{B(x_0,R)}}(f)\\
&\hspace{15pt}+(C_5+C_6)\sum_{k=0}^{+\infty}\frac{b^{p_0k}}{\Upsilon(a^kr)}\int_{B(x_0,R')}\lvert f\rvert^p \mathrm{d}m,
\end{align*}
where by (\ref{eq_beta12}), we have
$$\sum_{k=0}^{+\infty}\frac{b^{p_0k}}{\Upsilon(a^kr)}\le \frac{C_\Upsilon}{\Upsilon(r)}\sum_{k=0}^{+\infty}\left(\frac{b^{p_0}}{a^{\beta^{(2)}_\Upsilon}}\right)^k.$$

For any $\varepsilon>0$, first choose $b\in(0,1)$ sufficiently close to 1 so that $5\cdot2^{p_0-1}C \left(\frac{1-b}{b}\right)^p<\varepsilon$; second, since
$$\frac{b^{p_0}}{a^{\beta^{(2)}_\Upsilon}}\to b^{p_0}<1\text{ as }a\uparrow1,$$
there exists $a\in(0,1)$, depending only on $b,p_0,\beta^{(2)}_\Upsilon$ such that $\frac{b^{p_0}}{a^{\beta^{(2)}_\Upsilon}}<1$, then $C_7=\sum_{k=0}^{+\infty}\left(\frac{b^{p_0}}{a^{\beta^{(2)}_\Upsilon}}\right)^k<+\infty$. In summary, in case \ref{item_pd1}, we have
\begin{align*}
&\int_{B(x_0,R')}\lvert f\rvert^p \mathrm{d}\Gamma^{(J)}_{B(x_0,R')}(\varphi_n)\\
&\le \varepsilon \int_{B(x_0,R+r)\backslash \overline{B(x_0,R)}}\lvert \phi\rvert^p \mathrm{d}\Gamma^{(J)}_{B(x_0,R+r)\backslash \overline{B(x_0,R)}}(f)+\frac{C_\varepsilon}{\Upsilon(r)}\int_{B(x_0,R')}\lvert f\rvert^p \mathrm{d}m,
\end{align*}
where $C_\varepsilon=C_\Upsilon(C_5+C_6)C_7$. Compared with case \ref{item_pd1}, in case \ref{item_pd2}, the terms $I_{52}$ and $I_{53}$ do not appear, and in case \ref{item_pd3}, the term $I_6$ does not appear; in both cases, the argument still applies.

Finally, if \hyperlink{eq_CSJ_cont}{$\text{CS}^{(J)}_{\text{cont}}(\Upsilon)$} holds, then by the construction of $\phi$ as in (\ref{eq_phi_self1}) and (\ref{eq_phi_self2}), we can take $\phi\in \mathcal{F}^{(J)}\cap C_c(X)$.
\end{proof}

\section{Proof of Proposition \ref{prop_regular}}\label{sec_regular}

In this section, we give the proof of Proposition \ref{prop_regular} following the ideas in \cite{CGHL25}. Throughout this section, we always assume that $(\mathcal{E}^{(J)},\mathcal{F}^{(J)})$ is a non-local $p$-form given by a kernel $K^{(J)}$. It suffices to prove the following result.

\begin{proposition}\label{prop_dense}
Assume \ref{eq_VD}, \ref{eq_KJ_tail}, \ref{eq_PIJ}, \hyperlink{eq_CSJ_cont}{$\text{CS}^{(J)}_{\text{cont}}(\Upsilon)$}. Then
\begin{enumerate}[label=(\arabic*),ref=(\arabic*)]
\item\label{item_dense_Cc} $\mathcal{F}^{(J)}\cap C_c(X)$ is uniformly dense in $C_c(X)$.
\item\label{item_dense_Lp} $\mathcal{F}^{(J)}$ is $L^p$-dense in $L^p(X;m)$.
\item\label{item_dense_FJ} $\mathcal{F}^{(J)}\cap C_c(X)$ is $\mathcal{E}^{(J)}_1$-dense in $\mathcal{F}^{(J)}$.
\end{enumerate}
\end{proposition}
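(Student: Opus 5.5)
We prove the three items in order, using the continuous cutoff functions from \hyperlink{eq_CSJ_cont}{$\text{CS}^{(J)}_{\text{cont}}(\Upsilon)$} together with the self-improved estimate of Proposition \ref{prop_CSJ_self} (with $\phi\in\mathcal{F}^{(J)}\cap C_c(X)$).

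\textbf{Step 1: uniform density.} Taking $f\equiv1$ on $B(x_0,A_2r)$ and using Lemma \ref{lem_CSJ_energy} together with \ref{eq_KJ_tail} shows that each continuous cutoff $\phi$ has finite energy. We then build a continuous partition of unity at scale $\varepsilon$. Let $V$ be an $\varepsilon$-net and take continuous cutoffs $\phi_z$ for $B(z,\varepsilon)\subseteq B(z,A_1\varepsilon)$. By \ref{eq_VD} the family is locally finite with bounded overlap, and $\Phi=\sum_z\phi_z\ge1$ everywhere. Put $\psi_z=\phi_z/\Phi$. We need $\psi_z\in\mathcal{F}^{(J)}$. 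Since $\Phi$ is a finite sum near $\mathrm{supp}\,\phi_z$ and $\Phi\ge1$, we have
$$\lvert\psi_z(x)-\psi_z(y)\rvert\le\lvert\phi_z(x)-\phi_z(y)\rvert+\lvert\Phi(x)-\Phi(y)\rvert,$$
and on the relevant region the second term is controlled by the finitely many neighbouring $\phi_w$. Away from the supports we use the tail bound \ref{eq_KJ_tail}.

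Given $u\in C_c(X)$, set $u_\varepsilon=\sum_z u(z)\psi_z$. This is a finite sum, lies in $\mathcal{F}^{(J)}\cap C_c(X)$, and satisfies $\lVert u-u_\varepsilon\rVert_\infty\le\omega_u(A_1\varepsilon)$, where $\omega_u$ is the modulus of continuity of $u$. Letting $\varepsilon\to0$ gives \ref{item_dense_Cc}.

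\textbf{Step 2: $L^p$-density.} Since $C_c(X)$ is $L^p$-dense (because $m$ is Radon on a locally compact separable space) and uniform approximation with uniformly bounded supports gives $L^p$ approximation, \ref{item_dense_Lp} follows from Step 1.

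\textbf{Step 3: energy density (main step).} Let $u\in\mathcal{F}^{(J)}$. We proceed in three reductions.

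First, truncate: $u_N=(u\wedge N)\vee(-N)$ converges to $u$ in $\mathcal{E}^{(J)}_1$ by dominated convergence on $\lvert u(x)-u(y)\rvert^pK^{(J)}$.

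Second, localize: multiply by a continuous cutoff $\phi_R$ for $B(x_0,R)\subseteq B(x_0,2R)$. Write
$$(1-\phi_R)u(x)-(1-\phi_R)u(y)=(1-\phi_R(x))(u(x)-u(y))-u(y)(\phi_R(x)-\phi_R(y)).$$
The first term has energy controlled by the portion of $\Gamma^{(J)}(u)$ over pairs meeting $X\backslash B(x_0,R)$, which tends to $0$. The second term is bounded by Proposition \ref{prop_CSJ_self} with $r=R$ and $f=u$ by
$$\varepsilon\,\mathcal{E}^{(J)}(u)+\frac{C_\varepsilon}{\Upsilon(R)}\lVert u\rVert_p^p .$$
The trouble is that the $\varepsilon$-term does not tend to zero. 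To fix this we place the annulus far out, $B(x_0,R)\subseteq B(x_0,2R)$, and restrict $f$ to the annulus region, so that the $\Gamma$-term is $\int_{B(x_0,2R)\backslash\overline{B(x_0,R)}}d\Gamma(u)\to0$; the $\Upsilon(R)^{-1}$ term also vanishes as $R\to\infty$.

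Third, approximate the bounded, compactly supported $u$ by mollification with the partition of unity from Step 1:
$$u_\varepsilon=\sum_z u_{B(z,\varepsilon)}\psi_z\in\mathcal{F}^{(J)}\cap C_c(X).$$
We must show $\mathcal{E}^{(J)}(u-u_\varepsilon)\to0$. This is the expected obstacle: proving the discrete-convolution operator is uniformly bounded on $\mathcal{F}^{(J)}$, i.e. $\mathcal{E}^{(J)}(u_\varepsilon)\lesssim\mathcal{E}^{(J)}(u)$.

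To get this bound, split pairs $(x,y)$ according to whether $d(x,y)\ge\varepsilon$ or $d(x,y)<\varepsilon$.
\begin{itemize}
\item For long jumps, $d(x,y)\ge\varepsilon$, compare $u_\varepsilon(x)$ with averages of $u$ near $x$.
\item For short jumps, use $\sum_z\psi_z=1$ to write $u_\varepsilon(x)-u_\varepsilon(y)=\sum_z(u_{B(z,\varepsilon)}-u_{B(w,\varepsilon)})(\psi_z(x)-\psi_z(y))$ for a neighbouring $w$. Then bound $\lvert u_{B(z,\varepsilon)}-u_{B(w,\varepsilon)}\rvert^p$ by local averages of $\lvert u(x')-u(y')\rvert^p$, and control $\psi_z$ through \ref{eq_CSJ_energy} with $f$ equal to the oscillation. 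This mirrors (\ref{eq_PIL_E5}), but in the nonlocal setting, with \ref{eq_PIJ} replacing \ref{eq_PIL}.
\end{itemize}
Once uniform boundedness holds, convergence follows by density. Indeed, the functions with $\mathcal{E}^{(J)}(v-v_\varepsilon)\to0$ form a closed set in $\mathcal{F}^{(J)}$, and this includes the already-constructed elements of $\mathcal{F}^{(J)}\cap C_c(X)$ whose difference kernels are uniformly continuous. If the uniform bound resists, the fallback is a Mazur/weak-compactness argument: $u_\varepsilon$ is bounded in the reflexive space $\mathcal{F}^{(J)}$ (by Clarkson's inequality) and converges to $u$ in $L^p$, so convex combinations converge strongly.
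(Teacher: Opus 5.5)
The genuine gap is the closing step of Step 3. The argument that the set of $v$ with $\mathcal{E}^{(J)}(v-v_\varepsilon)\to0$ is closed and contains $\mathcal{F}^{(J)}\cap C_c(X)$ is circular. A uniform bound on the linear maps $v\mapsto v_\varepsilon$ only transfers convergence from a set $D$ to its $\mathcal{E}^{(J)}_1$-closure. The only $D$ you offer is $\mathcal{F}^{(J)}\cap C_c(X)$, whose density in $\mathcal{F}^{(J)}$ is precisely item \ref{item_dense_FJ}. Even the claim that convergence holds on $D$ is unsupported: continuity of $v$ gives no control of $\iint_{d(x,y)<\varepsilon}\lvert (v-v_\varepsilon)(x)-(v-v_\varepsilon)(y)\rvert^pK^{(J)}(x,y)\,m(\mathrm{d}x)\,m(\mathrm{d}y)$, since $K^{(J)}$ is singular on the diagonal. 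The Mazur argument does close the proof. The space $(\mathcal{F}^{(J)},(\mathcal{E}^{(J)}_1)^{1/p})$ is reflexive, any weak limit of $u_{\varepsilon_n}$ must coincide with the $L^p$-limit $u$, and convex combinations stay in $\mathcal{F}^{(J)}\cap C_c(X)$. But it is not a fallback for when the uniform bound resists: it requires exactly $\sup_n\mathcal{E}^{(J)}(u_{\varepsilon_n})<+\infty$. So the logically valid version of your Step 3 is ``uniform bound from your long/short split, then Mazur''.

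The paper needs neither density nor weak compactness, and your own estimates show why if you apply them to $u-u_\varepsilon$ rather than to $u_\varepsilon$.
For $d(x,y)\ge\varepsilon$, \ref{eq_KJ_tail} bounds the contribution by $C\Upsilon(\varepsilon)^{-1}\lVert u-u_\varepsilon\rVert_{L^p(X;m)}^p$, and \ref{eq_PIJ} gives $\lVert u-u_\varepsilon\rVert_{L^p(X;m)}^p\lesssim\Upsilon(\varepsilon)\sum_{v\in V}\int_{B(v,C\varepsilon)}\mathrm{d}\Gamma^{(J)}_{B(v,C\varepsilon)}(u)$.
For $d(x,y)<\varepsilon$, the contribution is at most the short-range energy of $u$ plus that of $u_\varepsilon$. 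The latter is bounded as you describe: differences of averages via \ref{eq_PIJ}, times $\mathcal{E}^{(J)}(\psi_w)\lesssim V(w,\varepsilon)/\Upsilon(\varepsilon)$.
By bounded overlap, everything is dominated by
$$\int_X\int_{B(x,C\varepsilon)}\lvert u(x)-u(y)\rvert^pK^{(J)}(x,y)\,m(\mathrm{d}y)\,m(\mathrm{d}x),$$
which tends to $0$ by dominated convergence because $\mathcal{E}^{(J)}(u)<+\infty$. This is how the paper concludes. It writes $u-u^{(\varepsilon)}=\sum_v(u-u_{B(v,\varepsilon)})\phi_v$, splits the net into $N$ subfamilies with pairwise disjoint $12\varepsilon$-balls, uses Minkowski's inequality, and applies Lemma \ref{lem_partitionJ}\ref{item_phiv_CS} with $f=u-u_{B(v,\varepsilon)}$. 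Your constant-coefficient rewriting would let you get by with the bare energy bound on $\psi_w$ in that step.

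The rest is fine. Step 1 replaces the paper's Stone--Weierstrass argument (a continuous cutoff separating two points suffices) by explicit interpolation, which needs the quotient estimate for $\psi_z$ up front. Your localization is Lemma \ref{lem_cptspt}, except that Proposition \ref{prop_CSJ_self} only controls pairs in $B(x_0,R')\times B(x_0,R')$. The pairs with one point in $B(x_0,2R)$ and the other outside $B(x_0,3R)$ must be handled by \ref{eq_KJ_tail}.
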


The proofs of \ref{item_dense_Cc} and \ref{item_dense_Lp} are relatively direct as follows.

\begin{proof}[Proofs of Proposition \ref{prop_dense} \ref{item_dense_Cc} and \ref{item_dense_Lp}]
It is obvious that $\mathcal{F}^{(J)}\cap C_c(X)$ is a sub-algebra of $C_c(X)$. By the Stone-Weierstrass theorem, to show that $\mathcal{F}^{(J)}\cap C_c(X)$ is uniformly dense in $C_c(X)$, we only need to show that $\mathcal{F}^{(J)}\cap C_c(X)$ separates points and vanishes nowhere. Indeed, for any distinct $x,y\in X$, let $R=d(x,y)\in(0,+\infty)$, by Proposition \ref{prop_CSJ_self}, there exists a cutoff function $\phi\in \mathcal{F}^{(J)}\cap C_c(X)$ for $B(x,\frac{1}{4}R)\subseteq B(x,\frac{1}{2}R)$, in particular, we have $\phi(x)=1\ne0=\phi(y)$, thus we have \ref{item_dense_Cc}.

For any $u\in L^p(X;m)$ and any $\varepsilon>0$. First, there exists $v\in C_c(X)$ such that $\lVert u-v\rVert_{L^p(X;m)}<\varepsilon$. Second, take $x_0\in X$ and $R>0$ such that $\mathrm{supp}(v)\subseteq B(x_0,R)$. By \ref{item_dense_Cc}, there exists $w\in \mathcal{F}^{(J)}\cap C_c(X)$ such that $\sup_X \lvert v-w\rvert<\frac{\varepsilon}{V(x_0,2R)^{\frac{1}{p}}}$. By Proposition \ref{prop_CSJ_self}, there exists a cutoff function $\psi\in \mathcal{F}^{(J)}\cap C_c(X)$ for $B(x_0,R)\subseteq B(x_0,2R)$, then $\psi w\in \mathcal{F}^{(J)}\cap C_c(X)$ and
$$\int_X \lvert \psi w-v\rvert^p \mathrm{d}m\le\int_{B(x_0,R)}\lvert w-v\rvert^p \mathrm{d}m+\int_{B(x_0,2R)\backslash B(x_0,R)}\lvert w\rvert^p \mathrm{d}m\le \varepsilon^p,$$
hence $\lVert \psi w-u\rVert_{L^p(X;m)}\le\lVert \psi w-v\rVert_{L^p(X;m)}+\lVert u-v\rVert_{L^p(X;m)}<2\varepsilon$, which gives \ref{item_dense_Lp}.
\end{proof}

We make some preparations in order to prove \ref{item_dense_FJ}.

\begin{lemma}\label{lem_cptspt}
Assume \ref{eq_VD}, \ref{eq_KJ_tail}, \hyperlink{eq_CSJ_weak}{$\text{CS}^{(J)}_{\text{weak}}(\Upsilon)$}. Then
$$\left\{u\in \mathcal{F}^{(J)}\cap L^\infty(X;m):\mathrm{supp}(u)\text{ is compact}\right\}$$
is $\mathcal{E}^{(J)}_1$-dense in $\mathcal{F}^{(J)}$.
\end{lemma}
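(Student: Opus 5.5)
The plan is to approximate in two stages: first truncate in height, then cut off in space using cutoff functions from Proposition \ref{prop_CSJ_self}.

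\emph{Truncation.} For $u\in\mathcal{F}^{(J)}$ and $M>0$, set $u_M=(u\wedge M)\vee(-M)$. The map $t\mapsto (t\wedge M)\vee(-M)$ is $1$-Lipschitz and fixes $0$, so $u_M\in\mathcal{F}^{(J)}\cap L^\infty$. Moreover
\[
\lvert (u-u_M)(x)-(u-u_M)(y)\rvert\le\lvert u(x)-u(y)\rvert
\]
pointwise, and $u-u_M\to0$ pointwise as $M\to+\infty$. Dominated convergence therefore gives $\mathcal{E}^{(J)}(u-u_M)\to0$, and clearly $\lVert u-u_M\rVert_{L^p}\to0$. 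So we may assume $u\in\mathcal{F}^{(J)}\cap L^\infty(X;m)$.

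\emph{Spatial cutoff.} Fix $x_0\in X$ and $R>0$. Proposition \ref{prop_CSJ_self} with $\varepsilon=1$, $r=R$ and $R'=3R$ gives a cutoff $\phi_R$ for $B(x_0,R)\subseteq B(x_0,2R)$. Then $u\phi_R$ is bounded with compact support, and the task is to show that $u\phi_R\in\mathcal{F}^{(J)}$ and that $\mathcal{E}^{(J)}(u-u\phi_R)\to0$ as $R\to+\infty$; the $L^p$ convergence is trivial. Write $w_R=u(1-\phi_R)$ and split
\[
\lvert w_R(x)-w_R(y)\rvert^p\le 2^{p-1}\lvert u(x)-u(y)\rvert^p(1-\phi_R(x))^p+2^{p-1}\lvert u(y)\rvert^p\lvert\phi_R(x)-\phi_R(y)\rvert^p .
\]
The first term vanishes unless $x\notin B(x_0,R)$, so its integral is at most
\[
2^{p-1}\int_{X\backslash B(x_0,R)}\int_X\lvert u(x)-u(y)\rvert^pK^{(J)}(x,y)\,m(\mathrm{d}y)\,m(\mathrm{d}x),
\]
which tends to $0$ because $\mathcal{E}^{(J)}(u)<\infty$.

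\emph{Main obstacle: the second term}, $\int_X\lvert u\rvert^p\,\mathrm{d}\Gamma^{(J)}_X(\phi_R)$. Split the $y$-integration into $B(x_0,3R)$ and its complement. On $B(x_0,3R)\times B(x_0,3R)$, apply (\ref{eq_CSJ_self}) with $f=u$, which is allowed since $u\in\widehat{\mathcal{F}}^{(J)}\cap L^\infty$. This bounds this part by
\[
\int_{A_R}\lvert\phi_R\rvert^p\,\mathrm{d}\Gamma^{(J)}_{A_R}(u)+\frac{C}{\Upsilon(R)}\int_X\lvert u\rvert^p\,\mathrm{d}m,
\]
where $A_R=B(x_0,2R)\backslash\overline{B(x_0,R)}$. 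The first piece is dominated by the energy of $u$ over $\{x:d(x_0,x)>R\}$, so it tends to $0$. The second piece tends to $0$ because $\Upsilon(R)\to\infty$, which follows from (\ref{eq_beta12}).

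For pairs with one point in $B(x_0,3R)^c$, $\phi_R$ vanishes at that point. The nonzero contributions therefore come from pairs with one point in $B(x_0,2R)$ and the other outside $B(x_0,3R)$, so $d(x,y)\ge R$. By \ref{eq_KJ_tail}, these contributions are bounded by
\[
\frac{C}{\Upsilon(R)}\int_X\lvert u\rvert^p\,\mathrm{d}m
\]
after symmetrizing in $x$ and $y$. This term also tends to $0$.

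Together these bounds show that $u\phi_R\in\mathcal{F}^{(J)}$ and $\mathcal{E}^{(J)}_1(u-u\phi_R)\to0$. Combined with the truncation step, this proves the density claimed in Lemma \ref{lem_cptspt}.
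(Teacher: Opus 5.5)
Your proposal is correct and follows essentially the same route as the paper. Both arguments truncate in height first. Both then multiply by the cutoffs $\phi_R$ for $B(x_0,R)\subseteq B(x_0,2R)$ from Proposition \ref{prop_CSJ_self}. The term $\int_X\lvert u\rvert^p\,\mathrm{d}\Gamma^{(J)}_X(\phi_R)$ is controlled by the self-improved cutoff Sobolev inequality on $B(x_0,3R)\times B(x_0,3R)$, with \ref{eq_KJ_tail} handling the cross terms. The remaining term, involving $(1-\phi_R)$ and the energy of $u$, vanishes by dominated convergence. Your explicit remark that these finite bounds also give $u\phi_R\in\mathcal{F}^{(J)}$ fills in a step the paper leaves implicit.
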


\begin{proof}
For any $u\in \mathcal{F}^{(J)}$ and any $M\ge1$, we have $(u\vee (-M))\wedge M\in \mathcal{F}^{(J)}$ and by the dominated convergence theorem, we have $\lim_{M\to+\infty}\mathcal{E}_1^{(J)}(u-(u\vee (-M))\wedge M)=0$. Hence we only need to show that any $u\in \mathcal{F}^{(J)}\cap L^\infty(X;m)$ can be $\mathcal{E}^{(J)}_1$-approximated by functions in $\mathcal{F}^{(J)}\cap L^\infty(X;m)$ with compact support.

Fix arbitrary $x_0\in X$. By Proposition \ref{prop_CSJ_self}, there exist $C_1,C_2>0$ such that for any $R>0$, there exists a cutoff function $\phi_R\in \mathcal{F}^{(J)}$ for $B(x_0,R)\subseteq B(x_0,2R)$ such that
\begin{align}
&\int_{B(x_0,3R)}\lvert u\rvert^p \mathrm{d}\Gamma^{(J)}_{B(x_0,3R)}(\phi_R)\nonumber\\
&\le C_1\int_{B(x_0,2R)\backslash \overline{B(x_0,R)}} \mathrm{d}\Gamma^{(J)}_{B(x_0,2R)\backslash \overline{B(x_0,R)}}(u)+\frac{C_2}{\Upsilon(R)}\int_{B(x_0,3R)}\lvert u\rvert^p \mathrm{d}m.\label{eq_cptspt1}
\end{align}
Then $\phi_Ru\in \mathcal{F}^{(J)}\cap L^\infty(X;m)$ has compact support. By the dominated convergence theorem, we have $\lim_{R\to+\infty}\lVert u-\phi_Ru\rVert_{L^p(X;m)}=0$. Moreover, we have
\begin{align*}
&\mathcal{E}^{(J)}(u-\phi_Ru)=\int_{X}\int_X \lvert (1-\phi_R(x))u(x)-(1-\phi_R(y))u(y)\rvert^p K^{(J)}(x,y) m(\mathrm{d}x) m(\mathrm{d}y)\\
&\le2^{p-1}\int_X\int_X \lvert u(x)\rvert^p \lvert \phi_R(x)-\phi_R(y)\rvert^p K^{(J)}(x,y) m(\mathrm{d}x)m(\mathrm{d}y)\\
&\hspace{15pt}+2^{p-1}\int_X\int_X (1-\phi_R(y))^p\lvert u(x)-u(y)\rvert^pK^{(J)}(x,y) m(\mathrm{d}x)m(\mathrm{d}y)\\
&=2^{p-1}\left(I_1+I_2\right).
\end{align*}
For $I_1$, we have
\begin{align*}
&I_1=\left(\int_{B(x_0,3R)}\int_{B(x_0,3R)}+\int_{B(x_0,3R)}\int_{X\backslash B(x_0,3R)}+\int_{X\backslash B(x_0,3R)}\int_{B(x_0,3R)}\right)\ldots m(\mathrm{d}x)m(\mathrm{d}y)\\
&=I_{11}+I_{12}+I_{13}.
\end{align*}
By (\ref{eq_cptspt1}) and the dominated convergence theorem, we have
\begin{align*}
&I_{11}=\int_{B(x_0,3R)}\lvert u\rvert^p \mathrm{d}\Gamma^{(J)}_{B(x_0,3R)}(\phi_R)\\
&\le C_1\int_{B(x_0,2R)\backslash \overline{B(x_0,R)}} \mathrm{d}\Gamma^{(J)}_{B(x_0,2R)\backslash \overline{B(x_0,R)}}(u)+\frac{C_2}{\Upsilon(R)}\int_{X}\lvert u\rvert^p \mathrm{d}m\to0
\end{align*}
as $R\to+\infty$. By \ref{eq_KJ_tail}, we have
\begin{align*}
&I_{12}=\int_{B(x_0,2R)}\int_{X\backslash B(x_0,3R)}\lvert u(x)\rvert^p\phi_R(x)^p K^{(J)}(x,y) m(\mathrm{d}x)m(\mathrm{d}y)\\
&\le\int_{B(x_0,2R)}\lvert u(x)\rvert^p\left(\int_{X\backslash B(x,R)} K^{(J)}(x,y) m(\mathrm{d}y)\right)m(\mathrm{d}x)\\
&\lesssim \frac{1}{\Upsilon(R)}\int_{B(x_0,2R)}\lvert u\rvert^p \mathrm{d}m\le \frac{1}{\Upsilon(R)}\int_X \lvert u\rvert^p \mathrm{d}m\to0
\end{align*}
as $R\to+\infty$, and
\begin{align*}
&I_{13}=\int_{X\backslash B(x_0,3R)}\int_{B(x_0,2R)}\lvert u(x)\rvert^p\phi_R(y)^p K^{(J)}(x,y)m(\mathrm{d}x)m(\mathrm{d}y)\\
&\le\int_{X\backslash B(x_0,3R)}\lvert u(x)\rvert^p\left(\int_{X\backslash B(x,R)} K^{(J)}(x,y)m(\mathrm{d}y)\right)m(\mathrm{d}x)\\
&\lesssim \frac{1}{\Upsilon(R)}\int_{X\backslash B(x_0,3R)}\lvert u\rvert^p \mathrm{d}m\le \frac{1}{\Upsilon(R)}\int_X \lvert u\rvert^p \mathrm{d}m\to0
\end{align*}
as $R\to+\infty$. Hence $\lim_{R\to+\infty}I_1=0$. By the dominated convergence theorem, we have
$$I_2\le\int_X\int_{X\backslash B(x_0,R)}\lvert u(x)-u(y)\rvert^pK^{(J)}(x,y) m(\mathrm{d}x)m(\mathrm{d}y)\to0$$
as $R\to+\infty$. In summary, we have $\lim_{R\to+\infty}\mathcal{E}^{(J)}_1(u-\phi_Ru)=0$.
\end{proof}

We have the following partition of unity.

\begin{lemma}\label{lem_partitionJ}
Assume \ref{eq_VD}, \ref{eq_KJ_tail}, \hyperlink{eq_CSJ_cont}{$\text{CS}^{(J)}_{\text{cont}}(\Upsilon)$}. For any $\varepsilon>0$, there exists a countable subset $V\subseteq X$ satisfying the following conditions:
\begin{enumerate}[label=(\alph*),ref=(\alph*)]
\item\label{item_cover} $\cup_{v\in V}B(v,\varepsilon)=X$.
\item\label{item_disjoint} For any distinct $v,w\in V$, we have $B(v,\frac{1}{2}\varepsilon)\cap B(w,\frac{1}{2}\varepsilon)=\emptyset$.
\end{enumerate}
Moreover, there exist $C_1,C_2>0$, independent of $\varepsilon$, and a family of functions $\{\phi_v\in \mathcal{F}^{(J)}\cap C_c(X):v\in V\}$ satisfying the following conditions:
\begin{enumerate}[label=(\arabic*),ref=(\arabic*)]
\item\label{item_phiv_spt} For any $v\in V$, we have $0\le \phi_v\le 1$ in $X$ and $\phi_v=0$ on $X\backslash B(v,\frac{5}{4}\varepsilon)$.
\item\label{item_phiv_unit} $\sum_{v\in V}\phi_v=1$.
\item\label{item_phiv_CS} For any $v\in V$, let
$$N_v=\{w\in V:B(w,3\varepsilon)\cap B(v,3\varepsilon)\ne\emptyset\},$$
then for any $f\in \widehat{\mathcal{F}}^{(J)}\cap L^\infty(X;m)$, we have
\begin{align*}
&\int_{B(v,3\varepsilon)}\lvert f\rvert^p \mathrm{d}\Gamma^{(J)}_{B(v,3\varepsilon)}(\phi_v)\le C_1\sum_{w\in N_v}\int_{B(w,3\varepsilon)}\mathrm{d}\Gamma^{(J)}_{B(w,3\varepsilon)}(f)+\frac{C_2}{\Upsilon(\varepsilon)}\int_{B(v,6\varepsilon)}\lvert f\rvert^p \mathrm{d}m.
\end{align*}
\end{enumerate}
\end{lemma}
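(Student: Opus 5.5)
The plan is to build the partition of unity from the continuous cutoff functions of Proposition \ref{prop_CSJ_self} by normalization, and then transfer the self-improved cutoff Sobolev inequality to the normalized functions.

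\emph{Choice of $V$ and the raw cutoffs.} Take $V$ to be a maximal $\frac{1}{2}\varepsilon$-separated subset of $X$ in the strict sense, i.e.\ $d(v,w)\ge\varepsilon$ for distinct $v,w\in V$. Then \ref{item_cover} and \ref{item_disjoint} are immediate, and $V$ is countable by separability. By \ref{eq_VD} there is $M$, independent of $\varepsilon$, with $\#N_v\le M$ and with every point of $X$ lying in at most $M$ balls $B(w,6\varepsilon)$. For each $w\in V$, apply Proposition \ref{prop_CSJ_self} (continuous version) with $R=\varepsilon$, $r=\frac14\varepsilon$, $R'=3\varepsilon$ and $\varepsilon$-parameter $1$. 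This gives $\psi_w\in\mathcal{F}^{(J)}\cap C_c(X)$ with $\psi_w=1$ on $B(w,\varepsilon)$, $\mathrm{supp}\,\psi_w\subseteq B(w,\frac54\varepsilon)$, and
\[
\int_{B(w,3\varepsilon)}|f|^p\,\mathrm{d}\Gamma^{(J)}_{B(w,3\varepsilon)}(\psi_w)\lesssim \int_{B(w,3\varepsilon)}\mathrm{d}\Gamma^{(J)}_{B(w,3\varepsilon)}(f)+\frac{1}{\Upsilon(\varepsilon)}\int_{B(w,3\varepsilon)}|f|^p\,\mathrm{d}m,
\]
using $|\phi|\le1$.

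\emph{Normalization.} Set $S=\sum_{w}\psi_w$. This sum is locally finite and continuous, satisfies $1\le S\le M$ by \ref{item_cover}, and put $\phi_v=\psi_v/S$. Then \ref{item_phiv_spt} and \ref{item_phiv_unit} hold. Membership $\phi_v\in\mathcal{F}^{(J)}$ will follow from the estimate below applied with $f\equiv1$ together with the tail bound \ref{eq_KJ_tail}.

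\emph{The key pointwise bound.} For $x,y\in B(v,3\varepsilon)$ we have
\[
|\phi_v(x)-\phi_v(y)|\le |\psi_v(x)-\psi_v(y)|+|S(x)-S(y)|,
\]
because $S\ge1$, $\psi_v\le S$, and
\[
\frac{a}{A}-\frac{b}{B}=\frac{a-b}{A}+b\,\frac{B-A}{AB}.
\]
Moreover, only those $w$ with $B(w,\frac54\varepsilon)\cap B(v,3\varepsilon)\neq\emptyset$ contribute to $S$ on $B(v,3\varepsilon)$, and all such $w$ lie in $N_v$. Hence
\[
|\phi_v(x)-\phi_v(y)|^p\le (M+1)^{p-1}\sum_{w\in N_v}|\psi_w(x)-\psi_w(y)|^p .
\]
Integrating against $|f(x)|^pK^{(J)}$ over $B(v,3\varepsilon)^2$ bounds the left side of \ref{item_phiv_CS} by a sum over $w\in N_v$ of $\int_{B(v,3\varepsilon)}|f|^p\,\mathrm{d}\Gamma^{(J)}_{B(v,3\varepsilon)}(\psi_w)$.

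\emph{Main obstacle: a domain mismatch.} The domain $B(v,3\varepsilon)$ is not $B(w,3\varepsilon)$, but both sit inside $\Omega=B(v,6\varepsilon)\supseteq B(w,3\varepsilon)$. I will split $B(v,3\varepsilon)^2$ into the part inside $B(w,3\varepsilon)^2$ and the rest. On the rest, at least one point lies outside $B(w,3\varepsilon)$. Since $\psi_w$ is supported in $B(w,\frac54\varepsilon)$, the difference $\psi_w(x)-\psi_w(y)$ is nonzero only when the pair straddles $B(w,\frac54\varepsilon)$ and the complement of $B(w,3\varepsilon)$, which forces $d(x,y)\ge\frac74\varepsilon$. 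On this part I bound $|\psi_w(x)-\psi_w(y)|\le1$ and apply \ref{eq_KJ_tail}, getting at most $\frac{C}{\Upsilon(\varepsilon)}\int_{B(v,6\varepsilon)}|f|^p\,\mathrm{d}m$ (this is the same computation as for $I_{12}$ and $I_2$ in the proof of (\ref{eq_phiv})). Summing over the at most $M$ indices $w\in N_v$ then yields \ref{item_phiv_CS}, with constants $C_1,C_2$ independent of $\varepsilon$ because Proposition \ref{prop_CSJ_self} is scale-invariant in $r$ and $\Upsilon(\frac14\varepsilon)\asymp\Upsilon(\varepsilon)$.
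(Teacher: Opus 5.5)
Your construction is the paper's: the same maximal separated set $V$, the same raw cutoffs $\psi_w$ from Proposition~\ref{prop_CSJ_self} with $R=\varepsilon$, $r=\frac14\varepsilon$, the same normalization $\phi_v=\psi_v/\sum_w\psi_w$, and the same pointwise bound of $|\phi_v(x)-\phi_v(y)|$ by $\sum_{w\in N_v}|\psi_w(x)-\psi_w(y)|$. The only variation is in the mismatch step. You split according to whether $(x,y)\in B(w,3\varepsilon)^2$, while the paper splits according to whether $d(x,y)<\frac14\varepsilon$. Both splits work.

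There is one false step, and the paper's proof contains it too: the inclusion $B(w,3\varepsilon)\subseteq B(v,6\varepsilon)$ for $w\in N_v$.
\begin{itemize}
\item Membership in $N_v$ only gives $d(v,w)<6\varepsilon$, so in general only $B(w,3\varepsilon)\subseteq B(v,9\varepsilon)$.
\item Even for the $w$ that actually contribute, namely those with $B(w,\frac54\varepsilon)\cap B(v,3\varepsilon)\ne\emptyset$, one only has $d(v,w)<\frac{17}{4}\varepsilon$. Then $B(w,3\varepsilon)$ can reach distance $\frac{29}{4}\varepsilon$ from $v$.
\item So on the square $B(w,3\varepsilon)^2$, the cutoff inequality for $\psi_w$ produces $\frac{C}{\Upsilon(\varepsilon)}\int_{B(w,3\varepsilon)}|f|^p\,\mathrm{d}m$. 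This is not controlled by $\int_{B(v,6\varepsilon)}|f|^p\,\mathrm{d}m$.
\end{itemize}

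The repair is cheap. Apply Proposition~\ref{prop_CSJ_self} with $R'=\frac32\varepsilon$ instead of $3\varepsilon$. Its first right-hand term is still at most $\int_{B(w,3\varepsilon)}\mathrm{d}\Gamma^{(J)}_{B(w,3\varepsilon)}(f)$, because the annulus lies in $B(w,3\varepsilon)$ and $|\psi_w|\le1$. Then split at $B(w,\frac32\varepsilon)^2$ instead of $B(w,3\varepsilon)^2$.
\begin{itemize}
\item Off this square, a nonzero difference forces one point into $B(w,\frac54\varepsilon)$ and the other outside $B(w,\frac32\varepsilon)$. Hence $d(x,y)>\frac14\varepsilon$, and \ref{eq_KJ_tail} with the weight $|f(x)|^p$, $x\in B(v,3\varepsilon)$, gives $\frac{C}{\Upsilon(\varepsilon)}\int_{B(v,3\varepsilon)}|f|^p\,\mathrm{d}m$.
\item On the square you get $\frac{C}{\Upsilon(\varepsilon)}\int_{B(w,\frac32\varepsilon)}|f|^p\,\mathrm{d}m$. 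For every contributing $w$, $B(w,\frac32\varepsilon)\subseteq B(v,\frac{23}{4}\varepsilon)\subseteq B(v,6\varepsilon)$, as required.
\end{itemize}
Alternatively, keep your argument and state \ref{item_phiv_CS} with $B(v,9\varepsilon)$ in place of $B(v,6\varepsilon)$. That is all the application in Proposition~\ref{prop_dense} needs, since there the balls $B(v,9\varepsilon)$, $v\in V_i$, are pairwise disjoint.
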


\begin{proof}
By Zorn's lemma, there exists a subset $V\subseteq X$ satisfying \ref{item_cover} and \ref{item_disjoint}. Since $X$ is separable, we have $V$ is countable. By \ref{eq_VD}, there exists a positive integer $N$ depending only on $C_{VD}$ such that $\# N_v\le N$ for any $v\in V$. By Proposition \ref{prop_CSJ_self}, there exists $C>0$, independent of $\varepsilon$, such that for any $v\in V$, there exists a cutoff function $\psi_v\in \mathcal{F}^{(J)}\cap C_c(X)$ for $B(v,\varepsilon)\subseteq B(v,\frac{5}{4}\varepsilon)$ such that for any $f\in \widehat{\mathcal{F}}^{(J)}\cap L^\infty(X;m)$, we have
\begin{equation}\label{eq_psiv_CS}
\int_{B(v,3\varepsilon)}\lvert f\rvert^p \mathrm{d}\Gamma^{(J)}_{B(v,3\varepsilon)}(\psi_v)\le \int_{B(v,3\varepsilon)} \mathrm{d}\Gamma^{(J)}_{B(v,3\varepsilon)}(f)+\frac{C}{\Upsilon(\varepsilon)}\int_{B(v,3\varepsilon)}\lvert f\rvert^p \mathrm{d}m.
\end{equation}

In any bounded open set, the summation $\sum_{v\in V}\psi_v$ consists of finitely many non-zero terms and $1\le \sum_{v\in V}\psi_v\le N$. For any $v\in V$, let
$$\phi_v=\frac{\psi_v}{\sum_{w\in V}\psi_w},$$
then $\phi_v\in C_c(X)$, \ref{item_phiv_spt} and \ref{item_phiv_unit} follow directly, in particular, we have
$$\phi_v=\frac{\psi_v}{\sum_{w\in N_v}\psi_w}\text{ in }B(v,3\varepsilon),$$
and for any $x,y\in B(v,3\varepsilon)$, we have
\begin{align*}
&\lvert \phi_v(x)-\phi_v(y)\rvert=\lvert \frac{\psi_v(x)}{\sum_{w\in N_v}\psi_w(x)}-\frac{\psi_v(y)}{\sum_{w\in N_v}\psi_w(y)}\rvert\\
&\le \frac{\lvert \psi_v(x)-\psi_v(y)\rvert}{\sum_{w\in N_v}\psi_w(x)}+\psi_v(y)\frac{\sum_{w\in N_v}\lvert \psi_w(x)-\psi_w(y)\rvert}{\sum_{w\in N_v}\psi_w(x)\sum_{w\in N_v}\psi_w(y)}\\
&\le\lvert \psi_v(x)-\psi_v(y)\rvert+\sum_{w\in N_v}\lvert \psi_w(x)-\psi_w(y)\rvert,
\end{align*}
which gives
$$\lvert \phi_v(x)-\phi_v(y)\rvert^p \le (N+1)^{p-1}\left(\lvert \psi_v(x)-\psi_v(y)\rvert^p+\sum_{w\in N_v}\lvert \psi_w(x)-\psi_w(y)\rvert^p\right).$$
Hence
\begin{align*}
&\int_{B(v,3\varepsilon)}\int_{B(v,3\varepsilon)}\lvert f(x)\rvert^p \lvert \phi_v(x)-\phi_v(y)\rvert^p K^{(J)}(x,y)m(\mathrm{d}x)m(\mathrm{d}y)\\
&\lesssim\int_{B(v,3\varepsilon)}\int_{B(v,3\varepsilon)}\lvert f(x)\rvert^p \lvert \psi_v(x)-\psi_v(y)\rvert^p K^{(J)}(x,y)m(\mathrm{d}x)m(\mathrm{d}y)\\
&\hspace{15pt}+\sum_{w\in N_v}\int_{B(v,3\varepsilon)}\int_{B(v,3\varepsilon)}\lvert f(x)\rvert^p \lvert \psi_w(x)-\psi_w(y)\rvert^pK^{(J)}(x,y)m(\mathrm{d}x)m(\mathrm{d}y)\\
&=I+\sum_{w\in N_v}I_w.
\end{align*}
By (\ref{eq_psiv_CS}), we have
$$I\lesssim\int_{B(v,3\varepsilon)}\mathrm{d}\Gamma^{(J)}_{B(v,3\varepsilon)}(f)+\frac{1}{\Upsilon(\varepsilon)}\int_{B(v,3\varepsilon)}\lvert f\rvert^p \mathrm{d}m.$$
For any $w\in N_v$, we have
$$I_{w}=\left(\iint\limits_{\substack{B(v,3\varepsilon)\times B(v,3\varepsilon)\\d(x,y)<\frac{1}{4}\varepsilon}}+\iint\limits_{\substack{B(v,3\varepsilon)\times B(v,3\varepsilon)\\ d(x,y)\ge\frac{1}{4}\varepsilon}}\right)\ldots m(\mathrm{d}x)m(\mathrm{d}y)=I_{w1}+I_{w2}.$$
For any $x,y\in B(v,3\varepsilon)$ with $d(x,y)<\frac{1}{4}\varepsilon$, we have $\psi_w(x)-\psi_w(y)\ne0$ only if $x,y\in B(w,\frac{3}{2}\varepsilon)\subseteq B(w,3\varepsilon)$, hence by (\ref{eq_psiv_CS}), we have
\begin{align*}
&I_{w1}\le\int_{B(w,3\varepsilon)}\int_{B(w,3\varepsilon)}\lvert f(x)\rvert^p \lvert \psi_w(x)-\psi_w(y)\rvert^pK^{(J)}(x,y)m(\mathrm{d}x)m(\mathrm{d}y)\\
&\lesssim\int_{B(w,3\varepsilon)}\mathrm{d}\Gamma^{(J)}_{B(w,3\varepsilon)}(f)+\frac{1}{\Upsilon(\varepsilon)}\int_{B(w,3\varepsilon)}\lvert f\rvert^p \mathrm{d}m.
\end{align*}
By \ref{eq_KJ_tail}, we have
\begin{align*}
I_{w2}\le\int_{B(v,3\varepsilon)}\lvert f(x)\rvert^p \left(\int_{X\backslash B(x,\frac{1}{4}\varepsilon)}K^{(J)}(x,y)m(\mathrm{d}y)\right)m(\mathrm{d}x)\lesssim \frac{1}{\Upsilon(\varepsilon)}\int_{B(v,3\varepsilon)}\lvert f\rvert^p \mathrm{d}m.
\end{align*}
Since $\# N_v\le N$ and $B(w,3\varepsilon)\subseteq B(v,6\varepsilon)$ for any $w\in N_v$, we have
\begin{align}
&\int_{B(v,3\varepsilon)}\int_{B(v,3\varepsilon)}\lvert f(x)\rvert^p \lvert \phi_v(x)-\phi_v(y)\rvert^p K^{(J)}(x,y)m(\mathrm{d}x)m(\mathrm{d}y)\nonumber\\
&\lesssim I+\sum_{w\in N_v}I_w\lesssim\sum_{w\in N_v}\int_{B(w,3\varepsilon)}\mathrm{d}\Gamma^{(J)}_{B(w,3\varepsilon)}(f)+\frac{1}{\Upsilon(\varepsilon)}\int_{B(v,6\varepsilon)}\lvert f\rvert^p \mathrm{d}m.\label{eq_partition1}
\end{align}
Moreover, we have
\begin{align*}
&\int_X\int_X \lvert \phi_v(x)-\phi_v(y)\rvert^p K^{(J)}(x,y)m(\mathrm{d}x)m(\mathrm{d}y)\\
&=\left(\int_{B(v,3\varepsilon)}\int_{B(v,3\varepsilon)}+2\int_{B(v,3\varepsilon)}\int_{X\backslash B(v,3\varepsilon)}\right)\ldots m(\mathrm{d}x)m(\mathrm{d}y)=J_1+2J_2.
\end{align*}
By \ref{eq_KJ_tail}, we have
\begin{align*}
&J_2=\int_{B(v,\frac{5}{4}\varepsilon)}\int_{X\backslash B(v,3\varepsilon)}\phi_v(x)^p K^{(J)}(x,y) m(\mathrm{d}x) m(\mathrm{d}y)\\
&\le \int_{B(v,\frac{5}{4}\varepsilon)}\left(\int_{X\backslash B(x,\frac{7}{4}\varepsilon)}K^{(J)}(x,y) m(\mathrm{d}y) \right)m(\mathrm{d}x)<+\infty.
\end{align*}
By taking $f\equiv1$ in (\ref{eq_partition1}), we have $J_1<+\infty$. Hence $\phi_v\in \mathcal{F}^{(J)}$ and \ref{item_phiv_CS} follows from (\ref{eq_partition1}).
\end{proof}

\begin{proof}[Proof of Proposition \ref{prop_dense} \ref{item_dense_FJ}]
By Lemma \ref{lem_cptspt}, we only need to show that any $u\in \mathcal{F}^{(J)}\cap L^\infty(X;m)$ with compact support can be $\mathcal{E}^{(J)}_1$-approximated by functions in $\mathcal{F}^{(J)}\cap C_c(X)$.

For any $\varepsilon>0$, let $V$ and $\{\phi_v\in \mathcal{F}^{(J)}\cap C_c(X):v\in V\}$ be given as in Lemma \ref{lem_partitionJ}. Let
$$u^{(\varepsilon)}=\sum_{v\in V}u_{B(v,\varepsilon)}\phi_v,$$
then $u^{(\varepsilon)}\in \mathcal{F}^{(J)}\cap C_c(X)$ is well-defined. Moreover, we have
$$u-u^{(\varepsilon)}=\sum_{v\in V}(u-u_{B(v,\varepsilon)})\phi_v=\sum_{v\in V}u_v\phi_v,$$
where $u_v=u-u_{B(v,\varepsilon)}$. By \ref{eq_VD}, there exists a positive integer $N$ depending only on $C_{VD}$ such that
$$\sum_{v\in V}1_{B(v,12\varepsilon)}\le N1_{\cup_{v\in V}B(v,12\varepsilon)}\le N,$$
then $V$ can be written as a disjoint union $V=\sqcup_{i=1}^NV_i$ such that, for any $i=1,\ldots,N$, the family $\{B(v,12\varepsilon):v\in V_i\}$ is pairwise disjoint.

Firstly, we show that $\lim_{\varepsilon\downarrow0}\lVert u-u^{(\varepsilon)} \rVert_{L^p(X;m)}=0$. Indeed, for any $x\in X$, the summation $\sum_{v\in V}u_v(x)\phi_v(x)$ consists of at most $N$ non-zero terms, by H\"older's inequality, we have
$$\lvert u-u^{(\varepsilon)}\rvert^p=\lvert \sum_{v\in V}u_v\phi_v\rvert^p \le N^{p-1}\sum_{v\in V}\lvert u_v\rvert^p\phi_v^p,$$
which gives
$$\int_X \lvert u-u^{(\varepsilon)}\rvert^p \mathrm{d}m\le N^{p-1}\sum_{v\in V}\int_X\lvert u_v\rvert^p\phi_v^p \mathrm{d}m\le N^{p-1}\sum_{v\in V}\int_{B(v,2\varepsilon)}\lvert u_v\rvert^p \mathrm{d}m.$$
By \ref{eq_VD}, \ref{eq_PIJ} and H\"older's inequality, we have
\begin{align}
&\int_{B(v,2\varepsilon)}\lvert u_v\rvert^p \mathrm{d}m=\int_{B(v,2\varepsilon)}\lvert u-u_{B(v,\varepsilon)}\rvert^p \mathrm{d}m\nonumber\\
&\le \frac{1}{m(B(v,\varepsilon))}\int_{B(v,2\varepsilon)}\int_{B(v,2\varepsilon)}\lvert u(x)-u(y)\rvert^p m(\mathrm{d}x)m(\mathrm{d}y)\nonumber\\
&\le \frac{2^p m(B(v,2\varepsilon))}{m(B(v,\varepsilon))}\int_{B(v,2\varepsilon)}\lvert u-u_{B(v,2\varepsilon)}\rvert^p \mathrm{d}m\lesssim \Upsilon(\varepsilon)\int_{B(v,2\varepsilon)}\mathrm{d}\Gamma^{(J)}_{B(v,2\varepsilon)}(u).\label{eq_regular}
\end{align}
Hence
\begin{align*}
&\int_X \lvert u-u^{(\varepsilon)}\rvert^p \mathrm{d}m\lesssim \Upsilon(\varepsilon)\sum_{v\in V}\int_{B(v,2\varepsilon)}\mathrm{d}\Gamma^{(J)}_{B(v,2\varepsilon)}(u)\\
&\le \Upsilon(\varepsilon)\int_X\left(\sum_{v\in V}1_{B(v,2\varepsilon)}\right)\mathrm{d}\Gamma^{(J)}_{X}(u)\le N\Upsilon(\varepsilon)\mathcal{E}^{(J)}(u)\to0
\end{align*}
as $\varepsilon\downarrow0$.

Secondly, we show that $\lim_{\varepsilon\downarrow0}\mathcal{E}^{(J)}(u-u^{(\varepsilon)})=0$. By Minkowski's inequality, we have
\begin{align*}
&\mathcal{E}^{(J)}(u-u^{(\varepsilon)})^{\frac{1}{p}}=\left(\int_X\int_X \lvert \sum_{v\in V}\left(u_v(x)\phi_v(x)-u_v(y)\phi_v(y)\right)\rvert^p K^{(J)}(x,y)m(\mathrm{d}x)m(\mathrm{d}y)\right)^{\frac{1}{p}}\\
&\le\sum_{i=1}^N\left(\int_X\int_X \left( \sum_{v\in V_i}\lvert u_v(x)\phi_v(x)-u_v(y)\phi_v(y)\rvert\right)^p K^{(J)}(x,y)m(\mathrm{d}x)m(\mathrm{d}y)\right)^{\frac{1}{p}}.
\end{align*}
For any $x,y\in X$, we have $u_v(x)\phi_v(x)-u_v(y)\phi_v(y)\ne0$ only if $x\in B(v,2\varepsilon)$ or $y\in B(v,2\varepsilon)$; moreover, noting that $B(v,2\varepsilon)\cap B(w,2\varepsilon)=\emptyset$ for any distinct $v,w\in V_i$, we have
\begin{align*}
&\left( \sum_{v\in V_i}\lvert u_v(x)\phi_v(x)-u_v(y)\phi_v(y)\rvert\right)^p\le \left(\sum_{v\in V_i} \lvert \ldots\rvert 1_{B(v,2\varepsilon)}(x)+\sum_{{v\in V_i}} \lvert \ldots\rvert1_{B(v,2\varepsilon)}(y)\right)^p\\
&\le2^{p-1} \left(\sum_{v\in V_i} \lvert \ldots\rvert 1_{B(v,2\varepsilon)}(x)\right)^p+2^{p-1} \left(\sum_{v\in V_i} \lvert \ldots\rvert 1_{B(v,2\varepsilon)}(y)\right)^p\\
&=2^{p-1}\sum_{v\in V_i}\lvert\ldots\rvert^p1_{B(v,2\varepsilon)}(x)+2^{p-1}\sum_{v\in V_i}\lvert \ldots\rvert^p1_{B(v,2\varepsilon)}(y).
\end{align*}
By symmetry, we have
\begin{align*}
&\mathcal{E}^{(J)}(u-u^{(\varepsilon)})^{\frac{1}{p}}\\
&\le2\sum_{i=1}^N\left(\sum_{v\in V_i}\int_{B(v,2\varepsilon)}\int_X \lvert u_v(x)\phi_v(x)-u_v(y)\phi_v(y)\rvert^p K^{(J)}(x,y)m(\mathrm{d}x)m(\mathrm{d}y)\right)^{\frac{1}{p}}\\
&=2\sum_{i=1}^N \left(I_i^{(\varepsilon)}\right)^{\frac{1}{p}}.
\end{align*}
Recall that $N$ is independent of $\varepsilon$, it remains to show that $\lim_{\varepsilon\downarrow0}I_i^{(\varepsilon)}=0$ for any $i=1,\ldots,N$. Noting that
\begin{align*}
&I_i^{(\varepsilon)}\\
&\le2^{p-1}\sum_{v\in V_i}\int_{B(v,2\varepsilon)}\int_X \lvert u_v(x)\rvert^p\lvert\phi_v(x)-\phi_v(y)\rvert^p K^{(J)}(x,y)m(\mathrm{d}x)m(\mathrm{d}y)\\
&\hspace{15pt}+2^{p-1}\sum_{v\in V_i}\int_{B(v,2\varepsilon)}\int_X \lvert u_v(x)-u_v(y)\rvert^p \phi_v(y)^p K^{(J)}(x,y)m(\mathrm{d}x)m(\mathrm{d}y)\\
&=2^{p-1}\sum_{v\in V_i}I^{(\varepsilon)}_{i1v}+2^{p-1}\sum_{v\in V_i}I^{(\varepsilon)}_{i2v}.
\end{align*}
For $I^{(\varepsilon)}_{i1v}$, we have
\begin{align*}
I^{(\varepsilon)}_{i1v}=\left(\int_{B(v,2\varepsilon)}\int_{B(v,3\varepsilon)}+\int_{B(v,2\varepsilon)}\int_{X\backslash B(v,3\varepsilon)}\right)\ldots m(\mathrm{d}x)m(\mathrm{d}y)=J_1+J_2.
\end{align*}
By Lemma \ref{lem_partitionJ} \ref{item_phiv_CS}, we have
\begin{align*}
&J_1\le\int_{B(v,3\varepsilon)}\lvert u_v\rvert^p \mathrm{d}\Gamma^{(J)}_{B(v,3\varepsilon)}(\phi_v)\\
&\le C_1\sum_{w\in N_v}\int_{B(w,3\varepsilon)}\mathrm{d}\Gamma^{(J)}_{B(w,3\varepsilon)}(u_v)+\frac{C_2}{\Upsilon(\varepsilon)}\int_{B(v,6\varepsilon)}\lvert u_v\rvert^p \mathrm{d}m\\
&=C_1\sum_{w\in N_v}\int_{B(w,3\varepsilon)}\mathrm{d}\Gamma^{(J)}_{B(w,3\varepsilon)}(u)+\frac{C_2}{\Upsilon(\varepsilon)}\int_{B(v,6\varepsilon)}\lvert u_v\rvert^p \mathrm{d}m,
\end{align*}
where $N_v=\{w\in V:B(w,3\varepsilon)\cap B(v,3\varepsilon)\ne \emptyset\}$. For any $w\in N_v$, we have $B(w,3\varepsilon)\subseteq B(v,9\varepsilon)$, hence
\begin{align*}
&\sum_{w\in N_v}\int_{B(w,3\varepsilon)}\mathrm{d}\Gamma^{(J)}_{B(w,3\varepsilon)}(u)\le\int_X\left(\sum_{w\in N_v}1_{B(w,3\varepsilon)}\right)\mathrm{d}\Gamma^{(J)}_{B(v,9\varepsilon)}(u)\\
&\le \int_X N1_{\cup_{{w\in N_v}}B(w,3\varepsilon)}\mathrm{d}\Gamma^{(J)}_{B(v,9\varepsilon)}(u)\le N\int_{B(v,9\varepsilon)}\mathrm{d}\Gamma^{(J)}_{B(v,9\varepsilon)}(u).
\end{align*}
Similar to (\ref{eq_regular}), we also have
\begin{align*}
\int_{B(v,6\varepsilon)}\lvert u_v\rvert^p \mathrm{d}m\lesssim \Upsilon(\varepsilon)\int_{B(v,6\varepsilon)}\mathrm{d}\Gamma^{(J)}_{B(v,6\varepsilon)}(u).
\end{align*}
Hence
\begin{align*}
J_1\lesssim\int_{B(v,9\varepsilon)}\mathrm{d}\Gamma^{(J)}_{B(v,9\varepsilon)}(u).
\end{align*}
By \ref{eq_KJ_tail}, we have
\begin{align*}
&J_2=\int_{B(v,2\varepsilon)}\int_{X\backslash B(v,3\varepsilon)}\lvert u_v(x)\rvert^p\phi_v(x)^p K^{(J)}(x,y)m(\mathrm{d}y)m(\mathrm{d}x)\\
&\le\int_{B(v,2\varepsilon)}\lvert u_v(x)\rvert^p\left(\int_{X\backslash B(x,\varepsilon)} K^{(J)}(x,y)m(\mathrm{d}y)\right)m(\mathrm{d}x)\\
&\lesssim \frac{1}{\Upsilon(\varepsilon)}\int_{B(v,2\varepsilon)}\lvert u_v\rvert^p \mathrm{d}m\lesssim \int_{B(v,2\varepsilon)}\mathrm{d}\Gamma^{(J)}_{B(v,2\varepsilon)}(u),
\end{align*}
where the last inequality follows from (\ref{eq_regular}). Hence
$$I^{(\varepsilon)}_{i1v}=J_1+J_2\lesssim\int_{B(v,9\varepsilon)}\mathrm{d}\Gamma^{(J)}_{B(v,9\varepsilon)}(u).$$

For $I^{(\varepsilon)}_{i2v}$, we have
$$I^{(\varepsilon)}_{i2v}=\int_{B(v,2\varepsilon)}\int_{B(v,2\varepsilon)}\lvert u(x)-u(y)\rvert ^p\phi_v(y)^p K^{(J)}(x,y)m(\mathrm{d}x)m(\mathrm{d}y)\le \int_{B(v,2\varepsilon)}\mathrm{d}\Gamma^{(J)}_{B(v,2\varepsilon)}(u).$$

Therefore, we have
\begin{align*}
&I_i^{(\varepsilon)}\le 2^{p-1}\sum_{v\in V_i}\left(I^{(\varepsilon)}_{i1v}+I^{(\varepsilon)}_{i2v}\right)\lesssim \sum_{v\in V_i}\int_{B(v,9\varepsilon)}\mathrm{d}\Gamma^{(J)}_{B(v,9\varepsilon)}(u)\\
&\le\sum_{v\in V_i}\int_{B(v,9\varepsilon)}\left(\int_{B(x,18\varepsilon)}\lvert u(x)-u(y)\rvert^p K^{(J)}(x,y)m(\mathrm{d}y)\right)m(\mathrm{d}x)\\
&\overset{(\star)}{\scalebox{2}[1]{$\le$}}\int_X\left(\int_{B(x,18\varepsilon)}\lvert u(x)-u(y)\rvert^p K^{(J)}(x,y)m(\mathrm{d}y)\right)m(\mathrm{d}x)\overset{(\dagger)}{\scalebox{2}[1]{$\to$}}0
\end{align*}
as $\varepsilon\downarrow0$, where $(\star)$ follows from the fact that the family $\{B(v,9\varepsilon):v\in V_i\}$ is pairwise disjoint, and $(\dagger)$ follows from the dominated convergence theorem.
\end{proof}

\section{Proof of \texorpdfstring{``$\text{CE}^{(\bullet)}_\square\Rightarrow\text{CS}^{(\bullet)}_\square$"}{CE to CS} in Theorems \ref{thm_equiv_L} and \ref{thm_equiv_J}}\label{sec_CE2CS}

For notational convenience, we write $\mathcal{E}^{(\bullet)}$, $\mathcal{F}^{(\bullet)}$, $\Gamma^{(\bullet)}$, \ldots\ to denote $\mathcal{E}^{(L)}$, $\mathcal{F}^{(L)}$, $\Gamma^{(L)}$, \ldots\ in Theorem \ref{thm_equiv_L}, and $\mathcal{E}^{(J)}$, $\mathcal{F}^{(J)}$, $\Gamma^{(J)}$, \ldots\ in Theorem \ref{thm_equiv_J}. We write $\bullet=L$ to mean the setting of Theorem \ref{thm_equiv_L}, and $\bullet=J$ to mean the setting of Theorem \ref{thm_equiv_J}. We prove these results for the same doubling function $\Psi$.

Let $\delta\in(0,1)$, $A_{E}>1$ be the constants in \hyperlink{eq_CE_strong}{$\text{CE}^{(\bullet)}_\square(\Psi)$}. Let $A=8A_{E}A_{PI}$, where $A_{PI}\ge1$ is the constant in \ref{eq_PIL} for $\bullet=L$, and $A_{PI}=1$ as in \hyperref[eq_PIJ]{$\text{PI}^{(J)}(\Psi)$} for $\bullet=J$.

For any ball $B(x_0,R)$, let $\phi\in \mathcal{F}^{(\bullet)}$ be a cutoff function for $B(x_0,R)\subseteq B(x_0,A_{E}R)$ given by \hyperlink{eq_CE_strong}{$\text{CE}^{(\bullet)}_\square(\Psi)$}. For $\bullet=L$, take any $f\in {\mathcal{F}}^{(L)}\cap C_c(X)$; and for $\bullet=J$, take any $f\in \widehat{\mathcal{F}}^{(J)}\cap L^\infty(X;m)$. Write
\begin{align*}
&\int_{B(x_0,AR)}\lvert f\rvert^p \mathrm{d}\Gamma^{(\bullet)}_{X}(\phi)=\left(\int_{B(x_0,2A_{E}R)}+\int_{B(x_0,AR)\backslash B(x_0,2A_{E}R)}\right)\lvert f\rvert^p \mathrm{d}\Gamma^{(\bullet)}_{X}(\phi)=I_1+I_2.
\end{align*}
For $I_1$, we have
\begin{align*}
&I_1\lesssim\int_{B(x_0,2A_{E}R)}\lvert f-f_{B(x_0,2A_{E}R)}\rvert^p \mathrm{d}\Gamma^{(\bullet)}_{X}(\phi)+\lvert f_{B(x_0,2A_{E}R)}\rvert^p\int_{B(x_0,2A_{E}R)}\mathrm{d}\Gamma^{(\bullet)}_{X}(\phi)\\
&=I_{11}+I_{12}.
\end{align*}
By \hyperref[eq_CE_energy]{$\text{CE}^{(\bullet)}(\Psi)\text{-}2$} and H\"older's inequality, we have
\begin{align*}
I_{12}\lesssim \frac{V(x_0,2A_{E}R)}{\Psi(R)}\frac{1}{V(x_0,2A_{E}R)}\int_{B(x_0,2A_{E}R)} \lvert f\rvert^p \mathrm{d}m\le\frac{1}{\Psi(R)}\int_{B(x_0,AR)} \lvert f\rvert^p \mathrm{d}m.
\end{align*}
For any $n\ge0$, let $\varepsilon_n=\frac{1}{2^n}(2A_{E}R)$ and $V_n$ an $\varepsilon_n$-net of $X$ with $x_0\in V_n$. For any Lebesgue point $x\in B(x_0,2A_{E}R)$ of $f$ (with respect to the measure $m$), for any $n\ge0$ , there exists $y_n\in V_n$ such that $d(x,y_n)<\varepsilon_n$. Without loss of generality, we may assume that $y_0=x_0$. Since $d(y_n,y_{n+1})<\varepsilon_n+\varepsilon_{n+1}$, we have $B(y_{n+1},\varepsilon_{n+1})\subseteq B(y_n,2\varepsilon_n)$. By \ref{eq_VD}, $\text{PI}^{(\bullet)}(\Psi)$ and H\"older's inequality, we have
\begin{align*}
&\lvert f_{B(y_n,\varepsilon_n)}-f_{B(y_{n+1},\varepsilon_{n+1})}\rvert^p\\
&\lesssim \left(\frac{1}{V(y_n,2\varepsilon_n)^2}\int_{B(y_n,2\varepsilon_n)}\int_{B(y_n,2\varepsilon_n)}\lvert f(y)-f(z)\rvert m(\mathrm{d}y)m(\mathrm{d}z)\right)^p\\
&\le\frac{1}{V(y_n,2\varepsilon_n)^2}\int_{B(y_n,2\varepsilon_n)}\int_{B(y_n,2\varepsilon_n)}\lvert f(y)-f(z)\rvert^p m(\mathrm{d}y)m(\mathrm{d}z)\\
&\lesssim \frac{\Psi(\varepsilon_n)}{V(y_n,\varepsilon_n)}\int_{B(y_n,2A_{PI}\varepsilon_n)}\mathrm{d}\Gamma^{(\bullet)}_{B(y_n,2A_{PI}\varepsilon_n)}(f)\le \frac{\Psi(\varepsilon_n)}{V(y_n,\varepsilon_n)}\int_{B(y_n,2A_{PI}\varepsilon_n)}\mathrm{d}\Gamma^{(\bullet)}_{B(x_0,AR)}(f).
\end{align*}
Taking any fixed $\alpha\in(0,\frac{\delta}{p})$, by H\"older's inequality, we have
\begin{align*}
&\lvert f(x)-f_{B(x_0,2A_ER)}\rvert^p=\lim_{n\to+\infty}\lvert f_{B(y_n,\varepsilon_n)}-f_{B(y_0,\varepsilon_0)}\rvert^p\\
&\le\left(\sum_{n=0}^{+\infty}2^{\alpha n}\lvert f_{B(y_n,\varepsilon_n)}-f_{B(y_{n+1},\varepsilon_{n+1})}\rvert\cdot2^{-\alpha n}\right)^p\\
&\le\left(\sum_{n=0}^{+\infty}2^{p\alpha n}\lvert f_{B(y_n,\varepsilon_n)}-f_{B(y_{n+1},\varepsilon_{n+1})}\rvert^p\right)\left(\sum_{n=0}^{+\infty}2^{-\alpha \frac{p}{p-1} n}\right)^{p-1}\\
&\lesssim\sum_{n=0}^{+\infty}2^{p\alpha n}\frac{\Psi(\varepsilon_n)}{V(y_n,\varepsilon_n)}\int_{B(y_n,2A_{PI}\varepsilon_n)}\mathrm{d}\Gamma^{(\bullet)}_{B(x_0,AR)}(f)\\
&\le\sum_{n=0}^{+\infty}\sum_{y\in V_n\cap B(x_0,4A_{E}R)}2^{p\alpha n}\frac{\Psi(\varepsilon_n)}{V(y,\varepsilon_n)}\left(\int_{B(y,2A_{PI}\varepsilon_n)}\mathrm{d}\Gamma^{(\bullet)}_{B(x_0,AR)}(f)\right)1_{B(y,\varepsilon_n)}(x).
\end{align*}
For $\bullet=L$, since $f\in \mathcal{F}^{(L)}\cap C_c(X)$, it holds for \emph{any} $x\in B(x_0,2A_{E}R)$; for $\bullet=J$, it holds for \emph{$m$-a.e.} $x\in B(x_0,2A_{E}R)$, since $\Gamma^{(J)}_{X}(\phi)$ is absolutely continuous with respect to $m$, it also holds for $\Gamma^{(J)}_{X}(\phi)$-a.e. $x\in B(x_0,2A_{E}R)$. In summary, the above inequality holds for $\Gamma^{(\bullet)}_X(\phi)$-a.e. $x\in B(x_0,2A_{E}R)$.

By \hyperref[eq_CE_energy]{$\text{CE}^{(\bullet)}(\Psi)\text{-}2$}, we have
\begin{align*}
&I_{11}\lesssim\sum_{n=0}^{+\infty}\sum_{y\in V_n\cap B(x_0,4A_{E}R)}2^{p\alpha n}\frac{\Psi(\varepsilon_n)}{V(y,\varepsilon_n)}\left(\int_{B(y,2A_{PI}\varepsilon_n)}\mathrm{d}\Gamma^{(\bullet)}_{B(x_0,AR)}(f)\right)\int_{B(y,\varepsilon_n)}\mathrm{d}\Gamma^{(\bullet)}_{X}(\phi)\\
&\lesssim\sum_{n=0}^{+\infty}\sum_{y\in V_n\cap B(x_0,4A_{E}R)}2^{p\alpha n}\frac{\Psi(\varepsilon_n)}{V(y,\varepsilon_n)}\left(\int_{B(y,2A_{PI}\varepsilon_n)}\mathrm{d}\Gamma^{(\bullet)}_{B(x_0,AR)}(f)\right)\left(\frac{\varepsilon_n}{R}\wedge 1\right)^\delta \frac{V(y,\varepsilon_n)}{\Psi(\varepsilon_n\wedge R)}\\
&\lesssim\sum_{n=0}^{+\infty} 2^{(p\alpha-\delta)n}\int_X \left(\sum_{y\in V_n\cap B(x_0,4A_{E}R)}1_{B(y,2A_{PI}\varepsilon_n)}\right)\mathrm{d}\Gamma^{(\bullet)}_{B(x_0,AR)}(f),
\end{align*}
where the last inequality follows from the fact that $\varepsilon_n\wedge R\asymp \varepsilon_n$. By \ref{eq_VD}, there exists some positive integer $M$ depending only on $A_{E}, A_{PI}, C_{VD}$ such that
$$\sum_{y\in V_n\cap B(x_0,4A_{E}R)}1_{B(y,2A_{PI}\varepsilon_n)}\le M1_{\cup_{{y\in V_n\cap B(x_0,4A_{E}R)}}B(y,2A_{PI}\varepsilon_n)}\le M1_{B(x_0,AR)},$$
which gives
\begin{align*}
I_{11}\lesssim\sum_{n=0}^{+\infty} 2^{(p\alpha-\delta)n}\int_{B(x_0,AR)}\mathrm{d}\Gamma^{(\bullet)}_{B(x_0,AR)}(f)\lesssim\int_{B(x_0,AR)}\mathrm{d}\Gamma^{(\bullet)}_{B(x_0,AR)}(f),
\end{align*}
where the second inequality follows from the fact that $\alpha<\frac{\delta}{p}$. Hence
$$I_1\lesssim I_{11}+I_{12}\lesssim\int_{B(x_0,AR)}\mathrm{d}\Gamma^{(\bullet)}_{B(x_0,AR)}(f)+\frac{1}{\Psi(R)}\int_{B(x_0,AR)} \lvert f\rvert^p \mathrm{d}m.$$

For $I_2$, when $\bullet=L$, the strongly local property implies that $I_2=0$. When $\bullet=J$, by \hyperref[eq_KJ_tail]{$\text{T}^{(J)}(\Psi)$}, we have
\begin{align*}
&I_2=\int_{B(x_0,AR)\backslash B(x_0,2A_{E}R)} \lvert f(x)\rvert^p \left(\int_{X}\lvert \phi(x)-\phi(y)\rvert^p K^{(J)}(x,y)m(\mathrm{d}y)\right)m(\mathrm{d}x)\\
&=\int_{B(x_0,AR)\backslash B(x_0,2A_{E}R)} \lvert f(x)\rvert^p \left(\int_{B(x_0,A_{E}R)}\phi(y)^p K^{(J)}(x,y)m(\mathrm{d}y)\right)m(\mathrm{d}x)\\
&\le\int_{B(x_0,AR)\backslash B(x_0,2A_{E}R)} \lvert f(x)\rvert^p \left(\int_{X\backslash B(x,A_{E}R)} K^{(J)}(x,y)m(\mathrm{d}y)\right)m(\mathrm{d}x)\\
&\lesssim\frac{1}{\Psi(R)}\int_{B(x_0,AR)\backslash B(x_0,2A_{E}R)}\lvert f\rvert^p \mathrm{d}m\le \frac{1}{\Psi(R)}\int_{B(x_0,AR)}\lvert f\rvert^p \mathrm{d}m.
\end{align*}
In summary, we have
\begin{equation}\label{eq_CE2CS}
\int_{B(x_0,AR)}\lvert f\rvert^p \mathrm{d}\Gamma^{(\bullet)}_{X}(\phi)=I_1+I_2\lesssim\int_{B(x_0,AR)}\mathrm{d}\Gamma^{(\bullet)}_{B(x_0,AR)}(f)+\frac{1}{\Psi(R)}\int_{B(x_0,AR)} \lvert f\rvert^p \mathrm{d}m.
\end{equation}

For $\bullet=L$, (\ref{eq_CE2CS}) holds for any $f\in \mathcal{F}^{(L)}\cap C_c(X)$, then for any $f\in \mathcal{F}^{(L)}$, by \cite[Proposition 8.5, Proposition 8.12]{Yan25a}, there exists $\{f_n\}\subseteq \mathcal{F}^{(L)}\cap C_c(X)$ such that $\{f_n\}$ is $\mathcal{E}^{(L)}_1$-convergent to $f$ and $\{f_n\}$ converges to $\widetilde{f}$ q.e. on $X$, which is also $\Gamma^{(L)}(\phi)$-a.e. on $X$, then by Fatou's lemma, we have
\begin{align*}
&\int_{B(x_0,AR)}\lvert \widetilde{f}\rvert^p \mathrm{d}\Gamma^{(L)}(\phi)\le\varliminf_{n\to+\infty}\int_{B(x_0,AR)}\lvert f_n\rvert^p \mathrm{d}\Gamma^{(L)}(\phi)\\
&\lesssim\varliminf_{n\to+\infty}\left(\int_{B(x_0,AR)}\mathrm{d}\Gamma^{(L)}(f_n)+\frac{1}{\Psi(R)}\int_{B(x_0,AR)} \lvert f_n\rvert^p \mathrm{d}m\right)\\
&=\int_{B(x_0,AR)}\mathrm{d}\Gamma^{(L)}(f)+\frac{1}{\Psi(R)}\int_{B(x_0,AR)} \lvert f\rvert^p \mathrm{d}m.
\end{align*}
Noting that $\phi\in \mathcal{F}^{(L)}$ is also a cutoff function for $B(x_0,R)\subseteq B(x_0,AR)$, we conclude that \hyperlink{eq_CSL_strong}{$\text{CS}^{(L)}_\square(\Psi)$} holds with $A_S=A$ therein.

For $\bullet=J$, (\ref{eq_CE2CS}) holds for any $f\in \widehat{\mathcal{F}}^{(J)}\cap L^\infty(X;m)$, hence by Lemma \ref{lem_CSJ_energy}, we have \hyperlink{eq_CSJ_strong}{$\text{CS}^{(J)}_{\square}(\Psi)$} holds with $A_1=A_{E}$ and $A_2=A$ therein.\hfill$\square$

\section{\texorpdfstring{Proof of ``{$\text{CS}^{(L)}_{\text{weak}}\Rightarrow\text{CE}^{(L)}_{\text{strong}}$" in Theorem \ref{thm_equiv_L}}}{Proof of CSLweak to CELstrong in Theorem \ref{thm_equiv_L}}}\label{sec_Moser_L}

Throughout this section, we always assume that \ref{eq_VD} holds and that $(\mathcal{E}^{(L)},\mathcal{F}^{(L)})$ is a strongly local regular $p$-energy satisfying \ref{eq_PIL}, \hyperlink{eq_CSL_weak}{$\text{CS}^{(L)}_{\text{weak}}(\Psi)$}. The main results of this section are the following interior and boundary oscillation inequalities, from which we give the proof of ``\hyperlink{eq_CSL_weak}{$\text{CS}^{(L)}_{\text{weak}}(\Psi)$}$\Rightarrow$\hyperlink{eq_CE_strong}{$\text{CE}^{(L)}_{\text{strong}}(\Psi)$}" in Theorem \ref{thm_equiv_L}.

\begin{proposition}\label{prop_int_osc_L}
There exist $\delta\in(0,1)$, $C>0$ such that for any bounded open subset $\Omega\subseteq X$, let $f\in L^\infty(\Omega;m)$ satisfy $\lVert f\rVert_{L^\infty(\Omega;m)}\le1$, let $u\in \mathcal{F}^{(L)}$ be bounded in $\Omega$ and satisfy $-\Delta_p^{(L)}u=f$ in $\Omega$, then for any $x_0\in \Omega$ and any $R,r>0$ with $r\le R$ and $B(x_0,R)\subseteq\Omega$, we have
$$\eosc_{B(x_0,r)}u\le C \left(\frac{r}{R}\right)^\delta \left(\eosc_{B(x_0,R)}u+\Psi(R)^{\frac{1}{p-1}}\right).$$
In particular, we have $u\in C(\Omega)$.
\end{proposition}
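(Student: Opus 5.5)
The plan is a De Giorgi--Moser-type oscillation decay for the inhomogeneous equation $-\Delta_p^{(L)}u=f$, with the cutoff Sobolev inequality \hyperlink{eq_CSL_weak}{$\text{CS}^{(L)}_{\text{weak}}(\Psi)$} replacing gradient bounds of cutoff functions. The core is a one-step estimate. There will be constants $\sigma\in(0,1)$, $\gamma\in(0,1)$ and $C$ such that, for every ball $B(x_0,\rho)\subseteq\Omega$ with $\rho\le R$,
$$\eosc_{B(x_0,\sigma\rho)}u\le\gamma\,\eosc_{B(x_0,\rho)}u+C\Psi(\rho)^{\frac{1}{p-1}}.$$
Iterating this along $\rho_k=\sigma^kR$ gives the claim. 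The inhomogeneous terms are summed using the lower growth bound $\Psi(\sigma^k R)\le C_\Psi\sigma^{k\beta^{(1)}_\Psi}\Psi(R)$ from (\ref{eq_beta12}), which makes the series $\sum_k\gamma^{-k}\Psi(\rho_k)^{1/(p-1)}$ geometric once $\delta$ is chosen with $\sigma^{\delta}\ge\gamma$ and $\delta<\beta^{(1)}_\Psi/(p-1)$. Continuity of $u$ in $\Omega$ then follows because the oscillation tends to $0$ on small balls.

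To prove the one-step estimate, set $M=\esup_{B(x_0,\rho)}u$, $m_0=\einf_{B(x_0,\rho)}u$, $\omega=M-m_0$, and normalise by $\lambda=\omega+\Psi(\rho)^{1/(p-1)}$, so that the right-hand side becomes effectively of unit size. The functions $M-u$ and $u-m_0$ are nonnegative in $B(x_0,\rho)$ and are super-solutions with right-hand side bounded by $1$; one of them is at least $\omega/2$ on half of $B(x_0,\rho/2)$ in measure. I would prove two ingredients for a nonnegative supersolution $v$ of $-\Delta_p^{(L)}v\ge -1$ in $B(x_0,\rho)$.

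The first ingredient is a logarithmic Caccioppoli estimate. Test the equation with $\phi^p(v+d)^{1-p}$, where $d=\Psi(\rho)^{1/(p-1)}$ and $\phi$ is the cutoff from \hyperlink{eq_CSL_weak}{$\text{CS}^{(L)}_{\text{weak}}(\Psi)$}. Using the chain and Leibniz rules (\ref{item_meas5}), (\ref{item_meas6}) and Young's inequality, this gives
$$\int\phi^p\,\mathrm{d}\Gamma^{(L)}(\log(v+d))\lesssim\int\mathrm{d}\Gamma^{(L)}(\phi)+\frac{V(x_0,\rho)}{\Psi(\rho)}.$$
The cutoff Sobolev inequality, applied with $f\equiv1$ on the ball, bounds $\int\mathrm{d}\Gamma^{(L)}(\phi)$ by $V/\Psi$. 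Combined with \ref{eq_PIL}, this shows that $\log(v+d)$ lies in BMO on a smaller ball. Lemma \ref{lem_crossover} then provides a weak Harnack crossover: $\dashint(v+d)^{s}\cdot\dashint(v+d)^{-s}\le C$ for some small $s>0$.

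The second ingredient is a Moser iteration for $(v+d)^{-1}$ giving $\sup(v+d)^{-1}\lesssim(\dashint(v+d)^{-s})^{1/s}$. This uses the Sobolev inequality Proposition \ref{prop_SobolevL} and the Caccioppoli inequality with test functions $\phi^p(v+d)^{-q}$. Here \hyperlink{eq_CSL_weak}{$\text{CS}^{(L)}_{\text{weak}}$} is used in the form $\int|w|^p\mathrm{d}\Gamma(\phi)\le C\int\mathrm{d}\Gamma(w)+\frac{C}{\Psi}\int|w|^p$, via the self-improved version analogous to Proposition \ref{prop_CSJ_self}; in the local setting this is \cite[Proposition 3.1]{Yan25c}. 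The self-improvement absorbs the $\varepsilon$-term on the left, and the iteration is run on annuli $B(x_0,R+r)\setminus B(x_0,R)$.

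Together these yield a weak Harnack inequality $\einf_{B(x_0,\sigma\rho)}v+d\gtrsim\dashint_{B(x_0,\rho/2)}v$. Applying it to whichever of $M-u$, $u-m_0$ is large on half the ball gives the one-step estimate with $\gamma=1-c$.

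The main obstacle will be the Moser iteration without gradients. The cutoff energy term $\int|w|^p\,\mathrm{d}\Gamma^{(L)}(\phi)$ has to be absorbed, which is exactly where the self-improvement of $\text{CS}$ (the $\varepsilon$-version on annuli) is essential. Quasi-continuity issues must also be handled: the integrand $\widetilde{w}$ is integrated against $\Gamma^{(L)}(\phi)$, and the test functions must be shown to belong to $\mathcal{F}^{(L)}(\Omega)$. For the latter, Lemma \ref{lem_FOmega} and truncation arguments should suffice, using the boundedness of $u$ in $\Omega$. If the logarithmic test function causes domain problems, I would first truncate $v$ at level $k$ and let $k\to\infty$ by Fatou's lemma.
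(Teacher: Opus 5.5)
Your proposal is correct and follows essentially the paper's route. The weak Harnack inequality (Proposition \ref{prop_int_wEHI_L}) is obtained from the negative-power Moser iteration of Lemma \ref{lem_Moser_L}, using the self-improved $\text{CS}^{(L)}_{\text{weak}}(\Psi)$ and Proposition \ref{prop_SobolevL}, combined with a BMO bound for the logarithm and Lemma \ref{lem_crossover}; the oscillation decay is then iterated. The paper differs only in that it applies the weak Harnack inequality to both $M-u$ and $u-m$ and adds them via the $L^q$ quasi-triangle inequality instead of your half-measure dichotomy, and it iterates with Lemma \ref{lem_ele2}.

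One small correction: your two ingredients only give $\einf(v+d)\gtrsim\bigl(\dashint v^{s}\,\mathrm{d}m\bigr)^{1/s}$ for the small crossover exponent $s$, not the $L^1$ average you state, but this $L^s$ version is all your half-measure argument needs.
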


\begin{proposition}\label{prop_bdy_osc_L}
Let $c\in(0,1)$. Then there exist $\delta\in(0,1)$, $C>0$ such that for any bounded open subset $\Omega\subseteq X$, let $f\in L^\infty(\Omega;m)$ satisfy $\lVert f\rVert_{L^\infty(\Omega;m)}\le1$, let $u\in \mathcal{F}^{(L)}(\Omega)$ be non-negative bounded in $\Omega$ and satisfy $-\Delta^{(L)}_pu=f$ in $\Omega$, let $x_0\in \partial \Omega$ satisfy $X\backslash \Omega$ has a $(c,r_0)$-corkscrew at $x_0$ for some $r_0>0$, then for any $R,r\in(0,r_0)$ with $r\le R$, we have
$$\esup_{B(x_0,r)\cap\Omega}u\le C \left(\frac{r}{R}\right)^\delta\left(\esup_{B(x_0,R)\cap\Omega}u+\Psi(R)^{\frac{1}{p-1}}\right).$$
In particular, we have $\lim_{r\downarrow0}\esup_{{B(x_0,r)\cap\Omega}}u=0$.
\end{proposition}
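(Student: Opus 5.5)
The plan is a De Giorgi/Moser iteration for a subsolution that vanishes outside $\Omega$, combined with a capacity density estimate supplied by the corkscrew. After replacing $u$ by $u/\lambda$ with $\lambda=\esup_{B(x_0,R)\cap\Omega}u+\Psi(R)^{1/(p-1)}$, the right-hand side $f$ is rescaled so that $\lVert f\rVert_\infty\Psi(R)\lambda^{1-p}\le 1$ (using homogeneity $\mathcal{E}(au;\varphi)=a^{p-1}\mathcal{E}(u;\varphi)$). It then suffices to prove a one-step decay
$$\esup_{B(x_0,\theta\rho)\cap\Omega}u\le (1-\eta)\esup_{B(x_0,\rho)\cap\Omega}u+C\Psi(\rho)^{\frac{1}{p-1}}$$
for some fixed $\theta,\eta\in(0,1)$ and every $\rho\in(0,r_0)$. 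Iterating along $\rho_k=\theta^kR$, and using $\Psi(\theta^k R)\le C\theta^{k\beta^{(1)}_\Psi}\Psi(R)$ from (\ref{eq_beta12}) to sum the geometric error terms, gives the power $(r/R)^\delta$ with $\delta$ small, depending on $\eta,\theta,\beta_\Psi^{(1)},p$.

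For the one-step decay, set $M=\esup_{B(x_0,\rho)\cap\Omega}u$ and extend $u$ by $0$ outside $\Omega$; by Lemma \ref{lem_FOmega} this is consistent, as $u\in\mathcal{F}^{(L)}(\Omega)$. I first check that $v=M-u$ is a nonnegative supersolution in $B(x_0,\rho)$ in the sense $-\Delta_p v\ge -1$ (after scaling). The subtle point is the interface $\partial\Omega$, where $u$ is extended by zero. I would handle it by testing the equation with $(u-k)_+\psi^p$ for $k\ge0$: these test functions lie in $\mathcal{F}^{(L)}(\Omega)$, so a Caccioppoli inequality for $(u-k)_+$, $k\ge 0$, holds across the whole ball $B(x_0,\rho)$, with the cutoff $\psi$ from $\text{CS}^{(L)}_{\text{weak}}(\Psi)$ and Proposition \ref{prop_SobolevL}. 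This is exactly the standard boundary De Giorgi setup.

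The decay itself comes from a De Giorgi lemma. By the corkscrew, $B(x_0,\rho/2)\setminus\Omega$ contains a ball of radius $c\rho/2$, on which $u=0$, so $\{u\le M/2\}$ occupies at least a fixed fraction $\gamma(c,C_{VD})$ of $B(x_0,\rho/2)$. I would then use the Caccioppoli inequality for $(u-k_j)_+$ with $k_j=M(1-2^{-j-1})$, together with the Poincaré inequality \ref{eq_PIL} applied to truncations $\min\{(u-k_j)_+,k_{j+1}-k_j\}$. This is the De Giorgi isoperimetric-type step, which exploits the measure of the zero set, and it shows that the density of $\{u>k_j\}$ becomes smaller than any prescribed $\epsilon_0$ after finitely many steps. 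The standard $L^\infty$ De Giorgi iteration (Caccioppoli plus Sobolev with exponent $\kappa>1$) then yields $u\le M(1-2^{-j_0-2})+C\Psi(\rho)^{1/(p-1)}$ on $B(x_0,\theta\rho)$.

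The main obstacle is the measure-shrinking step without a gradient. The classical argument uses a pointwise slicing inequality $|\nabla \min((u-k)_+,h)|$, which is unavailable here. I would try first to replace it by the strongly local chain rule \ref{item_meas5} and the locality of $\Gamma^{(L)}$: $\Gamma^{(L)}(\min\{(u-k_j)_+,h\})$ is supported on $\{k_j<u\le k_{j+1}\}$, so summing \ref{eq_PIL} over $j$ with the energy bound from Caccioppoli gives the $\sqrt{j}$-type decay as in the Euclidean case. If that fails, I would fall back on a logarithmic Caccioppoli estimate for $\log\frac{M+\epsilon}{M-u+\epsilon}$, combined with the weak Harnack estimates (Lemmas \ref{lem_JohnNirenberg} and \ref{lem_crossover}) applied to the supersolution $M-u$ extended by $M$ off $\Omega$. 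The final claim, $\lim_{r\downarrow0}\esup u=0$, is immediate from the estimate upon letting $r\downarrow0$ with $R$ fixed.
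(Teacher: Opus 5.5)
Your reduction to a one-step decay, the use of the corkscrew ball, and the $L^\infty$ De Giorgi iteration (Caccioppoli for $(u-k)_+$, $k\ge0$, with the cutoffs from $\text{CS}^{(L)}_{\text{weak}}(\Psi)$ plus Proposition \ref{prop_SobolevL}) all work in this setting. The genuine gap is the measure-shrinking step. You flagged it yourself, but the fix you propose cannot work, for a structural reason. Let $B=B(x_0,\rho/2)$ and $E_j=\{u>k_j\}\cap B$. Applying \ref{eq_PIL} to $g_j=\min\{(u-k_j)_+,h_j\}$, which vanishes on a fixed fraction of $B$, gives only
\[
h_j^p\, m(E_{j+1})\lesssim \Psi(\rho)\,\Gamma^{(L)}(u)\bigl(\{k_j<\widetilde u\le k_{j+1}\}\cap A_{PI}B\bigr).
\]
Caccioppoli bounds the right-hand side by roughly $h_j^p\,m(B)$. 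So each step returns only $m(E_{j+1})\lesssim m(B)$, and summing over the disjoint layers gives nothing, because the weights $h_j^p=M^p2^{-(j+2)p}$ decay geometrically.

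The Euclidean $\sqrt{j}$ decay does not come from a $p$-Poincaré inequality. It comes from De Giorgi's $L^1$ isoperimetric inequality $h_j|E_{j+1}|\,|\{u\le k_j\}\cap B|\lesssim r^{n+1}\int_{\{k_j<u\le k_{j+1}\}}|\nabla u|$, followed by H\"older with respect to $m$ on the layer, which produces the summable factor $|\{k_j<u\le k_{j+1}\}|^{1/p'}$. Both ingredients need a density $|\nabla u|\in L^p(m)$ of the energy measure, together with a Poincaré inequality of exponent below $p$, as in the upper-gradient setting. For the $p$-energies considered here, e.g.\ on fractals, $\Gamma^{(L)}(u)$ is typically singular with respect to $m$. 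The chain rule and locality of $\Gamma^{(L)}$ correctly localize the energy to the layer, but they cannot supply this missing factor.

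Your fallback is the right idea and is essentially the paper's proof. As written, though, it is a one-line sketch of the whole difficulty. Note also that John--Nirenberg and Lemma \ref{lem_crossover} only compare the $L^q$ and $L^{-q}$ means of $\bar w=M-u+K$; you still need to pass from the $L^{-q}$ mean to $\einf$.

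The paper handles this by proving the boundary weak Harnack inequality, Proposition \ref{prop_bdy_wEHI_L}, for $w=M-u$. Here $w=M$ q.e.\ off $\Omega$, $w$ is a supersolution only in $B\cap\Omega$, and the shift is fixed as $K=\Psi(R)^{1/(p-1)}$ so as to absorb $f$. There are two ingredients:
\begin{enumerate}
\item Lemma \ref{lem_Moser_L} runs Moser's iteration over exponents $\alpha<-(p-1)$ with test functions $\phi^p\bigl((w\wedge M+K)^\alpha-(M+K)^\alpha\bigr)$. These vanish where $w=M$, hence lie in $\mathcal{F}^{(L)}(B\cap\Omega)$, so only the equation in $\Omega$ is ever used.
\item The BMO bound for $\log\bar w$ comes from the test function $\phi^p(\bar w^{1-p}-(M+K)^{1-p})$ together with \ref{eq_PIL} and \ref{eq_ucapL}.
\end{enumerate}
The corkscrew ball, on which $M-u=M$, then bounds $\bigl(\dashint_{B(x_0,R/A)}(M-u)^q\,\mathrm{d}m\bigr)^{1/q}$ below by $M/C$. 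This gives your one-step decay, which is iterated by Lemma \ref{lem_ele2}.

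Alternatively, you can keep your De Giorgi scheme by putting the logarithmic estimate in place of the isoperimetric step. Apply \ref{eq_PIL} to $\min\{\log\frac{M+K}{\bar w},\log\frac1\delta\}$. This function vanishes on the corkscrew ball, and its energy on $A_{PI}B$ is $\lesssim m(B)/\Psi(\rho)$ by the logarithmic Caccioppoli estimate. The result is $m(\{\bar w\le\delta(M+K)\}\cap B)\lesssim(\log\frac1\delta)^{-p}m(B)$, with no density needed. Your $L^\infty$ iteration applied to $(u-(1-\delta)(M+K))_+$ then closes the step.
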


The proofs rely on the following interior and boundary weak Harnack inequalities. We will follow the classical Moser iteration technique as in \cite[Theorems 8.18 and 8.26]{GT01}, while \hyperlink{eq_CSL_weak}{$\text{CS}^{(L)}_{\text{weak}}(\Psi)$} will provide the necessary cutoff functions.

\begin{proposition}\label{prop_int_wEHI_L}
There exist $q>0$, $A>1$, $C>1$ such that the following holds. For any $x_0\in X$ and any $R>0$, let $u\in \mathcal{F}^{(L)}$ be non-negative bounded in $B(x_0,AR)$ and satisfy $-\Delta^{(L)}_pu\ge -1$ in $B(x_0,AR)$, then we have
$$\left(\dashint_{B(x_0,R)}u^q \mathrm{d}m\right)^{\frac{1}{q}}\le C \left(\einf_{B(x_0,\frac{1}{2}R)}u+\Psi(R)^{\frac{1}{p-1}}\right).$$
\end{proposition}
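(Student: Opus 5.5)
We follow the classical Moser scheme of \cite[Theorem 8.18]{GT01}. The cutoff functions come from \hyperlink{eq_CSL_weak}{$\text{CS}^{(L)}_{\text{weak}}(\Psi)$}, self-improved as in \cite[Proposition 3.1]{Yan25c}; the local analogue of Proposition \ref{prop_CSJ_self} gives cutoffs for $B(x_0,s)\subseteq B(x_0,s')$ with an $\varepsilon$ in front of the gradient term. The Sobolev inequality is Proposition \ref{prop_SobolevL}, and the log/BMO step uses Lemma \ref{lem_crossover}.

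First normalize. Set $k=\Psi(R)^{\frac{1}{p-1}}$ and $v=u+k$. Then $v\ge k$, $v$ is bounded, and $-\Delta_p^{(L)}v\ge -1 \ge -k^{p-1}\Psi(R)^{-1}\,v^{p-1}/k^{p-1}$. In other words, the right-hand side is absorbed as a zero-order term: $-\Delta_p^{(L)}v\ge -\Psi(R)^{-1}v^{p-1}$ in $B(x_0,AR)$. This is the device that produces the $\Psi(R)^{1/(p-1)}$ term.

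\textbf{Step 1 (negative powers, reverse Moser).} Test with $\varphi=\eta^p v^{\beta}$ for $\beta<-(p-1)$ (so $v^{-1}$-type powers). Here $\eta$ is a $\text{CS}$ cutoff for $B(x_0,s)\subseteq B(x_0,s')$ with $R/2\le s<s'\le R$. All test functions lie in $\mathcal{F}^{(L)}$ because $v\ge k>0$ is bounded. We use the chain and Leibniz rules \ref{item_meas5}, \ref{item_meas6} together with Young's inequality. The cross term $\int v^{\beta+p-1}\eta^{p-1}\,\mathrm{d}|\Gamma^{(L)}(v;\eta)|$ is controlled by $\int \eta^p v^{\beta-1}\mathrm{d}\Gamma^{(L)}(v)$ plus $\int v^{\beta+p-1}\mathrm{d}\Gamma^{(L)}(\eta)$. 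The latter is exactly where $\text{CS}$ enters: apply \ref{eq_CSL_energy} to $f=v^{(\beta+p-1)/p}$, and absorb the gradient term via the $\varepsilon$-version. This yields the energy estimate
$$\int \eta^p\,\mathrm{d}\Gamma^{(L)}\big(w\big)\lesssim C(\beta)\frac{\Psi(R)}{\Psi(s'-s)}\cdot\frac{1}{\Psi(R)}\int_{B(x_0,s')} w^p\,\mathrm{d}m,\qquad w=v^{(\beta+p-1)/p}.$$
Combining this with Proposition \ref{prop_SobolevL} applied to $\eta w$ gives a reverse Hölder step, with gain factor $\kappa$ in the exponent and constants of the form $(R/(s'-s))^{\beta_\Psi^{(2)}}$. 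Iterating over radii $s_j\downarrow R/2$ gives $\einf_{B(x_0,R/2)}v\gtrsim(\dashint_{B(x_0,R)}v^{-q}\mathrm{d}m)^{-1/q}$ for every $q>0$. The dependence on $q$ is harmless once $q$ is fixed.

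\textbf{Step 2 (small positive powers).} Run the same iteration with $0<\beta+p-1<p$, i.e.\ exponents $\gamma\in(0,\,\cdot)$ below the critical one. This gives $(\dashint_{B(x_0,R)}v^{q})^{1/q}\lesssim(\dashint_{B(x_0,A'R)}v^{q_0})^{1/q_0}$ for fixed small $q_0$, after finitely many steps. This step needs a larger ball, which is one reason for the factor $A$. Care is required here: the coefficient $C(\beta)$ blows up as $\beta\to-(p-1)$, so only finitely many steps are taken on this side.

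\textbf{Step 3 (crossover via log).} Test with $\eta^p v^{1-p}$, i.e.\ $\beta=1-p$. With CS handling $\int\mathrm{d}\Gamma^{(L)}(\eta)$ and VD, this gives $\int_{B}\mathrm{d}\Gamma^{(L)}(\log v)\lesssim V(B)/\Psi(r_B)$ for every ball $B$ with $A''B\subseteq B(x_0,AR)$. Combined with \ref{eq_PIL}, this shows $\log v\in\mathrm{BMO}$ on a suitable ball with norm bounded by a constant. Lemma \ref{lem_crossover} then gives $q_0>0$ with $\dashint v^{q_0}\cdot\dashint v^{-q_0}\le C$. Chaining Steps 2, 3 and 1 yields $(\dashint_{B(x_0,R)}u^q)^{1/q}\le(\dashint v^q)^{1/q}\lesssim\einf_{B(x_0,R/2)}v=\einf u+\Psi(R)^{1/(p-1)}$. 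Covering arguments with VD allow passing between the various ball enlargements, and these fix $A$ (at least $12$ times the radii used).

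\textbf{Main obstacle.} The expected difficulty is the cutoff-energy term. We only have $\text{CS}_{\text{weak}}$, not pointwise gradient bounds on $\eta$, and the term $\int v^{\gamma}\,\mathrm{d}\Gamma^{(L)}(\eta)$ must be bounded through \ref{eq_CSL_energy} applied to a power of $v$. This reintroduces $\int\mathrm{d}\Gamma^{(L)}(v^{\gamma/p})$ on an annulus, which must be absorbed. We plan to handle this with the self-improved CS, choosing $\varepsilon$ small relative to the Young constants (which depend on $\beta$, uniformly for $\beta$ in compact ranges). In the $\eta^p$ weighting we may need to replace $\eta^p$ by $\eta^{p}$ times the CS form with $|\phi|^p\mathrm{d}\Gamma(f)$ on the right, exactly as in the statement (\ref{eq_CSJ_self}). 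The quasi-continuous modification $\widetilde{f}$ is irrelevant here: we test with functions of $v$, which we may approximate by $\mathcal{F}^{(L)}\cap C_c$ functions as in Section \ref{sec_CE2CS}.
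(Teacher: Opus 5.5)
Your proof is correct and is essentially the paper's argument. Your Step 1 is Lemma \ref{lem_Moser_L}: the negative-power Moser iteration with the $\phi^p$-weighted self-improved $\text{CS}^{(L)}$ and Proposition \ref{prop_SobolevL}. Your Step 3 is the $\log$/BMO estimate via \ref{eq_PIL}, a capacity-type cutoff and Lemma \ref{lem_crossover}; the paper carries this out for Proposition \ref{prop_bdy_wEHI_L} and then specializes to $\Omega=B(x_0,AR)$, $M=\lVert u\rVert_{L^\infty(B(x_0,AR);m)}$.

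Your Step 2 is unnecessary, because the statement only asks for some $q>0$ and you may take $q=q_0$ from the crossover step. As written it is also slightly off: on the positive-power side only $\beta\in(1-p,0)$ is admissible for supersolutions, and the constants degenerate as $\beta\to 0$, not as $\beta\to 1-p$.
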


\begin{proposition}\label{prop_bdy_wEHI_L}
There exist $q>0$, $A>1$, $C>1$ such that the following holds. For any bounded open subset $\Omega\subseteq X$, any $x_0\in X$, and any $R>0$, let $u\in \mathcal{F}^{(L)}$ be non-negative bounded in $B(x_0,AR)$ and satisfy $\widetilde{u}=M$ q.e. on $B(x_0,AR)\backslash\Omega$, for some constant $M\in [0,+\infty)$, and $-\Delta^{(L)}_pu\ge -1$ in $B(x_0,AR)\cap\Omega$, then we have
$$\left(\dashint_{B(x_0,R)}\left(u\wedge M\right)^q \mathrm{d}m\right)^{\frac{1}{q}}\le C \left(\einf_{B(x_0,\frac{1}{2}R)}\left(u\wedge M\right)+\Psi(R)^{\frac{1}{p-1}}\right).$$
\end{proposition}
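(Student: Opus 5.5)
The proof follows the De Giorgi--Moser scheme of \cite[Theorem 8.26]{GT01}, adapted to the boundary. The cutoff functions come from \hyperlink{eq_CSL_weak}{$\text{CS}^{(L)}_{\text{weak}}(\Psi)$}, the Sobolev inequality is Proposition \ref{prop_SobolevL}, and the John--Nirenberg crossover is Lemma \ref{lem_crossover}. Set $k=\Psi(R)^{\frac{1}{p-1}}$ and $w=u\wedge M+k$, so that $w\ge k>0$ and $w\le M+k$.

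\textbf{Step 1: supersolution property of $w$.} Since $u\ge0$ and $\widetilde{u}=M$ q.e. on $B(x_0,AR)\backslash\Omega$, the truncation $u\wedge M$ is again a supersolution, $-\Delta^{(L)}_p(u\wedge M)\ge-1$ in the whole ball $B(x_0,AR)$. To check this, I will use test functions $\eta^p\,(u\wedge M+k)^{\beta}$ and verify the defining inequality against nonnegative $\varphi$ by splitting $\varphi$ through a truncation at level $M$. On $\{u\ge M\}$ the function is constant and has zero energy by strong locality. On $\{u<M\}$ the truncation agrees with $u$ and this set lies inside $\Omega$ up to quasi-everywhere. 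If the direct argument is awkward, I will instead test against $\varphi\,((M-u)_+\wedge\epsilon)/\epsilon\in\mathcal{F}^{(L)}(\Omega\cap B(x_0,AR))$, which lies there by Lemma \ref{lem_FOmega}, and let $\epsilon\downarrow0$. This step is the main obstacle: the boundary datum must be handled so that extending by $M$ preserves supersolutionhood across $\partial\Omega$, and the energy measure term of $\varphi$ has to be controlled via the chain and Leibniz rules \ref{item_meas5}, \ref{item_meas6} together with Fatou's lemma. After this step, the case $-\Delta^{(L)}_p w\ge -1\ge -k^{p-1}\Psi(R)^{-1}w^{p-1}$ needs no reference to $\Omega$.

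\textbf{Step 2: Caccioppoli estimates for powers.} Fix $\beta\ne0$ with $\beta\ne p-1$ and test with $\varphi=\eta^p w^{\beta}$, where $\eta$ is a \hyperlink{eq_CSL_weak}{$\text{CS}^{(L)}_{\text{weak}}$} cutoff between concentric balls $B(x_0,s)\subseteq B(x_0,s')$, obtained with Proposition-style self-improvement as in \cite[Proposition 3.1]{Yan25c}. The chain rule and Young's inequality then give
\[
\int \eta^p\,\mathrm{d}\Gamma^{(L)}\bigl(w^{(\beta+p-1)/p}\bigr)\lesssim C_\beta\int w^{\beta+p-1}\,\mathrm{d}\Gamma^{(L)}(\eta)+\frac{C_\beta}{\Psi(R)}\int\eta^p w^{\beta+p-1}\,\mathrm{d}m .
\]
The first term on the right is absorbed by the cutoff Sobolev inequality applied to $f=w^{(\beta+p-1)/p}\eta'$, using the self-improved $\varepsilon$-form. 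The resulting reverse-H\"older inequalities, combined with Proposition \ref{prop_SobolevL}, are then iterated.

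\textbf{Step 3: the two Moser iterations.} For $\beta<0$ with $\beta\ne1-p$, iterating over negative powers gives $\einf_{B(x_0,R/2)}w\gtrsim(\dashint_{B(x_0,R)}w^{-q})^{-1/q}$. For $0<\beta+p-1<p-1$, the iteration runs over small positive powers and yields $(\dashint w^{q'})^{1/q'}\lesssim(\dashint w^{q})^{1/q}$ for $q'$ up to $\kappa(p-1)$ and any small $q$. The gap between these two estimates is bridged with $\beta=1-p$: testing with $\eta^p w^{1-p}$ gives $\int_B\mathrm{d}\Gamma^{(L)}(\log w)\lesssim V/\Psi$ on every ball $B$ with $2B\subseteq B(x_0,AR)$. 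By \ref{eq_PIL}, this means $\log w\in\mathrm{BMO}$ on a large ball with a uniform constant. Lemma \ref{lem_crossover} then gives $\dashint w^{q}\cdot\dashint w^{-q}\le C$ for small $q$. Chaining the three estimates, and enlarging $A$ so that all balls used fit inside $B(x_0,AR)$, gives
\[
\Bigl(\dashint_{B(x_0,R)} w^q\Bigr)^{1/q}\lesssim\einf_{B(x_0,R/2)}w ,
\]
and substituting $w=u\wedge M+k$ yields the statement.
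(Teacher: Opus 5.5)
Your proposal is correct, and its analytic core is the paper's. The shared ingredients are the negative-power Moser iteration of Lemma \ref{lem_Moser_L}, driven by the self-improved \hyperlink{eq_CSL_weak}{$\text{CS}^{(L)}_{\text{weak}}(\Psi)$} and Proposition \ref{prop_SobolevL}; the logarithmic test function giving a uniform BMO bound via \ref{eq_PIL} and a capacity cutoff; and the crossover Lemma \ref{lem_crossover}.

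The genuine difference is the treatment of the boundary. You first prove a pasting lemma, that $u\wedge M$ is a supersolution in all of $B(x_0,AR)$, which turns the statement into an interior one. The paper never needs this. It tests the hypothesis on $u$ directly with $\phi^p(\overline{u}^{\alpha}-(M+K)^{\alpha})$ and $\phi^p(\overline{u}^{1-p}-(M+K)^{1-p})$, where $\overline{u}=u\wedge M+K$. Subtracting the constant makes these vanish wherever $u\ge M$, in particular q.e. on $B(x_0,AR)\backslash\Omega$. So they belong to $\mathcal{F}^{(L)}(B'\cap\Omega)$ for the relevant ball $B'$, and the constant only costs a cross term controlled by $0\le\overline{u}^{\alpha}-(M+K)^{\alpha}\le\overline{u}^{\alpha}$. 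This is shorter and avoids the $\epsilon$-limit and the level-set properties of $\Gamma^{(L)}(u)$ that your Step 1 relies on; in exchange, your route yields a reusable pasting lemma.

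Your fallback in Step 1 does work, but the mechanism is a favorable sign, not Fatou. Set $\theta_\epsilon=((M-u)_+\wedge\epsilon)/\epsilon$. Then $\mathcal{E}^{(L)}(u\wedge M;\varphi\theta_\epsilon)=\mathcal{E}^{(L)}(u;\varphi\theta_\epsilon)\ge-\int_X\varphi\theta_\epsilon\,\mathrm{d}m$. Meanwhile $\mathcal{E}^{(L)}(u\wedge M;\varphi(1-\theta_\epsilon))$ equals $\epsilon^{-1}\int_{\{M-\epsilon<u<M\}}\varphi\,\mathrm{d}\Gamma^{(L)}(u)\ge0$ plus a term bounded by $\lvert\Gamma^{(L)}(u;\varphi)\rvert(\{M-\epsilon<u<M\})\to0$.

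Two smaller points. First, the positive-power iteration in Step 3 is superfluous. The statement only asks for some $q>0$, and the crossover already gives $(\dashint w^{q})^{1/q}\lesssim(\dashint w^{-q})^{-1/q}$ for its small $q$, which the negative iteration turns into the conclusion. Second, the Caccioppoli inequality of Step 2 holds only for $\beta<0$, since for a supersolution the leading term has the wrong sign when $\beta>0$; the excluded value should be $\beta=1-p$. Since Step 3 only uses $\beta<0$, this is harmless.
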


It suffices to prove Proposition \ref{prop_bdy_wEHI_L}. Indeed, assuming Proposition \ref{prop_bdy_wEHI_L}, by taking $\Omega=B(x_0,AR)$ and $M=\lVert u\rVert_{L^\infty(B(x_0,AR);m)}<+\infty$, and noting that $u\wedge M=u$ in $B(x_0,AR)$, we obtain Proposition \ref{prop_int_wEHI_L}.

\begin{lemma}\label{lem_Moser_L}
For any $q>0$, there exists $C>1$ such that the following holds. For any bounded open subset $\Omega\subseteq X$, any $x_0\in X$, and any $R_0>0$, let $u\in \mathcal{F}^{(L)}$ be non-negative bounded in $B(x_0,R_0)$ and satisfy $\widetilde{u}=M$ q.e. on $B(x_0,R_0)\backslash\Omega$, for some constant $M\in[0,+\infty)$, and $-\Delta^{(L)}_pu\ge-1$ in $B(x_0,R_0)\cap\Omega$, then for any $R\in(0,R_0)$, we have
$$\einf_{B(x_0,\frac{1}{2}R)}(u\wedge M)+\Psi(R)^{\frac{1}{p-1}}\ge \frac{1}{C} \left(\dashint_{B(x_0,R)}\left(u\wedge M+\Psi(R)^{\frac{1}{p-1}}\right)^{-q}\mathrm{d}m\right)^{-\frac{1}{q}}.$$
\end{lemma}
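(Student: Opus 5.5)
This is the classical Moser lower bound for supersolutions (the step ``$\einf$ controls negative moments''), following \cite[Theorem 8.18]{GT01}. Set $k=\Psi(R)^{\frac{1}{p-1}}$ and $w=u\wedge M+k$. Then $w\ge k>0$, and $w\in\mathcal{F}^{(L)}_{\mathrm{loc}}$ is bounded in $B(x_0,R_0)$. The plan is to show that for every $\beta>0$ the function $w^{-\beta}$ satisfies a reverse-Hölder (Caccioppoli-plus-Sobolev) inequality on nested balls. Iterating this on a shrinking sequence of balls then gives
\[
\esup_{B(x_0,\frac12 R)}w^{-1}\le C\left(\dashint_{B(x_0,R)}w^{-q}\,\mathrm{d}m\right)^{\frac1q},
\]
which is exactly the claimed inequality after taking reciprocals.

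\textbf{Step 1: $w$ is a supersolution.} First I check that $w$ is a supersolution of $-\Delta_p^{(L)}w\ge-1$ in the whole of $B(x_0,R_0)$, not only in $\Omega$. Inside $\Omega$ this holds because $u\wedge M$ is the minimum of two supersolutions; this can be verified directly via the test-function computation below with the truncation. Near $B(x_0,R_0)\setminus\Omega$ we have $\widetilde u=M$ q.e., so $w=M+k$ attains its maximum value there.

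To handle the boundary I will not prove a global supersolution property. Instead I use test functions of the form
\[
\varphi=\eta^p\bigl(w^{-\beta-p+1}-(M+k)^{-\beta-p+1}\bigr),
\]
where $\eta$ is the cutoff function from \hyperlink{eq_CSL_weak}{$\text{CS}^{(L)}_{\text{weak}}(\Psi)$} for a pair of balls inside $B(x_0,R_0)$. This $\varphi$ is non-negative and vanishes q.e. off $\Omega$, so by Lemma \ref{lem_FOmega} it lies in $\mathcal{F}^{(L)}(B(x_0,R_0)\cap\Omega)$. Moreover, on the set $\{u\ge M\}$ it vanishes, so it only sees $u$ where $u<M$, where $w=u+k$.

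\textbf{Step 2: Caccioppoli inequality.} Testing $\mathcal{E}^{(L)}(u;\varphi)\ge-\int\varphi\,\mathrm{d}m$ and using the chain rule \ref{item_meas5} and the Leibniz rule \ref{item_meas6}, Young's inequality gives an estimate for $v=w^{-\beta/p}$:
\[
\int\eta^p\,\mathrm{d}\Gamma^{(L)}(v)\lesssim C_\beta\int|v|^p\,\mathrm{d}\Gamma^{(L)}(\eta)+\int\eta^p\frac{v^p}{w^{p-1}}\,\mathrm{d}m .
\]
Since $w\ge k$, the last term is at most $\Psi(R)^{-1}\int\eta^pv^p\,\mathrm{d}m$. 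This is the reason $k$ is added.

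The term $\int|v|^p\,\mathrm{d}\Gamma^{(L)}(\eta)$ is controlled by the self-improved cutoff Sobolev inequality. This is the local analogue of Proposition \ref{prop_CSJ_self}, namely \cite[Proposition 3.1]{Yan25c}. Choosing $\varepsilon$ small absorbs the energy of $v$ weighted by $\eta^p$, leaving $\frac{C}{\Psi(r)}\int_{B}v^p\,\mathrm{d}m$ for balls of radii separated by $r$.

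\textbf{Step 3: Moser iteration.} Combining Step 2 with the Sobolev inequality of Proposition \ref{prop_SobolevL} applied to $\eta v\in\mathcal{F}^{(L)}(B)$, and using \ref{eq_VD}, gives
\[
\left(\dashint_{B'}v^{p\kappa}\right)^{\frac1\kappa}\lesssim C_\beta\frac{\Psi(R)}{\Psi(r)}\dashint_{B}v^p .
\]
I then iterate with exponents $\beta_j=q\kappa^j$ on radii $R_j=\frac R2+2^{-j-1}R$. By \eqref{eq_beta12}, $\Psi(R)/\Psi(r_j)\lesssim 2^{j\beta^{(2)}_\Psi}$, and the constants $C_\beta$ grow only polynomially in $\beta$, so the product converges and gives the $L^\infty$ bound for $w^{-1}$.

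\textbf{Main obstacle.} The step I expect to be hardest is Step 2: rigorously justifying the chain rule and Leibniz rule for $w^{-\beta-p+1}$ and for the products with $\eta^p$. These rules are stated only for $\mathcal{F}^{(L)}\cap C_c(X)$ and piecewise $C^1$ functions. I would first verify everything for truncated, continuous approximants using regularity and quasi-continuous modifications, and then pass to the limit. The treatment of the constant $M$ at the boundary, which makes $\varphi$ admissible, also needs care.
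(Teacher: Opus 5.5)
Your proposal follows the paper's proof essentially step by step. The paper uses the same test function $\eta^p\bigl(w^{\alpha}-(M+k)^{\alpha}\bigr)$ with $\alpha=-\beta-(p-1)$, tested against $u$ and rewritten in terms of $w$ because it vanishes on $\{u\ge M\}$. It then absorbs the cutoff term via the self-improved $\text{CS}^{(L)}_{\text{weak}}(\Psi)$, applies the Sobolev inequality of Proposition \ref{prop_SobolevL}, and runs a Moser iteration with exponents $-q\kappa^k$ on radii $(\frac12+2^{-k-1})R$ via Lemma \ref{lem_ele1}.

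One bookkeeping point in Step 2 needs fixing. Carried out correctly, the coefficient of $\int|v|^p\,\mathrm{d}\Gamma^{(L)}(\eta)$ is bounded uniformly in $\beta$: it carries the factor $\bigl(\frac{\beta}{p(\beta+p-1)}\bigr)^p\le p^{-p}$, which is the role of the paper's constant $C_2=\sup_{\alpha<-(p-1)}\bigl(\frac{|\alpha+p-1|}{p|\alpha|}\bigr)^p$. The growth $\beta^{p-1}$ sits instead on the $\mathrm{d}m$ term. So $\varepsilon$, and hence $C_\varepsilon$, can be fixed independently of $\beta$. This matters for Step 3: with $C_\beta$ placed as you wrote it, $\varepsilon$ would have to shrink with $\beta$. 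Nothing is known about how $C_\varepsilon$ grows as $\varepsilon\downarrow0$, so the polynomial growth you claim would not follow.
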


\begin{proof}
For notational convenience, we may assume that all functions in $\mathcal{F}^{(L)}$ are quasi-continuous. If $M=0$, then the result is trivial, hence we may assume that $M>0$. For any $R,r>0$ with $r\le R$ and $R+r<R_0$, for any $K\ge\Psi(r)^{\frac{1}{p-1}}$, let $\overline{u}=u\wedge M+K$, and $\phi\in \mathcal{F}^{(L)}$ a cutoff function for $B(x_0,R)\subseteq B(x_0,R+r)$ to be chosen later. For any $\alpha< -(p-1)$, let
$$v=\phi^p \left(\overline{u}^\alpha-(M+K)^\alpha\right)=\phi^p \left((u\wedge M+K)^\alpha-(M+K)^\alpha\right),$$
then $v\in \mathcal{F}^{(L)}(B(x_0,R+r)\cap\Omega)$ is non-negative bounded, hence
\begin{equation}\label{eq_wEHI_L1}
\mathcal{E}^{(L)}(u;v)\ge-\int_{X}v \mathrm{d}m\ge -\frac{1}{\Psi(r)}\int_{B(x_0,R+r)}\overline{u}^{\alpha+p-1}\mathrm{d}m.
\end{equation}
On the other hand
\begin{equation}\label{eq_wEHI_L2}
\mathcal{E}^{(L)}(u;v)=\alpha\int_{B(x_0,R+r)}\phi^p \overline{u}^{\alpha-1} \mathrm{d}\Gamma^{(L)}(\overline{u})+p\int_{B(x_0,R+r)}\phi^{p-1}\left(\overline{u}^\alpha-(M+K)^\alpha\right)\mathrm{d}\Gamma^{(L)}(\overline{u};\phi),
\end{equation}
where
\begin{align}
&p\lvert \int_{B(x_0,R+r)}\phi^{p-1}\left(\overline{u}^\alpha-(M+K)^\alpha\right)\mathrm{d}\Gamma^{(L)}(\overline{u};\phi)\nonumber\rvert\\
&=p\lvert \int_{B(x_0,R+r)}\phi^{p-1}\overline{u}^{(\alpha-1)\frac{p-1}{p}}\overline{u}^{-(\alpha-1)\frac{p-1}{p}} \left(\overline{u}^\alpha-(M+K)^\alpha\right)\mathrm{d}\Gamma^{(L)}(\overline{u};\phi)\rvert\nonumber\\
&\le p\left(\int_{B(x_0,R+r)}\phi^p \overline{u}^{\alpha-1}\mathrm{d}\Gamma^{(L)}(\overline{u})\right)^{\frac{p-1}{p}}\left(\int_{B(x_0,R+r)}\overline{u}^{\alpha+p-1}\mathrm{d}\Gamma^{(L)}(\phi)\right)^{\frac{1}{p}}\nonumber\\
&\le \frac{1}{4}\lvert \alpha\rvert\int_{B(x_0,R+r)}\phi^p \overline{u}^{\alpha-1}\mathrm{d}\Gamma^{(L)}(\overline{u})+\frac{C_1}{\lvert \alpha\rvert^{p-1}}\int_{B(x_0,R+r)}\overline{u}^{\alpha+p-1}\mathrm{d}\Gamma^{(L)}(\phi),\label{eq_wEHI_L3}
\end{align}
here the last inequality follows from Young's inequality, and $C_1>0$ depends only on $p$. By the self-improvement property of \hyperlink{eq_CSL_weak}{$\text{CS}^{(L)}_{\text{weak}}(\Psi)$} (see \cite[Proposition 3.1]{Yan25c}), for any $\varepsilon>0$, there exist $C_\varepsilon>0$ and a cutoff function $\phi\in \mathcal{F}^{(L)}$ for $B(x_0,R)\subseteq B(x_0,R+r)$ such that
\begin{align}
&\int_{B(x_0,R+r)}\overline{u}^{\alpha+p-1}\mathrm{d}\Gamma^{(L)}(\phi)\nonumber\\
&\le \varepsilon\int_{B(x_0,R+r)}\phi^p \mathrm{d}\Gamma^{(L)} \left(\overline{u}^{\frac{\alpha+p-1}{p}}\right)+\frac{C_\varepsilon}{\Psi(r)}\int_{B(x_0,R+r)}\overline{u}^{\alpha+p-1}\mathrm{d}m\nonumber\\
&=\varepsilon \left(\frac{\lvert\alpha+p-1\rvert}{p}\right)^p\int_{B(x_0,R+r)}\phi^p \overline{u}^{\alpha-1} \mathrm{d}\Gamma^{(L)}(\overline{u})+\frac{C_\varepsilon}{\Psi(r)}\int_{B(x_0,R+r)}\overline{u}^{\alpha+p-1}\mathrm{d}m.\label{eq_wEHI_L4}
\end{align}
Combining (\ref{eq_wEHI_L1})--(\ref{eq_wEHI_L4}), we have
\begin{align*}
&\lvert \alpha\rvert\int_{B(x_0,R+r)}\phi^p \overline{u}^{\alpha-1} \mathrm{d}\Gamma^{(L)}(\overline{u})\\
&\le \left(\frac{1}{4}+\varepsilon C_1 C_2\right)\lvert \alpha\rvert\int_{B(x_0,R+r)}\phi^p \overline{u}^{\alpha-1} \mathrm{d}\Gamma^{(L)}(\overline{u})\\
&\hspace{15pt}+\left(\frac{C_1C_\varepsilon}{\lvert \alpha\rvert^{p-1}}+1\right)\frac{1}{\Psi(r)}\int_{B(x_0,R+r)}\overline{u}^{\alpha+p-1}\mathrm{d}m,
\end{align*}
where $C_2=\sup_{\alpha<-(p-1)}\left(\frac{\lvert \alpha+p-1\rvert}{p\lvert \alpha\rvert}\right)^p\in(0,+\infty)$ depends only on $p$. Let $\varepsilon=\frac{1}{4C_1C_2}$, then
$$\int_{B(x_0,R+r)}\phi^p \overline{u}^{\alpha-1} \mathrm{d}\Gamma^{(L)}(\overline{u})\le  \frac{2}{\lvert \alpha\rvert^p} \left(\lvert \alpha\rvert^{p-1}+C_1C_\varepsilon\right)\frac{1}{\Psi(r)}\int_{B(x_0,R+r)}\overline{u}^{\alpha+p-1}\mathrm{d}m,$$
and
$$\int_{B(x_0,R+r)}\overline{u}^{\alpha+p-1}\mathrm{d}\Gamma^{(L)}(\phi)\le \left(2\varepsilon C_2 \left(\lvert \alpha\rvert^{p-1}+C_1C_\varepsilon\right)+C_\varepsilon\right)\frac{1}{\Psi(r)}\int_{B(x_0,R+r)}\overline{u}^{\alpha+p-1}\mathrm{d}m,$$
hence
\begin{align*}
&\mathcal{E}^{(L)}(\phi \overline{u}^{\frac{\alpha+p-1}{p}})\\
&\le 2^{p-1}\left(\left(\frac{\lvert\alpha+p-1\rvert}{p}\right)^p\int_{B(x_0,R+r)}\phi^p \overline{u}^{\alpha-1}\mathrm{d}\Gamma^{(L)}(\overline{u})+\int_{B(x_0,R+r)}\overline{u}^{{\alpha+p-1}}\mathrm{d}\Gamma^{(L)}(\phi)\right)\\
&\le \frac{C_3 \lvert \alpha\rvert^{p-1}}{\Psi(r)}\int_{B(x_0,R+r)}\overline{u}^{\alpha+p-1}\mathrm{d}m,
\end{align*}
for any $\alpha<-(p-1)$, where $C_3>0$ depends only on $p,\varepsilon,C_1,C_2,C_\varepsilon$. Since $\phi \overline{u}^{\frac{\alpha+p-1}{p}}\in \mathcal{F}^{(L)}(B(x_0,R+r))$, by Proposition \ref{prop_SobolevL}, there exist $\kappa>1$, $C_4>0$ such that
$$\mathcal{E}^{(L)}(\phi \overline{u}^{\frac{\alpha+p-1}{p}})\ge \frac{1}{C_4} \frac{V(x_0,R+r)^{\frac{\kappa-1}{\kappa}}}{\Psi(R+r)}\left(\int_{B(x_0,R+r)}(\phi \overline{u}^{\frac{\alpha+p-1}{p}})^{p\kappa}\mathrm{d}m\right)^{\frac{1}{\kappa}}.$$
Hence
\begin{align}
&\left(\dashint_{B(x_0,R)}({u}\wedge M+K)^{\kappa(\alpha+p-1)}\mathrm{d}m\right)^{\frac{1}{\kappa}}\nonumber\\
&\le C_3C_4C_{VD}^{\frac{1}{\kappa}} \lvert \alpha\rvert^{p-1}\frac{\Psi(R+r)}{\Psi(r)}\dashint_{B(x_0,R+r)}({u}\wedge M+K)^{\alpha+p-1} \mathrm{d}m,\label{eq_wEHI_L_iteration}
\end{align}
for any $\alpha<-(p-1)$, $r\le R$, $K\ge\Psi(r)^{\frac{1}{p-1}}$. For any $q>0$ and any $R\in(0,R_0)$. For any $k\ge0$, let $R_k=(\frac{1}{2}+\frac{1}{2^{k+1}})R$ and $q_k=-q\kappa^k$, then $R=R_0>R_1>\ldots>R_k\downarrow \frac{1}{2}R$, and $-q=q_0>q_1>\ldots>q_k\downarrow-\infty$, let
$$I_k=\left(\dashint_{B(x_0,R_k)}\left(u\wedge M+\Psi(R)^{\frac{1}{p-1}}\right)^{q_k}\mathrm{d}m\right)^{\frac{1}{q_k}},$$
then by (\ref{eq_wEHI_L_iteration}), we have
\begin{align*}
I_{k+1}^{-q}&\le \left(C_3C_4C_{VD}^{\frac{1}{\kappa}} \left(q\kappa^k+p-1\right)^{p-1}\frac{\Psi(R_k)}{\Psi(R_k-R_{k+1})}\right)^{\frac{1}{\kappa^k}}I_{k}^{-q}\le C_5^{\frac{1}{\kappa^k}}C_6^{\frac{k}{\kappa^k}}I_k^{-q},
\end{align*}
where $C_5\ge1$ depends only on $p,q,\kappa,C_\Psi,C_{VD},C_3,C_4$, and $C_6=\kappa^{p-1}C_\Psi$. By Lemma \ref{lem_ele1}, we have
$$I_{k}^{-q}\le C_5^{\frac{\kappa}{\kappa-1}}C_6^{\frac{\kappa}{(\kappa-1)^2}}I_0^{-q},$$
that is, $I_k\ge \frac{1}{C_7}I_0$, where $C_7=\left(C_5^{\frac{\kappa}{\kappa-1}}C_6^{\frac{\kappa}{(\kappa-1)^2}}\right)^{\frac{1}{q}}$. Letting $k\to+\infty$, we have
$$\einf_{B(x_0,\frac{1}{2}R)}(u\wedge M)+\Psi(R)^{\frac{1}{p-1}}\ge \frac{1}{C_7} \left(\dashint_{B(x_0,R)}\left(u\wedge M+\Psi(R)^{\frac{1}{p-1}}\right)^{-q}\mathrm{d}m\right)^{-\frac{1}{q}}.$$
\end{proof}

\begin{proof}[Proof of Proposition \ref{prop_bdy_wEHI_L}]
For notational convenience, we may assume that all functions in $\mathcal{F}^{(L)}$ are quasi-continuous. If $M=0$, then the result is trivial, hence we may assume that $M>0$. Let $A_{PI},A_{cap}$ be the constants in \ref{eq_PIL}, \ref{eq_ucapL}, respectively, and $A=48A_{PI}A_{cap}$, then Lemma \ref{lem_Moser_L} holds with $R_0=AR$ therein. Let $K=\Psi(R)^{\frac{1}{p-1}}$ and $\overline{u}=u\wedge M+K$. We claim that $\log \overline{u}\in \mathrm{BMO}(B(x_0,12R))$ and
$$\lVert \log \overline{u}\rVert_{\mathrm{BMO}(B(x_0,12R))}\le C_1,$$
where $C_1>0$ depends only on $p,C_\Psi, C_{VD}$ and the constants in \ref{eq_PIL}, \ref{eq_ucapL}. Indeed, for any ball $B\subseteq B(x_0,12R)$ with radius $r$, we have $r<24R$ and $A_{PI}A_{cap}B\subseteq B(x_0,AR)$. Since $\log \frac{\overline{u}}{K}\in \mathcal{F}^{(L)}$, by \ref{eq_PIL}, we have
\begin{align}
&\dashint_B\lvert\log \overline{u}-\left(\log \overline{u}\right)_B\rvert\mathrm{d} m\le\left(\dashint_B\lvert\log \frac{\overline{u}}{K}-\left(\log \frac{\overline{u}}{K}\right)_B\rvert^p\mathrm{d} m\right)^{\frac{1}{p}}\nonumber\\
&\lesssim \left(\frac{\Psi(r)}{m(B)}\int_{A_{PI}B}\mathrm{d}\Gamma^{(L)}\left(\log \frac{\overline{u}}{K}\right)\right)^{\frac{1}{p}}=\left(\frac{\Psi(r)}{m(B)}\int_{A_{PI}B}\mathrm{d}\Gamma^{(L)}(\log \overline{u})\right)^{\frac{1}{p}}.\label{eq_BMO_L1}
\end{align}
By \ref{eq_ucapL}, there exists a cutoff function $\phi\in\mathcal{F}^{(L)}$ for $A_{PI}B\subseteq A_{PI}A_{cap}B$ such that
\begin{equation}\label{eq_BMO_L2}
\mathcal{E}^{(L)}(\phi)\lesssim \frac{m(B)}{\Psi(r)},
\end{equation}
hence
\begin{align}
&\int_{A_{PI}B}\mathrm{d}\Gamma^{(L)}(\log \overline{u})\le\int_{X}\phi^p \overline{u}^{-p}\mathrm{d}\Gamma^{(L)}(\overline{u})=\int_{X}\phi^p \overline{u}^{-p}\mathrm{d}\Gamma^{(L)}(u;\overline{u})\nonumber\\
&=-\frac{1}{p-1}\int_X\phi^p\mathrm{d}\Gamma^{(L)}\left(u;\overline{u}^{1-p}-(M+K)^{1-p}\right)\nonumber\\
&=-\frac{1}{p-1}\left( \mathcal{E}^{(L)}\left(u;\phi^p\left(\overline{u}^{1-p}-(M+K)^{1-p}\right)\right)\right.\nonumber\\
&\hspace{50pt}\left.-p\int_X\phi^{p-1}\left(\overline{u}^{1-p}-(M+K)^{1-p}\right)\mathrm{d}\Gamma^{(L)}(\overline{u};\phi)\right).\label{eq_BMO_L3}
\end{align}
Since $\phi^p\left(\overline{u}^{1-p}-(M+K)^{1-p}\right)\in \mathcal{F}^{(L)}(A_{PI}A_{cap}B\cap \Omega)$ is non-negative, by assumption, we have
\begin{align}
& \mathcal{E}^{(L)}\left(u;\phi^p\left(\overline{u}^{1-p}-(M+K)^{1-p}\right)\right)\ge -\int_X \phi^p\left(\overline{u}^{1-p}-(M+K)^{1-p}\right)\mathrm{d}m\nonumber\\
&\ge -\frac{1}{\Psi(R)}\int_X\phi^p \mathrm{d}m\ge -\frac{m(A_{PI}A_{cap}B)}{\Psi(R)}\ge -C_\Psi^5\frac{m(A_{PI}A_{cap}B)}{\Psi(r)},\label{eq_BMO_L4}
\end{align}
where the last inequality follows from (\ref{eq_beta12}) and the fact that $r<24R$. Moreover
\begin{align*}
&\frac{p}{p-1}\lvert \int_X\phi^{p-1}\left(\overline{u}^{1-p}-(M+K)^{1-p}\right)\mathrm{d}\Gamma^{(L)}(\overline{u};\phi)\rvert\\
&\le \frac{p}{p-1} \left(\int_X\phi^{p}\left(\overline{u}^{1-p}-(M+K)^{1-p}\right)^{\frac{p}{p-1}}\mathrm{d}\Gamma^{(L)}(\overline{u})\right)^{\frac{p-1}{p}}\left(\int_X \mathrm{d}\Gamma^{(L)}(\phi)\right)^{\frac{1}{p}}\\
&\le\frac{p}{p-1}\left(\int_X\phi^{p}\overline{u}^{-p}\mathrm{d}\Gamma^{(L)}(\overline{u})\right)^{\frac{p-1}{p}}\mathcal{E}^{(L)}(\phi)^{\frac{1}{p}}\le \frac{1}{2}\int_X\phi^{p}\overline{u}^{-p}\mathrm{d}\Gamma^{(L)}(\overline{u})+C_2 \mathcal{E}^{(L)}(\phi),
\end{align*}
where the last inequality follows from Young's inequality, and $C_2>0$ depends only on $p$. Combining this with (\ref{eq_BMO_L3}) and (\ref{eq_BMO_L4}), we have
\begin{align*}
&\int_{X}\phi^p \overline{u}^{-p}\mathrm{d}\Gamma^{(L)}(\overline{u})\le\frac{2C_\Psi^5}{p-1}\frac{m(A_{PI}A_{cap}B)}{\Psi(r)}+2C_2 \mathcal{E}^{(L)}(\phi).
\end{align*}
Then by (\ref{eq_BMO_L2}), \ref{eq_VD}, we have
\begin{align*}
&\int_{A_{PI}B}\mathrm{d}\Gamma^{(L)}(\log \overline{u})\le\int_{X}\phi^p \overline{u}^{-p}\mathrm{d}\Gamma^{(L)}(\overline{u})\le\frac{2C_\Psi^5}{p-1}\frac{m(A_{PI}A_{cap}B)}{\Psi(r)}+2C_2 \mathcal{E}^{(L)}(\phi)\lesssim\frac{m(B)}{\Psi(r)}.
\end{align*}
By (\ref{eq_BMO_L1}), we have
\begin{align*}
&\dashint_B\lvert\log \overline{u}-\left(\log \overline{u}\right)_B\rvert\mathrm{d} m\le C_1,
\end{align*}
where $C_1>0$ is some constant. By Lemma \ref{lem_crossover}, there exist $C_{3},C_{4}>0$ such that
\begin{align*}
&\left(\dashint_{B(x_0,R)}\exp \left(\frac{C_{4}}{2C_1}\log (u\wedge M+\Psi(R)^{\frac{1}{p-1}})\right)\mathrm{d} m\right)\\
&\cdot\left(\dashint_{B(x_0,R)}\exp \left(-\frac{C_{4}}{2C_1}\log (u\wedge M+\Psi(R)^{\frac{1}{p-1}})\right)\mathrm{d} m\right)\\
&\le(C_{3}+1)^2.
\end{align*}
Let $q=\frac{C_{4}}{2C_1}$, then
$$\left(\dashint_{B(x_0,R)}(u\wedge M+\Psi(R)^{\frac{1}{p-1}})^q\mathrm{d} m\right)\left(\dashint_{B(x_0,R)}(u\wedge M+\Psi(R)^{\frac{1}{p-1}})^{-q}\mathrm{d} m\right)\le (C_{3}+1)^2.$$
Combining this with Lemma \ref{lem_Moser_L}, we have
\begin{align*}
&\einf_{B(x_0,\frac{1}{2}R)}(u\wedge M)+\Psi(R)^{\frac{1}{p-1}}\\
&\gtrsim \left(\dashint_{B(x_0,R)}(u\wedge M+\Psi(R)^{\frac{1}{p-1}})^{q}\mathrm{d} m\right)^{\frac{1}{q}}\ge \left(\dashint_{B(x_0,R)}(u\wedge M)^{q}\mathrm{d} m\right)^{\frac{1}{q}}.
\end{align*}
\end{proof}

We give the proofs of Propositions \ref{prop_int_osc_L} and \ref{prop_bdy_osc_L} as follows.

\begin{proof}[Proof of Proposition \ref{prop_int_osc_L}]
Let $q>0$, $A>1$, $C_1>1$ be the constants appearing in Proposition \ref{prop_int_wEHI_L}. Fix $R_0>0$ with $B(x_0,R_0)\subseteq \Omega$, for any $R\in(0,R_0)$, let
$$M_1=\esup_{B(x_0,\frac{1}{2A}R)}u,\quad m_1=\einf_{B(x_0,\frac{1}{2A}R)}u,$$
$$M_2=\esup_{B(x_0,R)}u,\quad m_2=\einf_{B(x_0,R)}u.$$
Since $M_2-u,u-m_2\in \mathcal{F}^{(L)}$ are non-negative bounded in $B(x_0,R)$, and $-\Delta_p^{(L)}(M_2-u)=-f\ge-1$ in $B(x_0,R)$, $-\Delta_p^{(L)}(u-m_2)=f\ge-1$ in $B(x_0,R)$, by Proposition \ref{prop_int_wEHI_L}, we have
\begin{align*}
\left(\dashint_{B(x_0,\frac{1}{A}R)}(M_2-u)^q \mathrm{d}m\right)^{\frac{1}{q}}&\le C_1 \left(\einf_{B(x_0,\frac{1}{2A}R)}(M_2-u)+\Psi\left(\frac{1}{A}R\right)^{\frac{1}{p-1}}\right),\\
\left(\dashint_{B(x_0,\frac{1}{A}R)}(u-m_2)^q \mathrm{d}m\right)^{\frac{1}{q}}&\le C_1 \left(\einf_{B(x_0,\frac{1}{2A}R)}(u-m_2)+\Psi\left(\frac{1}{A}R\right)^{\frac{1}{p-1}}\right),
\end{align*}
where
\begin{align*}
\einf_{B(x_0,\frac{1}{2A}R)}(M_2-u)&=M_2-M_1,\\
\einf_{B(x_0,\frac{1}{2A}R)}(u-m_2)&=m_1-m_2.
\end{align*}
By the norm and quasi-norm properties of $L^q$-spaces (see \cite[Equations (1.1.3) and (1.1.4)]{Gra14}), there exists $C_2\ge1$ depending only on $q$ such that
\begin{align*}
&M_2-m_2\le C_2\left(\left(\dashint_{B(x_0,\frac{1}{A}R)}(M_2-u)^q \mathrm{d}m\right)^{\frac{1}{q}}+\left(\dashint_{B(x_0,\frac{1}{A}R)}(u-m_2)^q \mathrm{d}m\right)^{\frac{1}{q}}\right)\\
&\le C_1C_2 \left((M_2-M_1)+(m_1-m_2)+2\Psi \left(\frac{1}{A}R\right)^{\frac{1}{p-1}}\right),
\end{align*}
hence
\begin{align*}
\eosc_{B(x_0,\frac{1}{2A}R)}u\le\frac{C_1C_2-1}{C_1C_2}\eosc_{B(x_0,R)}u+2\Psi \left(\frac{1}{A}R\right)^{\frac{1}{p-1}}.
\end{align*}
By Lemma \ref{lem_ele2}, for any $\mu\in(0,1)$ and any $r\le R$, we have
\begin{align*}
&\eosc_{B(x_0,r)}u\le \frac{C_1C_2}{C_1C_2-1} \left(\frac{r}{R}\right)^{(1-\mu)\log_{2A}\frac{C_1C_2}{C_1C_2-1}}\eosc_{B(x_0,R)}u+2C_1C_2\Psi \left(\frac{1}{A}\left(\frac{r}{R}\right)^\mu R\right)^{\frac{1}{p-1}}\\
&\le\frac{C_1C_2}{C_1C_2-1} \left(\frac{r}{R}\right)^{(1-\mu)\log_{2A}\frac{C_1C_2}{C_1C_2-1}}\eosc_{B(x_0,R)}u+2C_1C_2C_\Psi^{\frac{1}{p-1}} \left(\frac{r}{R}\right)^{\frac{p\mu}{p-1}}\Psi \left(R\right)^{\frac{1}{p-1}},
\end{align*}
where the second inequality follows from (\ref{eq_Psi}), then there exist $\delta,\mu\in(0,1)$ depending only $p,A,C_1C_2$ such that $(1-\mu)\log_{2A}\frac{C_1C_2}{C_1C_2-1}\ge \delta$ and $\frac{p\mu}{p-1}\ge\delta$, hence
$$\eosc_{B(x_0,r)}u\le C_3 \left(\frac{r}{R}\right)^\delta \left(\eosc_{B(x_0,R)}u+\Psi(R)^{\frac{1}{p-1}}\right),$$
where $C_3=\max\{\frac{C_1C_2}{C_1C_2-1},2C_1C_2C_\Psi^{\frac{1}{p-1}}\}$.
\end{proof}

\begin{proof}[Proof of Proposition \ref{prop_bdy_osc_L}]
Let $q>0$, $A>1$, $C_1>1$ be the constants appearing in Proposition \ref{prop_bdy_wEHI_L}. Since $u\in \mathcal{F}^{(L)}(\Omega)$ is non-negative bounded in $\Omega$, for any $R\in(0,r_0)$, let
$$M_1=\esup_{B(x_0,\frac{1}{2A}R)\cap \Omega}u=\esup_{B(x_0,\frac{1}{2A}R)}u,\quad M_2=\esup_{B(x_0,R)\cap \Omega}u=\esup_{B(x_0,R)}u,$$
then $0\le M_1\le M_2<+\infty$, $M_2-u\in \mathcal{F}^{(L)}$ is non-negative bounded in $B(x_0,R)$, $M_2-\widetilde{u}=M_2$ q.e. on $B(x_0,R)\backslash\Omega$, $(M_2-u)\wedge M_2=M_2-u$ in $B(x_0,R)$, and $-\Delta^{(L)}_p(M_2-u)=-f\ge-1$ in $B(x_0,R)\cap\Omega$, by Proposition \ref{prop_bdy_wEHI_L}, we have
\begin{align*}
\left(\dashint_{B(x_0,\frac{1}{A}R)}\left(M_2-u\right)^q \mathrm{d}m\right)^{\frac{1}{q}}\le C_1 \left(\einf_{B(x_0,\frac{1}{2A}R)}(M_2-u)+\Psi \left(\frac{1}{A}R\right)^{\frac{1}{p-1}}\right),
\end{align*}
where
$$\einf_{B(x_0,\frac{1}{2A}R)}(M_2-u)=M_2-M_1.$$
Since $X\backslash\Omega$ has a $(c,r_0)$-corkscrew at $x_0$ and $R\in(0,r_0)$, we have $B(x_0,\frac{1}{A}R)\backslash\Omega$ contains a ball $B$ with radius $\frac{c}{A}R$. Since $M_2-\widetilde{u}=M_2$ q.e. on $B(x_0,\frac{1}{A}R)\backslash\Omega\supseteq B$, by \ref{eq_VD}, we have
$$\left(\dashint_{B(x_0,\frac{1}{A}R)}\left(M_2-u\right)^q \mathrm{d}m\right)^{\frac{1}{q}}\ge \left(\frac{m(B)}{V(x_0,\frac{1}{A}R)}\right)^{\frac{1}{q}}M_2\ge \frac{1}{C_2}M_2,$$
where $C_2=C_{VD}^{\frac{2-\log_2c}{q}}\ge1$. Hence
$$\esup_{B(x_0,\frac{1}{2A}R)}u\le \frac{C_1C_2-1}{C_1C_2}\esup_{B(x_0,R)}u+\Psi \left(\frac{1}{A}R\right)^{\frac{1}{p-1}}.$$
By Lemma \ref{lem_ele2}, for any $\mu\in(0,1)$ and any $R,r\in(0,r_0)$ with $r\le R$, we have
\begin{align*}
&\esup_{B(x_0,r)}u\le \frac{C_1C_2}{C_1C_2-1}\left(\frac{r}{R}\right)^{(1-\mu)\log_{2A}\frac{C_1C_2}{C_1C_2-1}}\esup_{B(x_0,R)}u+C_1C_2\Psi \left(\frac{1}{A}\left(\frac{r}{R}\right)^\mu R\right)^{\frac{1}{p-1}}\\
&\le \frac{C_1C_2}{C_1C_2-1}\left(\frac{r}{R}\right)^{(1-\mu)\log_{2A}\frac{C_1C_2}{C_1C_2-1}}\esup_{B(x_0,R)}u+C_1C_2C_\Psi^{\frac{1}{p-1}}\left(\frac{r}{R}\right)^{\frac{p\mu}{p-1}}\Psi \left(R\right)^{\frac{1}{p-1}},
\end{align*}
where the second inequality follows from (\ref{eq_Psi}), then there exist $\delta,\mu\in(0,1)$ depending only $p,A,C_1C_2$ such that $(1-\mu)\log_{2A}\frac{C_1C_2}{C_1C_2-1}\ge \delta$ and $\frac{p\mu}{p-1}\ge\delta$, hence
$$\esup_{B(x_0,r)\cap \Omega}u\le C_3 \left(\frac{r}{R}\right)^\delta \left(\esup_{B(x_0,R)\cap \Omega}u+\Psi(R)^{\frac{1}{p-1}}\right),$$
where $C_3=\max\{\frac{C_1C_2}{C_1C_2-1},C_1C_2C_\Psi^{\frac{1}{p-1}}\}$.
\end{proof}

We give the proof of ``\hyperlink{eq_CSL_weak}{$\text{CS}^{(L)}_{\text{weak}}(\Psi)$}$\Rightarrow$\hyperlink{eq_CE_strong}{$\text{CE}^{(L)}_{\text{strong}}(\Psi)$}" in Theorem \ref{thm_equiv_L} as follows.

\begin{proof}[Proof of ``\hyperlink{eq_CSL_weak}{$\text{CS}^{(L)}_{\text{weak}}(\Psi)$}$\Rightarrow$\hyperlink{eq_CE_strong}{$\text{CE}^{(L)}_{\text{strong}}(\Psi)$}" in Theorem \ref{thm_equiv_L}]
We refine the proof of \cite[Proposition 3.1]{Yan25d}. For any ball $B=B(x_0,r)$, by Lemma \ref{lem_corkscrew}, there exist $c\in(0,1)$, which depends only on $C_{cc}$, and an open subset $\Omega\subseteq X$ with $4B\subseteq \Omega\subseteq 5B$ such that for any $x\in \partial \Omega$, $X\backslash \Omega$ has a $(c,32r)$-corkscrew at $x$.

By Proposition \ref{prop_exist}, there exists a unique $u_\Omega\in \mathcal{F}^{(L)}(\Omega)$ such that
$$-\Delta^{(L)}_p u_\Omega+\frac{1}{\Psi(r)}\lvert u_\Omega\rvert^{p-2}u_\Omega=1\text{ in }\Omega.$$
By Proposition \ref{prop_comparisonL}, we have $0\le u_\Omega\le {\Psi(r)}^{\frac{1}{p-1}}$ in $X$. Similarly, there exists a unique $v\in \mathcal{F}^{(L)}(4B)$ such that $-\Delta_p^{(L)}v+\frac{1}{\Psi(r)}\lvert v\rvert^{p-2}v=1$ in $4B$. By \cite[Proposition 3.17 and Theorem 2.3]{Yan25d}, there exist $C_1,C_2>0$ such that
$$\einf_{2B}v\ge \frac{C_1}{\left(C_2+\frac{1}{\Psi(r)}\Psi(4r)\right)^{\frac{1}{p-1}}}\Psi(4r)^{\frac{1}{p-1}}\ge C_3\Psi(r)^{\frac{1}{p-1}},$$
where $C_3>0$ depends only on $p,C_\Psi,C_1,C_2$. Since $4B\subseteq \Omega$, by Proposition \ref{prop_comparisonL}, we have $u_\Omega\ge v$ in $4B$. Since $-\Delta^{(L)}_p u_\Omega=1-\frac{1}{\Psi(r)}\lvert u_\Omega\rvert^{p-2}u_\Omega$ in $\Omega$, where $0\le1-\frac{1}{\Psi(r)}\lvert u_\Omega\rvert^{p-2}u_\Omega\le1$ in $\Omega$, an initial application of Propositions \ref{prop_int_osc_L} and \ref{prop_bdy_osc_L} yields that $u_\Omega\in C \left(\overline{\Omega}\right)$ and $u_\Omega=0$ on $\partial \Omega$, and hence $u_\Omega\in C(X)$ with $u_\Omega=0$ on $X\backslash \Omega$. Hence
\begin{equation}\label{eq_u_lbd_L}
\inf_{B}u_\Omega\ge \inf_{2B}u_\Omega=\einf_{2B}u_\Omega\ge\einf_{2B}v\ge C_3\Psi(r)^{\frac{1}{p-1}}.
\end{equation}

Let $\delta_1,\delta_2\in(0,1)$ be the constants appearing in Propositions \ref{prop_int_osc_L}, \ref{prop_bdy_osc_L}, respectively, and $\delta=\min\{\delta_1,\delta_2\}\in(0,1)$.

We claim that there exists $C_4>0$ such that
\begin{equation}\label{eq_u_Holder_L}
\lvert u_\Omega(x)-u_\Omega(y)\rvert\le C_4 \left(\frac{d(x,y)}{r}\wedge 1\right)^\delta \Psi(r)^{\frac{1}{p-1}}\text{ for any }x,y\in X.
\end{equation}
Indeed, if $x,y\in X\backslash \Omega$, then this result is trivial, hence without loss of generality, we may assume that $x\in \Omega$, then $D=\mathrm{dist}(x,X\backslash \Omega)\in(0,10r)$, and there exists $y_0\in\partial\Omega$ such that $D\le d(y_0,x)<2D$. If $d(x,y)\ge r$, then this result follows directly from the fact that $0\le u_\Omega\le\Psi(r)^{\frac{1}{p-1}}$ in $X$, hence we may assume that $d(x,y)<r$.

If $y\in \Omega$ and $d(x,y)<D$, then by Proposition \ref{prop_int_osc_L}, we have
\begin{align*}
&\lvert u_\Omega(x)-u_\Omega(y)\rvert\lesssim\left(\frac{d(x,y)}{D}\right)^{\delta_1}\left(\mathrm{osc}_{B(x,D)}u_{\Omega}+\Psi(D)^{\frac{1}{p-1}}\right)\\
&\le \left(\frac{d(x,y)}{D}\right)^{\delta}\left(\sup_{B(y_0,3D)\cap\Omega}u_{\Omega}+\Psi(D)^{\frac{1}{p-1}}\right).
\end{align*}
Since $3D<32r$, by Proposition \ref{prop_bdy_osc_L}, we have
\begin{align*}
\sup_{B(y_0,3D)\cap\Omega}u_\Omega\lesssim \left(\frac{D}{r}\right)^{\delta_2}\left(\sup_Xu_{\Omega}+\Psi(r)^{\frac{1}{p-1}}\right)\lesssim \left(\frac{D}{r}\right)^{\delta}\Psi(r)^{\frac{1}{p-1}}.
\end{align*}
By (\ref{eq_Psi}), we have
$$\Psi(D)^{\frac{1}{p-1}}\lesssim \left(\frac{D}{r}\right)^{\frac{p}{p-1}}\Psi(r)^{\frac{1}{p-1}}\le\left(\frac{D}{r}\right)^{\delta}\Psi(r)^{\frac{1}{p-1}}.$$
Hence
$$\lvert u_\Omega(x)-u_{\Omega}(y)\rvert\lesssim \left(\frac{d(x,y)}{r}\right)^{\delta}\Psi(r)^{\frac{1}{p-1}}.$$

If $y\in\Omega$ and $d(x,y)\ge D$, then $x,y\in B(y_0,3d(x,y))\subseteq B(y_0,3r)$, hence by Proposition \ref{prop_bdy_osc_L}, we have
\begin{align*}
&\lvert u_\Omega(x)-u_\Omega(y)\rvert\le 2\sup_{B(y_0,3d(x,y))\cap\Omega}u_\Omega\\
&\lesssim  \left(\frac{d(x,y)}{r}\right)^{\delta_2}\left(\sup_Xu_\Omega+\Psi(r)^{\frac{1}{p-1}}\right)\lesssim \left(\frac{d(x,y)}{r}\right)^{\delta}\Psi(r)^{\frac{1}{p-1}}.
\end{align*}

If $y\in X\backslash\Omega$, then $D\le d(x,y)$, hence by Proposition \ref{prop_bdy_osc_L}, we have
\begin{align*}
&\lvert u_\Omega(x)-u_\Omega(y)\rvert=u_\Omega(x)\le \sup_{B(y_0,2D)\cap\Omega}u_\Omega\\
&\lesssim \left(\frac{D}{r}\right)^{\delta_2}\left(\sup_Xu_\Omega+\Psi(r)^{\frac{1}{p-1}}\right)\lesssim \left(\frac{d(x,y)}{r}\right)^{\delta}\Psi(r)^{\frac{1}{p-1}}.
\end{align*}
Now we finish the proof of (\ref{eq_u_Holder_L}).

We claim that there exists $C_5>0$ such that
\begin{equation}\label{eq_u_energy_L}
\int_{B(x,s)}\mathrm{d}\Gamma^{(L)}(u_\Omega)\le C_5 \left(\frac{s}{r}\wedge 1\right)^\delta\Psi(r)^{\frac{p}{p-1}}\frac{V(x,s)}{\Psi(s\wedge r)}\text{ for any }x\in X,s>0.
\end{equation}
Indeed, by \ref{eq_VD}, \ref{eq_ucapL}, there exists a cutoff function $\psi\in \mathcal{F}^{(L)}$ for $B(x,s)\subseteq B(x,2s)$ such that $\mathcal{E}^{(L)}(\psi)\lesssim \frac{V(x,s)}{\Psi(s)}$, then
\begin{align}
&\int_{B(x,s)}\mathrm{d}\Gamma^{(L)}(u_\Omega)\le \int_X\psi^p \mathrm{d}\Gamma^{(L)}(u_\Omega)=\int_X\psi^p \mathrm{d}\Gamma^{(L)}\left(u_\Omega;u_\Omega-\inf_{B(x,2s)}u_\Omega\right)\nonumber\\
&=\mathcal{E}^{(L)}\left(u_\Omega;\psi^p \left(u_\Omega-\inf_{B(x,2s)}u_\Omega\right)\right)-p\int_X\psi^{p-1}\left(u_\Omega-\inf_{B(x,2s)}u_\Omega\right)\mathrm{d}\Gamma^{(L)}(u_\Omega;\psi).\label{eq_CS2CE_L1}
\end{align}
By (\ref{eq_u_Holder_L}), we have
\begin{equation}\label{eq_u_osc_L}
0\le u_\Omega-\inf_{B(x,2s)}u_\Omega\lesssim \left(\frac{s}{r}\wedge1\right)^\delta \Psi(r)^{\frac{1}{p-1}}\text{ in }B(x,2s).
\end{equation}
Since $-\Delta_p^{(L)}u_\Omega+\frac{1}{\Psi(r)}\lvert u_\Omega\rvert^{p-2}u_\Omega=1$ in $\Omega$, and $\psi^p \left(u_\Omega-\inf_{B(x,2s)}u_\Omega\right)\in \mathcal{F}^{(L)}(\Omega)$ is non-negative in $X$, we have
\begin{align}
&\mathcal{E}^{(L)}\left(u_\Omega;\psi^p \left(u_\Omega-\inf_{B(x,2s)}u_\Omega\right)\right)\nonumber\\
&\le \mathcal{E}^{(L)}\left(u_\Omega;\psi^p \left(u_\Omega-\inf_{B(x,2s)}u_\Omega\right)\right)+\frac{1}{\Psi(r)}\int_X \lvert u_\Omega\rvert^{p-2}u_\Omega\psi^p \left(u_\Omega-\inf_{B(x,2s)}u_\Omega\right)\mathrm{d}m\nonumber\\
&=\int_X\psi^p \left(u_\Omega-\inf_{B(x,2s)}u_\Omega\right)\mathrm{d}m\lesssim \left(\frac{s}{r}\wedge 1\right)^\delta \Psi(r)^{\frac{1}{p-1}}V(x,s),\label{eq_CS2CE_L2}
\end{align}
where the last inequality follows from (\ref{eq_u_osc_L}) and \ref{eq_VD}. Moreover, we have
\begin{align*}
&\lvert \int_X\psi^{p-1}\left(u_\Omega-\inf_{B(x,2s)}u_\Omega\right)\mathrm{d}\Gamma^{(L)}(u_\Omega;\psi)\rvert\\
&\le \left(\int_X\psi^p \mathrm{d}\Gamma^{(L)}(u_\Omega)\right)^{\frac{p-1}{p}}\left(\int_X \left(u_\Omega-\inf_{B(x,2s)}u_\Omega\right)^p \mathrm{d}\Gamma^{(L)}(\psi)\right)^{\frac{1}{p}}\\
&\overset{(\star)}{\scalebox{2}[1]{$\lesssim$}} \left(\frac{s}{r}\wedge1\right)^\delta \Psi(r)^{\frac{1}{p-1}}\left(\int_X\psi^p \mathrm{d}\Gamma^{(L)}(u_\Omega)\right)^{\frac{p-1}{p}}\mathcal{E}^{(L)}(\psi)^{\frac{1}{p}}\\
&\lesssim\left(\frac{s}{r}\wedge1\right)^\delta \Psi(r)^{\frac{1}{p-1}}\left(\frac{V(x,s)}{\Psi(s)}\right)^{\frac{1}{p}}\left(\int_X\psi^p \mathrm{d}\Gamma^{(L)}(u_\Omega)\right)^{\frac{p-1}{p}},
\end{align*}
where $(\star)$ also follows from (\ref{eq_u_osc_L}). Combining this with (\ref{eq_CS2CE_L1}) and (\ref{eq_CS2CE_L2}), there exists $C_6>0$ such that
\begin{align*}
&\int_X\psi^p \mathrm{d}\Gamma^{(L)}(u_\Omega)\\
&\le C_6\left(\frac{s}{r}\wedge 1\right)^\delta \Psi(r)^{\frac{1}{p-1}}V(x,s)+C_6\left(\frac{s}{r}\wedge1\right)^\delta \Psi(r)^{\frac{1}{p-1}}\left(\frac{V(x,s)}{\Psi(s)}\right)^{\frac{1}{p}}\left(\int_X\psi^p \mathrm{d}\Gamma^{(L)}(u_\Omega)\right)^{\frac{p-1}{p}}\\
&\le C_6\left(\frac{s}{r}\wedge 1\right)^\delta \Psi(r)^{\frac{1}{p-1}}V(x,s)+\frac{1}{2}\int_X\psi^p \mathrm{d}\Gamma^{(L)}(u_\Omega)+C_7\left(\frac{s}{r}\wedge1\right)^{p\delta} \Psi(r)^{\frac{p}{p-1}}\frac{V(x,s)}{\Psi(s)},
\end{align*}
where the last inequality follows from Young's inequality, and $C_7>0$ depends only on $p,C_6$, hence
\begin{align*}
&\int_{B(x,s)}\mathrm{d}\Gamma^{(L)}(u_\Omega)\le \int_X\psi^p \mathrm{d}\Gamma^{(L)}(u_\Omega)\\
&\le 2C_6\left(\frac{s}{r}\wedge 1\right)^\delta \Psi(r)^{\frac{1}{p-1}}V(x,s)+2C_7\left(\frac{s}{r}\wedge1\right)^{p\delta} \Psi(r)^{\frac{p}{p-1}}\frac{V(x,s)}{\Psi(s)}\\
&\le 2(C_6+C_7)\left(\frac{s}{r}\wedge 1\right)^\delta\Psi(r)^{\frac{p}{p-1}}\left(\frac{1}{\Psi(r)}+\frac{1}{\Psi(s)}\right)V(x,s)\\
&\le 4(C_6+C_7)\left(\frac{s}{r}\wedge 1\right)^\delta\Psi(r)^{\frac{p}{p-1}}\frac{V(x,s)}{\Psi(s\wedge r)}.
\end{align*}
Now we finish the proof of (\ref{eq_u_energy_L}).

Finally, let $\phi=\left(\frac{1}{C_3\Psi(r)^{\frac{1}{p-1}}}u_\Omega\right)\wedge 1$, then $\phi\in \mathcal{F}^{(L)}(\Omega)\subseteq \mathcal{F}^{(L)}(8B)$. By (\ref{eq_u_lbd_L}), we have $\phi=1$ in $B$, hence $\phi$ is a cutoff function for $B\subseteq 8B$. By (\ref{eq_u_Holder_L}), we have
$$\lvert \phi(x)-\phi(y)\rvert\le \frac{C_4}{C_3}\left(\frac{d(x,y)}{r}\wedge 1\right)^\delta\text{ for any }x,y\in X,$$
that is, \hyperref[eq_CE_Holder]{$\text{CE}^{(L)}(\Psi)\text{-}1$} holds. By (\ref{eq_u_energy_L}), for any $x\in X$, $s>0$, we have
$$\int_{B(x,s)}\mathrm{d}\Gamma^{(L)}(\phi)\le \frac{1}{C_3^p\Psi(r)^{\frac{p}{p-1}}}\int_{B(x,s)}\mathrm{d}\Gamma^{(L)}(u_\Omega)\le \frac{C_5}{C_3^p}\left(\frac{s}{r}\wedge1\right)^\delta \frac{V(x,s)}{\Psi(s\wedge r)},$$
that is, \hyperref[eq_CE_energy]{$\text{CE}^{(L)}(\Psi)\text{-}2$} holds.
\end{proof}

\section{Two oscillation inequalities}\label{sec_Moser_J}

Throughout this section, we always assume that \ref{eq_VD} holds and that $(\mathcal{E}^{(J)},\mathcal{F}^{(J)})$ is a non-local regular $p$-energy given by a kernel $K^{(J)}$ satisfying \ref{eq_UPR}, \ref{eq_KJ_tail}, \ref{eq_PIJ}, \hyperlink{eq_CSJ_weak}{$\text{CS}^{(J)}_{\text{weak}}(\Upsilon)$}. The main results of this section are the following interior and boundary oscillation inequalities, which will play an important role in Section \ref{sec_CS2CE_J}.

\begin{proposition}\label{prop_int_osc_J}
There exist $\delta\in(0,1)$, $C>0$ such that for any bounded open subset $\Omega\subseteq X$, let $f\in L^\infty(\Omega;m)$ satisfy $\lVert f\rVert_{L^\infty(\Omega;m)}\le1$, let $u\in \widehat{\mathcal{F}}^{(J)}\cap L^\infty(X;m)$ satisfy $-\Delta_p^{(J)}u=f$ in $\Omega$, then for any $x_0\in \Omega$ and any $R,r>0$ with $r\le R$ and $B(x_0,R)\subseteq\Omega$, we have
$$\eosc_{B(x_0,r)}u\le C \left(\frac{r}{R}\right)^\delta \left(\lVert u\rVert_{L^\infty(B(x_0,R);m)}+\Upsilon(R)^{\frac{1}{p-1}}\mathrm{Tail}^{(J)}(u_-;x_0,R)+\Upsilon(R)^{\frac{1}{p-1}}\right).$$
In particular, we have $u\in C(\Omega)$.
\end{proposition}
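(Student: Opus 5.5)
We follow the scheme of Section \ref{sec_Moser_L}, now in the non-local setting. The first step is a non-local weak Harnack inequality with a tail term: there exist $q>0$, $A>1$, $C>1$ such that if $w\in\widehat{\mathcal{F}}^{(J)}\cap L^\infty(X;m)$ is non-negative in $B(x_0,AR)$ and $-\Delta_p^{(J)}w\ge-1$ in $B(x_0,AR)$, then
$$\Big(\dashint_{B(x_0,R)}w^q\,\mathrm{d}m\Big)^{1/q}\le C\Big(\einf_{B(x_0,\frac12R)}w+\Upsilon(R)^{\frac{1}{p-1}}\mathrm{Tail}^{(J)}(w_-;x_0,AR)+\Upsilon(R)^{\frac{1}{p-1}}\Big).$$
The proof follows Lemma \ref{lem_Moser_L} and the proof of Proposition \ref{prop_bdy_wEHI_L}, with $\overline{w}=w+K$, where $K=\Upsilon(R)^{\frac{1}{p-1}}\mathrm{Tail}^{(J)}(w_-;x_0,AR)+\Upsilon(R)^{\frac{1}{p-1}}$.

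For the negative-power Moser iteration, test the equation with $\phi^p\overline{w}^{\alpha}$ for $\alpha<-(p-1)$. The cutoff $\phi$ comes from the self-improved $\text{CS}^{(J)}_{\text{weak}}$ in Proposition \ref{prop_CSJ_self}. Split $\mathcal{E}^{(J)}(w;\phi^p\overline{w}^\alpha)$ into the part over $B(x_0,R+r)\times B(x_0,R+r)$ and the cross terms.

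On the diagonal part, use the standard algebraic inequality for $|a-b|^{p-2}(a-b)(\phi(x)^pa^\alpha-\phi(y)^pb^\alpha)$. This gives a good term, comparable to $\Gamma^{(J)}(\overline{w}^{(\alpha+p-1)/p})$ weighted by $\phi^p$, minus $\int\overline{w}^{\alpha+p-1}\,\mathrm{d}\Gamma^{(J)}(\phi)$. The latter is absorbed by (\ref{eq_CSJ_self}) with small $\varepsilon$.

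The cross terms with $y$ outside $B(x_0,R+r)$ are controlled by \ref{eq_KJ_tail} where $w\ge0$. Where $w<0$, we use \ref{eq_UPR} to replace $K^{(J)}(x,y)$ with $x\in B(x_0,R+r)$ by a multiple of $K^{(J)}(x_0,y)$, up to a bounded factor. This produces $\mathrm{Tail}^{(J)}(w_-;x_0,AR)^{p-1}\int\overline{w}^{\alpha}$, which is at most $\Upsilon(R)^{-1}\int\overline{w}^{\alpha+p-1}$ by the choice of $K$. With the Sobolev inequality Proposition \ref{prop_SobolevJ}, this yields the reverse-Hölder step analogous to (\ref{eq_wEHI_L_iteration}). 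Iterating via Lemma \ref{lem_ele1} gives $\einf\overline{w}\gtrsim(\dashint\overline{w}^{-q})^{-1/q}$.

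For the positive side, we show $\log\overline{w}\in\mathrm{BMO}(B(x_0,12R))$ with bounded norm. Test with $\phi^p\overline{w}^{1-p}$, where $\phi$ is a cutoff from \ref{eq_ucapJ} (Lemma \ref{lem_CSJ2ucapJ}). The non-local logarithmic estimate bounds $\iint_{B\times B}|\log\overline{w}(x)-\log\overline{w}(y)|^pK^{(J)}$ by $m(B)/\Upsilon(r)$ plus tail contributions, again absorbed by $K$. Then \ref{eq_PIJ} and Lemma \ref{lem_crossover} close this step.

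For the oscillation decay, set $M_j=\esup_{B(x_0,\rho_j)}u$ and $m_j=\einf_{B(x_0,\rho_j)}u$, with $\rho_j=(2A)^{-j}R$. Apply the weak Harnack inequality to $M_j-u$ and $u-m_j$ on $B(x_0,\rho_j)$. These functions are non-negative there but may be negative outside, so the tail terms $\mathrm{Tail}((M_j-u)_-;x_0,\rho_j)$ and $\mathrm{Tail}((u-m_j)_-;x_0,\rho_j)$ must be estimated. Following the Di Castro--Kuusi--Palatucci scheme, split $X\backslash B(x_0,\rho_j)$ into annuli $B(x_0,\rho_{i-1})\backslash B(x_0,\rho_i)$ for $i\le j$, plus the complement of $B(x_0,R)$. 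On each annulus the negative part is at most $\omega_{i-1}+\omega_j$ with $\omega_i=\eosc_{B(x_0,\rho_i)}u$, and \ref{eq_KJ_tail} together with \ref{eq_beta12} gives the weights $\Upsilon(\rho_j)/\Upsilon(\rho_i)\lesssim(2A)^{-\beta^{(1)}_\Upsilon(j-i)}$. Outside $B(x_0,R)$, the negative part is bounded by $\lVert u\rVert_{L^\infty(B(x_0,R))}+u_-$, and \ref{eq_UPR} moves the base point to $x_0$ at radius $R$.

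The main obstacle is the inductive proof that $\omega_j\le C\lambda^{j}\big(\lVert u\rVert_{L^\infty(B(x_0,R))}+\Upsilon(R)^{\frac{1}{p-1}}\mathrm{Tail}(u_-;x_0,R)+\Upsilon(R)^{\frac{1}{p-1}}\big)$. One must choose the decay exponent $\delta$ so small that $\sum_i(2A)^{-\beta^{(1)}_\Upsilon(j-i)}\lambda^{(i-j)(p-1)}$ stays small, and this is where the constants must be balanced. Given the geometric decay, the stated inequality follows as in Lemma \ref{lem_ele2}, and continuity of $u$ in $\Omega$ follows.
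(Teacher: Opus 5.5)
Your weak Harnack step (negative-power Moser iteration with the cutoff from Proposition \ref{prop_CSJ_self}, the logarithmic BMO bound, and Lemma \ref{lem_crossover}) is essentially the paper's Lemma \ref{lem_Moser_J} and Proposition \ref{prop_bdy_wEHI_J}. Your remark that $w_-$ vanishes on $B(x_0,AR)$, so that \ref{eq_UPR} costs only a bounded factor, is correct.

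The genuine gap is in the oscillation step. For $M_j-u$, the negative part outside $B(x_0,R)$ is $(u-M_j)_+\le u_++\lVert u\rVert_{L^\infty(B(x_0,R);m)}$, not $\lVert u\rVert_{L^\infty(B(x_0,R);m)}+u_-$; that bound holds only for $u-m_j$. Hence your induction also produces $\mathrm{Tail}^{(J)}(u_+;x_0,R)$. It yields the estimate only with $\mathrm{Tail}^{(J)}(\lvert u\rvert;x_0,R)$, which is weaker than the statement (it still gives $u\in C(\Omega)$). A priori, $\mathrm{Tail}^{(J)}(u_+;x_0,R)$ is controlled only by the global norm $\lVert u\rVert_{L^\infty(X;m)}$. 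The missing ingredient is Lemma \ref{lem_tail_pm}: since $-\Delta_p^{(J)}u\ge-1$ in $B=B(x_0,R)$,
\[
\Upsilon(R)^{\frac{1}{p-1}}\mathrm{Tail}^{(J)}(u_+;x_0,R)\lesssim\lVert u\rVert_{L^\infty(B;m)}+\Upsilon(R)^{\frac{1}{p-1}}\mathrm{Tail}^{(J)}(u_-;x_0,R)+\Upsilon(R)^{\frac{1}{p-1}}.
\]
It is proved by testing the equation with the non-positive function $\phi^p(u-2k)$, where $k=\lVert u\rVert_{L^\infty(B;m)}$ and $\phi$ is a capacity cutoff for $\frac12B\subseteq\frac34B$. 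The part over $B\times B$ is bounded below by $-Ck^p\mathcal{E}^{(J)}(\phi)$ via Lemma \ref{lem_ele6}. In the cross term, after moving the base point to $x_0$ with \ref{eq_UPR}, the far positive part of $u$ enters with a favourable sign as $k\,m(B)\,\mathrm{Tail}^{(J)}(u_+;x_0,R)^{p-1}$, and is thereby bounded by the remaining terms. With this lemma, the contribution of $X\backslash B(x_0,R)$ for $M_j-u$ is controlled exactly as for $u-m_j$.

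A second, smaller point concerns closing the induction. Taking $\delta$ small does not make your sum small; as $\lambda\uparrow1$ it only stays bounded. The annulus adjacent to the current ball contributes a term of order $\omega_{j-1}^{p-1}$ whose coefficient is independent of $\lambda$. That coefficient is essentially $\Upsilon$ at the weak Harnack radius divided by $\Upsilon$ at the radius of non-negativity, so it is fixed once the radius ratio is fixed. It need not be small compared with the gain from the weak Harnack inequality. The smallness must come from separating scales. The paper takes $R_k=A^{-k}R$ with $A=2aA_1$ and applies Proposition \ref{prop_int_wEHI_J} on $2B_{k+1}$, which needs non-negativity only on $2A_1B_{k+1}=a^{-1}B_k$. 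Since the function is non-negative on all of $B_k$, the tail beyond $2A_1B_{k+1}$ equals the tail beyond $B_k$. So the constants do not depend on $a$, while the prefactor satisfies $\Upsilon(2R_{k+1})/\Upsilon(R_k)\lesssim a^{-\beta^{(1)}_\Upsilon}$. One first chooses $a$ large and only then $\alpha\in[\frac12,1)$, with $\delta=-\log_A\alpha$. The condition $\alpha^{p-1}A^{\beta^{(1)}_\Upsilon}\ge2$ guarantees that the geometric sum over the annuli converges.
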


\begin{proposition}\label{prop_bdy_osc_J}
Let $c\in(0,1)$. Then there exist $\delta\in(0,1)$, $C>0$ such that for any bounded open subset $\Omega\subseteq X$, let $f\in L^\infty(\Omega;m)$ satisfy $\lVert f\rVert_{L^\infty(\Omega;m)}\le 1$, let $u\in {\mathcal{F}}^{(J)}(\Omega)\cap L^\infty(X;m)$ be non-negative in $\Omega$ and satisfy $-\Delta_p^{(J)}u=f$ in $\Omega$, let $x_0\in \partial\Omega$ satisfy $X\backslash \Omega$ has a $(c,r_0)$-corkscrew at $x_0$ for some $r_0>0$, then for any $R,r\in(0,r_0)$ with $r\le R$, we have
$$\esup_{B(x_0,r)\cap \Omega}u\le C \left(\frac{r}{R}\right)^\delta \left(\esup_X u+\Upsilon(R)^{\frac{1}{p-1}}\right).$$
In particular, we have $\lim_{r\downarrow0}\esup_{B(x_0,r)\cap \Omega}u=0$.
\end{proposition}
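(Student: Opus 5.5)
The boundary oscillation inequality will follow the scheme of the local Proposition \ref{prop_bdy_osc_L}: first prove a non-local boundary weak Harnack inequality, then iterate it using the corkscrew condition.

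The target weak Harnack inequality is the following. There exist $q>0$, $A>1$, $C>1$ such that if $w\in\widehat{\mathcal{F}}^{(J)}\cap L^\infty(X;m)$, $w\ge0$ in $B(x_0,AR)$, $\widetilde{w}=M$ q.e. on $B(x_0,AR)\backslash\Omega$, and $-\Delta_p^{(J)}w\ge-1$ in $B(x_0,AR)\cap\Omega$, then
$$\left(\dashint_{B(x_0,R)}(w\wedge M)^q\mathrm{d}m\right)^{\frac1q}\le C\left(\einf_{B(x_0,\frac12R)}(w\wedge M)+\Upsilon(R)^{\frac{1}{p-1}}\mathrm{Tail}^{(J)}(w_-;x_0,AR)+\Upsilon(R)^{\frac{1}{p-1}}\right).$$

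This is proved as in Lemma \ref{lem_Moser_L} and the proof of Proposition \ref{prop_bdy_wEHI_L}, with two non-local adjustments.

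\textbf{Step 1 (negative-power Moser iteration).} Test with $\phi^p\big((w\wedge M+K)^\alpha-(M+K)^\alpha\big)$ for $\alpha<-(p-1)$, with $K$ a multiple of $\Upsilon(R)^{\frac1{p-1}}$ plus the tail term. The cutoff $\phi$ comes from the self-improved $\text{CS}^{(J)}_{\text{weak}}$ of Proposition \ref{prop_CSJ_self}. Use the standard algebraic inequalities for $|a-b|^{p-2}(a-b)(\phi(x)^pa^\alpha-\phi(y)^pb^\alpha)$ in place of the chain rule. The long-range contributions from $y\notin B(x_0,R+r)$ are split into two parts:
\begin{itemize}
\item where $w\ge0$, they are handled by \ref{eq_KJ_tail};
\item where $w<0$, they produce exactly $\mathrm{Tail}^{(J)}(w_-)$, which is absorbed by the choice of $K$. \ref{eq_UPR} is needed to move the kernel from $K^{(J)}(x,y)$ to $K^{(J)}(x_0,y)$.
\end{itemize}
Then use the Sobolev inequality of Proposition \ref{prop_SobolevJ} and iterate.

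\textbf{Step 2 (logarithmic estimate).} Test with $\phi^p(\overline{w}^{1-p}-(M+K)^{1-p})$ to obtain a BMO bound on $\log\overline{w}$ over balls via \ref{eq_PIJ} and \ref{eq_ucapJ}. Then apply Lemma \ref{lem_crossover} to link positive and negative powers.

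Step 1 with a general boundary value $M$ and truncation $w\wedge M$ is where I expect the main obstacle. The non-local energy sees $w$ outside $\Omega$, so the truncation must be handled using that $w\wedge M$ differs from $w$ only where $w>M$, which only helps the supersolution inequality. Deriving the log estimate from the non-local test function with a tail is also delicate, though standard (cf. Di Castro--Kuusi--Palatucci).

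\textbf{Step 3 (iteration).} Apply the weak Harnack inequality to $w=M_2-u$, where $M_2=\esup_X u$, so that $w\ge0$ everywhere. Then $\mathrm{Tail}(w_-)=0$, $\widetilde w=M_2$ q.e. off $\Omega$ (since $u\in\mathcal{F}^{(J)}(\Omega)$, Lemma \ref{lem_FOmega}), and $-\Delta_p^{(J)}w=-f\ge-1$ in $\Omega$. Using the global supremum rather than the local one is precisely what kills the tail; this is why the statement has $\esup_X u$ on the right-hand side.

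The corkscrew gives a ball of radius $\frac cAR$ in $B(x_0,\frac1AR)\backslash\Omega$ on which $w=M_2$. Hence the left side of the weak Harnack inequality is at least $M_2/C_2$, by \ref{eq_VD}, and therefore
$$\esup_{B(x_0,\frac{1}{2A}R)}u\le\theta M_2+C\Upsilon(R)^{\frac1{p-1}}$$
with $\theta<1$.

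To iterate at smaller scales, rerun the argument with $u$ compared to the current local supremum $M(\rho)$, using $M_2-u$ at each stage while bounding the tail of the positive part of $u$ above $M(\rho)$ by $\esup_X u$ times a geometric factor. Then Lemma \ref{lem_ele2} and the lower growth bound (\ref{eq_beta12}) for $\Upsilon$ give the Hölder decay with some $\delta\in(0,1)$, as in the local proof.

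If the tail does not decay fast enough for this to work, I would instead absorb it by choosing the scale ratio $A$ large, using $\mathrm{Tail}\lesssim\Upsilon(\rho)^{-1}\esup_X u$ together with \ref{eq_KJ_tail}.
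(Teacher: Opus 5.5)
Your Steps 1--2 follow the paper's route to the boundary weak Harnack inequality (Lemma \ref{lem_Moser_J} and Proposition \ref{prop_bdy_wEHI_J}). Note that $\mathrm{Tail}^{(J)}(w_-)=\mathrm{Tail}^{(J)}((w\wedge M)_-)$ because $M\ge0$, and your truncation argument is exactly Lemma \ref{lem_cutoff}. The first application in Step 3, with $w=\esup_Xu-u$, is also correct.

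The gap is the passage to small scales, which is exactly where the non-local proof departs from the local one. Once you compare with the local supremum $M_k=\esup_{B_k}u$, where $B_k=B(x_0,R_k)$ and $R_k=A^{-k}R$, the weak Harnack inequality carries the term $\Upsilon(R_{k+1})^{\frac{1}{p-1}}\mathrm{Tail}^{(J)}((u-M_k)_+;x_0,R_k)$. Your fallback bounds $(u-M_k)_+\le\esup_Xu$ on all of $X\backslash B_k$ and uses \ref{eq_KJ_tail}. This produces an additive error of size $A^{-\beta^{(1)}_\Upsilon/(p-1)}\esup_Xu$ at \emph{every} scale. The resulting recursion $\varphi(\rho/(2A))\le\theta\varphi(\rho)+C\Upsilon(\rho)^{\frac{1}{p-1}}+cA^{-\beta^{(1)}_\Upsilon/(p-1)}\esup_Xu$ only gives $\limsup_{\rho\downarrow0}\varphi(\rho)\lesssim A^{-\beta^{(1)}_\Upsilon/(p-1)}\esup_Xu$, not decay to $0$. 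Taking $A$ large shrinks this constant, but it cannot be absorbed into $\theta\varphi(\rho)$ once $\varphi(\rho)\ll\esup_Xu$. For the same reason Lemma \ref{lem_ele2} does not apply: it handles memoryless recursions, whereas the tail at scale $k$ depends on $u$ at all larger scales. A ``geometric factor'' in the tail is available only if you already know $u$ is small on the intermediate annuli.

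The missing idea is the Di Castro--Kuusi--Palatucci induction that the paper uses. Set $K=\esup_Xu+\Upsilon(R)^{\frac{1}{p-1}}$ and assume $\esup_{B_j}u\le\alpha^jK$ for $j\le k$. Split the tail over $X\backslash B_k$ into two kinds of region:
\begin{itemize}
\item the annuli $B_j\backslash B_{j+1}$ with $j<k$, where $(u-M_k)_+\le\alpha^jK$ by the induction hypothesis;
\item the exterior $X\backslash B_0$, where $(u-M_k)_+\le 2K$.
\end{itemize}
By \ref{eq_KJ_tail} and (\ref{eq_beta12}), with constants independent of $A$ and $\alpha$,
\[
\Upsilon(R_{k+1})\,\mathrm{Tail}^{(J)}\big((u-M_k)_+;x_0,R_k\big)^{p-1}\lesssim K^{p-1}A^{-\beta^{(1)}_\Upsilon(k+1)}\sum_{j=0}^{k}\big(\alpha^{p-1}A^{\beta^{(1)}_\Upsilon}\big)^{j}\lesssim A^{-\beta^{(1)}_\Upsilon}\big(\alpha^kK\big)^{p-1},
\]
provided $\alpha^{p-1}A^{\beta^{(1)}_\Upsilon}\ge2$. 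The error is now measured against the current scale $\alpha^kK$ rather than against $\esup_Xu$. Combined with the corkscrew lower bound, this gives
\[
\esup_{B_{k+1}}u\le\Big(\frac{C_1C_2-1}{C_1C_2}+A^{-\beta^{(1)}_\Upsilon/(p-1)}\Big)\alpha^kK.
\]
Choosing first $A$ large and then $\alpha\in[\frac12,1)$ closes the induction, with $\delta=-\log_A\alpha$.
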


The proofs rely on the following interior and boundary weak Harnack inequalities. In comparison with the local case, namely Propositions \ref{prop_int_wEHI_L} and \ref{prop_bdy_wEHI_L}, special attention must be paid to the tail terms. We refer to \cite[Theorems 3.6 and 3.7]{KLL23} for weak Harnack inequalities for superharmonic functions in $\mathbb{R}^n$.

\begin{proposition}\label{prop_int_wEHI_J}
There exist $q>0$, $A>1$, $C>1$ such that for any $x_0\in X$ and any $R>0$, let $u\in \widehat{\mathcal{F}}^{(J)}\cap L^\infty(X;m)$ be non-negative in $B(x_0,AR)$, and satisfy $-\Delta_p^{(J)}u\ge-1$ in $B(x_0,AR)$, then we have
$$\left(\dashint_{B(x_0,R)}u^q \mathrm{d}m\right)^{\frac{1}{q}}\le C \left(\einf_{B(x_0,\frac{1}{2}R)}u+\Upsilon(R)^{\frac{1}{p-1}}\mathrm{Tail}^{(J)}\left(u_-;x_0,AR\right)+\Upsilon(R)^{\frac{1}{p-1}}\right).$$
\end{proposition}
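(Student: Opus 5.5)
I would follow the local case (Lemma \ref{lem_Moser_L} together with the proof of Proposition \ref{prop_bdy_wEHI_L}), adapted to the non-local setting along the lines of \cite[Theorems 3.6 and 3.7]{KLL23}. The inequality splits into two estimates for $\overline{u}=u+K$, where
$$K=\Upsilon(R)^{\frac{1}{p-1}}\mathrm{Tail}^{(J)}(u_-;x_0,AR)+\Upsilon(R)^{\frac{1}{p-1}}.$$
The first is a reverse Moser iteration for negative powers, bounding $\einf_{B(x_0,\frac12R)}\overline{u}$ from below by $(\dashint_{B(x_0,R)}\overline{u}^{-q}\mathrm{d}m)^{-1/q}$. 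The second is a BMO bound for $\log\overline{u}$ on $B(x_0,12R)$. By Lemma \ref{lem_crossover}, the BMO bound lets us pass from $\overline{u}^{-q}$ to $\overline{u}^{q}$. Combining the two gives the claim, since $u\le\overline{u}$ on $B(x_0,R)$ and $K$ is exactly the error term on the right-hand side.

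\emph{Step 1 (negative powers).} Fix $r\le R'$ with $R'+r\le AR$, and $\alpha<-(p-1)$. I would test the supersolution inequality with $v=\phi^p\overline{u}^{\alpha}$, which lies in $\mathcal{F}^{(J)}(B(x_0,R'+r))$ after truncation, and where $\phi$ is the cutoff from Proposition \ref{prop_CSJ_self}. The double integral in $\mathcal{E}^{(J)}(u;v)$ splits into two regions.

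On $B\times B$ with $B=B(x_0,R'+r)$, I would use the standard pointwise algebraic inequality for $|a-b|^{p-2}(a-b)(\phi_1^p a^\alpha-\phi_2^p b^\alpha)$, as in Kuusi--Lindgren--Lemenant type arguments. It gives
$$-c|\alpha|\,\bigl|\overline{u}^{\frac{\alpha+p-1}{p}}(x)-\overline{u}^{\frac{\alpha+p-1}{p}}(y)\bigr|^p\min\{\phi(x),\phi(y)\}^p$$
plus an error $C|\alpha|^{?}\,\overline{u}^{\alpha+p-1}|\phi(x)-\phi(y)|^p$. The error term is absorbed via \eqref{eq_CSJ_self}, with $\varepsilon$ chosen small, exactly as in Lemma \ref{lem_Moser_L}.

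The off-diagonal part, with $x\in\mathrm{supp}\,\phi$ and $y\notin B$, is bounded above. There $(u(x)-u(y))^{p-1}\le 2^{p-2}(\overline{u}(x)^{p-1}+u_-(y)^{p-1})$, so \ref{eq_KJ_tail} and \ref{eq_UPR} give a bound of order
$$\int\overline{u}^{\alpha+p-1}\phi^p\,\mathrm{d}m\Bigl(\frac{1}{\Upsilon(r)}+\frac{\mathrm{Tail}^{(J)}(u_-;x_0,AR)^{p-1}}{K^{p-1}}\cdot\frac{C(AR/r)^{\theta}}{\Upsilon(?)}\Bigr).$$
The role of \ref{eq_UPR} is to compare $K^{(J)}(x,y)$ with $K^{(J)}(x_0,y)$ for $x$ near $x_0$ and $y$ far away. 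The choice of $K$ makes this $\lesssim\Upsilon(r)^{-1}(R/r)^{\theta}\int\overline{u}^{\alpha+p-1}\mathrm{d}m$. The right-hand side term $-\int v\,\mathrm{d}m$ is controlled by $K^{-(p-1)}\int\overline{u}^{\alpha+p-1}$, since $K^{p-1}\ge\Upsilon(R)$.

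This yields an energy bound for $\phi\overline{u}^{(\alpha+p-1)/p}$. Proposition \ref{prop_SobolevJ} then gives the same iteration inequality as \eqref{eq_wEHI_L_iteration}, with an extra factor $(R/r)^{\theta}$ that is polynomial in $2^k$. Lemma \ref{lem_ele1} still sums it.

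\emph{Step 2 (log estimate).} For a ball $B\subseteq B(x_0,12R)$, test with $\phi^p\overline{u}^{1-p}$, where $\phi$ is a cutoff for $B\subseteq 2B$ taken from $\text{CS}^{(J)}_{\text{weak}}$ or \ref{eq_ucapJ}. The standard non-local logarithmic lemma gives
$$\int_B\int_B\Bigl|\log\frac{\overline{u}(x)}{\overline{u}(y)}\Bigr|^pK^{(J)}\,\mathrm{d}m\,\mathrm{d}m\lesssim\frac{m(B)}{\Upsilon(r_B)},$$
where the tail is again absorbed using \ref{eq_UPR} and the choice of $K$. Applying \ref{eq_PIJ} to $\log\overline{u}$ then bounds the BMO norm.

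\emph{Main obstacle.} I expect the main difficulty to be the tail handling. The tail is centered at $x_0$ with radius $AR$, but the test functions live on smaller balls, possibly off-center in Step 2. The comparison $K^{(J)}(x,y)\lesssim (\cdot)^{\theta}K^{(J)}(x_0,y)$ from \ref{eq_UPR}, combined with $d(x_0,y)\asymp d(x,y)$ for $y\notin B(x_0,AR)$, must be arranged so that the constants stay uniform. I would handle this first, by proving a clean lemma: for $x\in B(x_0,AR/2)$,
$$\int_{X\backslash B(x_0,AR)}u_-^{p-1}K^{(J)}(x,y)\,\mathrm{d}m\lesssim\mathrm{Tail}^{(J)}(u_-;x_0,AR)^{p-1}.$$
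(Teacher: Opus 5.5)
Your proposal is correct and is essentially the paper's proof. The paper derives this statement from the boundary version, Proposition \ref{prop_bdy_wEHI_J} (taking $\Omega=B(x_0,AR)$ and $M=\lVert u\rVert_{L^\infty(B(x_0,AR);m)}$), and that proof consists of exactly your two steps: the negative-power Moser iteration of Lemma \ref{lem_Moser_J} (via Proposition \ref{prop_CSJ_self}, \ref{eq_UPR}, \ref{eq_KJ_tail} and Proposition \ref{prop_SobolevJ}), and a $\mathrm{BMO}$ bound for $\log\overline{u}$ on $B(x_0,12R)$ (via \ref{eq_PIJ}, a $\text{CS}^{(J)}_{\text{weak}}$ cutoff, Lemma \ref{lem_ele5}, and \ref{eq_UPR} with $d(x_0,y)\asymp d(x,y)$), glued together by Lemma \ref{lem_crossover}.

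Working with $u$ directly, as you do, merely spares you Lemma \ref{lem_cutoff} and the shift $(M+K)^\alpha$ in the test function. For the constants you left open, Lemma \ref{lem_ele4} gives the good term with weight $\max\{\phi(x)^p,\phi(y)^p\}$, which dominates the $\lvert\phi\rvert^p$-weighted energy in \eqref{eq_CSJ_self} at once. Its coefficient is $\frac{\lvert\alpha\rvert}{2}\bigl(\frac{p}{\lvert\alpha+p-1\rvert}\bigr)^p$, not $c\lvert\alpha\rvert$, and the error term carries the same factor up to a constant uniform in $\alpha<-(p-1)$. This is what yields the $\lvert\alpha\rvert^{p-1}$ growth that Lemma \ref{lem_ele1} can sum.
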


\begin{proposition}\label{prop_bdy_wEHI_J}
There exist $q>0$, $A>1$, $C>1$ such that the following holds. For any bounded open subset $\Omega\subseteq X$, any $x_0\in X$, and any $R>0$, let $u\in \widehat{\mathcal{F}}^{(J)}\cap L^\infty(X;m)$ be non-negative in $B(x_0,AR)$ and satisfy $\widetilde{u}=M$ q.e. on $B(x_0,AR)\backslash \Omega$, for some constant $M\in [0,+\infty)$, and $-\Delta_p^{(J)}u\ge-1$ in $B(x_0,AR)\cap \Omega$, then we have
\begin{align*}
&\left(\dashint_{B(x_0,R)}(u\wedge M)^q \mathrm{d}m\right)^{\frac{1}{q}}\\
&\le C \left(\einf_{B(x_0,\frac{1}{2}R)}(u\wedge M)+\Upsilon(R)^{\frac{1}{p-1}}\mathrm{Tail}^{(J)}\left((u\wedge M)_-;x_0,AR\right)+\Upsilon(R)^{\frac{1}{p-1}}\right).
\end{align*}
\end{proposition}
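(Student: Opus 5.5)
Following the local case (Lemma \ref{lem_Moser_L} and the proof of Proposition \ref{prop_bdy_wEHI_L}), the plan has two parts: a Moser iteration with negative exponents, and a logarithmic estimate giving $\log\overline{u}\in\mathrm{BMO}$, glued by Lemma \ref{lem_crossover}. Set $M>0$ (the case $M=0$ is trivial) and
$$K=\Upsilon(R)^{\frac{1}{p-1}}\mathrm{Tail}^{(J)}((u\wedge M)_-;x_0,AR)+\Upsilon(R)^{\frac{1}{p-1}},\qquad \overline{u}=u\wedge M+K.$$
The constant $K$ plays the role of $\Psi(R)^{1/(p-1)}$ in the local case, but it now also absorbs the tail.

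A preliminary point is that $u\wedge M$ is a supersolution. Replacing $u$ by $u\wedge M$ changes nothing in $B(x_0,AR)\setminus\Omega$, since there $\widetilde u=M$. Inside $\Omega$ I would check directly from the kernel representation that $-\Delta^{(J)}_p(u\wedge M)\ge-1$ in $B(x_0,AR)\cap\Omega$ on the support of the test functions used below. Truncation from above only decreases $u(y)$ at far points, so the pairing with test functions supported where $u<M$ only improves.

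\emph{Step 1: negative-power Caccioppoli inequality.} Test with $v=\phi^p(\overline{u}^{\alpha}-(M+K)^\alpha)$ for $\alpha<-(p-1)$, where $\phi$ is the cutoff for $B(x_0,R)\subseteq B(x_0,R+r)$ from Proposition \ref{prop_CSJ_self}. Then $v\in\mathcal{F}^{(J)}(B(x_0,R+r)\cap\Omega)$ and $v\ge0$. Split $\mathcal{E}^{(J)}(u;v)$ into the local part over $B\times B$, with $B=B(x_0,R+r)$ (or a slightly larger ball), and the nonlocal part over $B\times(X\setminus B)$.
\begin{itemize}
\item Local part. Use the standard pointwise algebraic inequality for $|a-b|^{p-2}(a-b)(\phi_1^p a^\alpha-\phi_2^p b^\alpha)$ (as in \cite{KLL23}). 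It yields a good term $\gtrsim|\alpha|^{-?}\iint|\phi\,\overline u^{(\alpha+p-1)/p}(x)-\phi\,\overline u^{(\alpha+p-1)/p}(y)|^pK$, minus $C(\alpha)\iint\overline u^{\alpha+p-1}|\phi(x)-\phi(y)|^pK$. Control the latter by the self-improved cutoff Sobolev inequality, with small $\varepsilon$ absorbed into the good term.
\item Nonlocal part. Where $u(y)\ge0$ the contribution has a favorable sign. Where $u(y)<0$ it is bounded by $\int_B\phi^p\overline u^{\alpha}\,\mathrm{Tail}$-type integrals. Using \ref{eq_UPR} to move the kernel base point from $x\in B$ to $x_0$, this is at most $\overline u^{\alpha+p-1}\cdot K^{1-p}\,\mathrm{Tail}^{p-1}\lesssim\Upsilon(R)^{-1}\overline u^{\alpha+p-1}$; this is exactly why $K$ contains the tail. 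The far range $u(y)\ge0$, $|u(y)|$ large, is handled by \ref{eq_KJ_tail}.
\end{itemize}
Combined with Proposition \ref{prop_SobolevJ}, this gives the analogue of (\ref{eq_wEHI_L_iteration}) with $\Upsilon$ in place of $\Psi$. Iterating as in Lemma \ref{lem_Moser_L} yields $\einf_{B(x_0,R/2)}\overline u\gtrsim(\dashint_{B(x_0,R)}\overline u^{-q})^{-1/q}$.

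\emph{Step 2: BMO estimate.} Take $\alpha=1-p$, so that the test function is $\phi^p(\overline u^{1-p}-(M+K)^{1-p})$, now with a capacity cutoff for $A_{PI}B\subseteq 2A_{PI}B$ on every ball $B\subseteq B(x_0,12R)$. The same splitting gives
$$\iint_{B\times B}\Big|\log\frac{\overline u(x)}{\overline u(y)}\Big|^pK\lesssim\frac{m(B)}{\Upsilon(r)},$$
again using \ref{eq_UPR} and $K\ge\Upsilon(R)^{1/(p-1)}\mathrm{Tail}$ for the negative far values. By \ref{eq_PIJ} this gives $\|\log\overline u\|_{\mathrm{BMO}(B(x_0,12R))}\le C$. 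Lemma \ref{lem_crossover} then links $(\dashint\overline u^{q})^{1/q}$ to $(\dashint\overline u^{-q})^{-1/q}$, and combining with Step 1 finishes the proof, since $u\wedge M\le\overline u$.

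The main obstacle is the nonlocal part in Steps 1 and 2. One must produce the clean pointwise inequality for negative exponents, with constants whose dependence on $\alpha$ is compatible with the iteration. One must also use \ref{eq_UPR} to replace $K^{(J)}(x,y)$ for $x\in B(x_0,R+r)$, $y\notin B(x_0,AR)$, by $K^{(J)}(x_0,y)$; this requires $A$ large relative to the cutoff support so that $d(x_0,y)\asymp d(x,y)$. If the algebraic inequality proves delicate, I would first try the decomposition of \cite[Lemma 3.3]{KLL23} verbatim, which is pointwise and hence transfers to general kernels.
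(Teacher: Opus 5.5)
Your proposal is correct and follows the paper's proof essentially step by step. It uses the negative-power Moser iteration of Lemma \ref{lem_Moser_J}: test function $\phi^p(\overline{u}^{\alpha}-(M+K)^{\alpha})$, the pointwise inequality of Lemma \ref{lem_ele4}, Proposition \ref{prop_CSJ_self} and Proposition \ref{prop_SobolevJ}. It then bounds $\log\overline{u}$ in BMO via the logarithmic inequality of Lemma \ref{lem_ele5}, using \ref{eq_UPR} with $d(x_0,y)\asymp d(x,y)$ and $K\ge\Upsilon(R)^{\frac{1}{p-1}}\mathrm{Tail}^{(J)}$, and glues the two with Lemma \ref{lem_crossover}. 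Your direct check of the supersolution property, only for test functions vanishing on $\{u\ge M\}$, is a valid substitute for Lemma \ref{lem_cutoff}.

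One slip: in the nonlocal part the sign is favorable when $\overline{u}(y)\ge\overline{u}(x)$, not whenever $u(y)\ge0$. It is \ref{eq_KJ_tail} that handles the unfavorable range $0\le u(y)<u(x)$, via $(\overline{u}(x)-\overline{u}(y))_+^{p-1}\le 2^{p-1}\bigl(\overline{u}(x)^{p-1}+(u(y)\wedge M)_-^{p-1}\bigr)$.
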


It suffices to prove Proposition \ref{prop_bdy_wEHI_J}. Indeed, assuming Proposition \ref{prop_bdy_wEHI_J}, by taking $\Omega=B(x_0,AR)$ and $M=\lVert u\rVert_{L^\infty(B(x_0,AR);m)}<+\infty$, and noting that $u\wedge M=u$ in $B(x_0,AR)$ and $(u\wedge M)_-=u_-$ in $X$, we obtain Proposition \ref{prop_int_wEHI_J}.

\begin{lemma}\label{lem_Moser_J}
For any $q>0$, there exists $C>1$ such that the following holds. For any bounded open subset $\Omega\subseteq X$, any $x_0\in X$, and any $R_0>0$, let $u\in \widehat{\mathcal{F}}^{(J)}\cap L^\infty(X;m)$ be non-negative in $B(x_0,R_0)$ and satisfy $\widetilde{u}=M$ q.e. on $B(x_0,R_0)\backslash\Omega$ for some constant $M\in[0,+\infty)$, and $-\Delta_p^{(J)}u\ge-1$ in $B(x_0,R_0)\cap\Omega$, then for any $R\in(0,R_0)$, we have
\begin{align*}
&\einf_{B(x_0,\frac{1}{2}R)}(u\wedge M)+\Upsilon(R)^{\frac{1}{p-1}}\mathrm{Tail}^{(J)}\left((u\wedge M)_-;x_0,R\right)+\Upsilon(R)^{\frac{1}{p-1}}\\
&\ge \frac{1}{C}\left(\dashint_{B(x_0,R)}\left(u\wedge M+\Upsilon(R)^{\frac{1}{p-1}}\mathrm{Tail}^{(J)}\left((u\wedge M)_-;x_0,R\right)+\Upsilon(R)^{\frac{1}{p-1}}\right)^{-q}\mathrm{d}m\right)^{-\frac{1}{q}}.
\end{align*}
\end{lemma}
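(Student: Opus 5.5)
We follow the local proof of Lemma \ref{lem_Moser_L}, adding the tail contributions. Assume $M>0$ and that all functions are quasi-continuous. Put
$$T=\mathrm{Tail}^{(J)}\left((u\wedge M)_-;x_0,R\right)$$
and consider radii $r\le R'\le R$ with $R'+r\le R$. For $K\ge \Upsilon(r)^{\frac{1}{p-1}}+\Upsilon(r)^{\frac{1}{p-1}}T$, set $\overline{u}=u\wedge M+K$. Since $u\ge0$ in $B(x_0,R_0)$, we have $\overline{u}\ge K$ there. Take the test function
$$v=\phi^p\left(\overline{u}^\alpha-(M+K)^\alpha\right),\qquad \alpha<-(p-1),$$
where $\phi$ is the cutoff for $B(x_0,R')\subseteq B(x_0,R'+r)$ from Proposition \ref{prop_CSJ_self}. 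Then $v$ is nonnegative, bounded, and lies in $\mathcal{F}^{(J)}(B(x_0,R'+r)\cap\Omega)$ by Lemma \ref{lem_FOmega}, because $\widetilde u=M$ q.e. off $\Omega$. The supersolution inequality gives
$$\mathcal{E}^{(J)}(u;v)\ge-\int v\,\mathrm{d}m\ge -\frac{1}{\Upsilon(r)}\int_{B(x_0,R'+r)}\overline{u}^{\alpha+p-1}\mathrm{d}m,$$
using $K^{p-1}\ge\Upsilon(r)$.

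Write $\mathcal{E}^{(J)}(u;v)$ as a double integral and split it into the region $B\times B$ with $B=B(x_0,R'+r')$ for some slightly larger $r'$, plus the cross terms where one variable lies outside $B$.

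On $B\times B$ we first replace $u$ by $u\wedge M$. The point is that $|a-b|^{p-2}(a-b)$ is monotone and $v$ vanishes where $u\ge M$, so truncating only helps the sign. We then use the standard nonlocal algebraic inequality for $\alpha<0$ (as in \cite[Lemma 3.3 type]{KLL23}): for $a,b\ge K$,
$$|a-b|^{p-2}(a-b)\left(\phi(x)^p a^\alpha-\phi(y)^p b^\alpha\right)\le -c|\alpha|^{?}\left|\phi(x)a^{\frac{\alpha+p-1}{p}}-\phi(y)b^{\frac{\alpha+p-1}{p}}\right|^p+C(|\alpha|)\left(a^{\alpha+p-1}+b^{\alpha+p-1}\right)|\phi(x)-\phi(y)|^p.$$
This yields a lower bound
$$-c\,\mathcal{E}^{(J)}_{B}\left(\phi\,\overline u^{\frac{\alpha+p-1}{p}}\right)+C\int_B \overline{u}^{\alpha+p-1}\mathrm{d}\Gamma^{(J)}_B(\phi).$$
The last term is controlled by \eqref{eq_CSJ_self} applied to $f=\overline u^{\frac{\alpha+p-1}{p}}$. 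Its $\varepsilon$-part is absorbed, and we get
$$\int_B \overline{u}^{\alpha+p-1}\mathrm{d}\Gamma^{(J)}_B(\phi)\lesssim \varepsilon\cdot(\text{absorbed})+\frac{C_\varepsilon}{\Upsilon(r)}\int_B\overline u^{\alpha+p-1}\mathrm{d}m.$$

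The cross terms have $x\in B(x_0,R'+r)$ and $y\notin B$. There $v(y)=0$, and the integrand is at most
$$\phi(x)^p\,\overline u(x)^\alpha\left(u(x)-u(y)\right)_+^{p-1}K^{(J)}(x,y).$$
Since $u(y)\ge -(u\wedge M)_-(y)$ for the relevant part, we can bound $(u(x)-u(y))_+^{p-1}$ by a constant times $\overline u(x)^{p-1}+(u\wedge M)_-(y)^{p-1}$. This is where we need care: where $u(y)>M$ the truncation makes the contribution nonpositive, which is the favourable sign.

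The first piece is bounded by $\Upsilon(r)^{-1}\int\overline u^{\alpha+p-1}$ using \ref{eq_KJ_tail}. For the second piece we use \ref{eq_UPR} to replace $K^{(J)}(x,y)$ by $C\,(R/r)^{\theta}K^{(J)}(x_0,y)$, which gives a bound
$$C\,(R/r)^{\theta}\,T^{p-1}\int\overline u^{\alpha}\mathrm{d}m\le C\,(R/r)^{\theta}\,\frac{T^{p-1}}{K^{p-1}}\int\overline u^{\alpha+p-1}\mathrm{d}m.$$
To make this $\lesssim \Upsilon(r)^{-1}\int\overline u^{\alpha+p-1}$ we choose
$$K=\Upsilon(R)^{\frac{1}{p-1}}(1+T),$$
and use doubling, which gives $\Upsilon(R)/\Upsilon(r)\lesssim (R/r)^{\beta^{(2)}_\Upsilon}$. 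The resulting factor $(R/r)^{\theta+\beta_\Upsilon^{(2)}}$ is only polynomial in $R/r$, and this is harmless in the Moser iteration.

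Combining these bounds yields
$$\mathcal{E}^{(J)}\left(\phi\,\overline u^{\frac{\alpha+p-1}{p}}\right)\le C|\alpha|^{p-1}\left(\frac{R}{r}\right)^{N}\frac{1}{\Upsilon(r)}\int_{B(x_0,R'+r)}\overline u^{\alpha+p-1}\mathrm{d}m.$$
Here the contribution from $B^c\times B^c$ is handled by \ref{eq_KJ_tail}, since $\phi$ is supported well inside $B$. Proposition \ref{prop_SobolevJ} then gives the reverse Hölder step, analogous to (\ref{eq_wEHI_L_iteration}).

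We iterate with $R_k=(\frac12+2^{-k-1})R$ and $q_k=-q\kappa^k$. The per-step constants grow like $C^k$ because $R/(R_k-R_{k+1})=2^{k+2}$, so Lemma \ref{lem_ele1} still gives a convergent product. This yields the claim with $K$ as above. The main obstacle is the tail cross-term estimate, namely the sign handling when $u>M$ outside $\Omega$ and the use of \ref{eq_UPR} to pass from $K^{(J)}(x,y)$ to $K^{(J)}(x_0,y)$. We will also check that $K\ge \Upsilon(r)^{\frac{1}{p-1}}$ for every $r$ used in the iteration, which holds since $r\le R$.
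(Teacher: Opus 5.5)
Your proposal is correct and is essentially the paper's proof: the same test function $\phi^p(\overline{u}^\alpha-(M+K)^\alpha)$ and the same choice $K=\Upsilon(R)^{\frac{1}{p-1}}(1+T)$, a Lemma~\ref{lem_ele4}-type inequality on $B\times B$ absorbed via (\ref{eq_CSJ_self}), \ref{eq_KJ_tail} and \ref{eq_UPR} for the cross terms, then Proposition~\ref{prop_SobolevJ} and Lemma~\ref{lem_ele1}; the only variation is that you handle the truncation by the direct pointwise comparison $\mathcal{E}^{(J)}(u;v)\le\mathcal{E}^{(J)}(u\wedge M;v)$ (valid because $v\ge0$ vanishes where $u\ge M$), where the paper invokes Lemma~\ref{lem_cutoff}.

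When you write it out, pin down the $\alpha$-dependence you left as $|\alpha|^{?}$, as in Lemma~\ref{lem_ele4}: both coefficients scale like $|\alpha|/|\alpha+p-1|^p$, so their ratio is uniform in $\alpha$, and this is what lets you fix $\varepsilon$ in (\ref{eq_CSJ_self}) independently of $\alpha$. Three smaller points: the shift by $(M+K)^\alpha$ is harmless exactly as in the proof of that lemma; the gap between the support of $\phi$ and $\partial B$ must be comparable to $r$ (the paper uses $B(x_0,R+\frac{1}{2}r)$ inside $B(x_0,R+r)$); and no doubling is needed in the tail step, since $\Upsilon(R)\ge\Upsilon(r)$.
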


\begin{proof}
For notational convenience, we may assume that all functions in $\mathcal{F}^{(J)}$ are quasi-continuous. If $M=0$, then the result is trivial, hence we may assume that $M>0$. For any $R,r>0$ with $r\le R$ and $R+r<R_0$, for any
$$K\ge \Upsilon(R)^{\frac{1}{p-1}}\mathrm{Tail}^{(J)}\left((u\wedge M)_-;x_0,R\right)+\Upsilon(r)^{\frac{1}{p-1}},$$
let $\overline{u}=u\wedge M+K$, then by Lemma \ref{lem_cutoff}, we have $-\Delta_p^{(J)}\overline{u}\ge-1$ in $B(x_0,R_0)\cap \Omega$. Let $\phi\in \mathcal{F}^{(J)}$ be a cutoff function for $B(x_0,R)\subseteq B(x_0,R+\frac{1}{2}r)$ to be chosen later. For any $\alpha< -(p-1)$, let
$$v=\phi^p \left(\overline{u}^\alpha-(M+K)^\alpha\right)=\phi^p \left((u\wedge M+K)^\alpha-(M+K)^\alpha\right),$$
then $v\in \mathcal{F}^{(J)}(B(x_0,R+\frac{1}{2}r)\cap\Omega)$ is non-negative, hence
\begin{equation}\label{eq_wEHI_J1}
\mathcal{E}^{(J)}(\overline{u};v)\ge-\int_X v \mathrm{d}m\ge -\frac{1}{\Upsilon(r)}\int_{B(x_0,R+r)}\overline{u}^{\alpha+p-1}\mathrm{d}m,
\end{equation}
where
\begin{align}
&\mathcal{E}^{(J)}(\overline{u};v)=\left(\int_{B(x_0,R+r)}\int_{B(x_0,R+r)}+2\int_{B(x_0,R+r)}\int_{X\backslash B(x_0,R+r)}\right)\ldots \mathrm{d}m \mathrm{d}m=I_1+2I_2.\label{eq_wEHI_J4}
\end{align}
For $I_2$, we have
\begin{align*}
&I_2=\int_{B(x_0,R+\frac{1}{2}r)}\int_{X\backslash B(x_0,R+r)}\lvert \overline{u}(x)-\overline{u}(y)\rvert^{p-2}(\overline{u}(x)-\overline{u}(y))\\
&\hspace{120pt}\cdot\phi(x)^p \left(\overline{u}(x)^\alpha -(M+K)^\alpha\right)K^{(J)}(x,y)m(\mathrm{d}x)m(\mathrm{d}y)\\
&\le\int_{B(x_0,R+\frac{1}{2}r)}\int_{X\backslash B(x_0,R+r)}\left( \overline{u}(x)-\overline{u}(y)\right)^{p-1} \overline{u}(x)^\alpha K^{(J)}(x,y)1_{\overline{u}(x)\ge \overline{u}(y)}m(\mathrm{d}x)m(\mathrm{d}y)\\
&\overset{(\star)}{\scalebox{2}[1]{$\lesssim$}}\int_{B(x_0,R+\frac{1}{2}r)}\int_{X\backslash B(x_0,R+r)}\overline{u}(x)^{\alpha+p-1}  K^{(J)}(x,y)m(\mathrm{d}x)m(\mathrm{d}y)\\
&\hspace{10pt}+\int_{B(x_0,R+\frac{1}{2}r)}\int_{X\backslash B(x_0,R+r)}(u(y)\wedge M)_-^{p-1}\overline{u}(x)^{\alpha}K^{(J)}(x,y)m(\mathrm{d}x)m(\mathrm{d}y)\\
&=I_{21}+I_{22},
\end{align*}
where $(\star)$ follows from the fact that
\begin{align*}
&\left(\overline{u}(x)-\overline{u}(y)\right)^{p-1}1_{\overline{u}(x)\ge \overline{u}(y)}=\left({u}(x)\wedge M-{u}(y)\wedge M\right)^{p-1}1_{{u}(x)\wedge M\ge {u}(y)\wedge M}\\
&\le 2^{p-1}\left(\left(u(x)\wedge M\right)^{p-1}+\left(u(y)\wedge M\right)_-^{p-1}\right)\le2^{p-1}\left(\overline{u}(x)^{p-1}+\left(u(y)\wedge M\right)_-^{p-1}\right),
\end{align*}
due to $u\ge0$ in $B(x_0,R+\frac{1}{2}r)$. By \ref{eq_KJ_tail}, we have
\begin{align*}
&I_{21}\le\int_{B(x_0,R+\frac{1}{2}r)}\overline{u}(x)^{\alpha+p-1}\left(\int_{X\backslash B(x,\frac{1}{2}r)}  K^{(J)}(x,y)m(\mathrm{d}y)\right)m(\mathrm{d}x)\\
&\lesssim \frac{1}{\Upsilon(r)}\int_{B(x_0,R+\frac{1}{2}r)}\overline{u}^{\alpha+p-1}\mathrm{d}m.
\end{align*}
Since $r\le R$, by \ref{eq_UPR}, for any $x\in B(x_0,R+\frac{1}{2}r)$ and $y\in X\backslash B(x_0,R+r)$, we have
\begin{align*}
&K^{(J)}(x,y)\lesssim \left(\frac{d(x_0,y)}{d(x,y)}\vee 1\right)^\theta K^{(J)}(x_0,y)\\
&\le \left(\frac{d(x,y)+d(x_0,x)}{d(x,y)}\right)^\theta K^{(J)}(x_0,y)\lesssim \left(\frac{R}{r}\right)^{\theta}K^{(J)}(x_0,y),
\end{align*}
which gives
\begin{align*}
&I_{22}\lesssim \left(\frac{R}{r}\right)^{\theta}\mathrm{Tail}^{(J)}({(u\wedge M)}_-;x_0,R)^{p-1}\int_{B(x_0,R+\frac{1}{2}r)}\overline{u}^{\alpha}\mathrm{d}m\\
&\le\left(\frac{R}{r}\right)^{\theta}\frac{\Upsilon(R)}{\Upsilon(r)} \mathrm{Tail}^{(J)}({(u\wedge M)}_-;x_0,R)^{p-1}\int_{B(x_0,R+\frac{1}{2}r)}\overline{u}^{\alpha}\mathrm{d}m\\
&\le\left(\frac{R}{r}\right)^{\theta}\frac{1}{\Upsilon(r)} \int_{B(x_0,R+\frac{1}{2}r)}\overline{u}^{\alpha+p-1}\mathrm{d}m,
\end{align*}
where the last inequality follows from the fact that
$$\Upsilon(R)^{\frac{1}{p-1}}\mathrm{Tail}^{(J)}({(u\wedge M)}_-;x_0,R)\le K\le \overline{u}\text{ in }B(x_0,R+\frac{1}{2}r).$$
Thus, we have
\begin{align}
&I_2\lesssim I_{21}+I_{22}\lesssim \left(\frac{R}{r}\right)^{\theta}\frac{1}{\Upsilon(r)}\int_{B(x_0,R+\frac{1}{2}r)}\overline{u}^{\alpha+p-1}\mathrm{d}m.\label{eq_wEHI_J5}
\end{align}
By Lemma \ref{lem_ele4}, since in $B(x_0,R+r)$, $v=\alpha\phi^p F(\overline{u}-K)$ and $G(\overline{u}-K)=\frac{p}{\alpha+p-1}\overline{u}^{\frac{\alpha+p-1}{p}}$, where $F,G$ are the functions defined therein, we have for any $x,y\in B(x_0,R+r)$,
\begin{align*}
&\lvert \overline{u}(x)^{\frac{\alpha+p-1}{p}}-\overline{u}(y)^{\frac{\alpha+p-1}{p}}\rvert^p\max \{\phi(x)^p,\phi(y)^p\}\\
&\le 2C\left(1+\left(\frac{\lvert \alpha+p-1\rvert}{\lvert \alpha\rvert}\right)^p\right)\max\{\overline{u}(x)^{\alpha+p-1},\overline{u}(y)^{\alpha+p-1}\}\lvert \phi(x)-\phi(y)\rvert^p\\
&\hspace{15pt}-\frac{2\lvert \alpha+p-1\rvert^p}{p^p\lvert \alpha\rvert}\lvert \overline{u}(x)-\overline{u}(y)\rvert^{p-2}(\overline{u}(x)-\overline{u}(y))(v(x)-v(y)),
\end{align*}
where $C>0$ depends only on $p$. Hence
\begin{align*}
&I_3=\int\limits_{B(x_0,R+r)}\int\limits_{B(x_0,R+r)}\lvert \overline{u}(x)^{\frac{\alpha+p-1}{p}}-\overline{u}(y)^{\frac{\alpha+p-1}{p}}\rvert^p\max \{\phi(x)^p,\phi(y)^p\}K^{(J)}(x,y)m(\mathrm{d}x) m(\mathrm{d}y)\\
&\le4C\left(1+\left(\frac{\lvert \alpha+p-1\rvert}{\lvert \alpha\rvert}\right)^p\right)\\
&\hspace{50pt}\cdot\int_{B(x_0,R+r)}\int_{B(x_0,R+r)}\overline{u}(x)^{\alpha+p-1}\lvert \phi(x)-\phi(y)\rvert^p K^{(J)}(x,y) m(\mathrm{d}x)m(\mathrm{d}y)\\
&\hspace{15pt}-\frac{2\lvert \alpha+p-1\rvert^p}{p^p\lvert \alpha\rvert}I_1\\
&\le C_1\int_{B(x_0,R+r)}\overline{u}^{\alpha+p-1}\mathrm{d}\Gamma^{(J)}_{B(x_0,R+r)}(\phi)-\frac{2\lvert \alpha+p-1\rvert^p}{p^p\lvert \alpha\rvert}I_1,
\end{align*}
where $C_1=4C\left(1+\sup_{\alpha<-(p-1)}\left(\frac{\lvert \alpha+p-1\rvert}{\lvert \alpha\rvert}\right)^p\right)\in(0,+\infty)$ depends only on $p$. By Proposition \ref{prop_CSJ_self}, there exist $C_2>0$ depending on $C_1$, and a cutoff function $\phi\in \mathcal{F}^{(J)}$ for $B(x_0,R)\subseteq B(x_0,R+\frac{1}{2}r)$, such that
\begin{align}
&\int_{B(x_0,R+r)}\overline{u}^{\alpha+p-1}\mathrm{d}\Gamma^{(J)}_{B(x_0,R+r)}(\phi)\nonumber\\
&\le \frac{1}{2C_1}\int_{B(x_0,R+r)}\lvert \phi\rvert^p \mathrm{d}\Gamma_{B(x_0,R+r)}^{(J)}\left(\overline{u}^{\frac{\alpha+p-1}{p}}\right)+\frac{C_2}{\Upsilon(r)}\int_{B(x_0,R+r)}\overline{u}^{\alpha+p-1} \mathrm{d}m\nonumber\\
&\le \frac{1}{2C_1}I_3+\frac{C_2}{\Upsilon(r)}\int_{B(x_0,R+r)}\overline{u}^{\alpha+p-1} \mathrm{d}m,\label{eq_wEHI_J6}
\end{align}
which gives
$$I_3\le \frac{2C_1C_2}{\Upsilon(r)}\int_{B(x_0,R+r)}\overline{u}^{\alpha+p-1} \mathrm{d}m-\frac{4\lvert \alpha+p-1\rvert^p}{p^p\lvert \alpha\rvert}I_1.$$
Combining this with (\ref{eq_wEHI_J1})--(\ref{eq_wEHI_J6}), we have
\begin{align*}
&\int_{B(x_0,R+r)}\mathrm{d}\Gamma_{B(x_0,R+r)}^{(J)}\left(\phi \overline{u}^{\frac{\alpha+p-1}{p}}\right)\\
&\lesssim \int_{B(x_0,R+r)}\lvert \phi\rvert^p\mathrm{d}\Gamma_{B(x_0,R+r)}^{(J)}\left(\overline{u}^{\frac{\alpha+p-1}{p}}\right)+\int_{B(x_0,R+r)}\overline{u}^{{\alpha+p-1}}\mathrm{d}\Gamma_{B(x_0,R+r)}^{(J)}\left(\phi \right)\\
&\lesssim I_3+\frac{1}{\Upsilon(r)}\int_{B(x_0,R+r)}\overline{u}^{\alpha+p-1} \mathrm{d}m\\
&\lesssim \frac{1}{\Upsilon(r)}\int_{B(x_0,R+r)}\overline{u}^{\alpha+p-1} \mathrm{d}m-\frac{\lvert \alpha+p-1\rvert^p}{\lvert \alpha\rvert}I_1\\
&=\frac{1}{\Upsilon(r)}\int_{B(x_0,R+r)}\overline{u}^{\alpha+p-1} \mathrm{d}m+\frac{\lvert \alpha+p-1\rvert^p}{\lvert \alpha\rvert}\left(2I_2-\mathcal{E}^{(J)}(\overline{u};v)\right)\\
&\lesssim \lvert \alpha\rvert^{p-1}\left(\frac{R}{r}\right)^{\theta}\frac{1}{\Upsilon(r)}\int_{B(x_0,R+r)}\overline{u}^{\alpha+p-1} \mathrm{d}m.
\end{align*}
Noting that
\begin{align*}
&\mathcal{E}^{(J)}\left(\phi \overline{u}^{\frac{\alpha+p-1}{p}}\right)\\
&=\int_{B(x_0,R+r)}\mathrm{d}\Gamma_{B(x_0,R+r)}^{(J)}\left(\phi \overline{u}^{\frac{\alpha+p-1}{p}}\right)+2\int_{B(x_0,R+r)}\mathrm{d}\Gamma_{X\backslash B(x_0,R+r)}^{(J)}\left(\phi \overline{u}^{\frac{\alpha+p-1}{p}}\right),
\end{align*}
where by \ref{eq_KJ_tail}, we have
\begin{align*}
&\int_{B(x_0,R+r)}\mathrm{d}\Gamma_{X\backslash B(x_0,R+r)}^{(J)}\left(\phi \overline{u}^{\frac{\alpha+p-1}{p}}\right)\\
&=\int_{B(x_0,R+\frac{1}{2}r)}\int_{X\backslash B(x_0,R+r)}\phi(x)^p \overline{u}(x)^{\alpha+p-1}K^{(J)}(x,y)m(\mathrm{d}x)m(\mathrm{d}y)\\
&\le\int_{B(x_0,R+\frac{1}{2}r)}\overline{u}(x)^{\alpha+p-1}\left(\int_{X\backslash B(x,\frac{1}{2}r)} K^{(J)}(x,y)m(\mathrm{d}y)\right)m(\mathrm{d}x)\\
&\lesssim\frac{1}{\Upsilon(r)}\int_{B(x_0,R+\frac{1}{2}r)}\overline{u}^{\alpha+p-1}\mathrm{d}m\le \frac{1}{\Upsilon(r)}\int_{B(x_0,R+r)}\overline{u}^{\alpha+p-1}\mathrm{d}m,
\end{align*}
hence
$$\mathcal{E}^{(J)}\left(\phi \overline{u}^{\frac{\alpha+p-1}{p}}\right)\lesssim \lvert \alpha\rvert^{p-1}\left(\frac{R}{r}\right)^{\theta}\frac{1}{\Upsilon(r)}\int_{B(x_0,R+r)}\overline{u}^{\alpha+p-1} \mathrm{d}m.$$
Since $\phi \overline{u}^{\frac{\alpha+p-1}{p}}\in \mathcal{F}^{(J)}(B(x_0,R+r))$, by Proposition \ref{prop_SobolevJ}, we have
\begin{align*}
\left(\int_{B(x_0,R+r)}\lvert \phi \overline{u}^{\frac{\alpha+p-1}{p}}\rvert^{p\kappa}\mathrm{d}m\right)^{\frac{1}{\kappa}}\lesssim \frac{\Upsilon(R+r)}{V(x_0,R+r)^{\frac{\kappa-1}{\kappa}}}\mathcal{E}^{(J)}\left(\phi \overline{u}^{\frac{\alpha+p-1}{p}}\right).
\end{align*}
Hence
\begin{align}
&\left(\dashint_{B(x_0,R)}({u}\wedge M+K)^{\kappa(\alpha+p-1)}\mathrm{d}m\right)^{\frac{1}{\kappa}}\nonumber\\
&\le C_3 \lvert \alpha\rvert^{p-1} \left(\frac{R}{r}\right)^{\theta}\frac{\Upsilon(R+r)}{\Upsilon(r)}\dashint_{B(x_0,R+r)}({u}\wedge M+K)^{\alpha+p-1}\mathrm{d}m,\label{eq_wEHI_J_iteration}
\end{align}
for any $\alpha<-(p-1)$, $r\le R$, $K\ge \Upsilon(R)^{\frac{1}{p-1}}\mathrm{Tail}^{(J)}\left((u\wedge M)_-;x_0,R\right)+\Upsilon(r)^{\frac{1}{p-1}}$, where $C_3>0$ is some constant. For any $q>0$ and any $R\in(0,R_0)$. For any $k\ge0$, let $R_k=(\frac{1}{2}+\frac{1}{2^{k+1}})R$ and $q_k=-q\kappa^k$, then $R=R_0>R_1>\ldots>R_k\downarrow \frac{1}{2}R$, and $-q=q_0>q_1>\ldots>q_k\downarrow-\infty$, let
$$J_k=\left(\dashint_{B(x_0,R_k)}\left(u\wedge M+\Upsilon(R)^{\frac{1}{p-1}}\mathrm{Tail}^{(J)}\left((u\wedge M)_-;x_0,R\right)+\Upsilon(R)^{\frac{1}{p-1}}\right)^{q_k}\mathrm{d}m\right)^{\frac{1}{q_k}}.$$
Since $u\ge0$ in $B(x_0,R_0)$, we have
$$\mathrm{Tail}^{(J)}\left((u\wedge M)_-;x_0,R_k\right)=\mathrm{Tail}^{(J)}\left((u\wedge M)_-;x_0,R\right).$$
By (\ref{eq_wEHI_J_iteration}), we have
\begin{align*}
&J_{k+1}^{-q}\le \left(C_3 \left(q\kappa^k+p-1\right)^{p-1}\left(\frac{R_{k+1}}{R_k-R_{k+1}}\right)^{\theta}\frac{\Upsilon(R_k)}{\Upsilon(R_k-R_{k+1})}\right)^{\frac{1}{\kappa^k}}J_k^{-q}\\
&\le C_4^{\frac{1}{\kappa^k}}C_5^{\frac{k}{\kappa^k}}J_k^{-q},
\end{align*}
where $C_4\ge1$ depends only on $p,q,\kappa,C_\Upsilon,C_{VD},C_3$, and $C_5=2^\theta\kappa^{p-1}C_\Upsilon$. By Lemma \ref{lem_ele1}, we have
$$J_k^{-q}\le C_4^{\frac{\kappa}{\kappa-1}}C_5^{\frac{\kappa}{(\kappa-1)^2}}J_0^{-q},$$
that is, $J_k\ge \frac{1}{C_6}J_0$, where $C_6=\left(C_4^{\frac{\kappa}{\kappa-1}}C_5^{\frac{\kappa}{(\kappa-1)^2}}\right)^{\frac{1}{q}}$. Letting $k\to+\infty$, we have
\begin{align*}
&\einf_{B(x_0,\frac{1}{2}R)}(u\wedge M)+\Upsilon(R)^{\frac{1}{p-1}}\mathrm{Tail}^{(J)}\left((u\wedge M)_-;x_0,R\right)+\Upsilon(R)^{\frac{1}{p-1}}\\
&\ge \frac{1}{C_6}\left(\dashint_{B(x_0,R)}\left(u\wedge M+\Upsilon(R)^{\frac{1}{p-1}}\mathrm{Tail}^{(J)}\left((u\wedge M)_-;x_0,R\right)+\Upsilon(R)^{\frac{1}{p-1}}\right)^{-q}\mathrm{d}m\right)^{-\frac{1}{q}}.
\end{align*}
\end{proof}

\begin{proof}[Proof of Proposition \ref{prop_bdy_wEHI_J}]
For notational convenience, we may assume that all functions in $\mathcal{F}^{(J)}$ are quasi-continuous. If $M=0$, then the result is trivial, hence we may assume that $M>0$. Let $A_1,A_2$ be the constants in \hyperlink{eq_CSJ_weak}{$\text{CS}^{(J)}_{\text{weak}}(\Upsilon)$}, and $A=48A_2$, then Lemma \ref{lem_Moser_J} holds with $R_0=AR$ therein. Let
$$K=\Upsilon(R)^{\frac{1}{p-1}}\mathrm{Tail}^{(J)}\left((u\wedge M)_-;x_0,R\right)+\Upsilon(R)^{\frac{1}{p-1}},$$
and $\overline{u}=u\wedge M+K$. We claim that $\log \overline{u}\in \mathrm{BMO}(B(x_0,12R))$ and
$$\lVert \log \overline{u}\rVert_{\mathrm{BMO}(B(x_0,12R))}\le C_1,$$
where $C_1>0$ depends only on $p,C_\Upsilon,C_{VD}$ and the constants in \ref{eq_UPR}, \ref{eq_KJ_tail}, \ref{eq_PIJ}, \hyperlink{eq_CSJ_weak}{$\text{CS}^{(J)}_{\text{weak}}(\Upsilon)$}. Indeed, for any ball $B\subseteq B(x_0,12R)$ with radius $r$, we have $r<24R$ and $A_1B\subseteq A_2B\subseteq B(x_0,AR)$. By \ref{eq_PIJ}, we have
\begin{align}
&\dashint_{B}\lvert \log \overline{u}-(\log \overline{u})_B\rvert \mathrm{d}m\le \left(\dashint_{B}\lvert \log {\overline{u}}-\left(\log {\overline{u}}\right)_B\rvert^p \mathrm{d}m\right)^{\frac{1}{p}}\nonumber\\
&\lesssim\left(\frac{\Upsilon(r)}{m(B)}\int_{B}\int_{B}\lvert \log {\overline{u}(x)}-\log {\overline{u}(y)}\rvert^p K^{(J)}(x,y) m(\mathrm{d}x) m(\mathrm{d}y)\right)^{\frac{1}{p}}.\label{eq_BMO_J1}
\end{align}
By \hyperlink{eq_CSJ_weak}{$\text{CS}^{(J)}_{\text{weak}}(\Upsilon)$}, there exists a cutoff function $\phi\in \mathcal{F}^{(J)}$ for $B\subseteq A_1B$ satisfying the condition therein. Taking $f\equiv1$ in \ref{eq_CSJ_energy}, we have
\begin{equation}\label{eq_BMO_J2}
\int_{A_2B}\int_{A_2B}\lvert \phi(x)-\phi(y)\rvert^p K^{(J)}(x,y)m (\mathrm{d}x)m(\mathrm{d}y)\lesssim \frac{m(A_2B)}{\Upsilon(r)}\lesssim \frac{m(B)}{\Upsilon(r)}.
\end{equation}
By Lemma \ref{lem_ele5}, there exist $C_2,C_3>0$ depending only on $p$ such that
\begin{align}
&\int_{B}\int_{B}\lvert \log {\overline{u}(x)}-\log {\overline{u}(y)}\rvert^p K^{(J)}(x,y) m(\mathrm{d}x) m(\mathrm{d}y)\nonumber\\
&\le\int_{A_2B}\int_{A_2B}\lvert \log {\overline{u}(x)}-\log {\overline{u}(y)}\rvert^p\min\{\phi(x)^p,\phi(y)^p\} K^{(J)}(x,y) m(\mathrm{d}x) m(\mathrm{d}y)\nonumber\\
&\le \frac{C_3}{C_2}\int_{A_2B}\int_{A_2B}\lvert \phi(x)-\phi(y)\rvert^p K^{(J)}(x,y) m(\mathrm{d}x)m(\mathrm{d}y)\nonumber\\
&\hspace{15pt}-\frac{1}{C_2}\int_{A_2B}\int_{A_2B}\lvert \overline{u}(x)-\overline{u}(y)\rvert^{p-2}(\overline{u}(x)-\overline{u}(y))\nonumber\\
&\hspace{50pt}\cdot\left(\phi(x)^p\left(\overline{u}(x)^{1-p}-(M+K)^{1-p}\right)-\phi(y)^p\left(\overline{u}(y)^{1-p}-(M+K)^{1-p}\right)\right)\nonumber\\
&\hspace{50pt}\cdot K^{(J)}(x,y) m(\mathrm{d}x) m(\mathrm{d}y).\label{eq_BMO_J3}
\end{align}
Since $-\Delta_p^{(J)}u\ge-1$ in $B(x_0,AR)\cap\Omega$, by Lemma \ref{lem_cutoff}, we have $-\Delta_p^{(J)}\overline{u}\ge-1$ in $B(x_0,AR)\cap\Omega$. Let
$$v=\phi^p \left(\overline{u}^{1-p}-(M+K)^{1-p}\right)=\phi^p \left(({u}\wedge M+K)^{1-p}-(M+K)^{1-p}\right),$$
then $v\in \mathcal{F}^{(J)}(A_1B\cap\Omega)$ is non-negative, hence
\begin{equation}\label{eq_BMO_J4}
\mathcal{E}^{(J)}(\overline{u};v)\ge -\int_X v\mathrm{d}m\ge-\int_{A_1B}\overline{u}^{1-p}\mathrm{d}m\ge- \frac{m(A_1B)}{\Upsilon(R)}\gtrsim -\frac{m(B)}{\Upsilon(r)}.
\end{equation}
On the other hand, we have
\begin{align}
&\mathcal{E}^{(J)}(\overline{u};v)\nonumber\\
&=\left(\int_{A_2B}\int_{A_2B}+2\int_{A_2B}\int_{X\backslash A_2B}\right)\lvert \overline{u}(x)-\overline{u}(y)\rvert^{p-2}(\overline{u}(x)-\overline{u}(y))\nonumber\\
&\hspace{50pt}\cdot \left(\phi(x)^p\left(\overline{u}(x)^{1-p}-(M+K)^{1-p}\right)-\phi(y)^p\left(\overline{u}(y)^{1-p}-(M+K)^{1-p}\right)\right)\nonumber\\
&\hspace{50pt}\cdot K^{(J)}(x,y) m(\mathrm{d}x) m(\mathrm{d}y)\nonumber\\
&=I_1+2I_2.\label{eq_BMO_J6}
\end{align}
For $I_2$, we have
\begin{align*}
&I_2=\int_{A_1B}\int_{X\backslash A_2B}\lvert \overline{u}(x)-\overline{u}(y)\rvert^{p-2}(\overline{u}(x)-\overline{u}(y))\\
&\hspace{50pt}\cdot\phi(x)^p\left(\overline{u}(x)^{1-p}-(M+K)^{1-p}\right)K^{(J)}(x,y) m(\mathrm{d}x) m(\mathrm{d}y)\\
&\le\int_{A_1B}\int_{X\backslash A_2B}\left( \overline{u}(x)-\overline{u}(y)\right)^{p-1}\overline{u}(x)^{1-p}K^{(J)}(x,y)1_{\overline{u}(x)\ge \overline{u}(y)} m(\mathrm{d}x) m(\mathrm{d}y)\\
&\overset{(\star)}{\scalebox{2}[1]{$\lesssim$}}\int_{A_1B}\int_{X\backslash A_2B}K^{(J)}(x,y) m(\mathrm{d}x) m(\mathrm{d}y)\\
&\hspace{15pt}+\int_{A_1B}\int_{X\backslash A_2B}(u(y)\wedge M)_-^{p-1}\overline{u}(x)^{1-p}K^{(J)}(x,y) m(\mathrm{d}x) m(\mathrm{d}y)\\
&=I_{21}+I_{22}.
\end{align*}
where $(\star)$ follows from the fact that
\begin{align*}
&\left( \overline{u}(x)-\overline{u}(y)\right)^{p-1}1_{\overline{u}(x)\ge \overline{u}(y)}=\left( {u}(x)\wedge M-{u}(y)\wedge M\right)^{p-1}1_{{u}(x)\wedge M\ge {u}(y)\wedge M}\\
&\le2^{p-1} \left(\left({u}(x)\wedge M\right)^{p-1}+\left({u}(y)\wedge M\right)_-^{p-1}\right)\le 2^{p-1} \left(\overline{u}(x)^{p-1}+\left({u}(y)\wedge M\right)_-^{p-1}\right),
\end{align*}
due to $u\ge0$ in $A_1B$. By \ref{eq_KJ_tail}, we have
\begin{align*}
&I_{21}\le\int_{A_1B} \left(\int_{X\backslash B(x,(A_2-A_1)r)}K^{(J)}(x,y)m(\mathrm{d}y)\right)m(\mathrm{d}x)\lesssim \frac{m(A_1B)}{\Upsilon(r)}\lesssim \frac{m(B)}{\Upsilon(r)}.
\end{align*}
Since $u\ge0$ in $B(x_0,AR)$, we have
\begin{align*}
I_{22}=\int_{A_1B}\int_{X\backslash B(x_0,AR)}(u(y)\wedge M)_-^{p-1}\overline{u}(x)^{1-p}K^{(J)}(x,y) m(\mathrm{d}x) m(\mathrm{d}y).
\end{align*}
For any $x\in A_1B\subseteq B(x_0,36A_2R)$ and $y\in X\backslash B(x_0,AR)=X\backslash B(x_0,48A_2R)$, by the triangle inequality, we have $\frac{4}{7}d(x,y)\le d(x_0,y)\le 4d(x,y)$, then by \ref{eq_UPR}, we have
$$K^{(J)}(x,y)\lesssim \left(\frac{d(x_0,y)}{d(x,y)}\vee1\right)^\theta K^{(J)}(x_0,y)\lesssim K^{(J)}(x_0,y),$$
hence
\begin{align*}
&I_{22}\lesssim \left(\int_{A_1B}\overline{u}^{1-p}\mathrm{d}m\right)\left(\int_{X\backslash B(x_0,AR)}\left(u(y)\wedge M\right)_-^{p-1}K^{(J)}(x_0,y)m(\mathrm{d}y)\right)\\
&\overset{(\dagger)}{\scalebox{2}[1]{$=$}}\left(\int_{A_1B}\overline{u}^{1-p}\mathrm{d}m\right)\left(\int_{X\backslash B(x_0,R)}\left(u(y)\wedge M\right)_-^{p-1}K^{(J)}(x_0,y)m(\mathrm{d}y)\right)\\
&=\mathrm{Tail}^{(J)}((u\wedge M)_-;x_0,R)^{p-1}\int_{A_1B}\overline{u}^{1-p}\mathrm{d}m\overset{(\ast)}{\scalebox{2}[1]{$\le$}}\frac{m(A_1B)}{\Upsilon(R)}\lesssim \frac{m(B)}{\Upsilon(r)},
\end{align*}
where $(\dagger)$ follows from the fact that $u\ge0$ in $B(x_0,AR)$, and $(\ast)$ follows from the fact that
\begin{align*}
\Upsilon(R)^{\frac{1}{p-1}}\mathrm{Tail}^{(J)}\left((u\wedge M)_-;x_0,R\right)\le K\le \overline{u}\text{ in }A_1B.
\end{align*}
Therefore, we have
\begin{align*}
I_2\lesssim I_{21}+I_{22}\lesssim \frac{m(B)}{\Upsilon(r)}.
\end{align*}
Combining this with (\ref{eq_BMO_J2})--(\ref{eq_BMO_J6}), we have
\begin{align*}
&\int_{B}\int_{B}\lvert \log {\overline{u}(x)}-\log {\overline{u}(y)}\rvert^p K^{(J)}(x,y) m(\mathrm{d}x) m(\mathrm{d}y)\\
&\lesssim \frac{m(B)}{\Upsilon(r)}-I_1=\frac{m(B)}{\Upsilon(r)}+2I_2-\mathcal{E}^{(J)}(\overline{u};v)\lesssim \frac{m(B)}{\Upsilon(r)}.
\end{align*}
By (\ref{eq_BMO_J1}), we have
\begin{align*}
\dashint_{B}\lvert \log \overline{u}-(\log \overline{u})_B\rvert \mathrm{d}m\le C_{1},
\end{align*}
where $C_1>0$ is some constant. By Lemma \ref{lem_crossover}, there exist $C_{4},C_{5}>0$ such that
\begin{align*}
&\left(\dashint_{B(x_0,R)}\exp \left(\frac{C_{5}}{2C_1}\log \overline{u}\right)\mathrm{d}m\right)\left(\dashint_{B(x_0,R)}\exp \left(-\frac{C_{5}}{2C_1}\log \overline{u}\right)\mathrm{d}m\right)\le \left(C_{4}+1\right)^2.
\end{align*}
Let $q=\frac{C_{5}}{2C_1}$, then
\begin{align}
&\left(\dashint_{B(x_0,R)}\left({u}\wedge M+\Upsilon(R)^{\frac{1}{p-1}}\mathrm{Tail}^{(J)}\left((u\wedge M)_-;x_0,R\right)+\Upsilon(R)^{\frac{1}{p-1}}\right)^q\mathrm{d}m\right)\nonumber\\
&\cdot\left(\dashint_{B(x_0,R)}\left({u}\wedge M+\Upsilon(R)^{\frac{1}{p-1}}\mathrm{Tail}^{(J)}\left((u\wedge M)_-;x_0,R\right)+\Upsilon(R)^{\frac{1}{p-1}}\right)^{-q}\mathrm{d}m\right)\nonumber\\
&\le \left(C_{4}+1\right)^2.\label{eq_BMO_J7}
\end{align}
Since $u\ge0$ in $B(x_0,AR)$, we have
\begin{align*}
\mathrm{Tail}^{(J)}\left((u\wedge M)_-;x_0,R\right)=\mathrm{Tail}^{(J)}\left((u\wedge M)_-;x_0,AR\right).
\end{align*}
Combining this with (\ref{eq_BMO_J7}) and Lemma \ref{lem_Moser_J}, we have
\begin{align*}
&\einf_{B(x_0,\frac{1}{2}R)}(u\wedge M)+\Upsilon(R)^{\frac{1}{p-1}}\mathrm{Tail}^{(J)}\left((u\wedge M)_-;x_0,AR\right)+\Upsilon(R)^{\frac{1}{p-1}}\\
&\ge \einf_{B(x_0,\frac{1}{2}R)}(u\wedge M)+\Upsilon(R)^{\frac{1}{p-1}}\mathrm{Tail}^{(J)}\left((u\wedge M)_-;x_0,R\right)+\Upsilon(R)^{\frac{1}{p-1}}\\
&\gtrsim\left(\dashint_{B(x_0,R)}\left(u\wedge M+\Upsilon(R)^{\frac{1}{p-1}}\mathrm{Tail}^{(J)}\left((u\wedge M)_-;x_0,R\right)+\Upsilon(R)^{\frac{1}{p-1}}\right)^q\mathrm{d}m\right)^{\frac{1}{q}}\\
&\ge\left(\dashint_{B(x_0,R)}\left(u\wedge M\right)^q\mathrm{d}m\right)^{\frac{1}{q}}.
\end{align*}
\end{proof}

Using the two weak Harnack inequalities established above, we prove the oscillation inequalities. In comparison with the local case, namely Propositions \ref{prop_int_osc_L} and \ref{prop_bdy_osc_L}, special care must be taken with the tail terms. Our proof follows the induction argument of \cite[Lemma 5.1]{DiKP16}, in particular the reasoning leading to Equation (5.6) therein.

We need the following result for preparation; see also \cite[Lemma 4.2]{DiKP14}, \cite[Lemma 2.6]{CKW19}.

\begin{lemma}\label{lem_tail_pm}
There exists $C>0$ such that for any ball $B=B(x_0,R)$, let $u\in \widehat{\mathcal{F}}^{(J)}\cap L^\infty(X;m)$ satisfy $-\Delta_p^{(J)}u\ge-1$ in $B$, then we have
$$\Upsilon(R)^{\frac{1}{p-1}}\mathrm{Tail}^{(J)}(u_+;x_0,R)\le C \left(\lVert u\rVert_{L^\infty(B;m)}+\Upsilon(R)^{\frac{1}{p-1}}\mathrm{Tail}^{(J)}(u_-;x_0,R)+\Upsilon(R)^{\frac{1}{p-1}}\right).$$
\end{lemma}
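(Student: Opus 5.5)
We will test the supersolution inequality against a cutoff function supported in $B$. Set $r=R/2$ and take a cutoff $\phi\in\mathcal{F}^{(J)}$ for $B(x_0,r/2)\subseteq B(x_0,r)$, say from Proposition \ref{prop_CSJ_self}, with $\mathcal{E}^{(J)}(\phi)\lesssim V(x_0,R)/\Upsilon(R)$. This energy bound comes from taking $f\equiv1$ and using \ref{eq_KJ_tail}, as in Lemma \ref{lem_CSJ2ucapJ}. Since $\phi\ge0$ and $\phi\in\mathcal{F}^{(J)}(B)$, the hypothesis $-\Delta_p^{(J)}u\ge-1$ gives
\[
\mathcal{E}^{(J)}(u;\phi)\ge-\int_X\phi\,\mathrm{d}m\ge -V(x_0,R).
\]
Write $L=\lVert u\rVert_{L^\infty(B;m)}$. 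We split $\mathcal{E}^{(J)}(u;\phi)=I_1+2I_2$, where $I_1$ is the double integral over $B\times B$ and $I_2$ is the integral over $x\in B(x_0,r)$, $y\in X\backslash B$. The integrand of $I_2$ is $\lvert u(x)-u(y)\rvert^{p-2}(u(x)-u(y))\phi(x)K^{(J)}(x,y)$.

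For $I_1$, we use $\lvert u(x)-u(y)\rvert\le 2L$ and Hölder's inequality, which give $\lvert I_1\rvert\lesssim L^{p-1}\int_B\int_B\lvert\phi(x)-\phi(y)\rvert K^{(J)}\,\mathrm{d}m\,\mathrm{d}m$. Since the integrand vanishes unless $x$ or $y$ lies in $B(x_0,r)$, another application of Hölder's inequality together with $\mathcal{E}^{(J)}(\phi)\lesssim V/\Upsilon(R)$ and \ref{eq_KJ_tail} (for the measure of the pairing region) yields $\lvert I_1\rvert\lesssim L^{p-1}V(x_0,R)/\Upsilon(R)$. Alternatively, one can use a Hölder cutoff from Theorem \ref{thm_equiv_J}, but the capacity-type bound should suffice.

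The main step is $I_2$. For $y\notin B$ with $u(y)>L$, the term $(u(x)-u(y))^{p-1}$ is negative, and we have $(u(y)-u(x))^{p-1}\ge c_p u_+(y)^{p-1}-C_pL^{p-1}$. Points with $u(y)<0$ contribute at most $C(L^{p-1}+u_-(y)^{p-1})$. Restricting $x$ to $B(x_0,r/2)$, where $\phi=1$, we then get
\[
I_2\le -c\int_{B(x_0,r/2)}\int_{X\backslash B}u_+(y)^{p-1}K^{(J)}(x,y)+C\int_{B(x_0,r)}\int_{X\backslash B}\bigl(L^{p-1}+u_-(y)^{p-1}\bigr)K^{(J)}(x,y).
\]
To compare $K^{(J)}(x,y)$ with $K^{(J)}(x_0,y)$, we use \ref{eq_UPR} in both directions. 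For $x\in B(x_0,r)$ and $y\notin B$ we have $d(x_0,y)\asymp d(x,y)$, so $K^{(J)}(x,y)\asymp K^{(J)}(x_0,y)$; the upper bound is \ref{eq_UPR} applied at base point $x_0$ after symmetry, and the lower bound is \ref{eq_UPR} with the roles exchanged and base point $y$. This is the step I expect to need the most care. With it, the negative term is $\lesssim -V(x_0,R)\,\mathrm{Tail}^{(J)}(u_+;x_0,R)^{p-1}$, and the positive terms are bounded by $V(x_0,R)\bigl(L^{p-1}/\Upsilon(R)+\mathrm{Tail}^{(J)}(u_-;x_0,R)^{p-1}\bigr)$, using \ref{eq_KJ_tail} for the $L^{p-1}$ term.

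Combining the lower bound on $\mathcal{E}^{(J)}(u;\phi)$ with the bounds on $I_1$ and $I_2$ and dividing by $V(x_0,R)$, we obtain
\[
\mathrm{Tail}^{(J)}(u_+;x_0,R)^{p-1}\lesssim \frac{L^{p-1}}{\Upsilon(R)}+\mathrm{Tail}^{(J)}(u_-;x_0,R)^{p-1}+1.
\]
Multiplying by $\Upsilon(R)$ and taking $(p-1)$-th roots gives the claim.
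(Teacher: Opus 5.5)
\textbf{The gap is in your estimate of $I_1$.} After bounding $\lvert u(x)-u(y)\rvert^{p-1}\le(2L)^{p-1}$, you are left with the $L^1$-type quantity $\int_B\int_B\lvert\phi(x)-\phi(y)\rvert K^{(J)}(x,y)\,m(\mathrm{d}x)m(\mathrm{d}y)$. This is not controlled by $\mathcal{E}^{(J)}(\phi)$ and is typically $+\infty$.

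\begin{itemize}
\item \emph{Your H\"older step.} It needs the $K^{(J)}$-measure of the set $\{(x,y)\in B\times B:\phi(x)\ne\phi(y)\}$ to be finite. But \ref{eq_KJ_tail} only controls $K^{(J)}(x,\cdot)$ \emph{outside} balls $B(x,\rho)$. The region where $\phi$ varies contains pairs arbitrarily close to the diagonal, where the kernel is not integrable.
\item \emph{A counterexample within the scope of the lemma.} Take $\mathbb{R}^n$ with $K^{(J)}(x,y)\asymp\lvert x-y\rvert^{-n-\beta}$ and $\beta\in[1,p)$. Then $\iint_{B\times B}\lvert\phi(x)-\phi(y)\rvert K^{(J)}=+\infty$ for every $\phi$ that is non-constant on $B$, in particular for every cutoff.
\item \emph{A H\"older cutoff does not rescue it.} We have $\int_{B(x,r)}(\lvert x-y\rvert/r)^{\delta}\lvert x-y\rvert^{-n-\beta}\,\mathrm{d}y=+\infty$ whenever $\delta\le\beta$.
\end{itemize}

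The underlying problem is that replacing $\lvert u(x)-u(y)\rvert^{p-1}$ by $L^{p-1}$ discards exactly the factor that would make the near-diagonal integral converge. Keeping it would require a bound on the energy of $u$ over $B\times B$, i.e.\ a Caccioppoli estimate. Testing with $\phi$ (or $\phi^p$) cannot supply one, because $I_1$ has no sign.

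\textbf{The missing idea is a test function with Caccioppoli structure that still sees the tail of $u_+$.} The paper tests with $\phi^pw$, where $w=u-2L$, so $-3L\le w\le-L$ in $B$. Here $\phi$ is a cutoff for $\frac{1}{2}B\subseteq\frac{3}{4}B$ with $\mathcal{E}^{(J)}(\phi)\lesssim m(B)/\Upsilon(R)$.
\begin{itemize}
\item \emph{The right-hand side.} Since $\phi^pw\le0$, the supersolution inequality gives $\mathcal{E}^{(J)}(u;\phi^pw)\le-\int_X\phi^pw\,\mathrm{d}m\le3L\,m(B)$.
\item \emph{The part on $B\times B$.} Lemma \ref{lem_ele6} bounds the integrand from below by $-C(\lvert w(x)\rvert^p+\lvert w(y)\rvert^p)\lvert\phi(x)-\phi(y)\rvert^p\ge-C'L^p\lvert\phi(x)-\phi(y)\rvert^p$, after dropping a nonnegative energy term. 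This part is therefore $\gtrsim-L^pm(B)/\Upsilon(R)$. It is finite precisely because $\phi$ now enters through $\lvert\phi(x)-\phi(y)\rvert^p$.
\item \emph{The part on $B\times(X\backslash B)$.} The factor $-w(x)\in[L,3L]$ yields a lower bound
$$c\,L\,m(B)\,\mathrm{Tail}^{(J)}(u_+;x_0,R)^{p-1}-C\,L\,m(B)\,\mathrm{Tail}^{(J)}(u_-;x_0,R)^{p-1}-C\,L^pm(B)/\Upsilon(R).$$
\item \emph{Conclusion.} Dividing everything by $L$ gives the claim. The case $L=0$ follows by applying this to $u+\varepsilon$ and letting $\varepsilon\downarrow0$.
\end{itemize}

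Your treatment of the exterior integral carries over to this setting, including the comparison $K^{(J)}(x,y)\asymp K^{(J)}(x_0,y)$. However, both directions of that comparison come from \ref{eq_UPR} with base point $y$, using symmetry of the kernel. Base point $x_0$ only compares values of $K^{(J)}(x_0,\cdot)$.
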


\begin{proof}
Firstly, we assume that $k=\lVert u\rVert_{L^\infty(B;m)}>0$. Let $w=u-2k$, then $-3k\le w\le0$ in $B$. By \ref{eq_VD}, \ref{eq_ucapJ}, there exists a cutoff function $\phi\in \mathcal{F}^{(J)}$ for $\frac{1}{2}B\subseteq \frac{3}{4}B$ such that $\mathcal{E}^{(J)}(\phi)\lesssim \frac{m(B)}{\Upsilon(R)}$, then $\phi^p w\in \mathcal{F}^{(J)}(\frac{3}{4}B)\subseteq \mathcal{F}^{(J)}(B)$ is \emph{non-positive}. By assumption, we have
\begin{equation}\label{eq_tail_pm1}
\mathcal{E}^{(J)}(w;\phi^p w)=\mathcal{E}^{(J)}(u;\phi^p w)\le-\int_{X}\phi^p w \mathrm{d}m\le 3k m(B),
\end{equation}
where
\begin{align}
&\mathcal{E}^{(J)}(w;\phi^p w)\nonumber\\
&=\left(\int_{B}\int_{B}+2\int_{B}\int_{X\backslash B}\right)\lvert w(x)-w(y)\rvert^{p-2}(w(x)-w(y))\left(\phi(x)^pw(x)-\phi(y)^pw(y)\right)\nonumber\\
&\hspace{100pt}\cdot K^{(J)}(x,y)m(\mathrm{d}x)m(\mathrm{d}y)\nonumber\\
&=I_1+2I_2.\label{eq_tail_pm2}
\end{align}
For $I_1$, for $m$-a.e. $x,y\in B$, we have $\lvert w(x)\rvert,\lvert w(y)\rvert\le 3k$, then by Lemma \ref{lem_ele6}, there exists $C>0$ depending only on $p$, such that
\begin{align*}
&\lvert w(x)-w(y)\rvert^{p-2}(w(x)-w(y))\left(\phi(x)^pw(x)-\phi(y)^pw(y)\right)\\
&\ge \frac{1}{4}\lvert w(x)-w(y)\rvert^p\left(\phi(x)^p+\phi(y)^p\right)-{C}\left(\lvert w(x)\rvert^p+\lvert w(y)\rvert^p\right)\lvert \phi(x)-\phi(y)\rvert^p\\
&\ge-{C}\cdot2\cdot(3k)^p \lvert \phi(x)-\phi(y)\rvert^p=-2\cdot 3^pCk^p\lvert \phi(x)-\phi(y)\rvert^p,
\end{align*}
hence
\begin{align}
I_1\gtrsim -k^p\int_B\int_{B}\lvert \phi(x)-\phi(y)\rvert^pK^{(J)}(x,y)m(\mathrm{d}x)m(\mathrm{d}y)\ge-k^p \mathcal{E}^{(J)}(\phi)\gtrsim-k^p \frac{m(B)}{\Upsilon(R)}.\label{eq_tail_pm3}
\end{align}
For $I_2$, we have
\begin{align*}
I_2=\int_{\frac{3}{4}B}\int_{X\backslash B}\lvert w(x)-w(y)\rvert^{p-2}(w(x)-w(y))\phi(x)^pw(x)K^{(J)}(x,y)m(\mathrm{d}x)m(\mathrm{d}y).
\end{align*}
For $m$-a.e. $x\in \frac{3}{4}B$, we have $-k\le u(x)\le k$, hence
\begin{align*}
&\lvert w(x)-w(y)\rvert^{p-2}(w(x)-w(y))w(x)=\lvert u(y)-u(x)\rvert^{p-2}(u(y)-u(x))(2k-u(x))\\
&=\lvert u(y)-u(x)\rvert^{p-2}(u(y)-u(x))_+(2k-u(x))-\lvert u(y)-u(x)\rvert^{p-2}(u(y)-u(x))_-(2k-u(x))\\
&\ge k (u(y)-u(x))_+^{p-1}-3k(u(y)-u(x))_-^{p-1}\ge k (u(y)-k)_+^{p-1}-3k(u(y)-k)_-^{p-1},
\end{align*}
which gives
\begin{align}
&I_2\ge k\int_{\frac{3}{4}B}\int_{X\backslash B}(u(y)-k)_+^{p-1}\phi(x)^pK^{(J)}(x,y)m(\mathrm{d}x)m(\mathrm{d}y)\nonumber\\
&\hspace{15pt}-3k\int_{\frac{3}{4}B}\int_{X\backslash B}(u(y)-k)_-^{p-1}\phi(x)^pK^{(J)}(x,y)m(\mathrm{d}x)m(\mathrm{d}y)\nonumber\\
&=I_{21}-3I_{22}.\label{eq_tail_pm4}
\end{align}
For $I_{21}$, since $u(y)_+^{p-1}\le 2^{p-1}\left((u(y)-k)_+^{p-1}+k^{p-1}\right)$, we have
\begin{align}
&I_{21}\ge k\int_{\frac{1}{2}B}\int_{X\backslash B}(u(y)-k)_+^{p-1}K^{(J)}(x,y)m(\mathrm{d}x)m(\mathrm{d}y)\nonumber\\
&\ge \frac{k}{2^{p-1}}\int_{\frac{1}{2}B}\int_{X\backslash B}u(y)_+^{p-1}K^{(J)}(x,y)m(\mathrm{d}x)m(\mathrm{d}y)-k^p\int_{\frac{1}{2}B}\int_{X\backslash B}K^{(J)}(x,y)m(\mathrm{d}x)m(\mathrm{d}y)\nonumber\\
&=I_{211}-I_{212}.\label{eq_tail_pm5}
\end{align}
For $I_{22}$, since $(u(y)-k)_-^{p-1}\le 2^{p-1}\left(u(y)_-^{p-1}+k^{p-1}\right)$, we have
\begin{align*}
&I_{22}\lesssim k\int_{\frac{3}{4}B}\int_{X\backslash B}u(y)_-^{p-1}K^{(J)}(x,y)m(\mathrm{d}x)m(\mathrm{d}y)+k^p\int_{\frac{3}{4}B}\int_{X\backslash B}K^{(J)}(x,y)m(\mathrm{d}x)m(\mathrm{d}y).
\end{align*}
For any $x\in \frac{3}{4}B$ and $y\in X\backslash B$, by the triangle inequality, we have $\frac{4}{7}d(x,y)\le d(x_0,y)\le 4d(x,y)$, then by \ref{eq_UPR}, we have
$$K^{(J)}(x,y)\asymp K^{(J)}(x_0,y).$$
Hence
\begin{align}
I_{211}&\asymp k\int_{\frac{1}{2}B}\int_{X\backslash B}u(y)_+^{p-1}K^{(J)}(x_0,y)m(\mathrm{d}x)m(\mathrm{d}y)\asymp k{m(B)}\mathrm{Tail}^{(J)}(u_+;x_0,R)^{p-1},\label{eq_tail_pm6}\\
I_{212}&\asymp k^p\int_{\frac{1}{2}B}\int_{X\backslash B}K^{(J)}(x_0,y)m(\mathrm{d}x)m(\mathrm{d}y)\lesssim k^p\frac{m(B)}{\Upsilon(R)},\label{eq_tail_pm7}
\end{align}
and
\begin{align}
&I_{22}\lesssim k\int_{\frac{3}{4}B}\int_{X\backslash B}u(y)_-^{p-1}K^{(J)}(x_0,y)m(\mathrm{d}x)m(\mathrm{d}y)+k^p\int_{\frac{3}{4}B}\int_{X\backslash B}K^{(J)}(x_0,y)m(\mathrm{d}x)m(\mathrm{d}y)\nonumber\\
&\lesssim k {m(B)}\mathrm{Tail}^{(J)}(u_-;x_0,R)^{p-1}+k^p \frac{m(B)}{\Upsilon(R)}.\label{eq_tail_pm8}
\end{align}
Combining (\ref{eq_tail_pm1})--(\ref{eq_tail_pm8}), we have
\begin{align*}
k{m(B)}\mathrm{Tail}^{(J)}(u_+;x_0,R)^{p-1}\lesssim k {m(B)}\mathrm{Tail}^{(J)}(u_-;x_0,R)^{p-1}+k^p \frac{m(B)}{\Upsilon(R)}+km(B),
\end{align*}
that is,
\begin{align*}
k {\Upsilon(R)}\mathrm{Tail}^{(J)}(u_+;x_0,R)^{p-1}\lesssim k^p+k{\Upsilon(R)}\mathrm{Tail}^{(J)}(u_-;x_0,R)^{p-1}+k\Upsilon(R).
\end{align*}
Since $k=\lVert u\rVert_{L^\infty(B;m)}>0$, we have
$${\Upsilon(R)^{\frac{1}{p-1}}}\mathrm{Tail}^{(J)}(u_+;x_0,R)\lesssim\lVert u\rVert_{L^\infty(B;m)}+{\Upsilon(R)^{\frac{1}{p-1}}}\mathrm{Tail}^{(J)}(u_-;x_0,R)+\Upsilon(R)^{\frac{1}{p-1}}.$$

Secondly, we assume that $\lVert u\rVert_{L^\infty(B;m)}=0$. For any $\varepsilon>0$, we have $u+\varepsilon\in \widehat{\mathcal{F}}^{(J)}\cap L^\infty(X;m)$ satisfies $-\Delta_p^{(J)}(u+\varepsilon)\ge-1$ in $B$, and $u+\varepsilon=\varepsilon$ in $B$. By above, we have
\begin{align*}
&{\Upsilon(R)^{\frac{1}{p-1}}}\mathrm{Tail}^{(J)}((u+\varepsilon)_+;x_0,R)\lesssim \varepsilon+{\Upsilon(R)^{\frac{1}{p-1}}}\mathrm{Tail}^{(J)}((u+\varepsilon)_-;x_0,R)+\Upsilon(R)^{\frac{1}{p-1}}\\
&\le \varepsilon+{\Upsilon(R)^{\frac{1}{p-1}}}\mathrm{Tail}^{(J)}(u_-;x_0,R)+\Upsilon(R)^{\frac{1}{p-1}}.
\end{align*}
Letting $\varepsilon\downarrow0$, by Fatou's lemma, we have
$${\Upsilon(R)^{\frac{1}{p-1}}}\mathrm{Tail}^{(J)}(u_+;x_0,R)\lesssim{\Upsilon(R)^{\frac{1}{p-1}}}\mathrm{Tail}^{(J)}(u_-;x_0,R)+\Upsilon(R)^{\frac{1}{p-1}}.$$
\end{proof}

We give the proofs of Propositions \ref{prop_int_osc_J} and \ref{prop_bdy_osc_J} as follows.

\begin{proof}[Proof of Proposition \ref{prop_int_osc_J}]
Let $q>0$, $A_1>1$ be the constants appearing in Proposition \ref{prop_int_wEHI_J}. Let $a>1$ be chosen sufficiently large (to be specified later) such that $A=2aA_1$ satisfies $A^{\beta_\Upsilon^{(1)}}\ge 2^p$, where $\beta_\Upsilon^{(1)}$ is a constant appearing in (\ref{eq_beta12}). For any $k\ge0$, let $R_k=\frac{1}{A^k}R$ and $B_k=B(x_0,R_k)$. Let $\alpha\in[\frac{1}{2},1)$ be chosen later and
$$K=2\lVert u\rVert_{L^\infty(B(x_0,R);m)}+{\Upsilon(R)^{\frac{1}{p-1}}}\mathrm{Tail}^{(J)}(u_-;x_0,R)+\Upsilon(R)^{\frac{1}{p-1}}.$$
The proof proceeds by choosing suitable $a$ and $\alpha$, and applying induction to show that, for any $k\ge0$, we have
\begin{equation}\label{eq_int_osc_J}
\eosc_{B_k}u\le \alpha^k K=\alpha^k \left(2\lVert u\rVert_{L^\infty(B_0;m)}+{\Upsilon(R)^{\frac{1}{p-1}}}\mathrm{Tail}^{(J)}(u_-;x_0,R)+\Upsilon(R)^{\frac{1}{p-1}}\right).
\end{equation}

To begin with, using the a priori bounds on $a$ and $\alpha$, we have
\begin{equation}\label{eq_aalpha1}
\alpha^{p-1}A^{\beta_\Upsilon^{(1)}}\ge \frac{1}{2^{p-1}}2^p=2.
\end{equation}
We emphasize that, in the following proof, the implicit constants in $\lesssim$ and $\asymp$ are \emph{independent} of $\alpha$ and $A$ (or, equivalently, $a$). For $k=0$, the result is obvious. Assume that the result holds for $j=0,\ldots,k$. For $k+1$, let $v=\esup_{B_k}u-u$, then $v\in \widehat{\mathcal{F}}^{(J)}\cap L^\infty(X;m)$ is non-negative in $B_k$ and satisfies $-\Delta_p^{(J)}v=-f\ge-1$ in $B_k$. Since $B_k=2aA_1B_{k+1}\supseteq 2A_1B_{k+1}$, by Proposition \ref{prop_int_wEHI_J}, we have
\begin{align*}
\left(\dashint_{2B_{k+1}}v^q \mathrm{d}m\right)^{\frac{1}{q}}\lesssim \einf_{B_{k+1}}v+{\Upsilon(2R_{k+1})^{\frac{1}{p-1}}}\mathrm{Tail}^{(J)}\left(v_-;x_0,2A_1R_{k+1}\right)+\Upsilon(2R_{k+1})^{\frac{1}{p-1}},
\end{align*}
where
$$\einf_{B_{k+1}}v=\esup_{B_{k}}u-\esup_{B_{k+1}}u.$$
By (\ref{eq_beta12}) and (\ref{eq_aalpha1}), we have
\begin{align}
&\Upsilon(2R_{k+1})\lesssim \left(\frac{2R_{k+1}}{R}\right)^{\beta^{(1)}_\Upsilon}\Upsilon(R)\asymp \frac{1}{A^{\beta^{(1)}_\Upsilon(k+1)}}\Upsilon(R)=\frac{1}{\left(\alpha^{p-1}A^{\beta^{(1)}_\Upsilon}\right)^{k+1}}\alpha^{(p-1)(k+1)}\Upsilon(R)\nonumber\\
&\le\frac{1}{\alpha^{p-1}A^{\beta^{(1)}_\Upsilon}}\alpha^{(p-1)(k+1)}\Upsilon(R)\le\frac{1}{A^{\beta^{(1)}_\Upsilon}}\left(\alpha^{k}K\right)^{p-1}.\label{eq_UpRk1}
\end{align}
Since $v\ge0$ in $B_k$, we have
\begin{align*}
&\mathrm{Tail}^{(J)}\left(v_-;x_0,2A_1R_{k+1}\right)^{p-1}=\int_{X\backslash 2A_1B_{k+1}}v(y)_-^{p-1}K^{(J)}(x_0,y) m(\mathrm{d}y)\\
&=\int_{X\backslash B_{k}}v(y)_-^{p-1}K^{(J)}(x_0,y) m(\mathrm{d}y)=\left(\sum_{j=0}^{k-1}\int_{B_j\backslash B_{j+1}}+\int_{X\backslash B_0}\right)v(y)_-^{p-1}K^{(J)}(x_0,y) m(\mathrm{d}y).
\end{align*}
By the induction assumption, for any $j=0,\ldots,k-1$, we have
$$v_-=\left(\esup_{B_k}u-u\right)_-\le  \eosc_{B_j}u\le \alpha^jK\text{ in }B_j.$$
By \ref{eq_KJ_tail}, we have
$$\int_{B_j\backslash B_{j+1}}v(y)_-^{p-1}K^{(J)}(x_0,y) m(\mathrm{d}y)\lesssim \frac{1}{\Upsilon(R_{j+1})}\left(\alpha^jK\right)^{p-1}.$$
Since $v_-=(u-\esup_{B_k}u)_+\le u_++\lvert \esup_{B_k}u\rvert\le u_++K$ in $X$, by \ref{eq_KJ_tail}, we have
\begin{align*}
\int_{X\backslash B_0}v(y)_-^{p-1}K^{(J)}(x_0,y) m(\mathrm{d}y)\lesssim \mathrm{Tail}^{(J)}(u_+;x_0,R)^{p-1}+\frac{1}{\Upsilon(R)}K^{p-1}.
\end{align*}
Since $-\Delta_p^{(J)}u=f\ge-1$ in $B(x_0,R)$, by Lemma \ref{lem_tail_pm}, we have
$$\Upsilon(R)^{\frac{1}{p-1}}\mathrm{Tail}^{(J)}(u_+;x_0,R)\lesssim \lVert u\rVert_{L^\infty(B(x_0,R);m)}+\Upsilon(R)^{\frac{1}{p-1}}\mathrm{Tail}^{(J)}(u_-;x_0,R)+\Upsilon(R)^{\frac{1}{p-1}}\le K,$$
which gives
$$\int_{X\backslash B_0}v(y)_-^{p-1}K^{(J)}(x_0,y) m(\mathrm{d}y)\lesssim \frac{1}{\Upsilon(R)}K^{p-1}.$$
Hence
\begin{align}
&\Upsilon(2R_{k+1})\mathrm{Tail}^{(J)}\left(v_-;x_0,2A_1R_{k+1}\right)^{p-1}\nonumber\\
&\lesssim\Upsilon(R_{k+1}) \left(\sum_{j=0}^{k-1} \frac{1}{\Upsilon(R_{j+1})}(\alpha^j K)^{p-1}+\frac{1}{\Upsilon(R)}K^{p-1}\right)\lesssim K^{p-1}\sum_{j=0}^{k}\alpha^{(p-1)j}\frac{\Upsilon(R_{k+1})}{\Upsilon(R_j)}\nonumber\\
&\overset{(\diamond)}{\scalebox{2}[1]{$\lesssim$}}K^{p-1}\sum_{j=0}^{k}\alpha^{(p-1)j}\left(\frac{R_{k+1}}{R_j}\right)^{\beta_\Upsilon^{(1)}}\asymp K^{p-1}\frac{1}{A^{\beta^{(1)}_\Upsilon(k+1)}}\sum_{j=0}^{k}\left(\alpha^{p-1}A^{\beta_\Upsilon^{(1)}}\right)^j\nonumber\\
&\overset{(\star)}{\scalebox{2}[1]{$\le$}}K^{p-1}\frac{1}{A^{\beta^{(1)}_\Upsilon(k+1)}}\frac{\left(\alpha^{p-1}A^{\beta_\Upsilon^{(1)}}\right)^{k+1}}{\alpha^{p-1}A^{\beta_\Upsilon^{(1)}}-1}\overset{(\dagger)}{\scalebox{2}[1]{$\le$}}K^{p-1}\frac{\alpha^{(p-1)(k+1)}}{\frac{1}{2}\alpha^{p-1}A^{\beta^{(1)}_\Upsilon}}=\frac{2}{A^{\beta^{(1)}_\Upsilon}}\left(\alpha^k K\right)^{p-1},\label{eq_osc_Tail1}
\end{align}
where $(\diamond)$ follows from (\ref{eq_beta12}), and $(\star)$ and $(\dagger)$ follow from (\ref{eq_aalpha1}). Hence
\begin{align*}
&\left(\dashint_{2B_{k+1}}\left(\esup_{B_k}u-u\right)^q \mathrm{d}m\right)^{\frac{1}{q}}\\
&\lesssim\esup_{B_{k}}u-\esup_{B_{k+1}}u\\
&\hspace{15pt}+\frac{1}{A^{\frac{\beta_\Upsilon^{(1)}}{p-1}}}\alpha^{k}\left(\lVert u\rVert_{L^\infty(B(x_0,R);m)}+\Upsilon(R)^{\frac{1}{p-1}}\mathrm{Tail}^{(J)}(u_-;x_0,R)+\Upsilon(R)^{\frac{1}{p-1}}\right).
\end{align*}
Similarly, by considering $u-\einf_{B_k}u\in \widehat{\mathcal{F}}^{(J)}\cap L^\infty(X;m)$, which is non-negative in $B_k$ and satisfies $-\Delta_p^{(J)}(u-\einf_{B_k}u)=f\ge -1$ in $B_k$, noting that $\left(u-\einf_{B_k}u\right)_-\le u_-+\lvert \einf_{B_k}u\rvert\le u_-+K$ in $X$, we also have
\begin{align*}
&\left(\dashint_{2B_{k+1}}\left(u-\einf_{B_k}u\right)^q \mathrm{d}m\right)^{\frac{1}{q}}\\
&\lesssim \einf_{B_{k+1}}u-\einf_{B_{k}}u\\
&\hspace{15pt}+\frac{1}{A^{\frac{\beta_\Upsilon^{(1)}}{p-1}}}\alpha^{k}\left(\lVert u\rVert_{L^\infty(B(x_0,R);m)}+\Upsilon(R)^{\frac{1}{p-1}}\mathrm{Tail}^{(J)}(u_-;x_0,R)+\Upsilon(R)^{\frac{1}{p-1}}\right).
\end{align*}
Hence there exists $C_1>1$ such that
\begin{align*}
\left(\dashint_{2B_{k+1}}\left(\esup_{B_k}u-u\right)^q \mathrm{d}m\right)^{\frac{1}{q}}&\le C_1 \left(\esup_{B_{k}}u-\esup_{B_{k+1}}u+\frac{1}{A^{\frac{\beta_\Upsilon^{(1)}}{p-1}}}\alpha^{k}K\right),\\
\left(\dashint_{2B_{k+1}}\left(u-\einf_{B_k}u\right)^q \mathrm{d}m\right)^{\frac{1}{q}}&\le C_1 \left(\einf_{B_{k+1}}u-\einf_{B_k}u+\frac{1}{A^{\frac{\beta_\Upsilon^{(1) }}{p-1}}}\alpha^{k}K\right).\\
\end{align*}
By the norm and quasi-norm properties of $L^q$-spaces (see \cite[Equations (1.1.3) and (1.1.4)]{Gra14}), there exists $C_2\ge1$ depending only on $q$ such that
\begin{align*}
&\esup_{B_k}u-\einf_{B_k}u\\
&\le C_2 \left(\left(\dashint_{2B_{k+1}}\left(\esup_{B_k}u-u\right)^q \mathrm{d}m\right)^{\frac{1}{q}}+\left(\dashint_{2B_{k+1}}\left(u-\einf_{B_k}u\right)^q \mathrm{d}m\right)^{\frac{1}{q}}\right)\\
&\le C_1C_2\left(\left(\sup_{B_{k}}u-\einf_{B_{k}}u\right)-\left(\esup_{B_{k+1}}u-\einf_{B_{k+1}}u\right)+\frac{2}{A^{\frac{\beta_\Upsilon^{(1)}}{p-1}}}\alpha^{k}K\right),
\end{align*}
hence
\begin{align*}
\eosc_{B_{k+1}}u\le \frac{C_1C_2-1}{C_1C_2}\eosc_{B_k}u+\frac{2}{A^{\frac{\beta_\Upsilon^{(1)}}{p-1}}}\alpha^{k}K\le\left(\frac{C_1C_2-1}{C_1C_2}+\frac{2}{{A^{\frac{\beta_\Upsilon^{(1)}}{p-1}}}}\right)\alpha^{k}K,
\end{align*}
where the second inequality follows from the induction assumption. First, recall that $A=2aA_1$, choose $a>1$, depending only on $p,\beta_\Upsilon^{(1)},C_1,C_2$, sufficiently large such that $\frac{C_1C_2-1}{C_1C_2}+\frac{2}{{A^{\frac{\beta_\Upsilon^{(1)}}{p-1}}}}<1$. Second, choose $\alpha=\max\{\frac{1}{2},\frac{C_1C_2-1}{C_1C_2}+\frac{2}{{A^{\frac{\beta_\Upsilon^{(1)}}{p-1}}}}\}\in[\frac{1}{2},1)$. Then, $\eosc_{B_{k+1}}u\le \alpha^{k+1}K$, that is, (\ref{eq_int_osc_J}) holds for $k+1$. By induction, (\ref{eq_int_osc_J}) holds for any $k\ge0$.

For any $r\in(0,R]$, there exists a unique integer $k\ge0$ such that $R_{k+1}<r\le R_k$, then by (\ref{eq_int_osc_J}), we have
\begin{align*}
&\eosc_{B(x_0,r)}u\le \eosc_{B_k}u \le \alpha^k K\\
&\le \frac{2}{\alpha}\left(\frac{r}{R}\right)^{\delta}\left(\lVert u\rVert_{L^\infty(B(x_0,R);m)}+\Upsilon(R)^{\frac{1}{p-1}}\mathrm{Tail}^{(J)}(u_-;x_0,R)+\Upsilon(R)^{\frac{1}{p-1}}\right),
\end{align*}
where $\delta=-\log_A {\alpha}>0$.
\end{proof}

\begin{proof}[Proof of Proposition \ref{prop_bdy_osc_J}]
We combine the arguments in the proofs of Proposition \ref{prop_int_osc_J} and Proposition \ref{prop_bdy_osc_L}. Let $q>0$, $A_1>1$ be the constants appearing in Proposition \ref{prop_bdy_wEHI_J}. Let $a>1$ be chosen sufficiently large (to be specified later) such that $A=2aA_1$ satisfies $A^{\beta_\Upsilon^{(1)}}\ge 2^p$, where $\beta^{(1)}_\Upsilon$ is a constant appearing in (\ref{eq_beta12}). For any $k\ge0$, let $R_k=\frac{1}{A^k}R$ and $B_k=B(x_0,R_k)$. Let $\alpha\in[\frac{1}{2},1)$ be chosen later and
$$K=\esup_{X}u+\Upsilon(R)^{\frac{1}{p-1}}.$$
The proof also proceeds by choosing suitable $a$ and $\alpha$, and applying induction to show that, for any $k\ge0$, we have
\begin{equation}\label{eq_bdy_osc_J}
\esup_{B_k}u\le \alpha^k K=\alpha^k \left(\esup_{X}u+\Upsilon(R)^{\frac{1}{p-1}}\right).
\end{equation}

By the a priori bounds on $a$ and $\alpha$, we also have
\begin{equation}\label{eq_aalpha2}
\alpha^{p-1}A^{\beta_\Upsilon^{(1)}}\ge \frac{1}{2^{p-1}}2^p=2.
\end{equation}
We also emphasize that, in the following proof, the implicit constants in $\lesssim$ and $\asymp$ are \emph{independent} of $\alpha$ and $A$ (or, equivalently, $a$). For $k=0$, the result is obvious. Assume that the result holds for $j=0,\ldots,k$. For $k+1$, let $M=\esup_{B_k}u$, then $M\in[0,+\infty)$. Let $v=M-u$, then $v\in \widehat{\mathcal{F}}^{(J)}\cap L^\infty(X;m)$ is non-negative in $B_k$, $\widetilde{v}=M$ q.e. on $B_k\backslash \Omega$, $v\wedge M=v=M-u$ in $X$, $(v\wedge M)_-=(u-M)_+$ in $X$, and satisfies $-\Delta_p^{(J)}v=-f\ge-1$ in $B_k\cap \Omega$. Since $B_k=2aA_1B_{k+1}\supseteq 2A_1B_{k+1}$, by Proposition \ref{prop_bdy_wEHI_J}, we have
\begin{align*}
&\left(\dashint_{2B_{k+1}}(v\wedge M)^q \mathrm{d}m\right)^{\frac{1}{q}}\\
&\lesssim \einf_{B_{k+1}}(v\wedge M)+\Upsilon(2R_{k+1})^{\frac{1}{p-1}}\mathrm{Tail}^{(J)}\left((v\wedge M)_-;x_0,2A_1R_{k+1}\right)+\Upsilon(2R_{k+1})^{\frac{1}{p-1}},
\end{align*}
that is,
\begin{align*}
&\left(\dashint_{2B_{k+1}}\left(M-u\right)^q \mathrm{d}m\right)^{\frac{1}{q}}\\
&\lesssim \einf_{B_{k+1}}\left(M-u\right)+\Upsilon(2R_{k+1})^{\frac{1}{p-1}}\mathrm{Tail}^{(J)}\left((u-M)_+;x_0,2A_1R_{k+1}\right)+\Upsilon(2R_{k+1})^{\frac{1}{p-1}},
\end{align*}
where
$$\einf_{B_{k+1}}\left(M-u\right)=\esup_{B_{k}}u-\esup_{B_{k+1}}u.$$
By (\ref{eq_beta12}) and (\ref{eq_aalpha2}), following the same argument as in (\ref{eq_UpRk1}), we also have
\begin{equation*}
\Upsilon(2R_{k+1})\lesssim \frac{1}{A^{\beta^{(1)}_\Upsilon}}\alpha^{(p-1)k}\Upsilon(R)\le\frac{1}{A^{\beta^{(1)}_\Upsilon}}\left(\alpha^{k}K\right)^{p-1}.
\end{equation*}
Since $u\le M$ in $B_k$, we have
\begin{align*}
&\mathrm{Tail}^{(J)}\left((u-M)_+;x_0,2A_1R_{k+1}\right)^{p-1}\\
&=\int_{X\backslash 2A_1B_{k+1}}(u(y)-M)_+^{p-1} K^{(J)}(x_0,y) m(\mathrm{d}y)=\int_{X\backslash B_{k}}(u(y)-M)_+^{p-1} K^{(J)}(x_0,y) m(\mathrm{d}y)\\
&=\left(\sum_{j=0}^{k-1}\int_{B_j\backslash B_{j+1}}+\int_{X\backslash B_0}\right)(u(y)-M)_+^{p-1} K^{(J)}(x_0,y) m(\mathrm{d}y).
\end{align*}
By the induction assumption, for any $j=0,\ldots,k-1$, we have
$$(u-M)_+\le \esup_{B_j}u\le \alpha^jK\text{ in }B_j.$$
By \ref{eq_KJ_tail}, we have
$$\int_{B_j\backslash B_{j+1}}(u(y)-M)_+^{p-1} K^{(J)}(x_0,y) m(\mathrm{d}y)\lesssim \frac{1}{\Upsilon(R_{j+1})}\left(\alpha^jK\right)^{p-1}.$$
Since $(u-M)_+\le 2\esup_X u\le 2K$ in $X$, by \ref{eq_KJ_tail}, we have
\begin{align*}
&\int_{X\backslash B_0}(u(y)-M)_+^{p-1} K^{(J)}(x_0,y) m(\mathrm{d}y)\lesssim \frac{1}{\Upsilon(R)}K^{p-1}.
\end{align*}
Hence
\begin{align*}
&\Upsilon(2R_{k+1})\mathrm{Tail}^{(J)}\left((u-M)_+;x_0,2A_1R_{k+1}\right)^{p-1}\nonumber\\
&\lesssim\Upsilon(R_{k+1}) \left(\sum_{j=0}^{k-1}\frac{1}{\Upsilon(R_{j+1})}(\alpha^j K)^{p-1} +\frac{1}{\Upsilon(R)}K^{p-1}\right)\nonumber\\
&\overset{(\star)}{\scalebox{2}[1]{$\lesssim$}}\frac{1}{A^{\beta^{(1)}_\Upsilon}}\left(\alpha^kK\right)^{p-1},
\end{align*}
where $(\star)$ follows the same argument as in the proof, starting from the second line and continuing to the end of (\ref{eq_osc_Tail1}). Hence there exists $C_1>1$ such that
$$\left(\dashint_{2B_{k+1}}\left(\esup_{B_k}u-u\right)^q \mathrm{d}m\right)^{\frac{1}{q}}\le C_1 \left(\esup_{B_k}u-\esup_{B_{k+1}}u+\frac{1}{A^{\frac{\beta_\Upsilon^{(1)}}{p-1}}}\alpha^kK\right).$$
Since $X\backslash \Omega$ has a $(c,r_0)$-corkscrew at $x_0$, we have $2B_{k+1}\backslash\Omega$ contains a ball $B$ with radius $2cR_{k+1}$. Since $\esup_{B_k}u-\widetilde{u}=\esup_{B_k}u$ q.e. on $2B_{k+1}\backslash \Omega\supseteq B$, by \ref{eq_VD}, we have
$$\left(\dashint_{2B_{k+1}}\left(\esup_{B_k}u-u\right)^q \mathrm{d}m\right)^{\frac{1}{q}}\ge \left(\frac{m(B)}{m \left(2B_{k+1}\right)}\right)^{\frac{1}{q}}\esup_{B_k}u\ge\frac{1}{C_2}\esup_{B_k}u,$$
where $C_2=C_{VD}^{\frac{2-\log_2c}{q}}\ge1$. Hence
$$\esup_{B_k}\le C_1C_2 \left(\esup_{B_k}u-\esup_{B_{k+1}}u+\frac{1}{A^{\frac{\beta_\Upsilon^{(1)}}{p-1}}}\alpha^kK\right),$$
which gives
$$\esup_{B_{k+1}}u\le \frac{C_1C_2-1}{C_1C_2}\esup_{B_k}u+\frac{1}{A^{\frac{\beta_\Upsilon^{(1)}}{p-1}}}\alpha^kK\le \left(\frac{C_1C_2-1}{C_1C_2}+\frac{1}{A^{\frac{\beta_\Upsilon^{(1)}}{p-1}}}\right)\alpha^kK,$$
where the second inequality follows from the induction assumption. First, recall that $A=2aA_1$, choose $a>1$, depending only on $p,\beta_\Upsilon^{(1)},C_1,C_2$, sufficiently large such that $\frac{C_1C_2-1}{C_1C_2}+\frac{1}{{A^{\frac{\beta_\Upsilon^{(1)}}{p-1}}}}<1$. Second, choose $\alpha=\max\{\frac{1}{2},\frac{C_1C_2-1}{C_1C_2}+\frac{1}{{A^{\frac{\beta_\Upsilon^{(1)}}{p-1}}}}\}\in[\frac{1}{2},1)$. Then, $\esup_{B_{k+1}}u\le \alpha^{k+1}K$, that is, (\ref{eq_bdy_osc_J}) holds for $k+1$. By induction, (\ref{eq_bdy_osc_J}) holds for any $k\ge0$.

For any $r\in(0,R]$, there exists a unique integer $k\ge0$ such that $R_{k+1}<r\le R_k$, then by (\ref{eq_bdy_osc_J}), we have
\begin{align*}
\esup_{B(x_0,r)}u\le \esup_{B_k}u \le \alpha^k K\le \frac{1}{\alpha}\left(\frac{r}{R}\right)^{\delta}\left(\esup_Xu+\Upsilon(R)^{\frac{1}{p-1}}\right),
\end{align*}
where $\delta=-\log_A {\alpha}>0$.
\end{proof}

\section{Proof of \texorpdfstring{``{$\text{CS}^{(J)}_{\text{weak}}\Rightarrow\text{CE}^{(J)}_{\text{strong}}$}"}{CSJweak to CEJstrong} in Theorem \ref{thm_equiv_J}}\label{sec_CS2CE_J}

Throughout this section, we always assume that \ref{eq_VD} holds and that $(\mathcal{E}^{(J)},\mathcal{F}^{(J)})$ is a non-local regular $p$-energy given by a kernel $K^{(J)}$ satisfying \ref{eq_UPR}, \ref{eq_KJ_tail}, \ref{eq_PIJ}, \hyperlink{eq_CSJ_weak}{$\text{CS}^{(J)}_{\text{weak}}(\Upsilon)$}.

We have the following result in the non-local setting, which parallels \cite[Proposition 3.15]{Yan25d} in the local setting. In \cite{Yan25d}, this result was proved via a Wolff potential estimate under the assumption of a suitable elliptic Harnack inequality. In our setting, since \hyperlink{eq_CSJ_weak}{$\text{CS}^{(J)}_{\text{weak}}(\Upsilon)$} holds, certain arguments from the theory of Dirichlet forms (with $p=2$) can be employed.

\begin{proposition}\label{prop_EFJ}
There exists $C>0$ such that for any ball $B=B(x_0,R)$, for any $u\in \mathcal{F}^{(J)}(B)$ satisfying that $-\Delta_p^{(J)}u=1$ in $B$, we have
\begin{align*}
\esup_{B}u&\le C \Upsilon(R)^{\frac{1}{p-1}},\label{eq_EFJ1}\tag*{$\text{E}^{(J)}(\Upsilon)_\le$}\\
\einf_{\frac{1}{4}B}u&\ge \frac{1}{C}\Upsilon(R)^{\frac{1}{p-1}}.\label{eq_EFJ2}\tag*{$\text{E}^{(J)}(\Upsilon)_\ge$}
\end{align*}
\end{proposition}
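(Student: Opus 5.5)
We handle the two bounds separately. Throughout, $u\ge0$, since testing the equation with $u_-\in\mathcal{F}^{(J)}(B)$ gives $\mathcal{E}^{(J)}(u;u_-)\ge0$ against $-\int u_-\mathrm{d}m\le0$. Equivalently, apply Proposition \ref{prop_comparisonJ} with $v=0$.

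\textbf{Upper bound.} We test the equation with $u$ itself. This gives $\mathcal{E}^{(J)}(u)=\int_B u\,\mathrm{d}m$. By Proposition \ref{prop_SobolevJ} and H\"older's inequality,
$$\int_B u\,\mathrm{d}m\le m(B)^{1-\frac{1}{p\kappa}}\lVert u\rVert_{L^{p\kappa}}\lesssim m(B)^{1-\frac1{p\kappa}}\Big(\tfrac{\Upsilon(R)}{m(B)^{(\kappa-1)/\kappa}}\Big)^{1/p}\mathcal{E}^{(J)}(u)^{1/p}.$$
From this we get the energy bound $\mathcal{E}^{(J)}(u)\lesssim \Upsilon(R)^{\frac{1}{p-1}}m(B)$ and the mean bound $\dashint_B u\lesssim\Upsilon(R)^{\frac1{p-1}}$. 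To upgrade the $L^1$ bound to an $L^\infty$ bound, we run a De Giorgi/Moser iteration on the subsolution $u$ with level truncations $(u-k)_+\in\mathcal{F}^{(J)}(B)$. Since $u$ vanishes outside $B$, it has zero tail. The cutoff functions come from Proposition \ref{prop_CSJ_self}, exactly as in Lemma \ref{lem_Moser_J} but with positive exponents. The iteration is done on balls $B(y,\rho)$ covering $B$, treating $u$ as a subsolution of $-\Delta_p^{(J)}u\le1$ in all of $X$. This is legitimate because $u\in\mathcal{F}^{(J)}(B)$ and $u\ge0$: for a nonnegative test function $\varphi$ supported outside $B$, the form $\mathcal{E}^{(J)}(u;\varphi)$ is $\le0$. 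Since $u\equiv0$ off $B$, the tail terms are controlled by $\lVert u\rVert_{L^\infty}$ times small factors and can be absorbed, as in the De Giorgi iteration for non-local operators. Alternatively, one can compare $u$ with a supersolution. I expect the $L^\infty$ bound to be the main technical step.

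\textbf{Lower bound.} Here we use the weak Harnack inequality, Proposition \ref{prop_int_wEHI_J}, applied to $u$ on a ball $B(x_0,R/A')$ with $A'$ large enough that $B(x_0,A\cdot R/A')\subseteq B$. The tail term vanishes because $u\ge0$ everywhere. This yields
$$\einf_{\frac14 B}u\gtrsim \Big(\dashint_{B(x_0,R/A')}u^q\Big)^{1/q}-C\Upsilon(R)^{\frac1{p-1}},$$
which has the wrong sign for a small additive constant. The fix is to use instead Lemma \ref{lem_Moser_J} together with the BMO crossover estimate, with the additive constant $\epsilon\Upsilon(R)^{1/(p-1)}$ rather than $\Upsilon(R)^{1/(p-1)}$. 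For this, redo the $K$-choice in Lemma \ref{lem_Moser_J} with $K\ge\epsilon\Upsilon(r)^{1/(p-1)}$: since $u$ is a supersolution of $-\Delta_p u\ge 0$, the constant contributes only through $\int v\,\mathrm{d}m$, which now has the favorable sign.

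Indeed, $-\Delta_p^{(J)}u=1\ge0$, so $u$ is superharmonic. The weak Harnack inequality then holds without the $\Upsilon^{1/(p-1)}$ term: the term $-\int v$ in (\ref{eq_wEHI_J1}) becomes $\ge0$. Hence $\einf_{\frac14B}u\gtrsim(\dashint_{\frac12B}u^q)^{1/q}$.

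It remains to show that $\dashint_{\frac12 B}u^q\gtrsim\Upsilon(R)^{q/(p-1)}$. For this we test the equation with a cutoff $\phi$ from \hyperlink{eq_CSJ_weak}{$\text{CS}^{(J)}_{\text{weak}}$} for $\frac18B\subseteq\frac14B$. This gives
$$V(x_0,R/8)\lesssim\int\phi\,\mathrm{d}m=\mathcal{E}^{(J)}(u;\phi)\le\mathcal{E}^{(J)}(u)^{\frac{p-1}p}\mathcal{E}^{(J)}(\phi)^{\frac1p}.$$
Combined with $\mathcal{E}^{(J)}(\phi)\lesssim m(B)/\Upsilon(R)$, this shows $\mathcal{E}^{(J)}(u)\gtrsim m(B)\Upsilon(R)^{\frac1{p-1}}$. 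Meanwhile, $\mathcal{E}^{(J)}(u)=\int_Bu\le m(B)\esup_B u$, so the mass of $u$ is comparable to $m(B)\Upsilon(R)^{1/(p-1)}$. To localize this mass to $\frac12B$, test instead with $\phi$ and use the $L^\infty$ bound together with the \ref{eq_KJ_tail} estimate. Then Chebyshev's inequality and the upper bound $u\lesssim\Upsilon(R)^{1/(p-1)}$ give a set of proportional measure in $\frac12B$ on which $u\gtrsim\Upsilon(R)^{1/(p-1)}$. From this the $L^q$ lower bound follows.
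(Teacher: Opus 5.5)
The gap is in the lower bound, at ``to localize this mass to $\frac12B$, test instead with $\phi$ and use the $L^\infty$ bound together with $\text{T}^{(J)}(\Upsilon)$.'' Testing with $\phi$ and applying H\"older controls $\mathcal{E}^{(J)}(u;\phi)$ only through the \emph{global} energy $\mathcal{E}^{(J)}(u)=\int_B u\,\mathrm{d}m$. So what you have actually shown is $\int_B u\,\mathrm{d}m\gtrsim m(B)\Upsilon(R)^{\frac{1}{p-1}}$ over the whole ball. Nothing in your argument prevents this mass from sitting near $\partial B$, where no weak Harnack inequality is available.

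The long-range pairs are harmless. For $x\in\frac14B$ and $y\notin\frac12B$ one has $(u(x)-u(y))_+\le u(x)$, and $\text{T}^{(J)}(\Upsilon)$ bounds their contribution by $\Upsilon(R)^{-1}\int_{\frac14B}u^{p-1}\,\mathrm{d}m$. The near-diagonal part over $\frac12B\times\frac12B$, however, requires a local energy bound, namely a Caccioppoli inequality
\[
\int_{\frac12B}\int_{\frac12B}\lvert u(x)-u(y)\rvert^pK^{(J)}(x,y)\,m(\mathrm{d}x)m(\mathrm{d}y)\lesssim\int_{\frac34B}u\,\mathrm{d}m ,
\]
and this is not in your sketch. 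You can obtain it as follows: test the equation with $\psi^pu\in\mathcal{F}^{(J)}(B)$, apply Lemma \ref{lem_ele6}, absorb $\int u^p\,\mathrm{d}\Gamma^{(J)}(\psi)$ using Proposition \ref{prop_CSJ_self}, and use $\lVert u\rVert_{L^\infty}\lesssim\Upsilon(R)^{\frac{1}{p-1}}$ for the long-range and zero-order terms. With $t=\int_{\frac34B}u\,\mathrm{d}m/(m(B)\Upsilon(R)^{\frac{1}{p-1}})$ you then get $1\lesssim t^{\frac{p-1}{p}}+t^{\min\{1,p-1\}}$, that is, $t\gtrsim1$.

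There is also a smaller omission. Proposition \ref{prop_int_wEHI_J} needs superharmonicity on $B(y,A\rho)$ with a large $A$ (in the paper $A=48A_2$). It therefore gives an infimum only on a ball of radius of order $R/A$, not $\einf_{\frac14B}u\gtrsim(\dashint_{\frac12B}u^q\,\mathrm{d}m)^{1/q}$. Reaching $\frac14B$ needs an extra step, for example comparing $u$ with the solution on $B(y,\frac34R)$ for each $y\in\frac14B$ via Proposition \ref{prop_comparisonJ}.

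For the upper bound, your localized iteration absorbs ``$\lVert u\rVert_{L^\infty}$ times small factors''. This presupposes $u\in L^\infty$, which is not known a priori for $u\in\mathcal{F}^{(J)}(B)$. Also, the tail of $u$ relative to balls inside $B$ is not zero. A global Stampacchia iteration avoids both problems. For $k\ge0$ we have $(u-k)_+\in\mathcal{F}^{(J)}(B)$ and $\mathcal{E}^{(J)}((u-k)_+)\le\mathcal{E}^{(J)}(u;(u-k)_+)=\int(u-k)_+\,\mathrm{d}m$. Proposition \ref{prop_SobolevJ} on $B$ then yields a recursion for $m(B\cap\{u>k\})$ with exponent $\frac{p\kappa-1}{p-1}>1$. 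This is in substance the paper's route, where Faber--Krahn implies $\text{E}^{(J)}(\Upsilon)_\le$.

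The paper's lower bound needs no mass localization at all. Testing with $\phi^p/(u+\lambda)^{p-1}$ (the lemma stated right after this proposition) gives $\int_{\frac12B}(u+\lambda)^{1-p}\,\mathrm{d}m\le C\mathcal{E}^{(J)}(\phi)\lesssim m(B)/\Upsilon(R)$. Hence $\{u<\varepsilon\Upsilon(R)^{\frac{1}{p-1}}\}$ fills only a small fraction of $\frac12B$, and the lemma of growth LG concludes. Your superharmonicity observation fits this well. Once $u\in L^\infty$ is known, take the version of Lemma \ref{lem_Moser_J} for $-\Delta_p^{(J)}u\ge0$ and $u\ge0$ on $X$, with arbitrary $K=\lambda>0$, $q=p-1$ and working radius $R/2$. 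It gives $\einf_{\frac14B}(u+\lambda)\gtrsim\bigl(\dashint_{\frac12B}(u+\lambda)^{1-p}\,\mathrm{d}m\bigr)^{-\frac{1}{p-1}}\gtrsim\Upsilon(R)^{\frac{1}{p-1}}$ directly. This route needs no BMO step, no Paley--Zygmund argument and no Caccioppoli localization.
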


The proof proceeds along the same ideas as that of \cite[Corollary 12.5]{GHH24a}, as follows.

We say that the non-local Faber-Krahn inequality \ref{eq_FKJ} holds if there exist $\nu>0$, $C>0$ such that for any non-empty open subset $\Omega\subseteq B$, for any $u\in \mathcal{F}^{(J)}(\Omega)$, we have
\begin{equation*}\label{eq_FKJ}\tag*{$\text{FK}^{(J)}(\Upsilon)$}
\lVert u\rVert_{L^p(\Omega;m)}^p\le C\Upsilon(R) \left(\frac{m(\Omega)}{m(B)}\right)^\nu \mathcal{E}^{(J)}(u).
\end{equation*}
By \cite[Proposition 3.5]{CY25}, we have
\begin{center}
\ref{eq_CC} + \ref{eq_VD} + \ref{eq_PIJ} $\Rightarrow$ \ref{eq_FKJ},
\end{center}
where \ref{eq_CC} and \ref{eq_VD} together imply condition (RVD) therein. The same argument as in \cite[THEOREM 9.4]{GHL15} (see also \cite[Lemma 12.2]{GHH24a}) yields
\begin{center}
\ref{eq_FKJ} $\Rightarrow$ \ref{eq_EFJ1}.
\end{center}
Hence, we obtain \ref{eq_EFJ1} in Proposition \ref{prop_EFJ}.

We say that the condition \hypertarget{eq_LG}{$\text{LG}$} (Lemma of growth) holds if there exist $\varepsilon,\tau\in(0,1)$ such that for any ball $B=B(x_0,R)$, for any $u\in \widehat{\mathcal{F}}^{(J)}\cap L^\infty(X;m)$ which is superharmonic in $2B$ and non-negative in $X$, for any $a>0$, if
$$\frac{m(B\cap\{u<a\})}{m(B)}\le \varepsilon,$$
then
$$\einf_{\frac{1}{2}B}u\ge \tau a.$$
See also \cite[Lemma 4.4]{CY25} and \cite[Definition 11.1]{GHH24a} for related conditions. Following \cite{GHH24a}, we require that $u$ be non-negative in the whole space $X$. Under this assumption, the tail term in \cite{CY25} does not appear, which is sufficient for our purposes.

By \cite[Lemma 4.4]{CY25} (see also \cite[Lemma 11.2]{GHH24a}), we have
\begin{center}
\ref{eq_VD} + \hyperlink{eq_CSJ_weak}{$\text{CS}^{(J)}_{\text{weak}}(\Upsilon)$} + \ref{eq_FKJ} + \ref{eq_KJ_tail} $\Rightarrow$ \hyperlink{eq_LG}{\text{LG}}.
\end{center}
In the pure non-local setting of \cite{CY25}, condition (SI) therein is equivalent to \ref{eq_FKJ} (see \cite[Theorem 3.8]{CY25}), while condition (TJ) corresponds to \ref{eq_KJ_tail}. Moreover, condition (CS) therein is implied by \hyperlink{eq_CSJ_weak}{$\text{CS}^{(J)}_{\text{weak}}(\Upsilon)$}, which follows from the self-improvement property of \hyperlink{eq_CSJ_weak}{$\text{CS}^{(J)}_{\text{weak}}(\Upsilon)$} (Lemma \ref{prop_CSJ_self}).

The same argument as in \cite[Lemma 12.4]{GHH24a} yields
\begin{center}
\ref{eq_VD} + \hyperlink{eq_LG}{$\text{LG}$} + \ref{eq_ucapJ} $\Rightarrow$ \ref{eq_EFJ2}.
\end{center}
In the argument, the technical result \cite[Proposition 12.3]{GHH24a} (or \cite[Lemma 3.7]{GHH18}) is replaced by the following result (see also \cite[Lemma 4.5]{CY25} for a slightly different statement). Consequently, we obtain \ref{eq_EFJ2} in Proposition \ref{prop_EFJ}.

\begin{lemma}
Let $\Omega\subseteq X$ be a bounded open subset, and let $u\in \widehat{\mathcal{F}}^{(J)}\cap L^\infty(X;m)$ and $\phi\in \mathcal{F}^{(J)}(\Omega)\cap L^\infty(X;m)$ be non-negative in $X$. For any $\lambda>0$, let $u_\lambda=u+\lambda$, then we have $\frac{\phi^p}{u_\lambda^{p-1}}\in \mathcal{F}^{(J)}(\Omega)$ and
$$\mathcal{E}^{(J)}\left(u;\frac{\phi^p}{u_\lambda^{p-1}}\right)\le C \mathcal{E}^{(J)}(\phi),$$
where $C>0$ is some constant depending only on $p$.
\end{lemma}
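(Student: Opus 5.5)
The plan is to imitate the classical Dirichlet-form computation for $\mathcal{E}(u;\phi^2/u_\lambda)$, namely Lemma 3.7 of \cite{GHH18}, with the weight $u_\lambda^{1-p}$ in place of $u_\lambda^{-1}$. Everything reduces to a pointwise inequality for the integrand, followed by integration against $K^{(J)}$.

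\emph{Membership.} Since $u_\lambda\ge\lambda>0$ and $u\in L^\infty$, the map $t\mapsto t^{1-p}$ is Lipschitz and bounded on $[\lambda,\lambda+\|u\|_\infty]$. Hence for $a,b\ge0$ and $s,t$ in that interval we have
$$\lvert a^p s^{1-p}-b^p t^{1-p}\rvert\lesssim_\lambda \lvert a-b\rvert+\lvert s-t\rvert,$$
using boundedness of $\phi$. It follows that $w=\phi^p u_\lambda^{1-p}$ has finite $\mathcal{E}^{(J)}$-energy and vanishes wherever $\phi$ does. To show $w\in\mathcal{F}^{(J)}(\Omega)$, we approximate $\phi$ by $\phi_n\in\mathcal{F}^{(J)}\cap C_c(\Omega)$ and $u$ by truncated continuous approximations. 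An alternative is to use Lemma \ref{lem_FOmega} directly: the quasi-continuous version $\widetilde{w}$ vanishes q.e. off $\Omega$ because $\widetilde{\phi}$ does.

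\emph{Pointwise inequality.} Fix $x,y$ and write $a=u_\lambda(x)$, $b=u_\lambda(y)$, $s=\phi(x)$, $t=\phi(y)$. We need
$$\lvert a-b\rvert^{p-2}(a-b)\left(\frac{s^p}{a^{p-1}}-\frac{t^p}{b^{p-1}}\right)\le C\lvert s-t\rvert^p.$$
By symmetry we assume $a>b$. If $s^p/a^{p-1}\le t^p/b^{p-1}$, the left side is $\le0$ and there is nothing to prove. Otherwise $s>t(a/b)^{(p-1)/p}\ge t$.

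Set $\rho=b/a\in(0,1)$, so that $a-b=a(1-\rho)$. The left side becomes
$$(1-\rho)^{p-1}\bigl(s^p-t^p\rho^{1-p}\bigr).$$
This is the standard logarithmic-type estimate, as in Lemma \ref{lem_ele5} and in Di Castro--Kuusi--Palatucci. Split according to whether $t\le s/2$ or $t>s/2$.
\begin{itemize}
\item If $t\le s/2$, then $\lvert s-t\rvert\ge s/2$, and the expression is at most $s^p\le 2^p\lvert s-t\rvert^p$.
\item If $t>s/2$, use $s^p-t^p\rho^{1-p}\le s^p-t^p-t^p(\rho^{1-p}-1)$. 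We have $s^p-t^p\le p s^{p-1}(s-t)$ and $\rho^{1-p}-1\ge(p-1)(1-\rho)$. Maximizing the resulting expression over $1-\rho\in(0,1)$ via Young's inequality gives a bound $C_p s^{p-1}(s-t)\cdot\bigl((s-t)/t\bigr)^{p-1}\lesssim\lvert s-t\rvert^p$, since $t\asymp s$.
\end{itemize}

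\emph{Conclusion and main obstacle.} Integrating the pointwise inequality against $K^{(J)}(x,y)\,m(\mathrm{d}x)\,m(\mathrm{d}y)$ yields $\mathcal{E}^{(J)}(u;\phi^p/u_\lambda^{p-1})\le C\mathcal{E}^{(J)}(\phi)$ with $C=C(p)$. Here we use that $\mathcal{E}^{(J)}(u;\cdot)$ is given by the integral formula $\iint\lvert u(x)-u(y)\rvert^{p-2}(u(x)-u(y))(w(x)-w(y))K^{(J)}$ for $u\in\widehat{\mathcal{F}}^{(J)}$, and that $u_\lambda$ and $u$ have the same differences. The main obstacle is the second case of the elementary inequality: the constant must not depend on $\rho$ as $\rho\to0$. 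When $t>s/2$ the negative term $-t^p(\rho^{1-p}-1)$ dominates as $\rho\to0$, which should control this regime. I would first try the one-variable optimization in $\sigma=1-\rho$ of $\sigma^{p-1}\bigl(ps^{p-1}(s-t)-(p-1)t^p\sigma\bigr)$, whose maximum is $\lesssim (s-t)^p s^{p(p-1)}/t^{p(p-1)}\lesssim(s-t)^p$.
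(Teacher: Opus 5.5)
Your proposal is correct and follows essentially the paper's route: reduce everything to a pointwise inequality for the integrand of $\mathcal{E}^{(J)}(u;\phi^p u_\lambda^{1-p})$ and then integrate against $K^{(J)}$. The differences are minor. The paper cites the logarithmic inequality of Lemma \ref{lem_ele5} on $\Omega\times\Omega$ and treats $\Omega\times(X\backslash\Omega)$ separately via $(u(x)-u(y))_+^{p-1}\le u_\lambda(x)^{p-1}$, whereas you prove the log-free version of that inequality directly and apply it on all of $X\times X$. Your optimization in $\sigma=1-\rho$ checks out and gives $C=\max\{2^p,2^{p(p-1)}\}$. You also justify membership in $\mathcal{F}^{(J)}(\Omega)$ more explicitly (finite energy plus Lemma \ref{lem_FOmega}) than the paper's brief appeal to the Markovian property.
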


\begin{proof}
By the Markovian property (see also \cite[Proposition A.3]{CY25}), we have $\frac{\phi^p}{u_\lambda^{p-1}}\in \mathcal{F}^{(J)}(\Omega)$. Moreover, we have
\begin{align*}
&\mathcal{E}^{(J)}\left(u;\frac{\phi^p}{u_\lambda^{p-1}}\right)\\
&=\left(\int_{\Omega}\int_{\Omega}+2\int_\Omega\int_{X\backslash \Omega}\right)\lvert u(x)-u(y)\rvert^{p-2} \left(u(x)-u(y)\right) \left(\frac{\phi(x)^p}{u_\lambda(x)^{p-1}}-\frac{\phi(y)^p}{u_\lambda(y)^{p-1}}\right)\\
&\hspace{100pt}\cdot K^{(J)}(x,y) m(\mathrm{d}x)m(\mathrm{d}y)\\
&=I_1+2I_2.
\end{align*}
For $I_1$, since both $u$ and $\phi$ are non-negative in $X$, by Lemma \ref{lem_ele5}, we have
\begin{align*}
&I_1\le -C_1 \int_\Omega\int_\Omega\lvert \log u_\lambda(x)-\log u_\lambda(y)\rvert^p\min\{\phi(x)^p,\phi(y)^p\}K^{(J)}(x,y)m(\mathrm{d}x)m(\mathrm{d}y)\\
&\hspace{20pt}+C_2 \int_\Omega\int_\Omega\lvert \phi(x)-\phi(y)\rvert^pK^{(J)}(x,y)m(\mathrm{d}x)m(\mathrm{d}y)\\
&\le C_2 \int_\Omega\int_\Omega\lvert \phi(x)-\phi(y)\rvert^pK^{(J)}(x,y)m(\mathrm{d}x)m(\mathrm{d}y),
\end{align*}
where $C_1,C_2>0$ depend only on $p$. For $I_2$, we have
\begin{align*}
&I_2=\int_\Omega\int_{X\backslash \Omega}\lvert u(x)-u(y)\rvert^{p-2} \left(u(x)-u(y)\right) \frac{\phi(x)^p}{u_\lambda(x)^{p-1}}K^{(J)}(x,y) m(\mathrm{d}x)m(\mathrm{d}y)\\
&\le\int_\Omega\int_{X\backslash \Omega}\left(u(x)-u(y)\right)^{p-1} \frac{\phi(x)^p}{u_\lambda(x)^{p-1}}K^{(J)}(x,y) 1_{u(x)\ge u(y)}m(\mathrm{d}x)m(\mathrm{d}y)\\
&\overset{(\diamond)}{\scalebox{2}[1]{$\le$}}\int_\Omega\int_{X\backslash \Omega}u_\lambda(x)^{p-1} \frac{\phi(x)^p}{u_\lambda(x)^{p-1}}K^{(J)}(x,y) m(\mathrm{d}x)m(\mathrm{d}y)\\
&=\int_\Omega\int_{X\backslash \Omega} \lvert \phi(x)-\phi(y)\rvert^p K^{(J)}(x,y)m(\mathrm{d}x)m(\mathrm{d}y),
\end{align*}
where $(\diamond)$ follows from the fact that
\begin{align*}
\left(u(x)-u(y)\right)^{p-1}1_{u(x)\ge u(y)}\le u(x)^{p-1}1_{u(x)\ge u(y)}\le u_\lambda(x)^{p-1},
\end{align*}
due to $u\ge0$ in $X$. Hence
\begin{align*}
&\mathcal{E}^{(J)}\left(u;\frac{\phi^p}{u_\lambda^{p-1}}\right)\\
&\le \left(C_2\int_\Omega\int_\Omega+2\int_\Omega\int_{X\backslash \Omega}\right)\lvert \phi(x)-\phi(y)\rvert^p K^{(J)}(x,y)m(\mathrm{d}x)m(\mathrm{d}y)\le C \mathcal{E}^{(J)}(\phi),
\end{align*}
where $C=\max\{C_2,1\}$.
\end{proof}

The following result in the non-local setting parallels \cite[Proposition 3.17]{Yan25d} in the local setting. The proof follows the same line of argument as therein, by applying Propositions \ref{prop_comparisonJ} and \ref{prop_EFJ}, and is therefore omitted.

\begin{proposition}\label{prop_resolvent}
There exist $C_1,C_2>0$ such that for any $\lambda>0$ and any ball $B=B(x_0,R)$, for any $u\in \mathcal{F}^{(J)}(B)$ satisfying that $-\Delta_p^{(J)}u+\lambda \lvert u\rvert^{p-2}u=1$ in $B$, we have
\begin{align*}
\esup_{B}u&\le \frac{1}{\lambda^{\frac{1}{p-1}}},\\
\einf_{\frac{1}{4}B}u&\ge \frac{C_1}{\left(C_2+\lambda \Upsilon(R)\right)^{\frac{1}{p-1}}}\Upsilon(R)^{\frac{1}{p-1}}.
\end{align*}
\end{proposition}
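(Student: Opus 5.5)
The plan is to compare $u$ with explicit barriers through the non-local comparison principle (Proposition \ref{prop_comparisonJ}), using the torsion function of $B$ from Proposition \ref{prop_EFJ}. We may take $u\in\mathcal{F}^{(J)}(B)$ as given; existence and uniqueness come from Proposition \ref{prop_exist}.

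\emph{Upper bound.} The constant $w\equiv\lambda^{-1/p-1}$ is not in $\mathcal{F}^{(J)}$, so it cannot be inserted into the comparison principle literally. I would instead test the equation with $\varphi=(u-\lambda^{-1/(p-1)})_+\in\mathcal{F}^{(J)}(B)$, which lies in this space by the Markovian property. The equation gives
\[
\mathcal{E}^{(J)}(u;\varphi)+\lambda\int_X\lvert u\rvert^{p-2}u\varphi\,\mathrm{d}m=\int_X\varphi\,\mathrm{d}m .
\]
Two facts then combine. First, $\mathcal{E}^{(J)}(u;(u-k)_+)\ge\mathcal{E}^{(J)}((u-k)_+)\ge0$, which follows from the elementary monotonicity inequality
\[
\lvert a-b\rvert^{p-2}(a-b)\bigl((a-k)_+-(b-k)_+\bigr)\ge \bigl\lvert (a-k)_+-(b-k)_+\bigr\rvert^{p} .
\]
Second, $\lambda\lvert u\rvert^{p-2}u\ge1$ on $\{\varphi>0\}$. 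Together these give $\int_X\varphi\,\mathrm{d}m\ge\int_X\varphi\,\mathrm{d}m+\mathcal{E}^{(J)}(\varphi)$ plus a nonnegative surplus, so the surplus vanishes. That forces $\int(\lambda u^{p-1}-1)\varphi=0$, and hence $\varphi=0$.

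\emph{Lower bound.} Let $v\in\mathcal{F}^{(J)}(B)$ solve $-\Delta_p^{(J)}v=1$ in $B$. Proposition \ref{prop_EFJ} gives $\esup_B v\le C\Upsilon(R)^{1/(p-1)}$ and $\einf_{\frac14 B}v\ge C^{-1}\Upsilon(R)^{1/(p-1)}$. Set $w=tv$ with
\[
t^{p-1}=\frac{1}{1+\lambda C^{p-1}\Upsilon(R)} .
\]
Then $-\Delta_p^{(J)}w=t^{p-1}$ in $B$, and
\[
-\Delta_p^{(J)}w+\lambda w^{p-1}\le t^{p-1}\bigl(1+\lambda C^{p-1}\Upsilon(R)\bigr)=1=-\Delta_p^{(J)}u+\lambda\lvert u\rvert^{p-2}u
\]
in $B$ in the weak sense, using $w\ge0$, which holds by comparison with $0$.

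Both $u$ and $w$ vanish q.e.\ off $B$ by Lemma \ref{lem_FOmega}. Proposition \ref{prop_comparisonJ} then gives $u\ge w$, hence
\[
\einf_{\frac14B}u\ge tC^{-1}\Upsilon(R)^{1/(p-1)}=\frac{C_1\Upsilon(R)^{1/(p-1)}}{(C_2+\lambda\Upsilon(R))^{1/(p-1)}} .
\]

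The main obstacle is making the comparison step rigorous in the weak sense. Specifically, one must check that $\lambda w^{p-1}\le\lambda t^{p-1}C^{p-1}\Upsilon(R)$ holds $m$-a.e.\ in $B$. This in turn needs $0\le v\le C\Upsilon(R)^{1/(p-1)}$, with nonnegativity coming from comparing $v$ with $0$ via Proposition \ref{prop_comparisonJ} at $\lambda=0$. One must also confirm that Proposition \ref{prop_comparisonJ} applies with the boundary condition given by membership in $\mathcal{F}^{(J)}(B)$.
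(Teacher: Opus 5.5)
Your proposal is correct and is essentially the argument the paper points to. The paper omits the proof, citing the local analogue and Propositions \ref{prop_comparisonJ} and \ref{prop_EFJ}: the upper bound comes from comparison with the constant $\lambda^{-1/(p-1)}$, which your test function $(u-\lambda^{-1/(p-1)})_+$ carries out rigorously in the non-local setting, and the lower bound comes from comparing $u$ with the rescaled torsion function $tv$. The obstacles you list are harmless: since $u,tv\in\mathcal{F}^{(J)}(B)$, Lemma \ref{lem_FOmega} gives the boundary condition directly, and you do not even need $w\ge0$, because $\lvert w\rvert^{p-2}w\le (w_+)^{p-1}$ and only the upper bound on $v$ enters.
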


We give the proof of ``\hyperlink{eq_CSJ_weak}{$\text{CS}^{(J)}_{\text{weak}}(\Upsilon)$}$\Rightarrow${\hyperlink{eq_CE_strong}{$\text{CE}^{(J)}_{\text{strong}}(\Upsilon)$}}" in Theorem \ref{thm_equiv_J} as follows.

\begin{proof}[Proof of ``\hyperlink{eq_CSJ_weak}{$\text{CS}^{(J)}_{\text{weak}}(\Upsilon)$}$\Rightarrow${\hyperlink{eq_CE_strong}{$\text{CE}^{(J)}_{\text{strong}}(\Upsilon)$}}" in Theorem \ref{thm_equiv_J}]
For any ball $B=B(x_0,r)$, by Lemma \ref{lem_corkscrew}, there exist $c\in(0,1)$, which depends only on $C_{cc}$, and an open subset $\Omega\subseteq X$ with $4B\subseteq \Omega\subseteq 5B$ such that for any $x\in \partial \Omega$, $X\backslash \Omega$ has a $(c,32r)$-corkscrew at $x$.

By Proposition \ref{prop_exist}, there exists a unique $u_\Omega\in \mathcal{F}^{(J)}(\Omega)$ such that
$$-\Delta_p^{(J)}u_\Omega+\frac{1}{\Upsilon(r)}\lvert u_\Omega\rvert^{p-2}u_\Omega=1\text{ in }\Omega.$$
By Proposition \ref{prop_comparisonJ}, we have $0\le u_\Omega\le \Upsilon(r)^{\frac{1}{p-1}}$ in $X$. Similarly, there exists a unique $v\in \mathcal{F}^{(J)}(4B)$ such that $-\Delta_p^{(J)}v+\frac{1}{\Upsilon(r)}\lvert v\rvert^{p-2}v=1$ in $4B$. By Proposition \ref{prop_resolvent}, there exist $C_1,C_2>0$ such that
$$\inf_{B}v\ge \frac{C_1}{\left(C_2+\frac{1}{\Upsilon(r)}\Upsilon(4r)\right)^{\frac{1}{p-1}}}\Upsilon(4r)^{\frac{1}{p-1}}\ge C_3\Upsilon(r)^{\frac{1}{p-1}},$$
where $C_3>0$ depends only on $p,C_\Upsilon,C_1,C_2$. Since $4B\subseteq \Omega$, by Proposition \ref{prop_comparisonJ}, we have $u_\Omega\ge v$ in $X$. Since $-\Delta_p^{(J)}u_\Omega=1-\frac{1}{\Upsilon(r)}\lvert u_\Omega\rvert^{p-2}u_\Omega$ in $\Omega$, where $0\le1-\frac{1}{\Upsilon(r)}\lvert u_\Omega\rvert^{p-2}u_\Omega\le1$ in $\Omega$, an initial application of Propositions \ref{prop_int_osc_J} and \ref{prop_bdy_osc_J} yields that $u_\Omega\in C(\overline{\Omega})$ and $u_\Omega=0$ on $\partial\Omega$, and hence $u_\Omega\in C(X)$ with $u_\Omega=0$ on $X\backslash\Omega$. Hence
\begin{equation}\label{eq_u_lbd_J}
\inf_{B}u_\Omega=\einf_{B}u_\Omega\ge\einf_{B}v\ge C_3\Upsilon(r)^{\frac{1}{p-1}}.
\end{equation}

Let $\delta_1,\delta_2\in(0,1)$ be the constants appearing in Propositions \ref{prop_int_osc_J} and \ref{prop_bdy_osc_J}, respectively, and $\delta=\min\{\delta_1,\delta_2,\frac{\beta_\Upsilon^{(1)}}{p-1},\frac{\beta^{(1)}_\Upsilon}{2p}\}\in(0,1)$.

We claim that there exists $C_4>0$ such that
\begin{equation}\label{eq_u_Holder_J}
\lvert u_\Omega(x)-u_\Omega(y)\rvert\le C_4 \left(\frac{d(x,y)}{r}\wedge 1\right)^\delta\Upsilon(r)^{\frac{1}{p-1}}\text{ for any }x,y\in X.
\end{equation}
Indeed, if $x,y\in X\backslash \Omega$, then this result is trivial, hence without loss of generality, we may assume that $x\in \Omega$, then $D=\mathrm{dist}(x,X\backslash \Omega)\in(0,10r)$, and there exists $y_0\in\partial\Omega$ such that $D\le d(y_0,x)<2D$. If $d(x,y)\ge r$, then this result follows directly from the fact that $0\le u_\Omega\le\Upsilon(r)^{\frac{1}{p-1}}$ in $X$, hence we may assume that $d(x,y)<r$.

If $y\in \Omega$ and $d(x,y)<D$, then by Proposition \ref{prop_int_osc_J}, we have
\begin{align*}
&\lvert u_\Omega(x)-u_\Omega(y)\rvert\lesssim\left(\frac{d(x,y)}{D}\right)^{\delta_1}\left(\sup_{B(x,D)}u_{\Omega}+\Upsilon(D)^{\frac{1}{p-1}}\right)\\
&\le \left(\frac{d(x,y)}{D}\right)^{\delta}\left(\sup_{B(y_0,3D)\cap\Omega}u_{\Omega}+\Upsilon(D)^{\frac{1}{p-1}}\right).
\end{align*}
Since $3D<32r$, by Proposition \ref{prop_bdy_osc_J}, we have
\begin{align*}
\sup_{B(y_0,3D)\cap\Omega}u_\Omega\lesssim \left(\frac{D}{r}\right)^{\delta_2}\left(\sup_{X}u_{\Omega}+\Upsilon(r)^{\frac{1}{p-1}}\right)\lesssim \left(\frac{D}{r}\right)^{\delta}\Upsilon(r)^{\frac{1}{p-1}}.
\end{align*}
By (\ref{eq_beta12}), we have
$$\Upsilon(D)^{\frac{1}{p-1}}\lesssim \left(\frac{D}{r}\right)^{\frac{\beta_\Upsilon^{(1)}}{p-1}}\Upsilon(r)^{\frac{1}{p-1}}\lesssim\left(\frac{D}{r}\right)^{\delta}\Upsilon(r)^{\frac{1}{p-1}}.$$
Hence
$$\lvert u_\Omega(x)-u_{\Omega}(y)\rvert\lesssim \left(\frac{d(x,y)}{r}\right)^{\delta}\Upsilon(r)^{\frac{1}{p-1}}.$$

If $y\in\Omega$ and $d(x,y)\ge D$, then $x,y\in B(y_0,3d(x,y))\subseteq B(y_0,3r)$, hence by Proposition \ref{prop_bdy_osc_J}, we have
\begin{align*}
&\lvert u_\Omega(x)-u_\Omega(y)\rvert\le 2\sup_{B(y_0,3d(x,y))\cap\Omega}u_\Omega\\
&\lesssim  \left(\frac{d(x,y)}{r}\right)^{\delta_2}\left(\sup_{X}u_\Omega+\Upsilon(r)^{\frac{1}{p-1}}\right)\lesssim \left(\frac{d(x,y)}{r}\right)^{\delta}\Upsilon(r)^{\frac{1}{p-1}}.
\end{align*}

If $y\in X\backslash\Omega$, then $D\le d(x,y)$, hence by Proposition \ref{prop_bdy_osc_J}, we have
\begin{align*}
&\lvert u_\Omega(x)-u_\Omega(y)\rvert=u_\Omega(x)\le \sup_{B(y_0,2D)\cap\Omega}u_\Omega\\
&\lesssim \left(\frac{D}{r}\right)^{\delta_2}\left(\sup_{X}u_\Omega+\Upsilon(r)^{\frac{1}{p-1}}\right)\lesssim \left(\frac{d(x,y)}{r}\right)^{\delta}\Upsilon(r)^{\frac{1}{p-1}}.
\end{align*}
Now we finish the proof of (\ref{eq_u_Holder_J}).

We claim that there exists $C_5>0$ such that
\begin{equation}\label{eq_u_energy_J}
\int_{B(x,s)}\mathrm{d}\Gamma^{(J)}_X(u_\Omega)\le C_5\left(\frac{s}{r}\wedge 1\right)^\delta \Upsilon(r)^{\frac{p}{p-1}}\frac{V(x,s)}{\Upsilon(s\wedge r)}\text{ for any }x\in X,s>0.
\end{equation}
Indeed, by \ref{eq_VD}, \ref{eq_ucapJ}, there exists a cutoff function $\psi\in \mathcal{F}^{(J)}$ for $B(x,s)\subseteq B(x,2s)$ such that $\mathcal{E}^{(J)}(\psi)\lesssim \frac{V(x,s)}{\Upsilon(s)}$, then
\begin{align}
&\int_{B(x,s)}\mathrm{d}\Gamma^{(J)}_X(u_\Omega)\le \int_{B(x,2s)}\psi^p\mathrm{d}\Gamma^{(J)}_X(u_\Omega)\nonumber\\
&=\left(\int_{B(x,2s)}\int_{B(x,4s)}+\int_{B(x,2s)}\int_{X\backslash B(x,4s)}\right)\psi(y)^p \lvert u_\Omega(y)-u_{\Omega}(z)\rvert^{p}K^{(J)}(y,z) m(\mathrm{d}y)m(\mathrm{d}z)\nonumber\\
&=I_1+I_2.\label{eq_CS2CE_J1}
\end{align}
Let $c=\inf_{B(x,4s)}u_\Omega$, then by (\ref{eq_u_Holder_J}), we have
\begin{equation}\label{eq_u_osc_J}
0\le u_\Omega-c\lesssim\left(\frac{s}{r}\wedge 1\right)^\delta\Upsilon(r)^{\frac{1}{p-1}}\text{ in }B(x,4s).
\end{equation}
By Lemma \ref{lem_ele6}, we have
\begin{align}
&I_1\le\int_{B(x,4s)}\int_{B(x,4s)}\left(\psi(y)^p+\psi(z)^p\right) \lvert (u_\Omega(y)-c)-(u_{\Omega}(z)-c)\rvert^{p}K^{(J)}(y,z) m(\mathrm{d}y)m(\mathrm{d}z)\nonumber\\
&\le 4\int_{B(x,4s)}\int_{B(x,4s)}\lvert u_\Omega(y)-u_{\Omega}(z)\rvert^{p-2}(u_\Omega(y)-u_\Omega(z))\nonumber\\
&\hspace{80pt}\cdot\left(\psi(y)^p(u_\Omega(y)-c)-\psi(z)^p(u_\Omega(z)-c)\right)K^{(J)}(y,z) m(\mathrm{d}y)m(\mathrm{d}z)\nonumber\\
&\hspace{15pt}+4C\int_{B(x,4s)}\int_{B(x,4s)} \left((u(y)-c)^p+(u(z)-c)^p\right)\lvert \psi(y)-\psi(z)\rvert^pK^{(J)}(y,z)m(\mathrm{d}y)m(\mathrm{d}z)\nonumber\\
&=4I_{11}+I_{12},\label{eq_CS2CE_J2}
\end{align}
where $C>0$ depends only on $p$. By (\ref{eq_u_osc_J}), we have
\begin{align}
&I_{12}\lesssim\left(\frac{s}{r}\wedge 1\right)^{p\delta}\Upsilon(r)^{\frac{p}{p-1}}\int_{B(x,4s)}\int_{B(x,4s)} \lvert \psi(y)-\psi(z)\rvert^pK^{(J)}(y,z)m(\mathrm{d}y)m(\mathrm{d}z)\nonumber\\
&\le\left(\frac{s}{r}\wedge 1\right)^{p\delta}\Upsilon(r)^{\frac{p}{p-1}}\mathcal{E}^{(J)}(\psi)\lesssim\left(\frac{s}{r}\wedge 1\right)^{p\delta}\Upsilon(r)^{\frac{p}{p-1}}\frac{V(x,s)}{\Upsilon(s)}\le\left(\frac{s}{r}\wedge 1\right)^{\delta}\Upsilon(r)^{\frac{p}{p-1}}\frac{V(x,s)}{\Upsilon(s)}.\label{eq_CS2CE_J3}
\end{align}
Since $-\Delta_p^{(J)}u_\Omega+\frac{1}{\Upsilon(r)}\lvert u_\Omega\rvert^{p-2}u_\Omega=1$ in $\Omega$, and $\psi^p(u_\Omega-c)\in \mathcal{F}^{(J)}(\Omega)$ is non-negative in $X$, we have
\begin{align}
&\mathcal{E}^{(J)}(u_\Omega;\psi^p(u_\Omega-c))\nonumber\\
&\le\mathcal{E}^{(J)}(u_\Omega;\psi^p(u_\Omega-c))+{\frac{1}{\Upsilon(r)}}\int_X \lvert u_\Omega\rvert^{p-2}u_\Omega\psi^p(u_\Omega-c)\mathrm{d}m\nonumber\\
&=\int_X\psi^p(u_\Omega-c)\mathrm{d}m=\int_{B(x,2s)}\psi^p(u_\Omega-c)\mathrm{d}m\lesssim\left(\frac{s}{r}\wedge 1\right)^\delta\Upsilon(r)^{\frac{1}{p-1}} V(x,s),\label{eq_CS2CE_J4}
\end{align}
where the last inequality follows from (\ref{eq_u_osc_J}) and \ref{eq_VD}. On the other hand, we have
\begin{equation}\label{eq_CS2CE_J5}
\mathcal{E}^{(J)}(u_\Omega;\psi^p(u_\Omega-c))=I_{11}+2I_{13},
\end{equation}
where
\begin{align*}
&I_{13}=\int_{B(x,4s)}\int_{X\backslash B(x,4s)} \lvert u_\Omega(y)-u_\Omega(z)\rvert^{p-2}(u_\Omega(y)-u_\Omega(z))\\
&\hspace{50pt}\cdot \left(\psi(y)^p(u_\Omega(y)-c)-\psi(z)^p(u_\Omega(z)-c)\right)K^{(J)}(y,z)m(\mathrm{d}y)m(\mathrm{d}z)\\
&=\int_{B(x,2s)}\int_{X\backslash B(x,4s)} \lvert u_\Omega(y)-u_\Omega(z)\rvert^{p-2}(u_\Omega(y)-u_\Omega(z))\\
&\hspace{50pt}\cdot\psi(y)^p(u_\Omega(y)-c)K^{(J)}(y,z)m(\mathrm{d}y)m(\mathrm{d}z).
\end{align*}
By (\ref{eq_u_osc_J}) and the fact that $0\le u_\Omega\le {\Upsilon(r)^{\frac{1}{p-1}}}$ in $X$, we have
\begin{align}
&\lvert I_{13}\rvert\le \int_{B(x,2s)}\int_{X\backslash B(x,4s)} \lvert u_\Omega(y)-u_\Omega(z)\rvert^{p-1}(u_\Omega(y)-c)K^{(J)}(y,z)m(\mathrm{d}y)m(\mathrm{d}z)\nonumber\\
&\lesssim\left(\frac{s}{r}\wedge 1\right)^\delta\Upsilon(r)^{\frac{p}{p-1}}\int_{B(x,2s)}\int_{X\backslash B(x,4s)}K^{(J)}(y,z)m(\mathrm{d}y)m(\mathrm{d}z)\nonumber\\
&\lesssim \left(\frac{s}{r}\wedge 1\right)^\delta\Upsilon(r)^{\frac{p}{p-1}}\frac{V(x,s)}{\Upsilon(s)},\label{eq_CS2CE_J6}
\end{align}
where the last inequality follows from \ref{eq_KJ_tail} and \ref{eq_VD}. Combining (\ref{eq_CS2CE_J2})--(\ref{eq_CS2CE_J6}), we have
\begin{align}
&I_1\le 4I_{11}+I_{12}=4 \left(\mathcal{E}^{(J)}(u_\Omega;\psi^p(u_\Omega-c))-2I_{13}\right)+I_{12}\nonumber\\
&\lesssim \left(\frac{s}{r}\wedge 1\right)^\delta \Upsilon(r)^{\frac{p}{p-1}}\left(\frac{1}{\Upsilon(r)}+\frac{1}{\Upsilon(s)}\right)V(x,s).\label{eq_CS2CE_J7}
\end{align}
For $I_2$, by (\ref{eq_u_Holder_J}) and \ref{eq_KJ_tail}, we have
\begin{align*}
&I_2\lesssim\int_{B(x,2s)}\left(\int_{X\backslash B(y,2s)}\left(\frac{d(y,z)}{r}\wedge 1\right)^{p\delta}\Upsilon(r)^{\frac{p}{p-1}}K^{(J)}(y,z)m(\mathrm{d}z)\right)m(\mathrm{d}y)\\
&=\int_{B(x,2s)}\left(\sum_{n=1}^{+\infty}\int_{B(y,2^{n+1}s)\backslash B(y,2^ns)}\ldots m(\mathrm{d}z)\right)m(\mathrm{d}y)\\
&\le \Upsilon(r)^{\frac{p}{p-1}}\int_{B(x,2s)}\sum_{n=1}^{+\infty} \left(\frac{2^{n+1}s}{r}\wedge1\right)^{p\delta} \left(\int_{X\backslash B(y,2^ns)}K^{(J)}(y,z)m(\mathrm{d}z)\right)m(\mathrm{d}y)\\
&\lesssim \Upsilon(r)^{\frac{p}{p-1}}V(x,2s)\sum_{n=1}^{+\infty} \left(\frac{2^{n+1}s}{r}\wedge1\right)^{p\delta} \frac{1}{\Upsilon(2^ns)}\\
&\lesssim \Upsilon(r)^{\frac{p}{p-1}} \frac{V(x,s)}{\Upsilon(s)}\sum_{n=1}^{+\infty}\left(\frac{2^ns}{r}\wedge 1\right)^{p\delta}\frac{1}{2^{\beta^{(1)}_\Upsilon n}},
\end{align*}
where the last inequality follows from \ref{eq_VD} and (\ref{eq_beta12}). Recall the following elementary result: for any $\alpha,\beta>0$ with $\alpha<\beta$, and any $t>0$, we have
\begin{align*}
\sum_{n=1}^{+\infty}\left(2^nt\wedge1\right)^\alpha \frac{1}{2^{\beta n}}\lesssim \left(t\wedge 1\right)^\alpha,
\end{align*}
where the implicit constant in $\lesssim$ depends only on $\alpha,\beta$. Since $p\delta<\beta^{(1)}_\Upsilon$, we have
$$I_2\lesssim \left(\frac{s}{r}\wedge1\right)^{p\delta}\Upsilon(r)^{\frac{p}{p-1}} \frac{V(x,s)}{\Upsilon(s)}\le \left(\frac{s}{r}\wedge1\right)^{\delta}\Upsilon(r)^{\frac{p}{p-1}} \frac{V(x,s)}{\Upsilon(s)}.$$
Combining this with (\ref{eq_CS2CE_J1}) and (\ref{eq_CS2CE_J7}), we have
\begin{align*}
&\int_{B(x,s)}\mathrm{d}\Gamma^{(J)}_X(u_\Omega)\le I_1+I_2\\
&\lesssim\left(\frac{s}{r}\wedge 1\right)^\delta \Upsilon(r)^{\frac{p}{p-1}}\left(\frac{1}{\Upsilon(r)}+\frac{1}{\Upsilon(s)}\right)V(x,s)\asymp\left(\frac{s}{r}\wedge 1\right)^\delta \Upsilon(r)^{\frac{p}{p-1}}\frac{V(x,s)}{\Upsilon(s\wedge r)}.
\end{align*}
Now we finish the proof of (\ref{eq_u_energy_J}).

Finally, let $\phi=\left(\frac{1}{C_3\Upsilon(r)^{\frac{1}{p-1}}}u_\Omega\right)\wedge1$, then $\phi\in \mathcal{F}^{(J)}(\Omega)\subseteq \mathcal{F}^{(J)}(8B)$. By (\ref{eq_u_lbd_J}), we have $\phi=1$ in $B$, hence $\phi$ is a cutoff function for $B\subseteq 8B$. By (\ref{eq_u_Holder_J}), we have
$$\lvert \phi(x)-\phi(y)\rvert\le \frac{C_4}{C_3}\left(\frac{d(x,y)}{r}\wedge1\right)^\delta\text{ for any }x,y\in X,$$
that is, \hyperref[eq_CE_Holder]{$\text{CE}^{(J)}(\Upsilon)\text{-}1$} holds. By (\ref{eq_u_energy_J}), for any $x\in X$, $s>0$, we have
\begin{align*}
\int_{B(x,s)}\mathrm{d}\Gamma^{(J)}_X(\phi)\le \frac{1}{C_3^p\Upsilon(r)^{\frac{p}{p-1}}}\int_{B(x,s)}\mathrm{d}\Gamma^{(J)}_X(u_\Omega)\le\frac{C_5}{C_3^p}\left(\frac{s}{r}\wedge 1\right)^\delta \frac{V(x,s)}{\Upsilon(s\wedge r)},
\end{align*}
that is, \hyperref[eq_CE_energy]{$\text{CE}^{(J)}(\Upsilon)\text{-}2$} holds.
\end{proof}

\section{Some elementary results}\label{sec_ele}

In this section, we give some elementary results used in the previous sections.

\begin{lemma}\label{lem_ele1}
Let $A,B\ge1$, $\kappa>1$ be constants. Let $\{I_k\}_{k\ge0}$ be a sequence of non-negative real-numbers satisfying that
$$I_{k+1}\le A^{\frac{1}{\kappa^k}}B^{\frac{k}{\kappa^k}}I_k\text{ for any }k\ge0.$$
Then
$$I_k\le A^{\frac{\kappa}{\kappa-1}}B^{\frac{\kappa}{(\kappa-1)^2}}I_0\text{ for any }k\ge0.$$
\end{lemma}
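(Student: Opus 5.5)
The approach is to iterate the recursion and take logarithms. Since every factor in the recursion is non-negative, induction on $k$ gives
\begin{equation*}
I_k\le \left(\prod_{j=0}^{k-1}A^{\frac{1}{\kappa^j}}B^{\frac{j}{\kappa^j}}\right)I_0=A^{\sum_{j=0}^{k-1}\kappa^{-j}}B^{\sum_{j=0}^{k-1}j\kappa^{-j}}I_0 .
\end{equation*}
For $k=0$ the product is empty and the claim reads $I_0\le A^{\frac{\kappa}{\kappa-1}}B^{\frac{\kappa}{(\kappa-1)^2}}I_0$, which holds because $A,B\ge1$.

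Next, because $A,B\ge1$, both exponents can be replaced by the corresponding full series. For the first series,
\begin{equation*}
\sum_{j=0}^{\infty}\kappa^{-j}=\frac{1}{1-\kappa^{-1}}=\frac{\kappa}{\kappa-1}.
\end{equation*}
For the second, use $\sum_{j\ge0}jx^j=\frac{x}{(1-x)^2}$ with $x=\kappa^{-1}\in(0,1)$:
\begin{equation*}
\sum_{j=0}^{\infty}j\kappa^{-j}=\frac{\kappa^{-1}}{(1-\kappa^{-1})^2}=\frac{\kappa}{(\kappa-1)^2}.
\end{equation*}
Substituting these bounds gives the stated estimate for every $k\ge0$.

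There is no real obstacle. The only points needing care are the monotonicity of $t\mapsto A^t$ and $t\mapsto B^t$, which holds because the bases are at least $1$, and non-negativity, which lets the inequalities be chained.
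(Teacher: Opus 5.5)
Your proof is correct and is exactly the routine iteration the paper has in mind; the paper omits it for that reason. Chaining the recursion using non-negativity and then bounding the partial sums by $\sum_{j\ge0}\kappa^{-j}=\frac{\kappa}{\kappa-1}$ and $\sum_{j\ge0}j\kappa^{-j}=\frac{\kappa}{(\kappa-1)^2}$ (valid since $A,B\ge1$) gives precisely the stated constant.
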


The proof is a routine iteration argument and is therefore omitted.

\begin{lemma}[{\cite[Lemma 8.23]{GT01}}]\label{lem_ele2}
Let $R_0\in(0,+\infty]$ and $\varphi,\psi:(0,R_0)\to[0,+\infty)$ two increasing functions. Assume that there exist $\tau,\gamma\in(0,1)$ such that
$$\varphi(\tau R)\le \gamma \varphi(R)+\psi(R)\text{ for any }R\in(0,R_0).$$
Then for any $\mu\in(0,1)$ and any $R,r\in(0,R_0)$ with $r\le R$, we have
$$\varphi(r)\le \frac{1}{\gamma}\left(\frac{r}{R}\right)^{(1-\mu)\log_\tau\gamma}\varphi(R)+\frac{1}{1-\gamma}\psi \left(\left(\frac{r}{R}\right)^{\mu}R\right).$$
\end{lemma}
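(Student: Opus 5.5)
We prove Lemma \ref{lem_ele2} by iterating the assumed one-step inequality down a geometric sequence of radii, then interpolating. Fix $\mu\in(0,1)$ and $0<r\le R<R_0$. Choose an intermediate radius $R_1=(r/R)^{\mu}R$, so that $r\le R_1\le R$. For every $\rho\le R_1$, monotonicity of $\psi$ gives $\psi(\rho)\le\psi(R_1)$, so on $(0,R_1]$ the hypothesis becomes $\varphi(\tau\rho)\le\gamma\varphi(\rho)+\psi(R_1)$.

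Iterating from $\rho=R_1$, by induction on $k$ we get
\begin{equation*}
\varphi(\tau^kR_1)\le\gamma^k\varphi(R_1)+\psi(R_1)\sum_{j=0}^{k-1}\gamma^j\le\gamma^k\varphi(R)+\frac{1}{1-\gamma}\psi(R_1),
\end{equation*}
where $\varphi(R_1)\le\varphi(R)$ by monotonicity. Each step only uses radii $\tau^jR_1\le R_1<R_0$, so it stays in the domain.

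To reach $r$, take the integer $k\ge0$ with $\tau^{k+1}R_1<r\le\tau^kR_1$; this exists because $r\le R_1$. Monotonicity of $\varphi$ gives $\varphi(r)\le\varphi(\tau^kR_1)$. The choice of $k$ gives $\tau^{k+1}<r/R_1$, so $k+1>\log_\tau(r/R_1)$ since $\tau<1$. Because $\gamma<1$, this yields
\begin{equation*}
\gamma^k=\gamma^{-1}\gamma^{k+1}\le\gamma^{-1}\gamma^{\log_\tau(r/R_1)}=\gamma^{-1}\left(\frac{r}{R_1}\right)^{\log_\tau\gamma}.
\end{equation*}
Finally $r/R_1=(r/R)^{1-\mu}$, so $(r/R_1)^{\log_\tau\gamma}=(r/R)^{(1-\mu)\log_\tau\gamma}$, and substituting gives exactly the stated bound.

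The only point needing care is the exponent bookkeeping $\gamma^{\log_\tau x}=x^{\log_\tau\gamma}$ for $x\in(0,1]$, together with the inequality direction. Here $\log_\tau\gamma>0$, and $x\mapsto x^{\log_\tau\gamma}$ is increasing, so the strict inequality $\tau^{k+1}<r/R_1$ points the right way. No earlier results of the paper are needed.
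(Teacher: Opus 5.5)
Your proof is correct. It is essentially the standard argument behind \cite[Lemma 8.23]{GT01}, which the paper cites without reproducing: freeze $\psi$ at the intermediate radius $R_1=(r/R)^{\mu}R$, iterate the one-step inequality along $\tau^kR_1$, and convert the number of steps into the power $(r/R_1)^{\log_\tau\gamma}=(r/R)^{(1-\mu)\log_\tau\gamma}$.
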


The following result is a variant of \cite[Lemma A.1]{KLL23}.

\begin{lemma}\label{lem_ele4}
Let $K>0$, $M\ge0$, $\alpha<-(p-1)$ be constants and let $F,G:[0,M]\to \mathbb{R}$ be given by
\begin{align*}
F(x)&=\frac{1}{\alpha}\left((x+K)^\alpha-(M+K)^\alpha\right),\\
G(x)&=\frac{p}{\alpha+p-1}\left(x+K\right)^{\frac{\alpha+p-1}{p}}.
\end{align*}
Then there exists $C>0$ depending only on $p$ such that for any $x,y\in[0,M]$ and any $a,b\ge0$, we have
\begin{align*}
&\lvert x-y\rvert^{p-2}(x-y)\left(F(x)a^p-F(y)b^p\right)\\
&\ge \frac{1}{2}\lvert G(x)-G(y)\rvert^p \max\{a^p,b^p\}-C \left(1+\left(\frac{\lvert \alpha+p-1\rvert}{\lvert \alpha\rvert}\right)^p\right)\max\{\lvert G(x)\rvert^p,\lvert G(y)\rvert^p\}\lvert a-b\rvert^p.
\end{align*}
\end{lemma}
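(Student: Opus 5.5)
The plan is a direct elementary argument built on one basic inequality. Using the symmetry of both sides under $(x,a)\leftrightarrow(y,b)$, I will assume $x>y$; the case $x=y$ is trivial because both $\lvert x-y\rvert^{p-2}(x-y)$ and $G(x)-G(y)$ vanish, and the right-hand side is then $\le0$. I record the structure first. Since $\alpha<0$, the function $F$ is increasing on $[0,M]$ with $F(M)=0$, so $F\le0$, and $F'(t)=(t+K)^{\alpha-1}$. Moreover $G'(t)=(t+K)^{\frac{\alpha-1}{p}}$, so $\lvert G'\rvert^p=F'$.

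The basic inequality, from H\"older's inequality on $[y,x]$, is
$$\lvert G(x)-G(y)\rvert^p=\Big(\int_y^x (F')^{1/p}\Big)^p\le (x-y)^{p-1}\big(F(x)-F(y)\big).\qquad(\ast)$$
Because $\alpha+p-1<0$, we also have $\lvert G(x)\rvert\le\lvert G(y)\rvert$, so the quantity $\max\{\lvert G(x)\rvert^p,\lvert G(y)\rvert^p\}$ appearing in the lemma equals $\lvert G(y)\rvert^p=\big(\tfrac{p}{\lvert\alpha+p-1\rvert}\big)^p(y+K)^{\alpha+p-1}$.

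The case $a<b$ is easy. I write $F(x)a^p-F(y)b^p=(F(x)-F(y))b^p+F(x)(a^p-b^p)$. The second term is $\ge0$ since $F(x)\le0$ and $a^p-b^p<0$. Multiplying by $(x-y)^{p-1}$ and applying $(\ast)$ gives the bound $\lvert G(x)-G(y)\rvert^p\max\{a^p,b^p\}$, which is even stronger than required.

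The case $a\ge b$ is the main obstacle. Here I write $F(x)a^p-F(y)b^p=(F(x)-F(y))a^p+F(y)(a^p-b^p)$, and the last term is negative. I use $a^p-b^p\le pa^{p-1}(a-b)$ and $\lvert F(y)\rvert\le (y+K)^\alpha/\lvert\alpha\rvert$, and then split into two subcases according to how $x-y$ compares with $y+K$.

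If $x-y\le y+K$, I apply Young's inequality with exponents $\frac{p}{p-1}$ and $p$, splitting the exponent of $y+K$ as $\alpha=\frac{(\alpha-1)(p-1)}{p}+\frac{\alpha+p-1}{p}$. This gives
$$(x-y)^{p-1}\tfrac{p}{\lvert\alpha\rvert}(y+K)^\alpha a^{p-1}(a-b)\le \varepsilon\,\frac{a^p(x-y)^p(y+K)^{\alpha-1}}{\lvert \alpha\rvert}+\frac{C_\varepsilon}{\lvert\alpha\rvert}(y+K)^{\alpha+p-1}(a-b)^p.$$
The last term is $\lesssim\big(\tfrac{\lvert\alpha+p-1\rvert}{\lvert\alpha\rvert}\big)^p\lvert G(y)\rvert^p\lvert a-b\rvert^p$, up to a constant depending only on $p$, which is the allowed error. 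For the first term, I need $(x-y)(y+K)^{\alpha-1}/\lvert\alpha\rvert\lesssim F(x)-F(y)=\int_y^x(t+K)^{\alpha-1}\,\mathrm{d}t$ with a constant independent of $\alpha$. The worry is that for very negative $\alpha$ the factor $\big(\tfrac{x+K}{y+K}\big)^{\alpha-1}$ decays like $2^{\alpha}$. I plan to avoid this by integrating only over the subinterval $[y,\,y+(y+K)/\lvert\alpha\rvert]\cap[y,x]$, where $(t+K)^{\alpha-1}\gtrsim (y+K)^{\alpha-1}$ uniformly in $\alpha$. The exact bookkeeping of this step may still need adjusting. Once it is done, choosing $\varepsilon$ small lets the first term be absorbed into half of $(x-y)^{p-1}(F(x)-F(y))a^p\ge\lvert G(x)-G(y)\rvert^pa^p$, which leaves the claimed factor $\tfrac12$.

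If $x-y>y+K$, I do not need the main term at all beyond discarding it. In this subcase $\lvert G(x)-G(y)\rvert^p\le\lvert G(y)\rvert^p$, and the negative term is at most $(x-y)^{p-1}\lvert F(y)\rvert a^p$. I will compare this directly with $(x-y)^{p-1}(F(x)-F(y))a^p\ge(x-y)^{p-1}\lvert F(y)\rvert(1-2^{\alpha})a^p$, using $\lvert F(x)\rvert\le 2^{\alpha}\lvert F(y)\rvert$-type control, and any remaining loss is charged to the error term via $\lvert G(y)\rvert^p\lvert a-b\rvert^p$. I expect this subcase, together with keeping all constants uniform in $\alpha$, to be the most delicate bookkeeping in the proof.
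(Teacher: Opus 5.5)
Your reduction to $x>y$, the inequality $(\ast)$, and the case $a<b$ are correct. The case $a\ge b$, however, has a genuine gap: the constant you obtain is not uniform in $\alpha$.

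In the subcase $x-y\le y+K$, your last Young term is
\[
\frac{C_\varepsilon}{\lvert\alpha\rvert}(y+K)^{\alpha+p-1}(a-b)^p=\frac{C_\varepsilon}{p^p}\,\lvert\alpha\rvert^{p-1}\Big(\frac{\lvert\alpha+p-1\rvert}{\lvert\alpha\rvert}\Big)^p\lvert G(y)\rvert^p(a-b)^p .
\]
This exceeds the admissible error by the factor $\lvert\alpha\rvert^{p-1}$, which is unbounded as $\alpha\to-\infty$. Note that $\lvert\alpha+p-1\rvert<\lvert\alpha\rvert$, so the admissible error is just $C_p\lvert G(y)\rvert^p\lvert a-b\rvert^p$.

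Choosing $\varepsilon$ differently does not help. When $x-y\approx y+K$ one has $(x-y)^{p-1}(F(x)-F(y))\approx(y+K)^{\alpha+p-1}/\lvert\alpha\rvert$, so absorption forces $\varepsilon\lesssim1$, hence $C_\varepsilon\gtrsim1$. Your lower bound $F(x)-F(y)\gtrsim(x-y)(y+K)^{\alpha-1}/\lvert\alpha\rvert$ is fine; the loss is in the error term.

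The subcase $x-y>y+K$ is only sketched. The obvious way of charging the loss, via $a^p\le2^{p-1}(b^p+\lvert a-b\rvert^p)$, puts a multiple of $(x-y)^{p-1}\lvert F(y)\rvert$ in front of $\lvert a-b\rvert^p$. That coefficient is not bounded by a multiple of $\lvert G(y)\rvert^p$ (take $y=0$, $K=1$, $x$ large). The root cause in both subcases is that you expand around $F(y)$, the endpoint where $\lvert F\rvert$ and $G'$ are largest.

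The repair is to use, also when $a\ge b$, the split you already used for $a<b$:
\[
(x-y)^{p-1}\big(F(x)-F(y)\big)b^p+(x-y)^{p-1}F(x)(a^p-b^p).
\]
Since $G'$ is decreasing, $(x-y)G'(x)\le\int_y^xG'=\lvert G(x)-G(y)\rvert$. Combine this with $\lvert F(x)\rvert\le\frac{\lvert\alpha+p-1\rvert}{p\lvert\alpha\rvert}G'(x)^{p-1}\lvert G(x)\rvert$ and $a^p-b^p\le pa^{p-1}(a-b)$. The negative term is then bounded below by $-\frac{\lvert\alpha+p-1\rvert}{\lvert\alpha\rvert}\lvert G(x)-G(y)\rvert^{p-1}\lvert G(x)\rvert a^{p-1}(a-b)$, which Young turns into $-\frac14\lvert G(x)-G(y)\rvert^pa^p$ plus an admissible error.

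For the first term, $(\ast)$ and (\ref{eq_ele_ab}) give at least $\frac{1}{1+c_p\varepsilon}\lvert G(x)-G(y)\rvert^pa^p-\varepsilon^{1-p}\lvert G(x)-G(y)\rvert^p\lvert a-b\rvert^p$. The last error is admissible because $G$ has one sign, so $\lvert G(x)-G(y)\rvert\le\max\{\lvert G(x)\rvert,\lvert G(y)\rvert\}$. Taking $c_p\varepsilon\le\frac13$ leaves the coefficient $\frac34-\frac14=\frac12$, with no subcases and constants depending only on $p$.

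For comparison, the paper's written proof splits around $F(y)$ exactly as you do. It then invokes (\ref{eq_ele4B}), i.e. $\lvert G(x)-G(y)\rvert\ge(x-y)G'(y)$, which is reversed for decreasing $G'$. So it runs into the same obstruction, and the $F(x)$-based split above is also the correct version of the paper's argument.
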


We need the following result.

\begin{lemma}[{\cite[Lemma 3.1]{DiKP16}}]
Let $p\in[1,+\infty)$ and $\varepsilon\in(0,1]$. Then for any $a,b\in \mathbb{R}$, we have
\begin{align*}
\lvert a\rvert^p\le \lvert b\rvert^p+c_p\varepsilon \lvert b\rvert^p+\left(1+c_p\varepsilon\right)\varepsilon^{1-p}\lvert a-b\rvert^p,
\end{align*}
where $c_p=(p-1)\Gamma \left(\max\{1,p-2\}\right)$ and $\Gamma$ stands for the standard Gamma function, hence
\begin{equation}\label{eq_ele_ab}
\max\{\lvert a\rvert^p,\lvert b\rvert^p\}\le \left(1+c_p\varepsilon\right)\min\{\lvert a\rvert^p,\lvert b\rvert^p\}+\left(1+c_p\varepsilon\right)\varepsilon^{1-p}\lvert a-b\rvert^p.
\end{equation}
\end{lemma}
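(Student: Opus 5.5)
The proof proceeds by a two-step reduction to a one-variable inequality. First prove the main inequality; the consequence (\ref{eq_ele_ab}) then follows by symmetry.

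For the main inequality, the case $b=0$ is trivial because $c_p\varepsilon\ge0$ and $\varepsilon^{1-p}\ge1$. For $b\ne0$, divide by $|b|^p$ and set $t=a/b$. Since $|t|\le 1+|t-1|$, it suffices to show, for every $s\ge0$,
$$(1+s)^p\le 1+c_p\varepsilon+(1+c_p\varepsilon)\varepsilon^{1-p}s^p .$$

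This one-variable inequality is handled by splitting into two cases.

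\textbf{Case $s\ge\varepsilon$.} Use convexity of $x\mapsto x^p$ with weights $\varepsilon$ and $1-\varepsilon$:
$$(1+s)^p=\Bigl(\varepsilon\cdot\tfrac{s}{\varepsilon}+(1-\varepsilon)\cdot\tfrac{1}{1-\varepsilon}\Bigr)^p\le \varepsilon^{1-p}s^p+(1-\varepsilon)^{1-p}.$$
It remains to show $(1-\varepsilon)^{1-p}\le 1+c_p\varepsilon+c_p\varepsilon^{2-p}s^p$. When $s\ge\varepsilon$, the term $c_p\varepsilon^{2-p}s^p\ge c_p\varepsilon^2$ is available to absorb the excess.

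\textbf{Case $s<\varepsilon\le1$.} Use the mean value theorem: $(1+s)^p-1\le p\,2^{p-1}s$. This has to be dominated by $c_p\varepsilon+\varepsilon^{1-p}s^p$. Write $s=\lambda\varepsilon$ with $\lambda<1$; the task becomes a polynomial inequality in $\lambda$ with coefficients controlled by $c_p$.

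\textbf{Main obstacle.} The hard part is matching the specific constant $c_p=(p-1)\Gamma(\max\{1,p-2\})$ in the regime $\varepsilon$ near $1$. There the convexity bound above blows up, so a different route is needed. The plan is to bound instead
$$\frac{d}{ds}\bigl[(1+s)^p-\varepsilon^{1-p}s^p\bigr]=p\bigl((1+s)^{p-1}-(s/\varepsilon)^{p-1}\bigr).$$
Integrate this from $0$ to $\infty$, maximizing the difference in the variable $s$. That computation produces a Beta-type integral, and the Gamma function in $c_p$ comes out of it. This is exactly the computation in \cite[Lemma 3.1]{DiKP16}, so the cleanest course is to cite it and verify only that the stated constant matches.

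Finally, for (\ref{eq_ele_ab}), apply the main inequality with $(a,b)$ ordered so that $|b|=\min\{|a|,|b|\}$. This gives
$$\max\{|a|^p,|b|^p\}\le(1+c_p\varepsilon)\min\{|a|^p,|b|^p\}+(1+c_p\varepsilon)\varepsilon^{1-p}|a-b|^p,$$
using that $|a-b|$ is symmetric in $a$ and $b$. If the maximum is $|b|^p$ itself, the bound holds trivially.
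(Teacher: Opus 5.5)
The paper gives nothing beyond the citation, and your plan ends the same way: cite the source and ``verify only that the stated constant matches.'' That verification fails. With $c_p=(p-1)\Gamma(\max\{1,p-2\})$ the inequality is false for $2<p\le 6$.

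For $p=3$ one has $c_3=2$. The choice $\varepsilon=1$, $a=2$, $b=1$ gives $\lvert a\rvert^3=8$ against a right-hand side $1+2+3=6$. Even for small $\varepsilon$, the choice $\varepsilon=\frac{1}{10}$, $a=\frac{11}{10}$, $b=1$ gives $1.331>1.32$, and the same data violate (\ref{eq_ele_ab}).

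What is actually true comes out of your own reduction followed by an exact maximization; no Beta integral is involved. The reduction is lossless: take $b=1$, $a=1+s$. For $p>1$, $c>0$ and $K=(1+c\varepsilon)\varepsilon^{1-p}>1$, the function $h(s)=(1+s)^p-Ks^p$ has a single critical point $s_0$, where $(1+s_0)^{p-1}=Ks_0^{p-1}$. Hence $\max_{s\ge0}h=(1+s_0)^{p-1}=\bigl(1-K^{-1/(p-1)}\bigr)^{1-p}$. Write $D=(1+c\varepsilon)^{1/(p-1)}$, so that $K^{-1/(p-1)}=\varepsilon/D$. The requirement $\max h\le 1+c\varepsilon=D^{p-1}$ then becomes $D-\varepsilon\ge 1$, that is, $1+c\varepsilon\ge(1+\varepsilon)^{p-1}$.

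For $1<p\le 2$, Bernoulli's inequality gives this with $c=p-1$, so the lemma is correct there. For $p\ge 2$, convexity makes $\varepsilon\mapsto((1+\varepsilon)^{p-1}-1)/\varepsilon$ nondecreasing. So the optimal constant uniform in $\varepsilon\in(0,1]$ is $2^{p-1}-1$, which is strictly larger than $(p-1)\Gamma(\max\{1,p-2\})$ on $(2,6]$. No Gamma function arises. Your derivative computation omits the factor $1+c\varepsilon$ in front of $s^p$, so it only returns $(1-\varepsilon)^{1-p}$, the same blow-up as your convexity bound.

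Your case $s<\varepsilon$ also fails even where the lemma is true. For $p=2$ it requires $4\lambda\le 1+\lambda^2$ on $[0,1)$, which is false at $\lambda=\frac12$; the exact inequality there is just $2s\le\varepsilon+s^2/\varepsilon$. The bound $(1+s)^p-1\le p2^{p-1}s$ is too lossy.

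The fix is to take $c_p=\max\{p-1,2^{p-1}-1\}$, or any larger constant. Then your reduction plus the maximization above is a complete proof. Your derivation of (\ref{eq_ele_ab}), applying the main inequality with $\lvert b\rvert=\min\{\lvert a\rvert,\lvert b\rvert\}$, is correct. The paper uses (\ref{eq_ele_ab}) only in Lemmas \ref{lem_ele4} and \ref{lem_ele6}, via the choice $\varepsilon=\min\{1,\frac{1}{3c_p}\}$ to force $c_p\varepsilon\le\frac13$. Those proofs therefore go through unchanged apart from the value of $c_p$.
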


\begin{proof}[Proof of Lemma \ref{lem_ele4}]
It is easy to see that both $F'$ and $G'$ are positive and monotone decreasing, and satisfy that for any $x\in[0,M]$, we have $G'(x)^p=F'(x)$ and
\begin{equation}\label{eq_ele_FG}
\frac{\lvert F(x)\rvert}{G'(x)^{p-1}}\le \frac{\lvert \alpha+p-1\rvert}{p \lvert \alpha\rvert}\lvert G(x)\rvert.
\end{equation}
Without loss of generality, we may assume that $x>y$, then
\begin{align*}
&\text{LHS}=(x-y)^{p-1}\left(F(x)a^p-F(y)b^p\right)\\
&=(x-y)^{p-1}\left(F(x)-F(y)\right)a^p+(x-y)^{p-1}F(y)\left(a^p-b^p\right).
\end{align*}
By H\"older's inequality, we have
\begin{align}
&F(x)-F(y)=\int_{y}^{x}F'(z)\mathrm{d}z=\int_{y}^{x}G'(z)^p \mathrm{d}z\nonumber\\
&\ge \frac{1}{(x-y)^{p-1}}\left(\int_y^x G'(z)\mathrm{d}z\right)^p=\frac{1}{(x-y)^{p-1}}\lvert G(x)-G(y)\rvert^p.\label{eq_ele4A}
\end{align}
Since $G'$ is positive and monotone decreasing, we have
\begin{equation}\label{eq_ele4B}
\lvert G(x)-G(y)\rvert=\int_{y}^{x}G'(z)\mathrm{d}z\ge(x-y)G'(y).
\end{equation}
Moreover, we have
\begin{equation}\label{eq_ele4C}
\lvert a^p-b^p\rvert=\lvert p\int_{a\wedge b}^{a\vee b}z^{p-1}\mathrm{d}z \rvert\le p \max\{a^{p-1},b^{p-1}\}\lvert a-b\rvert.
\end{equation}
Hence for any $\varepsilon\in(0,1]$, we have
\begin{align*}
&\text{LHS}\overset{(\diamond)}{\scalebox{2}[1]{$\ge$}}\lvert G(x)-G(y)\rvert^p a^p-(x-y)^{p-1}\lvert F(y)\rvert\cdot \lvert a^p-b^p\rvert\\
&\overset{(\star)}{\scalebox{2}[1]{$\ge$}}\lvert G(x)-G(y)\rvert^p\min\{a^p,b^p\}\\
&\hspace{10pt}-(x-y)^{p-1}\left(\frac{\lvert \alpha+p-1\rvert}{p\lvert \alpha\rvert}G'(y)^{p-1}\lvert G(y)\rvert\right)\left(p\max\{a^{p-1},b^{p-1}\}\lvert a-b\rvert\right)\\
&\overset{(\circledast)}{\scalebox{2}[1]{$\ge$}}\lvert G(x)-G(y)\rvert^p\left(\frac{1}{1+c_p\varepsilon}\max\{a^p,b^p\}-\varepsilon^{1-p}\lvert a-b\rvert^p\right)\\
&\hspace{10pt}-\frac{\lvert \alpha+p-1\rvert}{\lvert \alpha\rvert}\lvert G(x)-G(y)\rvert^{p-1}\lvert G(y)\rvert\max\{a^{p-1},b^{p-1}\}\lvert a-b\rvert\\
&\overset{(\dagger)}{\scalebox{2}[1]{$\ge$}}\frac{1}{1+c_p\varepsilon}\lvert G(x)-G(y)\rvert^p\max\{a^p,b^p\}-\varepsilon^{1-p}\lvert G(x)-G(y)\rvert^p \lvert a-b\rvert^p\\
&\hspace{10pt}-\frac{1}{4}\lvert G(x)-G(y)\rvert^{p}\max\{a^{p},b^{p}\}-C_1 \left(\frac{\lvert \alpha+p-1\rvert}{\lvert \alpha\rvert}\right)^p \lvert G(y)\rvert^p \lvert a-b\rvert^p\\
&\ge \left(\frac{1}{1+c_p\varepsilon}-\frac{1}{4}\right)\lvert G(x)-G(y)\rvert^{p}\max\{a^{p},b^{p}\}\\
&\hspace{10pt}-\left(2^p\varepsilon^{1-p}+C_1 \left(\frac{\lvert \alpha+p-1\rvert}{\lvert \alpha\rvert}\right)^p\right)\max\{\lvert G(x)\rvert^p,\lvert G(y)\rvert^p\}\lvert a-b\rvert^p,
\end{align*}
where $(\diamond)$ follows from (\ref{eq_ele4A}), $(\star)$ follows from (\ref{eq_ele_FG}) and (\ref{eq_ele4C}), $(\circledast)$ follows from (\ref{eq_ele_ab}) and (\ref{eq_ele4B}), $(\dagger)$ follows from Young's inequality, and $C_1>0$ depends only on $p$. Taking $\varepsilon=\min\{1,\frac{1}{3c_p}\}\in(0,1]$, which depends only on $p$, we have
\begin{align*}
&\lvert x-y\rvert^{p-2}(x-y)\left(F(x)a^p-F(y)b^p\right)\\
&\ge \frac{1}{2}\lvert G(x)-G(y)\rvert^p \max\{a^p,b^p\}-C \left(1+\left(\frac{\lvert \alpha+p-1\rvert}{\lvert \alpha\rvert}\right)^p\right)\max\{\lvert G(x)\rvert^p,\lvert G(y)\rvert^p\}\lvert a-b\rvert^p,
\end{align*}
where $C=\max\{2^p\varepsilon^{1-p},C_1\}$.
\end{proof}

\begin{lemma}[{\cite[Lemma A.4]{KLL23}}]\label{lem_ele5}
There exist $C_1,C_2>0$ depending only on $p$ such that for any $K>0$ and any $M\ge0$, for any $x,y\in[0,M]$ and any $a,b\ge0$, we have
\begin{align*}
&\lvert x-y\rvert^{p-2}(x-y)\left(\left((x+K)^{1-p}-(M+K)^{1-p}\right)a^p-\left((y+K)^{1-p}-(M+K)^{1-p}\right)b^p\right)\\
&\le -C_1 \lvert \log(x+K)-\log(y+K)\rvert^p\min\{a^p,b^p\}+C_2 \lvert a-b\rvert^p.
\end{align*}
Moreover, letting $M\to+\infty$, we obtain that for any $K>0$ and any $x,y,a,b\ge0$, we have
\begin{align*}
&\lvert x-y\rvert^{p-2}(x-y)\left((x+K)^{1-p}a^p-(y+K)^{1-p}b^p\right)\\
&\le -C_1 \lvert \log(x+K)-\log(y+K)\rvert^p\min\{a^p,b^p\}+C_2 \lvert a-b\rvert^p.
\end{align*}
\end{lemma}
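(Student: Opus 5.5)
We would prove the first inequality directly by elementary calculus; the second then follows by letting $M\to+\infty$ with $x,y,a,b,K$ fixed, since both sides converge.

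\emph{Reductions.} If $x=y$, both the left-hand side and the logarithmic term vanish, so the claim holds with any $C_2\ge0$. Assume $x\ne y$. We may \emph{not} swap $x$ and $y$ without also swapping $a$ and $b$. The statement is symmetric under the joint swap $(x,a)\leftrightarrow(y,b)$, since $\min\{a^p,b^p\}$ and $\lvert a-b\rvert^p$ are symmetric. Hence we may assume $x>y$. Put $X=x+K>Y=y+K>0$, $F(t)=t^{1-p}-(M+K)^{1-p}$ and $t_0=X/Y>1$. Then $F\ge0$ on $[K,M+K]$ and $F$ is decreasing. The left-hand side equals
\[
(X-Y)^{p-1}\bigl(F(X)a^p-F(Y)b^p\bigr),
\]
and it suffices to bound it by $-C_1(\log t_0)^p\,b^p+C_2\lvert a-b\rvert^p$, because $b^p\ge\min\{a^p,b^p\}$. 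We split into the cases $t_0\le L$ and $t_0>L$, with a large constant $L=L(p)$ fixed in the second case.

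\emph{Case $t_0\le L$.} We apply (\ref{eq_ele_ab}) in the form $a^p\le(1+c_p\varepsilon)b^p+(1+c_p\varepsilon)\varepsilon^{1-p}\lvert a-b\rvert^p$, with the adapted choice $\varepsilon=\eta\,\frac{X-Y}{X}\in(0,1)$ and a small $\eta=\eta(p)$. This gives two contributions.
\begin{itemize}
\item The error term is at most $(1+c_p)\,\eta^{1-p}\,(X-Y)^{p-1}F(X)\bigl(\tfrac{X}{X-Y}\bigr)^{p-1}\lvert a-b\rvert^p\le C\eta^{1-p}\lvert a-b\rvert^p$, using $F(X)\le X^{1-p}$.
\item The main term is $(X-Y)^{p-1}b^p$ times $F(X)-F(Y)+c_p\eta\frac{X-Y}{X}F(X)$. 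Here the $M$-dependence cancels in $F(X)-F(Y)=X^{1-p}-Y^{1-p}=-(p-1)\int_Y^X s^{-p}\,\mathrm{d}s\le-(p-1)(X-Y)X^{-p}$, while the last summand is at most $c_p\eta(X-Y)X^{-p}$.
\end{itemize}
Taking $\eta=\frac{p-1}{2c_p}$ (or $1$ if smaller), the main term is at most $-\frac{p-1}{2}\bigl(\frac{X-Y}{X}\bigr)^p b^p$. On $1<t_0\le L$ we have $\frac{X-Y}{X}=1-t_0^{-1}\ge c_L\log t_0$, which gives the claim in this case.

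\emph{Case $t_0>L$ (the main obstacle).} The ratio $\frac{X-Y}{X}$ stays bounded while $\log t_0$ grows, so the estimate above is too weak. Instead we exploit the size of $F(Y)$. Since $F(Y)-F(X)=Y^{1-p}-X^{1-p}\ge(1-2^{1-p})Y^{1-p}$ and $F(X)\ge0$, we get $F(Y)\ge c\,Y^{1-p}$ with $c=1-2^{1-p}$. Combining this with $F(X)\le X^{1-p}$, $X-Y\le X$ and $a^p\le 2^{p-1}\bigl(b^p+\lvert a-b\rvert^p\bigr)$ yields
\[
\mathrm{LHS}\le\bigl(2^{p-1}-c(t_0-1)^{p-1}\bigr)b^p+2^{p-1}\lvert a-b\rvert^p .
\]
Because $(t_0-1)^{p-1}$ eventually dominates both a constant and $(\log t_0)^p$, choosing $L$ large makes the bracket at most $-c'(\log t_0)^p$.

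Taking $C_1$ to be the minimum and $C_2$ the maximum of the constants from the two cases finishes the proof. All constants depend only on $p$, and none of them depends on $M$ or $K$.
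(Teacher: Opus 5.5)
Your proof is correct, with constants depending only on $p$ and not on $M$ or $K$, and the passage $M\to+\infty$ is legitimate. The paper gives no proof of its own here and simply quotes \cite[Lemma A.4]{KLL23}. Your case split is the standard argument behind such logarithmic estimates, as in the logarithmic lemma of \cite{DiKP16}: the $\varepsilon$-inequality (\ref{eq_ele_ab}) with $\varepsilon$ proportional to $(X-Y)/X$ when $X/Y$ is bounded, and the lower bound $F(Y)\gtrsim Y^{1-p}$ when $X/Y$ is large. The only difference is cosmetic: you use a large threshold $L$ together with the crude bound $a^p\le 2^{p-1}(b^p+\lvert a-b\rvert^p)$ in the far regime, rather than threshold $2$ with a small $\varepsilon$-parameter. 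Your $L$ must also satisfy $L\ge 2$, which is needed for $F(Y)\ge(1-2^{1-p})Y^{1-p}$.
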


\begin{lemma}\label{lem_ele6}
There exists $C>0$ depending only on $p$ such that for any $x,y\in \mathbb{R}$ and any $a,b\ge0$, we have
\begin{align*}
&\lvert x-y\rvert^{p-2}(x-y)(a^px-b^py)\ge \frac{1}{4}\lvert x-y\rvert^p(a^p+b^p)-C \left(\lvert x\rvert^p+\lvert y\rvert^p\right)\lvert a-b\rvert^p.
\end{align*}
\end{lemma}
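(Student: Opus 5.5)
By homogeneity we may rescale $x,y$. Writing $a^px-b^py$ in two ways gives
\[
a^px-b^py=\tfrac{a^p+b^p}{2}(x-y)+\tfrac{a^p-b^p}{2}(x+y).
\]
The first piece contributes exactly $\frac12\lvert x-y\rvert^p(a^p+b^p)$ to the left-hand side. It therefore suffices to show that the error
\[
E=\tfrac12\lvert x-y\rvert^{p-1}\,\lvert a^p-b^p\rvert\,\lvert x+y\rvert
\]
is at most $\frac14\lvert x-y\rvert^p(a^p+b^p)+C(\lvert x\rvert^p+\lvert y\rvert^p)\lvert a-b\rvert^p$.

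To bound $E$, we first use (\ref{eq_ele4C}), i.e.\ $\lvert a^p-b^p\rvert\le p\max\{a,b\}^{p-1}\lvert a-b\rvert$. Since $\max\{a,b\}^{p}\le a^p+b^p$, this gives
\[
E\le \tfrac p2\,\bigl(\lvert x-y\rvert\,(a^p+b^p)^{1/p}\bigr)^{p-1}\cdot \lvert x+y\rvert\,\lvert a-b\rvert .
\]
Next we apply Young's inequality with exponents $\frac{p}{p-1}$ and $p$, putting a small weight $\eta$ on the first factor:
\[
E\le \eta\lvert x-y\rvert^p(a^p+b^p)+C_\eta\lvert x+y\rvert^p\lvert a-b\rvert^p .
\]
Choosing $\eta=\frac14$ and using $\lvert x+y\rvert^p\le 2^{p-1}(\lvert x\rvert^p+\lvert y\rvert^p)$ then yields the claim with $C=2^{p-1}C_{1/4}$.

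Nothing here is delicate. The only points to check are that the sign of $(x-y)$ is handled by absolute values, which it is, since we only lower-bound the second term by $-E$, and that the constant depends only on $p$. No case distinction between $p\ge2$ and $p<2$ is needed because $p-1>0$ throughout.
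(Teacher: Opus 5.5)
Your proof is correct, and it differs from the paper's only in the splitting, though that difference removes one auxiliary step. The paper assumes $x>y$ and splits asymmetrically, $a^px-b^py=b^p(x-y)+x(a^p-b^p)$, so its main term is $(x-y)^pb^p$. To turn $b^p$ into a multiple of $a^p+b^p$ it invokes the auxiliary inequality $(\ref{eq_ele_ab})$. This costs an extra term $\varepsilon^{1-p}\lvert x-y\rvert^p\lvert a-b\rvert^p$ and forces the choice $\varepsilon=\min\{1,\frac{1}{3c_p}\}$; only then does it treat $x(a^p-b^p)$ with $(\ref{eq_ele4C})$ and Young's inequality.

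Your symmetric splitting gives the symmetric main term $\frac12\lvert x-y\rvert^p(a^p+b^p)$ exactly. So you need neither $(\ref{eq_ele_ab})$ nor the reduction to $x>y$, and one use of $(\ref{eq_ele4C})$ plus Young's inequality finishes. It also makes transparent that $\frac14$ can be replaced by any constant below $\frac12$, and the case $a=b$ shows that $\frac12$ cannot be exceeded.

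The paper's version has the merit of following the same template, with the same auxiliary inequalities, as its proof of Lemma \ref{lem_ele4}, where the factor $x$ is replaced by a nonlinear $F(x)$. Your trick uses that $a^px-b^py$ is linear in $(x,y)$, so the error carries the harmless factor $x+y$. The analogous splitting in Lemma \ref{lem_ele4} would leave $F(x)+F(y)$ in the error term, which is not controlled so easily.

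Your opening remark about homogeneity is never used and can be dropped.
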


\begin{proof}
Without loss of generality, we may assume that $x>y$, then
\begin{align*}
\text{LHS}=(x-y)^{p-1}\left(a^px-b^px+b^px-b^py\right)=(x-y)^pb^p+(x-y)^{p-1}x(a^p-b^p).
\end{align*}
For any $\varepsilon\in(0,1]$, by (\ref{eq_ele_ab}), we have
\begin{align*}
&b^p\ge \min\{a^p,b^p\}\ge \frac{1}{1+c_p\varepsilon}\max\{a^p,b^p\}-\varepsilon^{1-p}\lvert a-b\rvert^p\\
&\ge\frac{1}{2(1+c_p\varepsilon)}\left(a^p+b^p\right)-\varepsilon^{1-p}\lvert a-b\rvert^p.
\end{align*}
Together with (\ref{eq_ele4C}), we have
\begin{align*}
&\text{LHS}\ge\frac{1}{2(1+c_p\varepsilon)}(x-y)^p\left(a^p+b^p\right)-\varepsilon^{1-p}(x-y)^p\lvert a-b\rvert^p\\
&\hspace{15pt}-(x-y)^{p-1}\lvert x\rvert\left(p\max\{a^{p-1},b^{p-1}\}\lvert a-b\rvert\right)\\
&\overset{(\star)}{\scalebox{2}[1]{$\ge$}}\frac{1}{2(1+c_p\varepsilon)}(x-y)^p\left(a^p+b^p\right)-2^{p-1}\varepsilon^{1-p}\left(\lvert x\rvert^p+\lvert y\rvert^p\right)\lvert a-b\rvert^p\\
&\hspace{15pt}-\frac{1}{8}(x-y)^{p}\max\{a^{p},b^{p}\}-C_1\lvert x\rvert^p\lvert a-b\rvert^p\\
&\ge \left(\frac{1}{2(1+c_p\varepsilon)}-\frac{1}{8}\right)(x-y)^p\left(a^p+b^p\right)- \left(2^{p-1}\varepsilon^{1-p}+C_1\right) \left(\lvert x\rvert^p+\lvert y\rvert^p\right)\lvert a-b\rvert^p,
\end{align*}
where $(\star)$ follows from Young's inequality and $C_1>0$ depends only on $p$. Taking $\varepsilon=\min\{1,\frac{1}{3c_p}\}\in(0,1]$, which depends only on $p$, we have
\begin{align*}
\lvert x-y\rvert^{p-2}(x-y)(a^px-b^py)\ge \frac{1}{4}\lvert x-y\rvert^p (a^p+b^p)-C \left(\lvert x\rvert^p+\lvert y\rvert^p\right)\lvert a-b\rvert^p,
\end{align*}
where $C=2^{p-1}\varepsilon^{1-p}+C_1$.
\end{proof}

\begin{lemma}\label{lem_KJ_ele}
Assume \ref{eq_VD}, \ref{eq_KJ}. Then \ref{eq_UPR}, \ref{eq_KJ_tail}, \ref{eq_PIJ} hold.
\end{lemma}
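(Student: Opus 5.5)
The three conditions will be checked one at a time. Throughout I write $C_1,C_2$ for the constants in \ref{eq_KJ}, so that $K^{(J)}(x,y)\asymp \frac{1}{V(x,d(x,y))\Upsilon(d(x,y))}$.

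\emph{\ref{eq_UPR}.} Fix $x_0$ and distinct points $x,y\ne x_0$, and put $a=d(x_0,x)$, $b=d(x_0,y)$. Then
$$\frac{K^{(J)}(x_0,x)}{K^{(J)}(x_0,y)}\lesssim\frac{V(x_0,b)\Upsilon(b)}{V(x_0,a)\Upsilon(a)}.$$
If $a\ge b$, both factors $V(x_0,\cdot)$ and $\Upsilon$ are increasing, so the ratio is bounded by a constant. If $a<b$, \ref{eq_VD} gives $V(x_0,b)/V(x_0,a)\le C_{VD}(b/a)^{\log_2C_{VD}}$, and (\ref{eq_beta12}) gives $\Upsilon(b)/\Upsilon(a)\le C_\Upsilon(b/a)^{\beta^{(2)}_\Upsilon}$. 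Hence \ref{eq_UPR} holds with $\theta=\log_2C_{VD}+\beta^{(2)}_\Upsilon$.

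\emph{\ref{eq_KJ_tail}.} I decompose $X\backslash B(x,r)$ into dyadic annuli $B(x,2^{n+1}r)\backslash B(x,2^nr)$, $n\ge0$. On the $n$-th annulus the upper bound in \ref{eq_KJ} gives $K^{(J)}(x,y)\le C_2/(V(x,2^nr)\Upsilon(2^nr))$, while the annulus has measure at most $V(x,2^{n+1}r)\le C_{VD}V(x,2^nr)$. So the $n$-th annulus contributes $\lesssim 1/\Upsilon(2^nr)\le C_\Upsilon 2^{-\beta^{(1)}_\Upsilon n}/\Upsilon(r)$ by (\ref{eq_beta12}). Summing the geometric series gives $\int_{X\backslash B(x,r)}K^{(J)}(x,y)\,m(\mathrm{d}y)\lesssim 1/\Upsilon(r)$.

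\emph{\ref{eq_PIJ} with $A_{PI}=1$.} Let $B=B(x_0,r)$. By Jensen's inequality,
$$\int_B|f-f_B|^p\,\mathrm{d}m\le\frac{1}{m(B)}\int_B\int_B|f(y)-f(z)|^p\,m(\mathrm{d}y)m(\mathrm{d}z).$$
For $y,z\in B$ we have $d(y,z)<2r$, so $V(y,d(y,z))\le V(y,2r)\le V(x_0,3r)\le C_{VD}^2\,m(B)$ by \ref{eq_VD}, and $\Upsilon(d(y,z))\le C_\Upsilon\Upsilon(r)$. The lower bound in \ref{eq_KJ} then gives
$$\frac{1}{m(B)}\le C\,\Upsilon(r)K^{(J)}(y,z)\qquad\text{for }y,z\in B.$$
Inserting this into the double integral yields $\int_B|f-f_B|^p\,\mathrm{d}m\le C\Upsilon(r)\int_B\mathrm{d}\Gamma^{(J)}_B(f)$, which is \ref{eq_PIJ}. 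This holds for every measurable $f$, in particular for every $f\in\widehat{\mathcal{F}}^{(J)}$.

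All three steps are routine; the only point needing care is the volume comparison $V(y,2r)\lesssim m(B)$ for $y\in B$ in the Poincar\'e step, and this follows directly from \ref{eq_VD}.
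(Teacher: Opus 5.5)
Your proposal is correct and follows essentially the same route as the paper: the ratio bound for \ref{eq_UPR} via \ref{eq_VD} and (\ref{eq_beta12}) (your exponent $\log_2C_{VD}+\beta^{(2)}_\Upsilon$ is the paper's $\log_2(C_\Upsilon C_{VD})$ once one takes $\beta^{(2)}_\Upsilon=\log_2C_\Upsilon$), the dyadic-annulus summation for \ref{eq_KJ_tail}, and Jensen/H\"older combined with the lower kernel bound to get \ref{eq_PIJ} with $A_{PI}=1$. The only cosmetic difference is your volume comparison $V(y,2r)\le V(x_0,3r)\le C_{VD}^2\,m(B)$, where the paper obtains the constant $C_{VD}^3$ instead.
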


\begin{proof}
First, by \ref{eq_VD}, \ref{eq_KJ}, for any $x_0,x,y\in X$ with $x_0\ne x$ and $x_0\ne y$, we have
$$\frac{K^{(J)}(x_0,x)}{K^{(J)}(x_0,y)}\le \frac{C_2V(x_0,d(x_0,y))\Upsilon(d(x_0,y))}{C_1V(x_0,d(x_0,x))\Upsilon(d(x_0,x))}\le \frac{C_2C_\Upsilon C_{VD}}{C_1} \left(\frac{d(x_0,y)}{d(x_0,x)}\vee1\right)^{\log_2(C_\Upsilon C_{VD})}.$$

Second, \ref{eq_KJ_tail} follows directly from \ref{eq_VD}, \hyperlink{eq_uKJ}{$\text{K}^{(J)}_\le(\Upsilon)$} by noting that for any $x\in X$ and any $r>0$, we have
\begin{align*}
&\int_{X\backslash B(x,r)}\frac{1}{V(x,d(x,y))\Upsilon(d(x,y))}m(\mathrm{d}y)\\
&=\sum_{n=0}^{+\infty}\int_{B(x,2^{n+1}r)\backslash B(x,2^nr)}\frac{1}{V(x,d(x,y))\Upsilon(d(x,y))}m(\mathrm{d}y)\\
&\le\sum_{n=0}^{+\infty} \frac{V(x,2^{n+1}r)}{V(x,2^nr)\Upsilon(2^nr)}\le \left(C_\Upsilon C_{VD}\sum_{n=0}^{+\infty}\frac{1}{2^{\beta_\Upsilon^{(1)}n}}\right)\frac{1}{\Upsilon(r)}.
\end{align*}

Third, by H\"older's inequality, we have
\begin{align*}
&\int_{B(x,r)}\lvert f-f_{B(x,r)}\rvert^p \mathrm{d}m\le \frac{1}{V(x,r)}\int_{B(x,r)}\int_{B(x,r)}\lvert f(y)-f(z)\rvert^p m(\mathrm{d}y) m(\mathrm{d}z)\\
&=\int_{B(x,r)}\int_{B(x,r)}\frac{\lvert f(y)-f(z)\rvert^p}{V(y,d(y,z))\Upsilon(d(y,z))} \frac{V(y,d(y,z))}{V(x,r)}\Upsilon(d(y,z))m(\mathrm{d}y) m(\mathrm{d}z),
\end{align*}
where
$$\Upsilon(d(y,z))\le \Upsilon(2r)\le C_\Upsilon \Upsilon(r),$$
and by \ref{eq_VD}, we have
\begin{align*}
\frac{V(y,d(y,z))}{V(x,r)}\le C_{VD}\left(\frac{d(x,y)+2r}{r}\right)^{\log_2C_{VD}}\le C_{VD}^3.
\end{align*}
Hence
$$\int_{B(x,r)}\lvert f-f_{B(x,r)}\rvert^p \mathrm{d}m\le C_\Upsilon C_{VD}^3\Upsilon(r)\int_{B(x,r)}\int_{B(x,r)}\frac{\lvert f(y)-f(z)\rvert^p}{V(y,d(y,z))\Upsilon(d(y,z))}m(\mathrm{d}y)m(\mathrm{d}z),$$
then \ref{eq_PIJ} follows from \hyperlink{eq_lKJ}{$\text{K}^{(J)}_\ge(\Upsilon)$}.
\end{proof}

\begin{lemma}\label{lem_CSJ_energy}
Assume \ref{eq_KJ_tail}. Then \ref{eq_CSJ_energy} is equivalent to
$$\int_{B(x_0,A_2r)}\lvert f\rvert^p \mathrm{d}\Gamma^{(J)}_{X}(\phi)\le C_1\int_{B(x_0,A_2r)}\mathrm{d}\Gamma^{(J)}_{B(x_0,A_2r)}(f)+\frac{C_2}{\Upsilon(r)}\int_{B(x_0,A_2r)}\lvert f\rvert^p \mathrm{d}m,$$
up to suitable modifications of the constants $C_1,C_2$.
\end{lemma}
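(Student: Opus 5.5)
The statement is Lemma \ref{lem_CSJ_energy}, so the plan is to compare the two left-hand sides and show they differ by a tail term controlled via \ref{eq_KJ_tail}. Write $B=B(x_0,r)$, so that $A_2B=B(x_0,A_2r)$, and let $\phi$ be a cutoff function for $B\subseteq A_1B$. Since $\Gamma^{(J)}_{A_2B}(\phi)\le \Gamma^{(J)}_X(\phi)$ as measures, the inequality with $\Gamma^{(J)}_X(\phi)$ on the left trivially implies \ref{eq_CSJ_energy} with the same constants. It therefore suffices to prove the converse.

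For the converse, split
$$\int_{A_2B}\lvert f\rvert^p\mathrm{d}\Gamma^{(J)}_X(\phi)=\int_{A_2B}\lvert f\rvert^p\mathrm{d}\Gamma^{(J)}_{A_2B}(\phi)+\int_{A_2B}\int_{X\backslash A_2B}\lvert f(x)\rvert^p\lvert\phi(x)-\phi(y)\rvert^pK^{(J)}(x,y)m(\mathrm{d}y)m(\mathrm{d}x).$$
The first term is bounded directly by \ref{eq_CSJ_energy}. In the second term, $y\notin A_2B$ while $\mathrm{supp}(\phi)\subseteq A_1B$, so $\phi(y)=0$ and the integrand vanishes unless $x\in A_1B$. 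In that case $\lvert\phi(x)-\phi(y)\rvert^p\le 1$ and $d(x,y)\ge (A_2-A_1)r$.

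Hence the second term is at most
$$\int_{A_1B}\lvert f(x)\rvert^p\left(\int_{X\backslash B(x,(A_2-A_1)r)}K^{(J)}(x,y)m(\mathrm{d}y)\right)m(\mathrm{d}x)\le \frac{C_T}{\Upsilon((A_2-A_1)r)}\int_{A_2B}\lvert f\rvert^p\mathrm{d}m.$$
By (\ref{eq_beta12}), $\Upsilon((A_2-A_1)r)\gtrsim\Upsilon(r)$, with a constant depending only on $C_\Upsilon,\beta^{(1)}_\Upsilon,\beta^{(2)}_\Upsilon,A_1,A_2$. Adding this to $C_2$ gives the modified constant in the stated inequality.

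There is no genuine obstacle. The only point needing care is that the exterior contribution requires $A_1<A_2$, so that the gap $(A_2-A_1)r$ is comparable to $r$.
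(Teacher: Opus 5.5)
Your proposal is correct and follows the paper's proof essentially step for step. The direction from the $\Gamma^{(J)}_X(\phi)$ inequality to \ref{eq_CSJ_energy} is trivial, since $\Gamma^{(J)}_{B(x_0,A_2r)}(\phi)\le\Gamma^{(J)}_X(\phi)$. For the other direction, the paper likewise splits off the exterior term, uses $\mathrm{supp}(\phi)\subseteq B(x_0,A_1r)$ to restrict to $x\in B(x_0,A_1r)$ with $d(x,y)\ge (A_2-A_1)r$, and bounds that term via \ref{eq_KJ_tail} and the doubling property of $\Upsilon$.
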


\begin{proof}
``$\Leftarrow$": Obvious. ``$\Rightarrow$": It follows easily by noting that
$$\int_{B(x_0,A_2r)}\lvert f\rvert^p \mathrm{d}\Gamma^{(J)}_{X}(\phi)=\int_{B(x_0,A_2r)}\lvert f\rvert^p \mathrm{d}\Gamma^{(J)}_{B(x_0,A_2r)}(\phi)+I,$$
where
\begin{align*}
&I=\int_{B(x_0,A_2r)}\int_{X\backslash B(x_0,A_2r)}\lvert f(x)\rvert^p \lvert \phi(x)-\phi(y)\rvert^p K^{(J)}(x,y) m(\mathrm{d}x)m(\mathrm{d}y)\\
&=\int_{B(x_0,A_1r)}\int_{X\backslash B(x_0,A_2r)}\lvert f(x)\rvert^p  \phi(x)^p K^{(J)}(x,y) m(\mathrm{d}x)m(\mathrm{d}y)\\
&\le\int_{B(x_0,A_1r)}\lvert f(x)\rvert^p \left(\int_{X\backslash B(x,(A_2-A_1)r)}K^{(J)}(x,y)m(\mathrm{d}y)\right)m(\mathrm{d}x)\\
&\le \frac{C_T}{\Upsilon((A_2-A_1)r)}\int_{B(x,A_1r)}\lvert f\rvert^p \mathrm{d}m\le \frac{C}{\Upsilon(r)}\int_{B(x,A_2r)}\lvert f\rvert^p \mathrm{d}m,
\end{align*}
where $C>0$ depends only on $A_1,A_2,C_\Upsilon,C_T$.
\end{proof}

\begin{lemma}\label{lem_CSJ2ucapJ}
Assume \ref{eq_VD}, \ref{eq_KJ_tail}. Then \hyperlink{eq_CSJ_weak}{$\text{CS}^{(J)}_{\text{weak}}(\Upsilon)$} implies \ref{eq_ucapJ}.
\end{lemma}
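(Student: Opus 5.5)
The plan is to take the cutoff function supplied by \hyperlink{eq_CSJ_weak}{$\text{CS}^{(J)}_{\text{weak}}(\Upsilon)$} and test \ref{eq_CSJ_energy} against the constant function $f\equiv1$; then bound the full energy $\mathcal{E}^{(J)}(\phi)$ by splitting it into a local piece and a tail piece.

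Fix a ball $B(x_0,r)$ and let $\phi\in\mathcal{F}^{(J)}$ be the cutoff function for $B(x_0,r)\subseteq B(x_0,A_1r)$. Since constants belong to $\widehat{\mathcal{F}}^{(J)}$ and $\Gamma^{(J)}_{B(x_0,A_2r)}(1)=0$, taking $f\equiv1$ in \ref{eq_CSJ_energy} gives
$$\int_{B(x_0,A_2r)}\int_{B(x_0,A_2r)}\lvert\phi(x)-\phi(y)\rvert^pK^{(J)}(x,y)\,m(\mathrm{d}x)m(\mathrm{d}y)\le \frac{C_2}{\Upsilon(r)}V(x_0,A_2r).$$

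Next I write
$$\mathcal{E}^{(J)}(\phi)=\int_{B(x_0,A_2r)}\int_{B(x_0,A_2r)}\ldots+2\int_{B(x_0,A_2r)}\int_{X\backslash B(x_0,A_2r)}\ldots.$$
The second term vanishes unless $x\in B(x_0,A_1r)$, where $\lvert\phi(x)-\phi(y)\rvert\le1$. For such $x$ and $y\notin B(x_0,A_2r)$ we have $d(x,y)\ge (A_2-A_1)r$. Hence \ref{eq_KJ_tail} and the doubling of $\Upsilon$ bound the second term by $C V(x_0,A_1r)/\Upsilon(r)$. This is exactly the computation in Lemma \ref{lem_CSJ_energy} with $f\equiv1$, and one could simply invoke that lemma with $f\equiv1$.

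Combining the two bounds with \ref{eq_VD} gives $\mathcal{E}^{(J)}(\phi)\le C V(x_0,r)/\Upsilon(r)$. Since $\phi$ has support in $B(x_0,A_1r)$, it is not yet admissible for $\mathrm{cap}^{(J)}$ as defined, which requires $\phi=0$ in an open neighbourhood of $X\backslash B(x_0,A_{cap}r)$. With $A_{cap}=2A_1$, however, $\mathrm{supp}(\phi)\subseteq B(x_0,A_1r)$ gives $\phi=0$ a.e. on the open set $X\backslash\overline{B(x_0,A_1r)}$, which is a neighbourhood of $X\backslash B(x_0,2A_1r)$. Also $\phi=1$ on a neighbourhood of $\overline{B(x_0,r)}$ by the definition of cutoff function. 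So $\phi$ is admissible and \ref{eq_ucapJ} follows.

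No step is a real obstacle. The one point needing care is that the "neighbourhood" requirements in the definition of $\mathrm{cap}^{(J)}$ match the cutoff definition, which is handled by enlarging $A_{cap}$ as above.
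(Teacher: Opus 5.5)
Your proposal is correct and follows the paper's proof: test \ref{eq_CSJ_energy} (equivalently Lemma \ref{lem_CSJ_energy}) with $f\equiv1$, control the part of $\mathcal{E}^{(J)}(\phi)$ leaving $B(x_0,A_2r)$ by \ref{eq_KJ_tail}, and conclude with \ref{eq_VD}. The only difference is that the paper takes $A_{cap}=A_1$ directly. This is legitimate because $X\backslash\mathrm{supp}(\phi)$ is already an open neighbourhood of $X\backslash B(x_0,A_1r)$ on which $\phi=0$ a.e., so your enlargement to $2A_1$ is harmless but unnecessary.
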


\begin{proof}
By taking $f\equiv1\in \widehat{\mathcal{F}}^{(J)}\cap L^\infty(X;m)$ in Lemma \ref{lem_CSJ_energy}, we have
$$\int_{B(x_0,A_2r)} \mathrm{d}\Gamma^{(J)}_{X}(\phi)\le C_2\frac{V(x_0,A_2r)}{\Upsilon(r)}.$$
By \ref{eq_KJ_tail}, we have
\begin{align*}
&\int_{X\backslash B(x_0,A_2r)} \mathrm{d}\Gamma^{(J)}_{X}(\phi)=\int_{X\backslash B(x_0,A_2r)}\int_X \lvert \phi(x)-\phi(y)\rvert^p K^{(J)}(x,y) m(\mathrm{d}x)m(\mathrm{d}y)\\
&=\int_{X\backslash B(x_0,A_2r)}\int_{B(x_0,A_1r)} \phi(y)^p K^{(J)}(x,y) m(\mathrm{d}x)m(\mathrm{d}y)\\
&\le\int_{B(x_0,A_1r)}\left(\int_{X\backslash B(y,(A_2-A_1)r)}K^{(J)}(y,x)m(\mathrm{d}x)\right)m(\mathrm{d}y)\le C_T\frac{V(x_0,A_1r)}{\Upsilon((A_2-A_1)r)}.
\end{align*}
Then by \ref{eq_VD}, we have
$$\mathcal{E}^{(J)}(\phi)=\int_{B(x_0,A_2r)} \mathrm{d}\Gamma^{(J)}_{X}(\phi)+\int_{X\backslash B(x_0,A_2r)} \mathrm{d}\Gamma^{(J)}_{X}(\phi)\le C \frac{V(x_0,r)}{\Upsilon(r)},$$
where $C>0$ depends only on $A_1,A_2,C_2,C_\Upsilon,C_T$, which gives
$$\mathrm{cap}^{(J)}(B(x_0,r),X\backslash B(x_0,A_1r))\le C \frac{V(x_0,r)}{\Upsilon(r)},$$
hence \ref{eq_ucapJ} holds.
\end{proof}

\begin{lemma}\label{lem_cutoff}
Let $(\mathcal{E}^{(J)},\mathcal{F}^{(J)})$ be a non-local regular $p$-energy. Let $\Omega\subseteq X$ be a bounded open subset and let $u\in \widehat{\mathcal{F}}^{(J)}\cap L^\infty(X;m)$ satisfy $-\Delta_p^{(J)}u\ge-1$ in $\Omega$, that is, $\mathcal{E}^{(J)}(u;\varphi)\ge-\int_{X}\varphi \mathrm{d}m$ for any non-negative $\varphi\in \mathcal{F}^{(J)}(\Omega)$. Then for any $M\in \mathbb{R}$, we have $u\wedge M\in \widehat{\mathcal{F}}^{(J)}\cap L^\infty(X;m)$ also satisfies that $-\Delta_p^{(J)}(u\wedge M)\ge -1$ in $\Omega$.
\end{lemma}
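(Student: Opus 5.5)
The plan is to reduce the statement to the fact that the minimum of two supersolutions is again a supersolution. The constant $M$ satisfies $\mathcal{E}^{(J)}(M;\varphi)=0$ for every $\varphi$, hence $-\Delta_p^{(J)}M=0\ge-1$. It remains to show directly that $w=u\wedge M$ satisfies $\mathcal{E}^{(J)}(w;\varphi)\ge-\int_X\varphi\,\mathrm{d}m$ for every non-negative $\varphi\in\mathcal{F}^{(J)}(\Omega)$. By Lemma \ref{lem_FOmega} and density, it suffices to treat non-negative $\varphi\in\mathcal{F}^{(J)}\cap C_c(\Omega)$, which is bounded. That $w\in\widehat{\mathcal{F}}^{(J)}\cap L^\infty(X;m)$ is immediate from the Markovian property, since $|w(x)-w(y)|\le|u(x)-u(y)|$.

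For $\varepsilon>0$, set $\eta_\varepsilon=\varepsilon^{-1}\big((M+\varepsilon-u)_+\wedge\varepsilon\big)$. Then $0\le\eta_\varepsilon\le1$, $\eta_\varepsilon=1$ on $\{u\le M\}$, $\eta_\varepsilon=0$ on $\{u\ge M+\varepsilon\}$, and $\eta_\varepsilon$ is a non-increasing function of $u$. The product $\varphi\eta_\varepsilon$ of bounded functions lies in $\mathcal{F}^{(J)}(\Omega)$ and is non-negative, so the hypothesis gives
$$\mathcal{E}^{(J)}(u;\varphi\eta_\varepsilon)\ge-\int_X\varphi\eta_\varepsilon\,\mathrm{d}m\ge-\int_X\varphi\,\mathrm{d}m.$$
Next, write
$$\varphi(x)\eta_\varepsilon(x)-\varphi(y)\eta_\varepsilon(y)=\tfrac12\big(\eta_\varepsilon(x)+\eta_\varepsilon(y)\big)\big(\varphi(x)-\varphi(y)\big)+\tfrac12\big(\varphi(x)+\varphi(y)\big)\big(\eta_\varepsilon(x)-\eta_\varepsilon(y)\big).$$
Because $\eta_\varepsilon$ is non-increasing in $u$, the product $|u(x)-u(y)|^{p-2}(u(x)-u(y))(\eta_\varepsilon(x)-\eta_\varepsilon(y))$ is $\le0$, and $\varphi\ge0$. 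Hence
$$-\int_X\varphi\,\mathrm{d}m\le\iint\tfrac12\big(\eta_\varepsilon(x)+\eta_\varepsilon(y)\big)\,|u(x)-u(y)|^{p-2}(u(x)-u(y))\big(\varphi(x)-\varphi(y)\big)K^{(J)}(x,y)\,m(\mathrm{d}x)m(\mathrm{d}y).$$
The integrand is dominated by $|u(x)-u(y)|^{p-1}|\varphi(x)-\varphi(y)|K^{(J)}$, which is integrable by Hölder since $u\in\widehat{\mathcal{F}}^{(J)}$ and $\varphi\in\mathcal{F}^{(J)}$. As $\varepsilon\downarrow0$ we have $\eta_\varepsilon\to1_{\{u\le M\}}$, so dominated convergence gives $-\int\varphi\le L$, where $L$ is the same integral with weight $\frac12(1_{\{u(x)\le M\}}+1_{\{u(y)\le M\}})$.

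The last step is the pointwise comparison of the integrand of $L$ with that of $\mathcal{E}^{(J)}(w;\varphi)$, split by the positions of $u(x),u(y)$ relative to $M$.
\begin{itemize}
\item If both values are $\le M$, then $w=u$ at both points and the integrands agree.
\item If both values are $>M$, the weight of $L$ vanishes and $w(x)-w(y)=0$, so both integrands are $0$.
\item In the mixed case $u(x)\le M<u(y)$, the $L$-integrand is $\frac12|u(x)-u(y)|^{p-2}(u(x)-u(y))(\varphi(x)-\varphi(y))$, while the $w$-integrand is $|u(x)-M|^{p-2}(u(x)-M)(\varphi(x)-\varphi(y))$, with $u(x)-u(y)<u(x)-M\le0$.
\end{itemize}
The mixed case is the main obstacle, because the sign of $\varphi(x)-\varphi(y)$ is arbitrary and the two integrands are not ordered pointwise. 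My first attempt would be to avoid the symmetrization and use only the non-symmetric splitting $\varphi(x)\eta_\varepsilon(x)-\varphi(y)\eta_\varepsilon(y)=\eta_\varepsilon(x)(\varphi(x)-\varphi(y))+\varphi(y)(\eta_\varepsilon(x)-\eta_\varepsilon(y))$. I would keep the $\varphi(y)(\eta_\varepsilon(x)-\eta_\varepsilon(y))$ term rather than discard it. In the limit it contributes exactly $\varphi(y)\,|u(x)-u(y)|^{p-2}(u(x)-u(y))$ on the mixed region, and this should compensate the missing $-\varphi(y)$ part. The monotonicity of $t\mapsto|t|^{p-2}t$ then gives the favourable comparison $|u(x)-u(y)|^{p-2}(u(x)-u(y))\le|u(x)-M|^{p-2}(u(x)-M)$ on that region. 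The difficulty is that the kept term is not dominated uniformly in $\varepsilon$: its energy behaves like $\mathcal{E}^{(J)}(u)/\varepsilon^p$. I would try to control it by monotone convergence after splitting into the sets $\{u(x)\le M,\,u(y)\ge M+\varepsilon\}$ and the shrinking strip $\{M<u<M+\varepsilon\}$, where the bounded integrand should go to $0$ by dominated convergence.

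If this direct bookkeeping fails, I would fall back on the standard nonlocal argument that the minimum of two weak supersolutions is a supersolution, as in \cite[Lemma 9]{LL14}. It applies here because its proof only uses the algebraic structure of $\mathcal{E}^{(J)}(\cdot\,;\cdot)$ and the Markovian property, with the constant supersolution $M$ in place of the second function.
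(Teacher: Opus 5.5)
Your non-symmetric splitting is the right idea, and it gives a complete proof by a route different from the paper's. The symmetric version throws away exactly the term that supplies the compensation, so you can drop it.

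The step you flag, however, is not settled by your suggested fix. On the strip $\{M<u<M+\varepsilon\}$, the kept term $B_\varepsilon(x,y)=\varphi(y)|u(x)-u(y)|^{p-2}(u(x)-u(y))(\eta_\varepsilon(x)-\eta_\varepsilon(y))$ is bounded uniformly in $\varepsilon$ only by $\varphi(y)|u(x)-u(y)|^{p-1}K^{(J)}(x,y)$. That bound need not be integrable, since $K^{(J)}$ may be too singular on the diagonal, so dominated convergence is not available there.

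What closes the argument is the sign you already noticed. Since $\eta_\varepsilon$ is non-increasing in $u$ and $\varphi\ge0$, we have $B_\varepsilon\le0$ everywhere. Moreover $B_\varepsilon\to B_0:=\varphi(y)|u(x)-u(y)|^{p-2}(u(x)-u(y))(1_{\{u(x)\le M\}}-1_{\{u(y)\le M\}})$ pointwise. Fatou's lemma applied to $-B_\varepsilon\ge0$ then gives $\limsup_{\varepsilon\downarrow0}\iint B_\varepsilon K^{(J)}\le\iint B_0K^{(J)}$; the right side may be $-\infty$, which is harmless. Equivalently, drop the strip pairs (where $B_\varepsilon\le0$) and use monotone convergence on $\{u(x)\le M,\,u(y)\ge M+\varepsilon\}$ and its mirror image, where $B_\varepsilon=B_0$.

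Combining this with dominated convergence for the term $\eta_\varepsilon(x)(\varphi(x)-\varphi(y))$, you get $-\int_X\varphi\,\mathrm{d}m\le\iint(A_0+B_0)K^{(J)}$. The limiting integrand $A_0+B_0$ behaves as follows:
\begin{itemize}
\item it vanishes when $u(x),u(y)>M$;
\item it equals the integrand of $\mathcal{E}^{(J)}(w;\varphi)$ when $u(x),u(y)\le M$;
\item on both mixed regions it equals $-\varphi(z)|u(x)-u(y)|^{p-1}$, where $z$ is the point with $u(z)\le M$.
\end{itemize}
So one comparison covers both mixed regions, and your monotonicity argument gives $-\varphi(z)|u(x)-u(y)|^{p-1}\le-\varphi(z)|u(z)-M|^{p-1}\le|w(x)-w(y)|^{p-2}(w(x)-w(y))(\varphi(x)-\varphi(y))$. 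The proof therefore goes through, and the fallback is unnecessary. Note also that the paper invokes \cite[Lemma 9]{LL14} for the comparison principle, not for minima of supersolutions.

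The paper avoids any non-dominated term by comparing before taking a limit. For $f\in C^2$ concave with $0\le f'\le1$, the mean value theorem gives, for $a,b\ge0$, the integrand-level inequality $|f(X)-f(Y)|^{p-2}(f(X)-f(Y))(a-b)\ge|X-Y|^{p-2}(X-Y)(f'(X)^{p-1}a-f'(Y)^{p-1}b)$. Hence $\mathcal{E}^{(J)}(f(u);\varphi)\ge\mathcal{E}^{(J)}(u;f'(u)^{p-1}\varphi)\ge-\int_X\varphi\,\mathrm{d}m$. The paper then approximates $t\mapsto t\wedge M$ uniformly by such $f_n$. The only limit it takes is in $\mathcal{E}^{(J)}(f_n(u);\varphi)$, which is dominated by $|u(x)-u(y)|^{p-1}|\varphi(x)-\varphi(y)|K^{(J)}(x,y)$. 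Its test function $f'(u)^{p-1}\varphi$ is the smooth counterpart of your $\eta_\varepsilon\varphi$.

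The paper's route also proves the more general fact that $f(u)$ is a supersolution for every such concave $f$. Yours dispenses with the smoothing and the mean-value inequality, and compares directly with $\mathcal{E}^{(J)}(u\wedge M;\varphi)$. It also works with an explicit Lipschitz function of $u$, so admissibility of the test function is immediate for every $p>1$. The price is the one-sided Fatou step above.
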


\begin{proof}
Let $f\in C^2(\mathbb{R})$ satisfy $0\le f'\le1$ and $f^{\prime\prime}\le0$ in $\mathbb{R}$. Then the following elementary result holds: for any $X,Y\in \mathbb{R}$ and any $a,b\ge0$, we have
$$\lvert f(X)-f(Y)\rvert^{p-2}\left(f(X)-f(Y)\right)(a-b)\ge \lvert X-Y\rvert^{p-2}(X-Y)\left(f'(X)^{p-1}a-f'(Y)^{p-1}b\right).$$
Indeed, without loss of generality, we may assume that $X>Y$, then there exists $Z\in [Y,X]$ such that $f'(Z)=\frac{f(X)-f(Y)}{X-Y}$. Since $0\le f'(X)\le f'(Z)\le f'(Y)$ and $a,b\ge0$, we have
\begin{align*}
&\text{RHS}\le(X-Y)^{p-1}\left(f'(Z)^{p-1}a-f'(Z)^{p-1}b\right)\\
&=(X-Y)^{p-1} \left(\frac{f(X)-f(Y)}{X-Y}\right)^{p-1}(a-b)=\text{LHS}.
\end{align*}

Since $0\le f'\le 1$ in $\mathbb{R}$, by the Markovian property, we have $f(u),f'(u)^{p-1}\in \widehat{\mathcal{F}}^{(J)}\cap L^\infty(X;m)$. For any non-negative $\varphi\in \mathcal{F}^{(J)}\cap C_c(\Omega)$, we have
\begin{align*}
&\mathcal{E}^{(J)}(f(u);\varphi)\\
&=\int_X\int_X \lvert f(u(x))-f(u(y))\rvert^{p-2}\left(f(u(x))-f(u(y))\right)(\varphi(x)-\varphi(y))K^{(J)}(x,y) m(\mathrm{d}x)m(\mathrm{d}y)\\
&\ge\int_X\int_X \lvert u(x)-u(y)\rvert^{p-2}(u(x)-u(y)) \left(f'(u(x))^{p-1}\varphi(x)-f'(u(y))^{p-1}\varphi(y)\right)\\
&\hspace{50pt}\cdot K^{(J)}(x,y)m(\mathrm{d}x)m(\mathrm{d}y)\\
&=\mathcal{E}^{(J)}(u;f'(u)^{p-1} \varphi).
\end{align*}
By the Markovian property again, we have $f'(u)^{p-1} \varphi\in \mathcal{F}^{(J)}(\Omega)$ is non-negative. By assumption, we have
$$\mathcal{E}^{(J)}(u;f'(u)^{p-1} \varphi)\ge-\int_{X}f'(u)^{p-1} \varphi \mathrm{d}m\ge-\int_X \varphi \mathrm{d}m,$$
where the second inequality follows from that facts that $\varphi$ is non-negative and that $0\le f'\le1$ in $\mathbb{R}$. Hence $\mathcal{E}^{(J)}(f(u);\varphi)\ge-\int_X\varphi \mathrm{d}m$ for any non-negative $\varphi\in \mathcal{F}^{(J)}\cap C_c(\Omega)$. By Lemma \ref{lem_FOmega}, this inequality holds for any non-negative $\varphi\in \mathcal{F}^{(J)}(\Omega)$.

For any $M\in \mathbb{R}$, by the Markovian property, we have $u\wedge M\in \widehat{\mathcal{F}}^{(J)}\cap L^\infty(X;m)$. Let $g:\mathbb{R}\to \mathbb{R}$, $x\mapsto x\wedge M$, then there exists $\{f_n\}_{n}\subseteq C^2(\mathbb{R})$ with $0\le f'_n\le1$ and $f^{\prime\prime}_n\le0$ in $\mathbb{R}$ for any $n$, such that $f_n\uparrow g$ uniformly in $\mathbb{R}$. By above, for any non-negative $\varphi\in \mathcal{F}^{(J)}(\Omega)$, we have $\mathcal{E}^{(J)}(f_n(u);\varphi)\ge-\int_X \varphi \mathrm{d}m$. Letting $n\to+\infty$, by the dominated convergence theorem, we have $\mathcal{E}^{(J)}(u\wedge M;\varphi)\ge-\int_X \varphi \mathrm{d}m$, that is, $-\Delta_p^{(J)}(u\wedge M)\ge-1$ in $\Omega$.
\end{proof}

\begin{lemma}[{\cite[Corollary 2.5]{LM23}, \cite[Lemma 4.1]{BKKL25}}]\label{lem_Xi_basic}
Let $\Upsilon,\Psi$ be doubling functions satisfying (\ref{eq_beta12}) and \ref{eq_SUG0}. Let $\Xi$ be given by (\ref{eq_Xi}).
\begin{enumerate}[label=(\arabic*),ref=(\arabic*)]
\item\label{item_Xi1} The function $\Xi$ is well-defined and doubling. Moreover, it satisfies (\ref{eq_beta12}) and, for any $r>0$,
$$\Xi(r)\asymp\frac{\Psi(r)}{\sum_{n=0}^{+\infty}\frac{\Psi(\frac{1}{2^n}r)}{\Upsilon(\frac{1}{2^n}r)}}.$$
\item\label{item_Xi2} The functions $\Xi,\Upsilon,\Psi$ satisfy the following estimates:
$$\Xi(r)<\Upsilon(r)\text{ for any }r>0,$$
$$\frac{\Xi(R)}{\Xi(r)}\le \frac{\Psi(R)}{\Psi(r)}\text{ for any }r\le R,$$
\begin{equation}\label{eq_Xi4}
\Xi(r)
\begin{cases}
\lesssim \Psi(r),&\text{for any }r>1,\\
\gtrsim \Psi(r),&\text{for any }r\le1.
\end{cases}
\end{equation}
\end{enumerate}
\end{lemma}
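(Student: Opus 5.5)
We argue directly from the definition (\ref{eq_Xi}), working with the discrete quantity
$$S(r)=\sum_{n=0}^{+\infty}\frac{\Psi(2^{-n}r)}{\Upsilon(2^{-n}r)}.$$
First, \ref{eq_SUG0} together with (\ref{eq_beta12}) for $\Psi,\Upsilon$ gives $S(r)<+\infty$ for every $r>0$. For $r\le1$ the sum is a tail of the series in \ref{eq_SUG0}, up to doubling constants. For $r>1$ we add only finitely many extra terms.

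Next we compare the Stieltjes integral with $S$. We split $\int_0^r\frac{\mathrm{d}\Psi(t)}{\Upsilon(t)}$ over the dyadic intervals $(2^{-n-1}r,2^{-n}r]$. On each interval $\Upsilon(t)\asymp\Upsilon(2^{-n}r)$ by doubling, and the $\Psi$-increment is $\Psi(2^{-n}r)-\Psi(2^{-n-1}r)$. This increment is $\le\Psi(2^{-n}r)$, and it is $\ge(1-C_\Psi 2^{-\beta^{(1)}_\Psi})\Psi(2^{-n}r)$ once we group a fixed number $k$ of consecutive dyadic scales so that $C_\Psi2^{-k\beta^{(1)}_\Psi}<1/2$. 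Hence
$$\int_0^r\frac{\mathrm{d}\Psi}{\Upsilon}\asymp S(r).$$
This shows at once that $\Xi$ is well defined and finite, and that $\Xi(r)\asymp\Psi(r)/S(r)$.

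For doubling and (\ref{eq_beta12}), write $I(r)=\int_0^r\mathrm{d}\Psi/\Upsilon$, which is continuous and increasing with $I(0^+)=0$. For $r\le R$ we have $I(R)=I(r)+\int_r^R\mathrm{d}\Psi/\Upsilon\ge I(r)$. Since $\Upsilon$ is increasing, $\int_r^R\mathrm{d}\Psi/\Upsilon\le(\Psi(R)-\Psi(r))/\Upsilon(r)$, and also $I(r)\ge\Psi(r)/\Upsilon(r)$ because $\Upsilon(t)\le\Upsilon(r)$ on $(0,r]$. Together these give $I(R)\le I(r)\,\Psi(R)/\Psi(r)$.

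From $I(R)\ge I(r)$ we get $\Xi(R)/\Xi(r)\le\Psi(R)/\Psi(r)$, which is the second estimate in \ref{item_Xi2} and the upper power bound. From $I(R)\le I(r)\Psi(R)/\Psi(r)$ we get $\Xi(R)\ge\Xi(r)$, so $\Xi$ is increasing.

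For a strict positive lower power we use $I\asymp S$ and compare $S(R)$ with $S(r)$. We have
$$S(R)\lesssim S(r)+\sum_{2^{-n}R>r}\frac{\Psi(2^{-n}R)}{\Upsilon(2^{-n}R)}.$$
The finitely many extra terms are each bounded by $C\,\frac{\Psi(r)}{\Upsilon(r)}\bigl(\frac{2^{-n}R}{r}\bigr)^{\beta^{(2)}_\Psi-\beta^{(1)}_\Upsilon}$. This only gives polynomial growth of $S(R)/S(r)$, which need not be small enough. \textbf{This is the main obstacle.} It is also exactly the statement proved in \cite[Corollary 2.5]{LM23} and \cite[Lemma 4.1]{BKKL25}. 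We therefore plan to invoke those results for the lower exponent of $\Xi$ and for $\Xi$ being a homeomorphism. Continuity and strict monotonicity, which we need for the homeomorphism claim, follow from the continuity of $\Psi$ and $I$.

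For the remaining estimates in \ref{item_Xi2}:
\begin{itemize}
\item $\Xi<\Upsilon$ is immediate from $I(r)>\Psi(r)/\Upsilon(r)$. The inequality is strict because $\Upsilon$ is strictly increasing, so $\Upsilon(t)<\Upsilon(r)$ on a set of positive $\mathrm{d}\Psi$-measure.
\item For $r>1$, we have $I(r)\ge I(1)>0$, so $\Xi(r)\le\Psi(r)/I(1)$.
\item For $r\le1$, we have $I(r)\le I(1)<\infty$, so $\Xi(r)\ge\Psi(r)/I(1)$. This gives (\ref{eq_Xi4}).
\end{itemize}
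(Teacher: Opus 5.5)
Your proposal is correct and follows the paper's proof essentially step for step: the dyadic comparison of $\int_0^r\mathrm{d}\Psi/\Upsilon$ with $S(r)$, monotonicity of $I$ for the ratio bound, $\Xi<\Upsilon$ and (\ref{eq_Xi4}), and deferring the positive lower exponent of $\Xi$ to \cite{LM23,BKKL25}, which is exactly what the paper does (your inequality $I(R)\le I(r)\Psi(R)/\Psi(r)$ replaces the paper's computation of $\mathrm{d}\Xi$, and its strict version comes from strict monotonicity of $\Upsilon$, not from ``continuity''). Incidentally, your ``main obstacle'' disappears if you normalize by $\Psi(R)$ instead of $\Psi(r)$: for $R=2^kr$ one has $S(R)=S(r)+\sum_{j=1}^k\Psi(2^jr)/\Upsilon(2^jr)$, each term is $\lesssim\frac{\Psi(R)}{\Upsilon(r)}2^{-(k-j)\beta^{(1)}_\Psi-j\beta^{(1)}_\Upsilon}$, and with $S(r)\ge\Psi(r)/\Upsilon(r)$ this gives $\Xi(R)/\Xi(r)\gtrsim\min\bigl\{\frac{\Psi(R)}{\Psi(r)},\frac{\Psi(R)}{\Upsilon(r)(S(R)-S(r))}\bigr\}\gtrsim(R/r)^{\beta}$ for any $\beta<\min\{\beta^{(1)}_\Psi,\beta^{(1)}_\Upsilon\}$.
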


\begin{proof}
Since $\Upsilon,\Psi$ are doubling functions satisfying (\ref{eq_beta12}), for any $r>0$,
$$\Xi(r)=\frac{\Psi(r)}{\int_0^r \frac{\mathrm{d}\Psi(t)}{\Upsilon(t)}}\asymp \frac{\Psi(r)}{\sum_{n=0}^{+\infty} \frac{\Psi(\frac{1}{2^n}r)-\Psi(\frac{1}{2^{n+1}}r)}{\Upsilon(\frac{1}{2^n}r)}}\asymp \frac{\Psi(r)}{\sum_{n=0}^{+\infty} \frac{\Psi(\frac{1}{2^n}r)}{\Upsilon(\frac{1}{2^n}r)}}.$$
By \ref{eq_SUG0}, we have $\Xi$ is well-defined. Moreover,
$$\Xi(r)=\frac{\Psi(r)}{\int_0^r \frac{\mathrm{d}\Psi(t)}{\Upsilon(t)}}<\frac{\Psi(r)}{\int_0^r \frac{\mathrm{d}\Psi(t)}{\Upsilon(r)}}=\Upsilon(r),$$
and
$$\mathrm{d}\Xi(r)=\frac{1}{\left(\int_0^r \frac{\mathrm{d}\Psi(t)}{\Upsilon(r)}\right)^2}\left(\frac{1}{\Xi(r)}-\frac{1}{\Upsilon(r)}\right)\Psi(r)\mathrm{d}\Psi(r),$$
hence $\Xi:[0,+\infty)\to[0,+\infty)$ is strictly increasing. For any $r>0$, by \ref{eq_SUG0},
\begin{align*}
\Xi(r)=\frac{\Psi(r)}{\int_0^r \frac{\mathrm{d}\Psi(t)}{\Upsilon(t)}}
\begin{cases}
\le\frac{\Psi(r)}{\int_0^1 \frac{\mathrm{d}\Psi(t)}{\Upsilon(t)}},&\text{for }r>1,\\
\ge\frac{\Psi(r)}{\int_0^1 \frac{\mathrm{d}\Psi(t)}{\Upsilon(t)}},&\text{for }r\le1,
\end{cases}
\end{align*}
hence (\ref{eq_Xi4}) holds. For any $r\le R$, we have
$$\frac{\Xi(R)}{\Xi(r)}=\frac{\Psi(R)}{\Psi(r)}\frac{\int_0^r \frac{\mathrm{d}\Psi(t)}{\Upsilon(t)}}{\int_0^R \frac{\mathrm{d}\Psi(t)}{\Upsilon(t)}}\le \frac{\Psi(R)}{\Psi(r)}.$$
Combining this with the proof of \cite[Lemma 4.1]{BKKL25}, especially with the argument proving Equation (4.5) therein, we have $\Xi$ is a doubling function satisfying (\ref{eq_beta12}).
\end{proof}

\bibliographystyle{plain}

\end{document}